\let\oast=\circledast
\newcommand{\DeclareRuneSeparators}[1]{} 
\newcommand{\ud}{\textarn{u}}
\newtheorem{theorem}{Theorem}[section]
\newtheorem{example}[theorem]{Example}
\newtheorem{lemma}[theorem]{Lemma}
\newtheorem{corollary}[theorem]{Corollary}
\newtheorem{proposition}[theorem]{Proposition}
\theoremstyle{definition}
\newtheorem{definition}[theorem]{Definition}
\newtheorem*{remark}{Remark}
\newcommand{\mM}{M}
\newcommand{\DCH}{\mathcal{H}_*}
\newcommand{\SCH}{\mathcal{H}^*}
\newcommand{\CH}{\mathcal{H}}
\newcommand{\cA}{\mathcal{A}}
\newcommand{\duA}{\lnot\cA}
\newcommand{\cB}{\mathcal{B}}
\newcommand{\cC}{\mathcal{C}}
\newcommand{\cD}{\mathcal{D}}
\newcommand{\cE}{\mathcal{E}}
\newcommand{\cF}{\mathcal{F}}
\newcommand{\cG}{\mathcal{G}}
\newcommand{\cH}{\mathcal{H}}
\newcommand{\cI}{\mathcal{I}}
\newcommand{\cJ}{\mathcal{J}}
\newcommand{\cK}{\mathcal{K}}
\newcommand{\cL}{\mathcal{L}}
\newcommand{\cP}{\mathcal{P}}
\newcommand{\cR}{\mathcal{R}}
\newcommand{\cS}{\mathcal{S}}
\newcommand{\cX}{\mathcal{X}}
\newcommand{\cY}{\mathcal{Y}}
\newcommand{\pot}{\mathcal{P}}
\newcommand*{\EE}{\mathbb{E}}
\newcommand*{\FF}{\mathbb{F}}
\newcommand*{\NN}{\mathbb{N}}
\newcommand*{\OO}{\mathbb{O}}
\newcommand*{\ZZ}{\mathbb{Z}}
\newcommand*{\LE}{\mathbb{LE}}
\newcommand*{\LF}{\mathbb{LF}}
\def\vx{\vec{x}}
\def\va{\vec{a}}
\def\vb{\vec{b}}
\def\vc{\vec{c}}
\def\vy{\vec{y}}
\def\vq{\vec{q}}
\def\vr{\vec{r}}
\def\vp{\vec{p}}
\def\vz{\vec{z}}
\def\vu{\vec{u}}
\def\vw{\vec{w}}
\def\vt{\vec{t}}
\DeclareMathOperator{\id}{id}
\DeclareMathOperator{\rg}{rg}
\DeclareMathOperator{\DD}{D}
\DeclareMathOperator{\DDd}{D^d}
\DeclareMathOperator{\CD}{CD}
\DeclareMathOperator{\gdim}{\mathfrak{Dim}}
\DeclareMathOperator{\Dim}{Dim}
\DeclareMathOperator{\Dimd}{Dim^d}
\DeclareMathOperator{\CDim}{CDim}
\DeclareMathOperator{\Card}{Card}
\let\varjo=\partial
\DeclareMathOperator{\Crit}{Crit}
\DeclareMathOperator{\Critd}{Crit^d}
\DeclareMathOperator{\Max}{Max}
\DeclareMathOperator{\Min}{Min}
\newcommand*{\anonym}{\mathrel{\Upsilon}}
\DeclareMathOperator{\FO}{FO}
\newcommand{\ilor}{{\underline{\lor}}}
\newcommand{\tland}{{\owedge}}
\newcommand{\gtd}{\bigvee\limits}
\DeclareMathOperator{\dom}{dom}
\DeclareMathOperator{\len}{len}
\let\osaj=\subseteq
\let\jer=\smallsetminus
\let\tyj=\emptyset
\let\aioj=\subsetneq
\DeclareMathOperator{\dep}{\,=}
\def\ivee{\underline{\vee}}
\newcommand*{\sat}[2]{\left\Vert #1 \right\Vert^{#2}}
\DeclareMathOperator{\nem}{\mathsf{N}\hspace{-0.25em}\mathsf{E}}
\def\ev{\mbox{\tt\bf E}}
\title{Dimension in team semantics}
\begin{document}

\author{Lauri Hella\\University of Tampere,\\
         Tampere, Finland
        
\and Kerkko Luosto\\
University of Tampere,\\
         Tampere, Finland
    
\and
Jouko V\"a\"an\"anen\footnote{Supported by  the Academy of Finland (grant No 322795) and the European Research Council (ERC) under the
European Union’s Horizon 2020 research and innovation programme (grant No
101020762).}\\
University of Helsinki,\\
         Helsinki, Finland
        }

\maketitle

\begin{abstract}
We introduce three measures of complexity for families of sets.
Each of the three measures, that we call dimensions, is defined in
terms of the minimal number of convex subfamilies that are needed
for covering the given family: for upper dimension, the subfamilies
are required to contain a unique maximal set, for dual upper
dimension a unique minimal set, and for cylindrical dimension both a
unique maximal and a unique minimal set. In addition to considering
dimensions of particular families of sets we study the behaviour of
dimensions under operators that map families of sets to new families
of sets. We identify natural sufficient criteria for such operators
to preserve the growth class of the dimensions.

We apply the theory of our dimensions for proving new hierarchy
results for logics with team semantics. First, we show that the
standard logical operators preserve the growth classes of the families
arising from the semantics of formulas in such logics. Second, we show
that the upper dimension of $k+1$-ary dependence, inclusion,
independence, anonymity, and exclusion atoms is in a strictly higher
growth class than that of any $k$-ary atoms, whence the $k+1$-ary
atoms are not definable in terms of any atoms of smaller arity.

\end{abstract}

\section{Introduction}

Families of sets are well-studied in discrete mathematics and set
theory (see e.g. \cite{B}). Sperner families and downward
closed families are examples of basic building blocks that can be used
to analyse complex families. Considerations on ways how to represent a
family as a union of more basic families leads us to several concepts
of dimension. Given the finite size of the base set, we use our
dimensions to associate families of subsets of the base set with
better quantitative estimates than their mere size.  We show that
certain canonical operations on families of sets preserve
dimension. This allows us to isolate dimension bounded collections of
families of sets.

By restricting attention to families of subsets of cartesian powers of
finite sets we obtain finer distinctions. Such families arise in the
context of so-called team semantics. In ordinary Tarski semantics of
first order logic $\FO$ and its extensions by new logical operations any
formula and any model of the appropriate kind can be associated with
the set of assignments satisfying the formula in the model. It is
natural to consider such a set as a subset of the cartesian power of
the domain of the model. In team semantics satisfaction is defined
with respect to sets (`teams') of assignments. Accordingly, any
formula becomes associated with a family of subsets of such a
cartesian power. We use our dimensions and preservation results for
logical operations to prove new non-definability and hierarchy results
for logics based on teams semantics. Examples of such logics are
dependence logic, independence logic and inclusion logic.

The background of our work for this paper is the following. Ciardelli
defined in his Master's Thesis\footnote{\cite{ciardelli09}} a dimension concept,
in the case of downward closed families, namely the cardinality of the
set of maximal sets, or equivalently, the smallest number of
power-sets that cover the family. He proved the preservation
properties for basic propositional logic operations, including
intuitionistic implication, and referred to them as Groenendijk
inequalities.  In \cite{HLSV} 
a similar dimension concept was introduced in modal logic, including preservation of
dimension results for logical operations of modal dependence
logic. Hella and Stumpf used a form of dimension to prove a
succinctness result for the inclusion atom in modal inclusion logic (\cite{HS}).
In \cite{LVil}  the notion of dimension was generalized
from downward closed families to arbitrary families. They proved
preservation of dimension under propositional operations, and computed
the dimension of dependence and exclusion atoms in the context of
propositional logic. An important step in the background of this paper has been also \cite{Lu}. 

There are several other dimension concepts in discrete
mathematics. Perhaps the most famous is the matroid rank, which
coincides with the usual concept of dimension in the case of vector
spaces and with degree of transcendence in the case of algebraically
closed fields. However, our families do not necessarily satisfy the
Exchange Axiom of matroids and therefore this concept does not work in
our context. Another well-known dimension is the Vapnik–Chervonenkis-
or VC-dimension. In Section~\ref{vc} we argue that VC-dimension is not
preserved by logical operations in the sense that our dimension
is. Therefore it does not serve our purpose well in this paper. Still
another dimension is the length of a disjunctive normal form in
propositional logic. We show in Section~\ref{dnf} that this is
equivalent to one of the dimensions (cylindrical dimension) we
investigate.



The concepts we introduce in this paper belong to discrete mathematics
with no immediate connection to logic. Thus part of this paper can be
read with no knowledge or interest in logic. However, our applications
come from logic, more exactly from team semantics. We believe that our
results are an interesting new contribution to discrete mathematics of
families of sets. At the same time, we suggest that our results lead
to a new approach to definability questions in team semantics and, in
particular, yield a new strong hierarchy result
(Theorem~\ref{hierarchy}).

\subsection*{An outline of the paper}

Section 2 gives the basic concepts of out dimension theory. We define three dimension notions for arbitrary families of sets and give some elementary basic properties of these notions. We define the basic operators on families of sets that we will use in our results. Finally, we introduce some concepts from logic that are relevant for our results. In particular, we introduce the so-called team semantics which gives rise to a wealth of interesting families of subsets of Cartesian products $M^k$ of finite sets, raising the question what the dimensions of these families are.

Section 3 introduces some technical tools for explicit dimension computations. Such computations are the heart of our results.

In Section 4 we introduce the concept of a growth class. These classes are used to measure the rate of growth of dimension of definable sets of subsets of a Cartesian product $M^k$ when the finite size of $M$ increases. Some important  results are proved about preservation of  dimension under operators. These preservation results will make it easier to estimate the growth class of a given definable family of sets.

In Section 5 we put our results together and indicate applications. Our main application is Theorem~\ref{hierarchy} which gives  strong hierarchy results for a number of logics based on team semantics. We also observe that several logical operations that occur in the literature of team semantics are not of the kind that preserve dimension. This allows us to use the quantitative method of dimension to obtain qualitative distinctions between logical operations.

In Section 6 we address the obvious question why not apply the VC-dimension. The answer turns out to be that VC-dimension is not preserved under the logical operations that we are mainly interested in, such as conjunction, disjunction, existential quantifier and universal quantifier.

In Section 7 we relate one of our dimension concepts to an invariant related to disjunctive normal forms of Boolean polynomials. This allows us to make some conclusions about dimensions of random families of sets.

Finally, in Section 8 we show that it is impossible to obtain on infinite domains the kind of results we are after. The desired hierarchy results are simply false on infinite domains.

\section{Basic notions }



\subsection{Families of sets}

In the sequel, our applications will build on heavy use of
combinatorial results in the subfield often called set-system
combinatorics.  We start with commonly used notions.

We use standard set-theoretic notation, including the shorthands
\[
  \bigcup\cA = \bigcup_{A\in\cA}A \text{ and }
  \bigcap\cA = \bigcap_{A\in\cA}A,
\]
the latter being unambiguous only if $\cA\neq\emptyset$.
In addition, we write
\[
  [A,B] = \{ C \mid A\osaj C\osaj B \}, 
\]
for any sets $A$ and $B$.  Note that if $A\not\osaj B$, then $[A,B]=\tyj$.

\begin{definition}
Let $\cA$ be a family of sets.  The family $\cA$ is an
\emph{interval or cylinder}, if there exist $A_0$ and $A_1$
such that $A_0\osaj A_1$ and $\cA=[A_0,A_1]$.  The family $\cA$
is \emph{convex} if for all $S,T\in\cA$, we have $[S,T]\osaj\cA$.
$\cA$ is \emph{downwards closed} if $A\in\cA$ and $S\osaj A$
imply $S\in\cA$. The family $\cA$ is a \emph{Sperner family} if
for all distinct $S,T\in\cA$ we have $S\not\osaj T$.
Finally, $\cA$ \emph{fulfills the Zorn condition} if
it is closed under nonempty unions of chains, i.e., if $\cC$ is a
nonempty chain  (or a nonempty family linearly ordered by inclusion), then
$\bigcup\cC\in\cC$.  A stricter notion is also useful:
$\cA$ is \emph{closed under unions}
if for every subfamily $\cB\osaj\cA$, we have $\bigcup\cB\in\cA$.
\end{definition}

Note that if a family of sets is downward closed or a Sperner family,
then it is also convex.  The concept of a upward closed family is also
useful in set theory but is lacking here, as applications of our methods
are very much leaning towards downward closed families.

The concept of Zorn condition is only used when we discuss the applicability
of these notions to infinite families.   Our emphasis is, however,
on finite families of finite sets, for which the Zorn condition,
as we have formulated it, trivially holds.  We have weakened the standard
condition by imposing the requirement only on nonempty chains.
The only notable effect is that the empty set is not required to be included
in a family fulfilling the Zorn condition, thus allowing all the finite
families to meet the condition.

For $\cA$ a family of sets, we denote the family of all maximal (with
respect to inclusion) sets in~$\cA$ by $\Max(\cA)$. Similarly,
$\Min(\cA)$ is the set of all minimals sets in $\cA$.  Observe that
$\Max(\cA)$ and $\Min(\cA)$ are always Sperner families.

\begin{definition} 
A family of sets $\cA$ is \emph{dominated (by $\bigcup\cA$)}
if $\bigcup\cA\in\cA$.  The family $\cA$ is \emph{supported}
(by $\bigcap\cA$) if $\cA$ is nonempty and $\bigcap\cA\in\cA$.
Naturally, we say that $\cA$ is \emph{dominated convex} if it is
dominated and convex. Similarly, $\cA$ is \emph{supported convex} if it is
supported and convex.
\end{definition}

In other words, a family $\cA$ is dominated by a set $D$ if and only
if $D$ is the largest element in $\cA$ with respect to inclusion.
Similarly,  $\cA$ is supported by a set $S$ if and only
if $S$ is the smallest element in $\cA$ with respect to inclusion.
We spell out some of the easily seen connections between the
basic concepts in the following lemma.

\begin{lemma}  \label{dom-supp-conv}
Let $\cA\osaj\pot(X)$ where $X$ is a set. Denote $\duA=\{X\jer A \mid A\in\cA \}$.
\begin{enumerate}[(a)]
\item
The family $\cA$ is an interval if and only if it is dominated, supported and convex.
\item
$\cA$ is convex if and only if $\duA$ is convex.
\item
$\cA$ is dominated if and only if $\duA$ is supported. \qed 
\end{enumerate}
\end{lemma}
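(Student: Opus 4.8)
The plan is to treat part~(a) directly from the definitions and to obtain parts~(b) and~(c) from a single unifying device: the complementation map $c\colon\pot(X)\to\pot(X)$, $c(A)=X\jer A$, which is an inclusion-reversing involution (so $c\circ c=\id$) carrying $\cA$ to $\duA$ and $\duA$ back to $\cA$. Because of the involution, for (b) and (c) I would establish only one implication each and then invoke $c\circ c=\id$, applied to $\duA$ in place of $\cA$, to get the converse for free.

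For (a), the forward direction is immediate: if $\cA=[A_0,A_1]$ with $A_0\osaj A_1$, then every $C\in\cA$ satisfies $A_0\osaj C\osaj A_1$, so $\bigcap\cA=A_0\in\cA$ and $\bigcup\cA=A_1\in\cA$, giving supportedness and dominance, while convexity follows since $S,T\in\cA$ and $S\osaj C\osaj T$ force $A_0\osaj C\osaj A_1$. For the converse, assume $\cA$ is dominated, supported and convex, and set $A_0=\bigcap\cA$, $A_1=\bigcup\cA$; supportedness gives $\cA\neq\tyj$ and $A_0\in\cA$, dominance gives $A_1\in\cA$, and $A_0\osaj A_1$ automatically. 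The inclusion $\cA\osaj[A_0,A_1]$ holds because $A_0\osaj C\osaj A_1$ for every $C\in\cA$, and the reverse inclusion $[A_0,A_1]\osaj\cA$ is exactly convexity applied to the pair $A_0,A_1\in\cA$.

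For (b) and (c) the key computations are the interval identity $c([A,B])=[X\jer B,\,X\jer A]$ and the De Morgan identity $\bigcap\duA=X\jer\bigcup\cA$. From the first, if $\cA$ is convex and $S',T'\in\duA$, writing $S'=X\jer S$ and $T'=X\jer T$ with $S,T\in\cA$, any $C'\in[S',T']$ has complement $C=X\jer C'\in[T,S]\osaj\cA$, whence $C'=X\jer C\in\duA$; thus $\duA$ is convex, and the converse follows by applying this to $\duA$. For (c), the De Morgan identity shows that $\bigcap\duA\in\duA$ is equivalent to $\bigcup\cA\in\cA$, i.e.\ supportedness of $\duA$ is equivalent to dominance of $\cA$; the nonemptiness clause in supportedness matches the fact that dominance already forces $\cA\neq\tyj$.

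The proof is essentially routine; the only points demanding care are the bookkeeping of emptiness (supportedness is defined to require $\cA\neq\tyj$, and one should note that dominance likewise excludes $\cA=\tyj$ since $\tyj\notin\tyj$, so all three equivalences behave correctly on the empty family) and the order reversal in the interval identity for $c$, where the two endpoints swap. I expect no genuine obstacle beyond keeping the directions of the inclusions straight under complementation.
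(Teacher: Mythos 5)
Your proof is correct; note that the paper gives no proof of this lemma at all (it is stated with a terminal \qed{} as one of the ``easily seen connections''), and your verification---endpoints $\bigcap\cA$ and $\bigcup\cA$ plus convexity for (a), inclusion-reversal under complementation for (b), De Morgan for (c), with the empty-family bookkeeping correctly observing that dominance already forces $\cA\neq\tyj$---is exactly the routine argument the authors intended. One small remark: your opening plan to handle (c) by ``one implication plus the involution'' would not literally work, since applying $c\circ c=\id$ to the forward implication yields ``$\duA$ dominated $\Rightarrow$ $\cA$ supported'' rather than the needed converse, but this is moot because the argument you actually execute for (c) is a direct biconditional via $\bigcap\duA=X\jer\bigcup\cA$ and injectivity of complementation on $\pot(X)$.
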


We proceed to the central dimension concepts which will be studied throughout
this paper.  The upper dimension was first defined for downwards closed families
in  \cite{HLSV}
and subsequently generalized for arbitrary families  in \cite{LVil}. The definition presented here is an equivalent
reformulation of the latter. We also introduce two new dimension concepts.
The idea of the dual upper dimension is that many of the underlying
ideas behind the upper dimension work if the inclusion order is reversed,
as the previous lemma indicates.  The third concept, cylindrical dimension,
can be seen as a combination of the two mentioned dimension concepts.

\begin{definition}\label{dimension-def}
Let $\cA$ be a family of sets.  We say that a subfamily
$\cG\osaj\cA$ \emph{dominates} $\cA$ if there exist
dominated convex families $\cD_G$, $G\in\cG$,
such that $\bigcup_{G\in\cG}\cD_G=\cA$ and $\bigcup\cD_G=G$,
for each $G\in\cG$.
The subfamily $\cK\osaj\cA$ \emph{supports} $\cA$ if there exist
supported convex families $\cS_K$, $K\in\cK$ such that
$\bigcup_{K\in\cK}\cS_K=\cA$ ja $\bigcap\cS_K=K$, for each $K\in\cK$.

The \emph{upper dimension} of the family is $\cA$ 
\[
\DD(\cA)=\min\{|\cG| \mid \text{$\cG$ dominates the family $\cA$}\},
\]
\emph{the dual upper dimension} is
\[
\DDd(\cA)=\min\{|\cG| \mid \text{$\cG$ supports the family $\cA$}\}
\]
and the \emph{cylindrical dimension} is
\[
  \CD(\cA)=\min\{|I| \mid \text{$(\cA_i)_{i\in I}$ is an indexed family of
    intervals with $\bigcup_{i\in I}\cA_i=\cA$}\}.
\]

\end{definition}

\begin{proposition}\label{basic:dimEst}
Let $\cA$ be a family of set.  Then
\[
  \DD(\cA)\le\CD(\cA) \text{ and } \DDd(\cA)\le\CD(\cA).
\]
If, in addition, $\cA$ is convex, then
\[
  \CD(\cA)\le\DD(\cA)\DDd(\cA)
\]
\end{proposition}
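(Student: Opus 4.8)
The plan is to treat the two lower-bound inequalities together and then handle the product bound separately, since the first two rest on the single observation that an interval is simultaneously dominated convex (by its top) and supported convex (by its bottom), whereas the last genuinely uses convexity of $\cA$.

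For $\DD(\cA)\le\CD(\cA)$, I would start from an optimal cover of $\cA$ by intervals $\cA_i=[A_i,B_i]$, $i\in I$, with $|I|=\CD(\cA)$. Each $[A_i,B_i]$ is convex and dominated by its top $B_i$, and $B_i\in[A_i,B_i]\osaj\cA$. Take $\cG$ to be the set of distinct tops $\{B_i\mid i\in I\}$ and, for each $G\in\cG$, set $\cD_G=\bigcup\{\cA_i\mid B_i=G\}$. I would then check that $\cD_G$ is again dominated convex with $\bigcup\cD_G=G$; convexity is the only point needing an argument, and it holds because if $S,T\in\cD_G$ and $S\osaj C\osaj T$, then $S\in[A_i,G]$ for some $i$ with $B_i=G$, so $A_i\osaj S\osaj C\osaj T\osaj G$ places $C$ back in $[A_i,G]\osaj\cD_G$. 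Since $\bigcup_{G\in\cG}\cD_G=\bigcup_i\cA_i=\cA$, the family $\cG$ dominates $\cA$, giving $\DD(\cA)\le|\cG|\le|I|$. The inequality $\DDd(\cA)\le\CD(\cA)$ is the mirror image: each interval is supported by its bottom $A_i$, and grouping by distinct bottoms yields a supporting family; alternatively one can pass to complements using Lemma~\ref{dom-supp-conv}.

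For the product bound, assume $\cA$ convex and fix optimal witnesses: a dominating family $\cG$ with associated $\cD_G$ satisfying $\bigcup\cD_G=G$, and a supporting family $\cK$ with associated $\cS_K$ satisfying $\bigcap\cS_K=K$, where $|\cG|=\DD(\cA)$ and $|\cK|=\DDd(\cA)$. I would consider the intervals $[K,G]$ for exactly those pairs $(K,G)\in\cK\times\cG$ with $K\osaj G$. Each endpoint lies in $\cA$, namely $K=\bigcap\cS_K\in\cS_K\osaj\cA$ and $G=\bigcup\cD_G\in\cD_G\osaj\cA$, so convexity of $\cA$ gives $[K,G]\osaj\cA$. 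Conversely, any $C\in\cA$ lies in some $\cD_G$, whence $C\osaj\bigcup\cD_G=G$, and in some $\cS_K$, whence $K=\bigcap\cS_K\osaj C$; thus $C\in[K,G]$ for an admissible pair. Hence these at most $|\cG|\,|\cK|$ intervals cover $\cA$, and $\CD(\cA)\le\DD(\cA)\DDd(\cA)$.

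The only real obstacle is the role of convexity in the last inequality: it is precisely what guarantees that the ``rectangle'' $[K,G]$ spanned by a minimal witness and a maximal witness stays inside $\cA$, so that it is legitimate to use as a covering interval. Dropping convexity, the interval between a bottom and a top could escape $\cA$, and the argument breaks. Verifying that the regrouped families $\cD_G$ and $\cS_K$ stay convex in the first two parts is routine but is the one spot where a small explicit check is required.
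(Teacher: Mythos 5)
Your proof is correct and follows essentially the same route as the paper: an optimal interval cover yields the dominating family of tops and the supporting family of bottoms, and for convex $\cA$ the intervals $[K,G]$ over admissible pairs $(K,G)\in\cK\times\cG$ with $K\osaj G$ cover $\cA$. Your only addition is the explicit check that intervals sharing a common top (or bottom) merge into a single dominated (supported) convex family $\cD_G$ (resp.\ $\cS_K$), a detail the paper's proof passes over silently but which is indeed needed to match Definition~\ref{dimension-def} exactly.
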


\begin{proof}
Let $(\cA_i)_{i\in I}$ be an indexed family of minimal size of intervals 
covering $\cA$, i.e., $\bigcup_{i\in I}\cA_i=\cA$.
Write $\cA_i=[B_i,C_i]$, for each $i\in I$, and consider
the families $\cB=\{B_i \mid i\in I\}$ and $\cC=\{C_i\mid i\in I\}$.
Then $\cA_i$ is a convex set supported by $B_i$ and dominated
by $C_i$, for $i\in I$.  Consequently, $\cB$ supports $\cA$
and $\cC$ dominates $\cA$, which implies
\[
  \DDd(\cA)\le|\cB|\le|I|=\CD(\cA) \text{ and }
  \DD(\cA)\le|\cC|\le|I|=\CD(\cA).
\]

For the second part of the proposition, assume now that $\cA$
is convex.  Let $\cG$ be a family of minimal size that dominates $\cA$
and $\cK$ be a family of minimal size that supports $\cA$.
Then $\DD(\cA)=|\cG|$ and $\DDd(\cA)=|\cK|$.
Let $I$ be the set of pairs $(G,K)\in \cG\times\cK$ with $K\osaj G$.
By convexity of $\cA$, we have $[K,G]\osaj\cA$, for each $(G,K)\in I$.
On the other hand, if $A\in\cA$, then there has to be $G\in\cG$ such
that $A\osaj G$, as $\cG$ dominates $\cA$, and similarly $K\in\cK$
such that $K\osaj A$.  This means that $A\in[K,G]$, for some
interval $[K,G]$ with $(G,K)\in I$.  Consequently,
\[
  \cA=\bigcup_{(G,K)\in I}[K,G],
\]
which implies
\[
  \CD(\cA)\le|I|\le |\cG\times\cK|=\DD(\cA)\cdot\DDd(\cA).
\]
\end{proof}

Clearly, if $\cA\osaj\pot(X)$ with $n=|X|\in\NN$, then
$\CD(\cA)\le 2^n$. One gets easily a modest improvement to this
result, which is the best possible upper bound,
as the succeeding example shows.

\begin{proposition}
Let $X$ be a nonempty finite set with $n=|X|$, and let $\cA\osaj\pot(X)$.  
Then
\[
  \CD(\cA)\le 2^{n-1}.
\]
Hence also, $\DD(\cA)\le 2^{n-1}$ and
$\DDd(\cA)\le 2^{n-1}$.
  
\end{proposition}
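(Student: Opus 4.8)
The plan is to exhibit an explicit covering of $\cA$ by at most $2^{n-1}$ intervals, which gives the cylindrical bound directly, and then read off the other two bounds from the preceding proposition. Since $X$ is nonempty, I would fix an element $x_0\in X$ and use it to pair up subsets: every subset of $X$ either contains $x_0$ or not, so the assignment $S\mapsto\{S,S\cup\{x_0\}\}$, as $S$ ranges over the subsets of $X\jer\{x_0\}$, partitions $\pot(X)$ into exactly $2^{n-1}$ two-element blocks, each of the form $[S,S\cup\{x_0\}]$.

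Next I would intersect each block with $\cA$. For a block $P=\{S,S\cup\{x_0\}\}$ the intersection $P\cap\cA$ has at most two elements, and the key observation is that in every case it is again an interval contained in $\cA$: if both members lie in $\cA$ it equals the interval $[S,S\cup\{x_0\}]$; if exactly one member $T$ lies in $\cA$ it equals the degenerate interval $[T,T]$; and if neither member lies in $\cA$ it is empty and is simply discarded. Hence the nonempty intersections $P\cap\cA$ form a family of at most $2^{n-1}$ intervals whose union is $\cA$, and $\CD(\cA)\le 2^{n-1}$ follows immediately from the definition of cylindrical dimension.

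The bounds $\DD(\cA)\le 2^{n-1}$ and $\DDd(\cA)\le 2^{n-1}$ then require no further work, since Proposition~\ref{basic:dimEst} already gives $\DD(\cA)\le\CD(\cA)$ and $\DDd(\cA)\le\CD(\cA)$.

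I do not expect a serious obstacle here; the proof is essentially a counting argument once the right pairing is chosen. The only point demanding care is the middle step, namely verifying that slicing a two-element interval by $\cA$ never yields a set that must be split into more than one interval. This is exactly the content of checking the three degenerate cases (full block, singleton, empty), and since each nonempty slice is itself an interval, the count $2^{n-1}$ is preserved with no loss. The nonemptiness of $X$ is used precisely to guarantee the existence of the element $x_0$ on which the whole pairing depends.
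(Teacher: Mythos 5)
Your proof is correct and follows essentially the same route as the paper: fixing an element (the paper calls it $b$, you call it $x_0$), partitioning $\pot(X)$ into the $2^{n-1}$ pairs $\{S,S\cup\{x_0\}\}$ with $S\osaj X\jer\{x_0\}$, observing that each intersection with $\cA$ is empty or one of the intervals $[S,S]$, $[S\cup\{x_0\},S\cup\{x_0\}]$, $[S,S\cup\{x_0\}]$, and deducing the bounds on $\DD$ and $\DDd$ from Proposition~\ref{basic:dimEst}. No gaps to report.
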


\begin{proof}
Fix $b\in X$ and consider the partition of $\pot(X)$ in
pairs~$\{A,A\cup\{b\}\}$, where $A\osaj X\jer\{b\}$.
For each such pair, $\cA\cap\{A,A\cup\{b\}\}$ is either
empty or one of the intervals $[A,A]=\{A\}$,
$[A\cup\{b\},A\cup\{b\}]=\{A\cup\{b\}\}$ or
$[A,A\cup\{b\}]=\{A,A\cup\{b\}\}$. Consequently, there is a family of
at most $2^{n-1}$ intervals, the union of which is $\cA$. The remaining claims follow from Proposition~\ref{basic:dimEst}.
\end{proof}

\begin{example} \label{basic:evenEx}
Let $X$ be a nonempty finite set of $n$~elements.
Consider the family
\[
  \cE=\{A\osaj X \mid |A|\text{ is even} \}.
\]
Let $\cG$ be a subfamily of $\cE$ which dominates $\cE$. 
Let $A\in\cE$ and suppose that $A$ is dominated by $G\in\cG$, which means
that $A$ belongs to certain dominated convex family $\cD_G$
where $\cD_G\osaj\cE$ and $\bigcup\cD_G=G$.  By convexity,
$[A,G]\osaj\cD_G\osaj\cE$. Note that $A,G\in\cA$ both have even size. 
However, the interval $[A,G]$ would contain sets of odd size
unless $A=G$. As this holds for arbitrary $A\in\cE$, we conclude that
$\cG=\cE$.  Hence, $\DD(\cE)=|\cE|=2^{n-1}$.  By symmetry, we get
$\DDd(\cE)=2^{n-1}$, too.  Combined with the last two propositions,
we have that $\CD(\cE)=2^{n-1}$.
\end{example}

\subsection{Operators}

In addition to studying the dimensions of fixed families of sets we
are also interested in the behaviour of dimensions under various
operators. An operator on families of sets on a fixed base set $X$ is
a function $\Delta\colon \cP(\cP(X))^n\to\cP(\cP(X))$
for some positive integer $n$. 
In some applications to team semantics it is useful to consider more
general operators of the form
$\Delta\colon \cP(\cP(X))^n\to\cP(\cP(Y))$ with different base sets
$X$ and $Y$. We list in the next example some natural set-theoretic
operators that we will study further in the forthcoming sections.

\begin{example}\label{operator-ex}
Let $X$ be a base set.

\begin{enumerate}[(a)]
\item
\emph{Union and intersection.} The union operator
$\Delta^X_\cup\colon \cP(\cP(X))^2\to\cP(\cP(X))$ on the base set $X$
is defined by $\Delta^X_\cup(\cA,\cB)=\cA\cup\cB$. Similarly, the
intersection operator
$\Delta^X_\cap\colon \cP(\cP(X))^2\to\cP(\cP(X))$ on $X$ is defined by
$\Delta^X_\cap(\cA,\cB)=\cA\cap\cB$.

\item
\emph{Complementation.}  Complementation on $X$ is the unary
operator $\Delta^X_c\colon \cP(\cP(X))\to\cP(\cP(X))$ defined by
$\Delta^X_c(\cA)=\cP(X)\jer\cA$.

\item
\emph{Tensor disjunction and conjunction.} The idea of tensor 
disjunction\footnote{We call this operator tensor \emph{disjunction}, 
since it gives the team semantics for disjunction.} $\Delta^X_\lor$ and 
tensor conjunction $\Delta^X_\land$
is to take unions and intersections inside the families: 
$\Delta^X_\lor(\cA,\cB)=\{A\cup B\mid A\in\cA, B\in\cB\}$ and
$\Delta^X_\land(\cA,\cB)=\{A\cap B\mid A\in\cA, B\in\cB\}$.

\item
\emph{Tensor negation.} Pushing complementation inside a given
family, we obtain tensor negation:
$\Delta^X_\lnot(\cA)=\{X\jer A\mid A\in\cA\}$.

\item
\emph{Projections.} Let $f\colon X\to Y$ be a surjective
function. The (abstract) projection operator corresponding to $f$ is
obtained by lifting $f$ to a function
$\Delta_f\colon\cP(\cP(X))\to\cP(\cP(Y))$ in the usual way:
$\Delta_f(\cA)=\{f[A]\mid A\in \cA\}$, where $f[A]$ denotes the image
$\{f(a)\mid a\in A\}$ of $A$ under $f$.

\item
\emph{Inverse projections.} Given a surjection $f\colon X\to Y$,
we can also define a useful operator
$\Delta_{f^{-1}}\colon\cP(\cP(Y))\to\cP(\cP(X))$ as follows:
$\Delta_{f^{-1}}(\cB)=\{A\in\cP(X)\mid f[A]\in\cB\}$.

\item
\emph{Existential and universal quantification.}
Consider the concrete projection function $f\colon X\to Y$ for
$X=X_0\times\cdots\times X_{m-1}$ and
$Y=X_0\times\cdots X_{i-1}\times X_{i+1}\times\cdots\times X_{m-1}$
defined by
$f(a_0,\ldots,a_{m-1})=(a_0,\ldots,a_{i-1},a_{i+1},\ldots,a_{m-1})$
(i.e., $f$ is the projection to coordinates $j\not=i$). Note that
$B\in \Delta_f(\cA)$ if and only if there is $A\in\cA$ such that for
each tuple $\va\in B$ there exists some element $a\in X_i$ such that
the extension of $\va$ by $a$ as the $i$th component is in $A$. Thus,
$\Delta_f$ corresponds to the logical operation of existential
quantification, and accordingly we denote it by
$\Delta^X_{\exists i}$.

Similarly, we define an operator
$\Delta^X_{\forall i}\colon\cP(\cP(X))\to\cP(\cP(Y))$ that corresponds
to universal quantification: Given a set $B\in\cP(Y)$, let
$B[X_i/i]=\{(a_0,\ldots,a_{m-1})\in X\mid
(a_0,\ldots,a_{i-1},a_{i+1},\ldots,a_{m-1})\in B, a_i\in X_i\}$. Then
we let $\Delta^X_{\forall i}(\cA)=\{B\in\cP(Y)\mid B[X_i/i]\in \cA\}$.
\end{enumerate}

\end{example}

Note that the union and intersection operators $\Delta^X_\cup$ and $\Delta^X_\cap$ 
do not depend on the base set $X$.
Thus, in the sequel we will denote these operators simply by $\cup$ and $\cap$. The same holds
for tensor disjunction and conjunction, whence we will use the notation  $\cA\lor\cB:=\Delta^X_\lor(\cA,\cB)$ 
and $\cA\land\cB:=\Delta^X_\land(\cA,\cB)$. On the other hand, both complementation $\Delta^X_c$
and tensor negation $\Delta^X_\lnot$ depend on $X$, whence we do not introduce any shorthand notation 
for them.

Note further that the projections $\Delta^X_{\exists i}$ do not depend
on $X=X_0\times\cdots\times X_{m-1}$, since the length and the $i$-th
component of any tuple $\va$ is uniquely determined:
$(a_0,\ldots,a_{m-1})=(b_0,\ldots,b_{m'-1})$ if and only if $m=m'$ and
$a_i=b_i$ for all $i< m$. However, the universal projection operator
$\Delta^X_{\forall i}$ clearly depends on the base set $X$. Thus, for
the sake of uniformity we keep using the notation
$\Delta^X_{\exists i}$.

\subsection{Tensor operators}

We have seen in Example~\ref{operator-ex} that the 
disjunction and conjunction connectives give rise to  tensor disjunction
and tensor conjunction operators.  This idea can of course be generalized to
arbitrary connectives.  We introduce here the related concept of
tensor operator, and show that they preserve intervals but not
necessarily dominated convex or supported convex families.

\begin{definition}\label{tensor-def}
Fix a base set $X$ and let $\oast$ be a binary operation
on the set $\{0,1\}$, i.e., $\oast$ is a map $\{0,1\}\times\{0,1\}\to\{0,1\}$.  
Then the \emph{corresponding set-theoretic operation} is $*\colon\pot(X)\to\pot(X)$,
\[
  A*B=\{x\in X \mid \chi_A(x)\oast\chi_B(x)=1\}
\]
where $\chi_C$ is the characteristic function related to a set $C$.
The \emph{tensor operator} corresponding to $\oast$ is
$\Delta^X_{\oast}\colon \pot(\pot(X))\to\pot(\pot(X))$,
\[
  \Delta^X_{\oast}(\cA,\cB)=\{A*B \mid A\in\cA,B\in\cB\}.
\]
\end{definition}

\begin{remark}
\begin{enumerate}[(a)]
\item 
  Naturally, we often identify the binary operation $\oast$ with
the corresponding connective, especially on the notational level.
We also overload the notation, writing in the sequel simply
\[
  \cA\oast\cB=\Delta^X_{\oast}(\cA,\cB).
\]
Note though, that this notation is independent of the set~$X$
only if $0\oast 0=0$.

\item
We could have considered $n$-ary operations on $\{0,1\}$ in general,
which appears to be a non-trivial generalization, but we refrain
ourselves from doing that here.  Even so, it is worth-while
to have notation
\[
  \lnot\cA = \Delta^X_{\lnot}(\cA)=\{X\jer A \mid A\in\cA\} 
\]
for the unary operation corresponding to negation.
\end{enumerate}
\end{remark}

Note that we always have $\cA\oast\tyj=\tyj\oast\cB=\tyj$.

There are $2^4=16$ binary operations on the set $\{0,1\}$,
4 of which (constant functions and projections) are rather trivial.
Among the $8$ zero-preserving (i.e., $0\oast 0=0$) operations
there are $5$ non-trivial tensor operations, which are listed
below except for the case $\cA\oast\cB=\cB-\cA$, which is the
set difference with the roles reversed.

\def\arraystretch{1.2}
\begin{center}
\begin{tabularx}{\linewidth}{llX}
connective  & set-theoretic operation & tensor operation \\
\hline
disjunction $\lor$  & union $\cup$
    & $\cA\lor\cB  = \{ A\cup B \mid A\in\cA,\,B\in\cB \}$ \\  
conjunction $\land$ & intersection $\cap$ 
    & $\cA\land\cB = \{ A\cap B \mid A\in\cA,\,B\in\cB \}$ \\
``$p$ but not $q$''   & set difference $\jer$
    & $\cA-\cB  = \{ A\jer B \mid A\in\cA,\,B\in\cB \}$ \\
exclusive disjunction $\oplus$ & symmetric difference $\triangle$
    & $\cA\oplus\cB = \{ A\triangle B \mid A\in\cA,\,B\in\cB \}$  
\end{tabularx}
\end{center}
\def\arraystretch{1}

If the connective $\oast$ is commutative (resp.~associative),
then the corresponding tensor operation
is  commutative (resp.~associative), too,
but as we shall see in the next example,
the same does not apply to idempotence.  In general, the well-known
logical equivalences do not transfer to equalities about
tensor operations.  This means that, in contrast to propositional
logic where we often reduce problems to some small set of basic
connectives,  it is better to consider tensor operators separately.

\begin{example}
Suppose the base set $X$ is infinite and $\cA=\{\{x\} \mid x\in X\}$.
Consider the families
\[
 \cA_n=\underbrace{\cA\lor\ldots\lor\cA}_{n\text{ times}}, 
\]
for $n\in\ZZ_+$.  An easy induction shows that
$\cA_n=\{B\osaj X \mid B\neq\tyj,|B|\le n\}$, so these families
are all different.  In particular, $\cA\lor\cA=\cA_2\neq\cA$,
so the tensor operation $\lor$ is not idempotent, though
the binary operation $\lor$ on $\{0,1\}$ is.  A similar example
shows that $\land$ (as a tensor operator) is not idempotent
either.

Elaborating on this  example, one sees that distributive
law does not hold for $\lor$ and $\land$, either.
Choose $\cB=\cC=\{X\}$ with $\cA$ as above; then
\[
  \cA\land(\cB\lor\cC)=\cA\land\{X\}=\cA\neq
  \cA\lor\cA=(\cA\land\cB)\lor(\cA\land\cC).
\]
\end{example}

We do not aim at a complete analysis on how the tensor operations
behave, but we shall show that they preserve intervals.
In connection with the following lemma we will have thus one way
to compute the result of tensor operation.

\begin{lemma}
Let $\oast$ be a binary operation on $\{0,1\}$, and let
$\cA_i,\cB_j\in\pot(\pot(X))$ be families of sets, for $i\in I$ and $j\in J$.  
Then
\[
\left(\bigcup_{i\in I}\cA_i\right)\oast\left(\bigcup_{j\in J}\cB_j\right)
=\bigcup_{\substack{i\in I,\\j\in J}}(\cA_i\oast\cB_j).
\] 
\end{lemma}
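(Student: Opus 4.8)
The plan is to prove the identity by a direct double inclusion, unwinding the definition of the tensor operator $\oast$ from Definition~\ref{tensor-def}. Recall that for families $\cC,\cD\osaj\pot(X)$ we have $\cC\oast\cD=\{C*D\mid C\in\cC,\,D\in\cD\}$, so membership of a set in a tensor product is entirely governed by the existence of a witnessing pair of sets, one taken from each argument. Since both sides of the claimed equation are subfamilies of $\pot(X)$, it suffices to show that an arbitrary $E\osaj X$ lies in the left-hand family if and only if it lies in the right-hand one.

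For the forward inclusion I would assume $E$ belongs to $\left(\bigcup_{i\in I}\cA_i\right)\oast\left(\bigcup_{j\in J}\cB_j\right)$. By the definition of $\oast$ there are $A\in\bigcup_{i\in I}\cA_i$ and $B\in\bigcup_{j\in J}\cB_j$ with $E=A*B$. Unwinding the two unions, $A\in\cA_{i_0}$ for some $i_0\in I$ and $B\in\cB_{j_0}$ for some $j_0\in J$. Then $E=A*B\in\cA_{i_0}\oast\cB_{j_0}$, and hence $E\in\bigcup_{i\in I,\,j\in J}(\cA_i\oast\cB_j)$.

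The reverse inclusion runs along exactly the same lines: if $E\in\cA_{i_0}\oast\cB_{j_0}$ for some fixed $i_0\in I$ and $j_0\in J$, then $E=A*B$ with $A\in\cA_{i_0}\osaj\bigcup_{i\in I}\cA_i$ and $B\in\cB_{j_0}\osaj\bigcup_{j\in J}\cB_j$, so $E$ lies in the left-hand family. Taking the union over all index pairs $(i,j)$ yields the containment. I do not anticipate any genuine obstacle here: the statement is really just the fact that forming the pointwise image under the operation $*$ commutes with unions in each argument, and it is robust even to empty index sets, where both descriptions collapse to $\tyj$ in agreement with the convention $\cA\oast\tyj=\tyj\oast\cB=\tyj$. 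The only point requiring mild care is to keep the existential witnesses straight, matching a witnessing pair $(A,B)$ with the index pair $(i_0,j_0)$ when passing between the two sides.
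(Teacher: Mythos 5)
Your proof is correct, but it takes a different route from the paper's: the paper does not write out the element-chasing argument at all. Its stated proof defers the lemma to Section~\ref{growth}, where tensor operators are shown to be Kripke-operators (Example~\ref{kripke-ex}, via the relation $\cR_\oast=\{(A*B,A,B)\mid A,B\in\pot(X)\}$) and the identity is then obtained as an instance of the general Union Lemma~\ref{union-lem}, which asserts exactly this commutation with unions for every Kripke-operator $\Delta_\cR$. Your direct double inclusion is precisely the ``easy exercise'' alternative the paper offers the reader, and in substance it is the specialization of the Union Lemma's chain of equivalences to the relation $\cR_\oast$: the existential witness pair $(A,B)$ you track corresponds to the tuple $(A,A_0,\ldots,A_{n-1})\in\cR$ there, and your matching of $(A,B)$ with the index pair $(i_0,j_0)$ is the step $\exists\vec k\in K$ in that proof. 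What each approach buys: yours is self-contained, elementary, and immediately checkable from Definition~\ref{tensor-def}, while the paper's route amortizes the same argument over all Kripke-operators, so that it can be reused verbatim for intersection, tensor disjunction, and the Lindstr\"om-quantifier projections $\Delta^{M^m}_{\cK,\vec\ell}$ in the dimension-preservation results (Theorem~\ref{sailyminen}). Your remark on empty index sets is a nice touch and consistent with the convention $\cA\oast\tyj=\tyj\oast\cB=\tyj$; in the Kripke framework it corresponds to the observation that $\Delta_\cR$ applied to an empty argument family yields the empty family.
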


\begin{proof}
The reader can either prove this as an easy exercise, or wait until
Section~\ref{growth}, where it is shown that tensor operators
are so-called Kripke operators and that the Union Lemma \ref{union-lem}
holds generally for Kripke operators.
\end{proof}

We need some auxiliary concepts to handle with intervals and
tensor operators.  We depart for a moment from classical logic
(Kleene introduced his logic in \cite[\S 64]{Kl}),
and introduce a new truth value $\ud\neq 0,1$ for 'unknown'.
\footnote{This symbol stands for the letter 'u' as written in runes.
'Unknown' is 'ukjent' in Norwegian.}

\begin{definition}
Let $\oast$ be a binary operation on $\{0,1\}$.  We define
\emph {Kleene's extension} $\widetilde\oast$ of $\oast$ as follows.
Write $V_0=\{0\}$, $V_1=\{1\}$ and $V_\ud=\{0,1\}$ and
$A\oast B=\{u\oast v \mid a\in A, b\in B\}$, for $A,B\osaj\{0,1\}$.
Then $\widetilde\oast$ is determined by the rule:
\[
  u\widetilde\oast v = w \text{ if and only if }V_u\oast V_v = V_w, 
\]
for $u,v,w\in\{0,1,\ud\}$.
Overloading once again the notation, we shall
denote also the extension by $\oast$ instead on $\widetilde\oast$
in the sequel.
\end{definition}

\begin{definition}
\begin{enumerate}[(a)]
\item 
\emph{The characteristic function} of a family of sets
$\cA\osaj\pot(X)$  is \break $\xi_\cA\colon X\to \{0,1,\ud\}$,
\[
  \xi_\cA(x)=
\begin{cases}
  1, &\text{for }E(x)=\{1\} \\
  \ud, &\text{for }E(x)=\{0,1\} \\
  0, &\text{for }E(x)=\{0\}
\end{cases}
\]
where $E(x)=\{\chi_A(x) \mid A\in\cA \}$.

\item
We say that $\chi\colon X\to\{0,1\}$ is \emph{compatible}
with the function $\xi\colon X\to\{0,\ud,1\}$
if for all $x\in X$, $\xi(x)\neq\ud$ implies $\chi(x)=\xi(x)$.
\end{enumerate}
\end{definition}

\begin{lemma} \label{basic:compat}
Let $\cA\osaj\pot(X)$.  
\begin{enumerate}[(a)]
\item
For all $A\in\cA$, we have that $\chi_A$ is compatible with $\xi_\cA$.

\item 
The family $\cA$ is an interval provided that the following
condition holds for every $A\osaj X$:
$A\in\cA$ if and only if $\chi_A$ is compatible with $\xi_\cA$.
Conversely, if $\cA$ is an interval then the condition holds.
\end{enumerate}
\end{lemma}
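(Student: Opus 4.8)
The plan is to prove both parts of Lemma~\ref{basic:compat} by working directly with the characteristic function $\xi_\cA$ and the definition of compatibility, exploiting the pointwise nature of both.

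\paragraph{Part (a).}
First I would fix $A\in\cA$ and an arbitrary $x\in X$, and show that $\chi_A$ is compatible with $\xi_\cA$ at $x$, i.e.\ that $\xi_\cA(x)\neq\ud$ forces $\chi_A(x)=\xi_\cA(x)$. Recall $E(x)=\{\chi_B(x)\mid B\in\cA\}$ and note that $\chi_A(x)\in E(x)$ since $A\in\cA$. If $\xi_\cA(x)=1$ then $E(x)=\{1\}$, so $\chi_A(x)=1=\xi_\cA(x)$; symmetrically if $\xi_\cA(x)=0$ then $E(x)=\{0\}$ and $\chi_A(x)=0=\xi_\cA(x)$. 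Since these are the only two cases with $\xi_\cA(x)\neq\ud$, compatibility holds at every $x$. This part is essentially immediate from the definitions.

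\paragraph{Part (b).}
For the converse direction (the easier one), suppose the stated biconditional holds for every $A\osaj X$; I must exhibit $A_0\osaj A_1$ with $\cA=[A_0,A_1]$. The natural candidates are the ``tightest'' sets allowed by $\xi_\cA$: put $A_0=\{x\mid\xi_\cA(x)=1\}$ and $A_1=\{x\mid\xi_\cA(x)\in\{1,\ud\}\}$. Then $A_0\osaj A_1$, and I would check that for $A\osaj X$, compatibility of $\chi_A$ with $\xi_\cA$ is equivalent to $A_0\osaj A\osaj A_1$: indeed $\xi_\cA(x)=1$ forces $x\in A$ (giving $A_0\osaj A$) and $\xi_\cA(x)=0$ forces $x\notin A$ (giving $A\osaj A_1$), while coordinates with value $\ud$ are unconstrained. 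Combined with the hypothesis, this yields $\cA=[A_0,A_1]$, so $\cA$ is an interval. For the forward direction, assume $\cA=[A_0,A_1]$ is an interval with $A_0\osaj A_1$. Part (a) already gives that every $A\in\cA$ has $\chi_A$ compatible with $\xi_\cA$, so I only need the reverse implication: if $\chi_A$ is compatible with $\xi_\cA$ then $A\in\cA$.

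\paragraph{Main obstacle.}
The crux is the last implication, and it hinges on computing $\xi_\cA$ explicitly when $\cA=[A_0,A_1]$. The key claim is that $\xi_\cA(x)=1$ exactly when $x\in A_0$, that $\xi_\cA(x)=0$ exactly when $x\notin A_1$, and $\xi_\cA(x)=\ud$ exactly when $x\in A_1\jer A_0$. To see this one examines $E(x)=\{\chi_B(x)\mid A_0\osaj B\osaj A_1\}$: for $x\in A_0$ every such $B$ contains $x$ so $E(x)=\{1\}$; for $x\notin A_1$ no such $B$ contains $x$ so $E(x)=\{0\}$; and for $x\in A_1\jer A_0$ both $B=A_0$ and $B=A_1$ occur, witnessing $E(x)=\{0,1\}$. (This uses that $[A_0,A_1]\neq\tyj$, guaranteed by $A_0\osaj A_1$.) Given this description, compatibility of $\chi_A$ with $\xi_\cA$ says precisely $x\in A$ whenever $x\in A_0$ and $x\notin A$ whenever $x\notin A_1$, i.e.\ $A_0\osaj A\osaj A_1$, hence $A\in[A_0,A_1]=\cA$. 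The only subtlety to handle with care is the degenerate interval $A_0=A_1$ (no $\ud$ values) and ensuring the nonemptiness assumption is used so that $E(x)$ genuinely attains both $0$ and $1$ on the $\ud$-coordinates; otherwise the argument is a routine pointwise case analysis.
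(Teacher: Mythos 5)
Your proof is correct and follows essentially the same route as the paper: part (a) via $\chi_A(x)\in E(x)$, and the main direction of (b) via the sets $\xi_\cA^{-1}[\{1\}]$ and $\xi_\cA^{-1}[\{1,\ud\}]$, exactly as in the paper's argument. The only difference is that you spell out the direction the paper dismisses as ``easy'' (computing $\xi_\cA$ explicitly for $\cA=[A_0,A_1]$, with $A_0$ and $A_1$ as witnesses on the $\ud$-coordinates), which is a correct and welcome addition; just note that what you call the ``converse'' direction is the primary implication as stated in the lemma.
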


\begin{proof}
(a)  Let $A\in\cA$ and for every $x\in X$.
Using the notation of the previous definition, we note that
$\chi_A(x)\in E(x)$.
Thus, either $\xi_\cA(x)=\chi_A(x)$ or $\xi_\cA(x)=\ud$, and compatibility
follows.

(b) Suppose
\[
  \cA=\{ A\osaj X \mid \chi_A\text{ is compatible with }\xi_\cA \}.
\]
Put $B=\xi^{-1}_\cA[\{1\}]$ and $C=\xi^{-1}_\cA[\{1,\ud\}]$.
Then for every $A\osaj X$, compatibility of $\chi_A$ with $\xi_\cA$
is equivalent to the condition $B\osaj A\osaj C$.
Hence, $\cA=[B,C]$.  The converse direction is easy.
\end{proof}

\begin{lemma} \label{basic:comp-op}
Let $\oast$ be a binary operation on $\{0,1\}$ and
$\cA,\cB\osaj\pot(X)$.  Then for every $x\in X$,
it holds that
\[
\xi_{\cA\oast\cB}(x)=\xi_\cA(x)\oast\xi_\cB(x).
\]  
\end{lemma}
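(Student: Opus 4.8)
The plan is to reduce everything to a single pointwise identity about the sets of truth values attained by the characteristic functions, and then to read the conclusion off the definition of Kleene's extension. For a family $\cF\osaj\pot(X)$ and a point $x\in X$, write $E_\cF(x)=\{\chi_F(x)\mid F\in\cF\}$, exactly the set called $E(x)$ in the definition of the characteristic function. The first thing I would record is that, provided $\cF\neq\tyj$, the set $E_\cF(x)$ is always one of $\{0\}$, $\{1\}$, $\{0,1\}$, and that matching the three cases in the definition of $\xi_\cF$ against the definitions of $V_0,V_1,V_\ud$ gives directly
\[
  V_{\xi_\cF(x)}=E_\cF(x).
\]
Since $u\mapsto V_u$ is a bijection from $\{0,1,\ud\}$ onto $\{\{0\},\{1\},\{0,1\}\}$, this identity lets me translate freely between $\xi_\cF(x)$ and $E_\cF(x)$ and, at the end, between equality of sets and equality of Kleene values. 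Throughout I would assume $\cA,\cB\neq\tyj$, so that $\cA\oast\cB\neq\tyj$ as well and all three characteristic functions are defined (the empty case being degenerate via $\cA\oast\tyj=\tyj\oast\cB=\tyj$).

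Next I would unwind the left-hand side. By definition $\cA\oast\cB=\{A*B\mid A\in\cA,B\in\cB\}$, and from $A*B=\{x\mid\chi_A(x)\oast\chi_B(x)=1\}$ one reads off the pointwise identity $\chi_{A*B}(x)=\chi_A(x)\oast\chi_B(x)$, since both sides equal $1$ precisely when $\chi_A(x)\oast\chi_B(x)=1$. Hence
\[
  E_{\cA\oast\cB}(x)=\{\chi_{A*B}(x)\mid A\in\cA,\,B\in\cB\}
  =\{\chi_A(x)\oast\chi_B(x)\mid A\in\cA,\,B\in\cB\}.
\]

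The key step, and the only one needing a moment's care, is to recognise this last set as the Kleene set-operation applied to $E_\cA(x)$ and $E_\cB(x)$. Because $A$ and $B$ range independently, as $A$ runs through $\cA$ and $B$ through $\cB$ the pair $(\chi_A(x),\chi_B(x))$ attains exactly the pairs in $E_\cA(x)\times E_\cB(x)$; this is just the observation that the image of a product set factors through its two projections. Therefore
\[
  \{\chi_A(x)\oast\chi_B(x)\mid A\in\cA,\,B\in\cB\}
  =\{a\oast b\mid a\in E_\cA(x),\,b\in E_\cB(x)\}
  =E_\cA(x)\oast E_\cB(x),
\]
where the final expression uses the extension of $\oast$ to subsets of $\{0,1\}$.

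To finish, I would substitute $E_\cA(x)=V_{\xi_\cA(x)}$ and $E_\cB(x)=V_{\xi_\cB(x)}$ and invoke the defining rule of Kleene's extension, namely $V_u\oast V_v=V_{u\oast v}$, to obtain
\[
  E_{\cA\oast\cB}(x)=V_{\xi_\cA(x)}\oast V_{\xi_\cB(x)}=V_{\xi_\cA(x)\oast\xi_\cB(x)}.
\]
On the other hand $E_{\cA\oast\cB}(x)=V_{\xi_{\cA\oast\cB}(x)}$ by the first observation, so injectivity of $u\mapsto V_u$ yields $\xi_{\cA\oast\cB}(x)=\xi_\cA(x)\oast\xi_\cB(x)$, as claimed. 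The main obstacle here is bookkeeping rather than depth: keeping the three families' $E$'s apart, justifying the independence-of-ranges step cleanly, and being explicit that the translation between set-equalities and Kleene-value-equalities rests on the injectivity of $V$.
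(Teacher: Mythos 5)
Your proposal is correct and follows essentially the same route as the paper: both compute $E_{\cA\oast\cB}(x)=E_\cA(x)\oast E_\cB(x)$ pointwise via $\chi_{A*B}(x)=\chi_A(x)\oast\chi_B(x)$ and the independence of the ranges of $A$ and $B$, then translate back through the correspondence $V_{\xi_\cF(x)}=E_\cF(x)$ defining Kleene's extension. You merely make explicit two points the paper leaves tacit, namely the injectivity of $u\mapsto V_u$ and the nonemptiness of the families involved.
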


\begin{proof}
Write $E_\cC(x)=\{\chi_C(x) \mid C\in\cC\}$, for $\cC\osaj\pot(X)$ and $x\in X$.
Then  
\begin{align*}
E_{\cA\oast\cB}(x)
  & = \{ \chi_C(x) \mid C\in \cA\oast\cB \} \\      
  & = \{ \chi_{A*B}(x) \mid A\in \cA,\,B\in\cB \} \\      
  & = \{ \chi_A(x)\oast \chi_B(x) \mid A\in \cA,\,B\in\cB \} \\
  & = \{ \chi_A(x) \mid A\in \cA\} \oast \{\chi_B(x)\mid B\in\cB \} \\      
  & = E_\cA(x) \oast E_\cB(x).
\end{align*}
Employing the notation that was used to define Kleene's extension,
we may write this equation as
\[
  V_{\xi_{\cA\oast\cB}(x)} = V_{\xi_\cA(x)} \oast V_{\xi_\cB(x)},
\]
i.e., $\xi_{\cA\oast\cB}(x) = \xi_\cA(x) \oast \xi_\cB(x)$.
\end{proof}

\begin{proposition} \label{basic:tensor-intpres}
Let $\oast$ be a tensor operator.  Then if $\cA,\cB\osaj\pot(X)$
are intervals, then so is $\cA\oast\cB$, too.
Indeed, if we write $\xi=\xi_{\cA\oast\cB}$, $C_0=\xi^{-1}[\{1\}]$
and $C_1=\xi^{-1}[\{\ud,1\}]$, then $\cA\oast\cB=[C_0,C_1]$.  
\end{proposition}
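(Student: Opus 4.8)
The plan is to verify the membership characterization of Lemma~\ref{basic:compat}(b) for the family $\cA\oast\cB$. That lemma says a family is an interval precisely when its members are exactly those sets whose characteristic function is compatible with the family's three-valued characteristic function. So it suffices to prove that, for every $C\osaj X$, we have $C\in\cA\oast\cB$ if and only if $\chi_C$ is compatible with $\xi_{\cA\oast\cB}$. Once this equivalence is established, Lemma~\ref{basic:compat}(b) gives both that $\cA\oast\cB$ is an interval and, following the construction in its proof, that it equals $[C_0,C_1]$ with $C_0=\xi^{-1}[\{1\}]$ and $C_1=\xi^{-1}[\{\ud,1\}]$, which is exactly the asserted formula.

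The forward direction of this equivalence is immediate from Lemma~\ref{basic:compat}(a): any member $C$ of any family has $\chi_C$ compatible with that family's characteristic function. The substantive direction is the converse: assuming $\chi_C$ compatible with $\xi_{\cA\oast\cB}$, I must produce witnesses $A\in\cA$ and $B\in\cB$ with $A*B=C$. Here I would exploit that $\cA$ and $\cB$ are themselves intervals, so by Lemma~\ref{basic:compat}(b) it is enough to build functions $\chi_A,\chi_B\colon X\to\{0,1\}$ that are compatible with $\xi_\cA$ and $\xi_\cB$ respectively and satisfy $\chi_A(x)\oast\chi_B(x)=\chi_C(x)$ for all $x$; setting $A=\chi_A^{-1}[\{1\}]$ and $B=\chi_B^{-1}[\{1\}]$ then gives $A\in\cA$, $B\in\cB$ and $A*B=C$.

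The construction of $\chi_A$ and $\chi_B$ is carried out pointwise, and this is where I expect the only real work. Fix $x$ and write $u=\xi_\cA(x)$ and $v=\xi_\cB(x)$; by Lemma~\ref{basic:comp-op} the value $w=\xi_{\cA\oast\cB}(x)$ equals $u\oast v$ in Kleene's sense, which by definition means $V_u\oast V_v=V_w$. The key observation is that $\chi_C(x)\in V_w$: if $w\neq\ud$ then compatibility forces $\chi_C(x)=w$, while if $w=\ud$ then $V_w=\{0,1\}$ contains $\chi_C(x)$ trivially. Since $V_w=V_u\oast V_v=\{a\oast b\mid a\in V_u,\ b\in V_v\}$, membership of $\chi_C(x)$ in $V_w$ yields $a\in V_u$ and $b\in V_v$ with $a\oast b=\chi_C(x)$, and I set $\chi_A(x)=a$, $\chi_B(x)=b$. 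Compatibility of these choices is automatic, since $a\in V_u$ means $a=u$ whenever $u\neq\ud$, and likewise for $b$ and $v$. Ranging this choice over every $x\in X$ produces the required $\chi_A,\chi_B$ and completes the hard direction. The main obstacle is thus essentially bookkeeping: making sure the pointwise witness choices respect compatibility in both the definite and the $\ud$ cases, which reduces entirely to the definitional identity $V_u\oast V_v=V_w$ supplied by the Kleene extension.
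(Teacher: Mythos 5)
Your proposal is correct and follows essentially the same route as the paper's own proof: one inclusion via Lemma~\ref{basic:compat}(a), the other by using Lemma~\ref{basic:comp-op} to split $\chi_C$ pointwise into functions compatible with $\xi_\cA$ and $\xi_\cB$, and then Lemma~\ref{basic:compat}(b) applied to the intervals $\cA$ and $\cB$ to recover witnesses $A\in\cA$, $B\in\cB$ with $A*B=C$. Your only addition is to spell out explicitly, via the identity $V_u\oast V_v=V_w$, why the pointwise witness choice exists — a step the paper leaves implicit — so this is a slightly more detailed rendering of the same argument.
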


\begin{proof}
By case (a) of Lemma~\ref{basic:compat} we have that $\cA\oast\cB\osaj [C_0,C_1]$.
Let $C\in[C_0,C_1]$.
As $C\in[C_0,C_1]$, the characteristic function $\chi_C$
is compatible with $\xi_{\cA\oast\cB}$.
By Lemma~\ref{basic:comp-op}, we know that
$\xi_{\cA\oast\cB}=\xi_\cA\oast\xi_\cB$.
This enables us to choose (picking the values $\chi^{(0)}(x)$ and $\chi^{(1)}(x)$
separately for each $x\in X$) functions $\chi^{(0)},\chi^{(1)}\colon X\to\{0,1\}$
such that $\chi_C=\chi^{(0)}\oast\chi^{(1)}$, $\chi^{(0)}$ is compatible with $\xi_\cA$
and $\chi^{(1)}$ is compatible with $\xi_\cB$.  Finally, by case (b) of
Lemma~\ref{basic:compat}, we see that there are $A\in\cA$ and $B\in\cB$
with $\chi_A=\chi^{(0)}$ and $\chi_B=\chi^{(1)}$, which implies
$C=A*B\in\cA\oast\cB$.
\end{proof}

\subsection{Families of teams } \label{families-of-teams}

The general concept of a family of sets arises naturally in numerous
contexts. In this paper our focus is on families of sets arising in
logic, with applications in logic in mind. These families are families
of sets on the base set of the form of a cartesian product
$M^m$. This particular form of the base set permits dimension
computations which do not arise in the abstract setting. In
particular, we can fix $m$ and ask how does the dimension of a family
depend on $|M|$. To avoid trivialities we assume $|M|\ge 2$.

In classical logic one associates with a given formula
$\phi(x_0,\ldots,x_{m-1})$ with the free variables $x_0,\ldots,x_{m-1}$
and a given structure $M$ the set of $m$-tuples
satisfying the formula $\phi$ in $M$:
\[
  \sat{\phi}{M}=\{(a_0,\ldots,a_{m-1})\in M^m\mid M\models
      \phi(a_0,\ldots,a_{m-1})\}.
\]
Such sets of $m$-tuples are called \emph{definable} subsets of
$M^m$. The definable subsets of $M^m$ form a Boolean algebra with
Boolean operations corresponding to the logical operations of first
order logic. The study of this algebra is a well-known method in
logic.

In the same way as classical logic gives rise to definable sets of
$m$-tuples, team semantics and dependence logic (\cite{Vaa}) give rise
to definable families of sets of $m$-tuples. If $M$ is a model, a
\emph{team} in $M$ is a set $T$ of assignments $s$ (i.e. functions) which
map a set $\dom(s)=\{x_0,\ldots,x_{m-1}\}$ of variables, called the
\emph{domain} of $s$ (and of $T$), to $M$. We identify $s$ with the tuple
$(s(x_0),\ldots,s(x_{m-1}))$ and a team with a subset of $M^m$. Every
formula $\phi$ of dependence logic, or another logic based on team
semantics, with free variables in $\vec x=(x_0,\ldots,x_{m-1})$, gives
rise to the set of teams
\begin{equation}\label{satM}
  \sat{\phi}{{M,\vec x}}=\{T\subseteq M^m \mid M\models_T\phi\},
\end{equation}
where $M\models_T\phi$ is the satisfaction relation defined below. We
consider the families $\sat{\phi}{{M,\vec x}}$ a special interesting case of
families of subsets of $M^m$.

If $\ell< m$, there is a canonical projection $M^m\to M^\ell$. We may
identify $T\subseteq M^\ell$ with
$T^*=\{s\in M^m : s\restriction \ell\in T\}$. In this way it is
possible to think of a subset of $M^\ell$ at the same time, via $T^*$,
as a subset of $M^m$, although literally, of course, $T\ne
T^*$.

 Many of
the results of this paper hold for arbitrary families of sets but when
applied to families of the form $\sat{\phi}{{M,\vx}}$, results pertaining to
dependence and independence logics obtain.

In order to make (\ref{satM}) more exact we now recall the inductive
definition of $M\models_T\phi$ from \cite{Vaa}. If $a\in M$, then
$s(a/x)$ is the unique assignment $s'$ such that $s'(x)=a$ and
$s'(y)=s(y)$ for variables $y$ in the domain of $s$ other than $x$. If
$F:T\to \mathcal{P}(M)\jer\{\emptyset\}$, then
$T[F/x]=\{s(a/x)\mid s\in T, a\in F(s)\}$. Finally,
$T[M/x]=\{s(a/x) \mid a\in M, s\in T\}$.

\begin{definition}\label{fol}
\begin{enumerate}[(a)]
  \item $M\models_T\phi$, where $\phi$ is atomic or negated atomic if
    and only if every assignment $s$ in $T$ satisfies $\phi$.
  \item $M\models_T\phi\wedge\psi$ if and only if $M\models_T\phi$ and
    $M\models_T\psi$.
\item $M\models_T\phi\vee\psi$ if and only if there are $U$ and $V$ such that $T=U\cup V$,
    $M\models_U\phi$ and $M\models_V\psi$. (Tensor disjunction)
 \item $M\models_T\phi\,\tland\,\psi$ if and only if there are $U$ and $V$ such that $T=U\cap V$,
    $M\models_U\phi$ and $M\models_V\psi$. (Tensor conjunction)
  \item $M\models_T\phi\ \ivee\ \psi$ if and only if $M\models_T\phi$ or
    $M\models_T\psi$. (Intuitionistic disjunction)
  \item $M\models_T\exists x\phi$ if and only if there is
  $F:T\to \pot(M)\jer\{\emptyset\}$ such that
  $M\models_{T[F/x]}\phi$.
  \item $M\models_T\forall x\phi$ if and only if
    $M\models_{T[M/x]}\phi$.
\end{enumerate}
\end{definition}

This defines $M\models_T\phi$ for every first-order formula $\phi$.
Note that \cite{Vaa} uses only the first two of the four binary connectives in Definition~\ref{fol}. We have kept here the usual notation $\land$ and $\lor$ for these connectives. Intuitionistic disjunction was mentioned in \cite{Vaa} and elaborated on  in \cite{AV}. Tensor conjunction does not seem to have been studied before, and its role is minor here, too. 

By Definition
\ref{fol}(a), for every first-order literal (i.e., atomic or negated
atomic) $\phi$ we have $\sat{\phi}{{M,\vec x}}=[\emptyset,T_\phi]$,
where $T_\phi=\{\vec a\in M^m\mid M\models\phi(\vec a)\}$.  The same
is true if $\phi$ is any formula of first order logic. Thus for first
order $\phi$ the family $\sat{\phi}{{M,\vx}}$ is dominated (by
$T_\phi$), downward closed, convex and supported (by $\emptyset$).

Note further that
for composite $\phi$ the family $\sat{\phi}{{M,\vec x}}$ can be obtained from
the corresponding families for the components $\psi$ of $\phi$ by applying one of the  operators introduced in Example \ref{operator-ex}. For conjunction and (tensor) disjunction we have

\def\arraystretch{1.2}
$$\begin{array}{lcl}
\sat{\phi\wedge\psi}{{M,\vec x}}&=     &\sat{\phi}{{M,\vec x}}\cap\sat{\psi}{{M,\vec x}}  \\
\sat{\phi\vee\psi}{{M,\vec x}}&=     &\sat{\phi}{{M,\vec x}}\lor\sat{\psi}{{M,\vec x}} \\
\end{array}
  $$
Furtheoremore, for tensor conjunction and intuitionistic disjunction we have

\def\arraystretch{1.2}
$$\begin{array}{lcl}
\sat{\phi\,\tland\,\psi}{{M,\vec x}}&=     &\sat{\phi}{{M,\vec x}}\land\sat{\psi}{{M,\vec x}}  \\
\sat{\phi\ \ivee\ \psi}{{M,\vec x}}&=     &\sat{\phi}{{M,\vec x}}\cup\sat{\psi}{{M,\vec x}} \\
\end{array}
  $$

Note however, that in the case of existential and universal quantifiers, the quantified variable needs to be dropped from the tuple $\vec x=(x_0,\ldots,x_{m-1})$:
$$\begin{array}{lcl}
\sat{\exists x_i\phi}{{M,\vx^-}}&=     &\Delta_{\exists i}^{M^m}(\sat{\phi}{{M,\vx}})  \\
\sat{\forall x_i\phi}{{M,\vx^-}}&=     &\Delta_{\forall i}^{M^m}(\sat{\phi}{{M,\vx}}),  \\
\end{array}
  $$
  \def\arraystretch{1}
where $\vx^-$ is the tuple obtained from $\vx$ by deleting the component $x_i$.

We now recall the extension of $M\models_T\phi$ from first order $\phi$ to new non-first order atoms.
Below, the restriction  of a team $T$ to $\vx$, in symbols $T\restriction\vx$, is the set $\{s\restriction\vx:s\in T\}$. We use $\len(\vx)$ to denote the length of the variable (or other) sequence $\vx$.
\begin{definition}\label{atoms}
\begin{enumerate}[(a)]
    \item {\bf Dependence atom}: $M\models_T \dep(\vx,y)$ if and only if $s(\vx)=s'(\vx)$ implies $s(y)=s'(y)$ for all $s,s'\in T$. We allow $\len(\vx)=0$ and call $\dep(y)$ the {\bf constancy atom}. More generally, $M\models_T \dep(\vy)$ if and only if $s(\vy)=s'(\vy)$ for all $s,s'\in T$.
    \item {\bf Exclusion atom}: $M\models_T\vx\ |\ \vy$ if and only if for every $s,s'\in T$  we have $s(\vx)\ne s'(\vy)$. We assume $\len(\vx)=\len(\vy)>0$.

    \item {\bf Inclusion atom}: $M\models_T\vx\subseteq \vy$ if and only if for every $s\in T$ there is $s'\in T$ such that $s(\vx)=s'(\vy)$. We assume $\len(\vx)=\len(\vy)>0$.
    \item {\bf Anonymity atom}: $M\models_T\vx\anonym y$ if and only if for every $s\in T$  there is $s'\in T$ such that $s(\vx)=s'(\vx)$ and $s(y)\ne s'(y)$. We assume that $\vx$ is  non-empty.
    \item {\bf Independence atom}: $M\models_T \vx\perp_{\vz}\vy$ if and only if for every $s,s'\in T$ such that $s(\vz)=s'(\vz)$ there is $s''\in T$ such that $s''(\vz)=s(\vz)$, $s''(\vx)=s(\vx)$ and $s''(\vy)=s'(\vy)$. We assume that $\vx$ and $\vy$ are non-empty.
The atom
$\vx\perp\vy$, corresponding to the case $\vz$ is empty, is called the \emph{pure} independence atom, while  $ \vx\perp_{\vz}\vy$ is otherwise called the \emph{conditional} independence atom.
    
    \item {\bf The general concept of an atom:} Suppose $C$ is a class, closed under isomorphisms,  of pairs $(A,T)$ where $A$ is a set and $T$ is a team in $A$ with domain $\vec{x}$. We can associate with $C$ a new atom $\alpha_C(\vec{x})$ and define  $M\models_T\alpha_C(\vec{x})$ to hold if and only if $(A,T\restriction\vec{x})\in C$, where $A$ is the domain of the model $M$.
\end{enumerate}
\end{definition}
\medskip

By closing the respective atom under the logical operations (b), (c), (f) and (g) of Definition~\ref{fol} we obtain \emph{dependence logic}, \emph{constancy logic}, \emph{exclusion logic}, \emph{inclusion logic}, \emph{anonymity logic} and \emph{(pure or conditional) independence logic}.

Note that we defined $\dep(\vx,y)$ for single variable $y$ only. This is because $\dep(\vx,\vy)$ for a vector $\vy=(y_1\ldots,y_n)$, which we adopt now as a shorthand, can be defined as 
$$\dep(\vx,y_1)\wedge\ldots\wedge\dep(\vx,y_n).$$ We use the same convention for $\dep(\vx)$.

If $\phi$ is a dependence atom or an exclusion atom, then $\sat{\phi}{{M,\vx}}$ is downward closed and supported by $\emptyset$ but not necessarily closed under unions or dominated. If $\phi$ is an inclusion atom or an anonymity atom, then $\sat{\phi}{{M,\vx}}$ is closed under unions and dominated by $M^{\len(\vx)}$ but not necessarily downward closed or supported. 

\begin{example}\label{halfandparity}
An example of a sentence combining  dependence atoms and logical operations is the following formula which is satisfied by a team $T$ in a model of size $n$ if and only if
$|\hspace{1pt} T\restriction \vx \hspace{1pt}|\le n^k/2$, where $\len(\vx)=k$:
$$
\exists\vy(\dep(\vy,\vx)\wedge \vx\ |\ \vy).$$ Here $\len(\vy)=k$.
An example of a sentence combining  a number of different  atoms as well as logical operations is the following formula which is satisfied by a team $T$  if and only if
$|\hspace{1pt} T\restriction \vx \hspace{1pt}|$ is even:$$\begin{array}{l}
\exists u\exists v\exists\vy\exists\vz(\vy\vz\perp \vx\wedge\vy\subseteq\vx\wedge\vz\subseteq\vx\\
\qquad\wedge((u=v\wedge\vx\subseteq\vy)\vee(u\ne v\wedge \vx\subseteq\vz))\\
\qquad\wedge\ \vy\ |\ \vz\ \wedge \dep(\vz,\vy)\ \wedge \dep(\vy,\vz))
\end{array}.$$

\end{example}

We will also consider the extension of first-order logic with Lindström quantifiers (see \cite{Li} for definition).
For the sake of simplicity, we restrict attention to Lindström quantifiers of type $(r)$ for some positive integer
$r$ (i.e., quantifiers binding a single formula). Such a quantifier $Q_\cK$ is associated to any 
isomorphism closed class $\cK$ of structures of the form $(A,R)$, $R\subseteq A^r$. If $\psi$ is a formula and
$\vec y$ is an $r$-tuple of variables, then applying the quantifier $Q_\cK$ we obtain a new formula
$Q_\cK\vec y\,\psi$ in which all occurrences of the variables in $\vec y$ are bound.

To define the team semantics of $Q_\cK$, we adapt the notation used for existential quantifier:
if $F\colon T\to \pot(M^r)$, then $T[F/\vec y]=\{s(\vec b/\vec y)\mid s\in T, \vec b\in F(s)\}$.  

\begin{definition}\label{Linquantsem}
\begin{enumerate}
    \item[] $M\models_T Q_\cK\vec y\,\psi$ if and only if there exists $F\colon T\to \pot(M^r)$ such that 
    $M\models_{T[F/\vec y]}\psi$ and $(M,F(s))\in \cK$ for all $s\in T$.
\end{enumerate}
\end{definition}

The semantics of Lindström quantifiers can also be formulated in terms of operators that map sets of the form $\sat{\psi}{{M,\vz}}$ to sets $\sat{Q_\cK\vy\,\psi}{{M,\vx}}$, where $\vz$ consists of the variables in the tuples $\vx$ and $\vy$. To work out the details of these operators, we fix a quantifier $Q_\cK$ of type $(r)$, the length $m\ge r$ of $\vz$, the tuple $\vec\ell=(l_0,\ldots,\ell_{r-1})$ for which $\vy=(z_{\ell_0},\ldots,z_{\ell_{r-1}})$, and the universe $M$ of the model. Note that there is no reason to assume that the components of $\vec\ell$ are in ascending order; the quantifier $Q_\cK$ can be applied to any $r$-tuple of distinct variables in $\vz$. On the other hand, we can assume w.l.o.g. that $\vx$ lists the rest of the variables in $\vz$ in ascending order, i.e., for each $i<m-r$, $x_i=z_j$, where $j\not\in\{\ell_0,\ldots,\ell_{r-1}\}$ and $i=|\{k<j\mid k\notin\{\ell_0,\ldots,\ell_{r-1}\}\}|$. Thus, $\vz$ is obtained from $\vx$ and $\vy$ by re-ordering the latter and shuffling according to $\vec\ell$. We use the notation $\vz=\vx\otimes_{\vec\ell}\vy$ to denote this shuffling, and similarly $\vc=\va\otimes_{\vec\ell}\vb$ for tuples $\vc,\va,\vb$ of elements in $M$.

Assume then that $S\subseteq M^m$ is a team with domain $\{z_0,\ldots,z_{m-1}\}$. For each $\va\in M^{m-r}$ the \emph{$\vec\ell$-restriction of $S$ on $\va$} is the set $S[\va]_{\vec\ell}:=\{\vb\in M^r\mid \va\otimes_{\vec\ell}\vb\in S\}$. Furtheoremore, the \emph{$(\cK,\vec\ell)$-projection of $S$} is the set $\pi_{\cK,\vec\ell}\,(S):=\{\va\in M^{m-r}\mid (M,S[\va]_{\vec\ell})\in\cK\}$. The idea here is that if $T=\pi_{\cK,\vec\ell}\,(S)$ for some team $S\in\sat{\psi}{{M,\vz}}$, then defining $F\colon T\to\pot(M^r)$ by $F(\va)=S[\va]_{\vec\ell}$ for each
$\va\in T$, we see that the truth condition of Definition \ref{Linquantsem} holds for the team $T$, provided that $S=T[F/\vy]$. It is clear that $T[F/\vy]\subseteq S$, and the converse inclusion holds if and only if $\{\va\in M^{m-r}\mid S[\va]_{\vec\ell}\not=\emptyset\}\subseteq T$. We say that $T$ is the \emph{proper $(\cK,\vec\ell)$-projection of $S$}, in symbols $T=\pi^p_{\cK,\vec\ell}\,(S)$, if this condition holds.

The argument above shows that if $T=\pi^p_{\cK,\vec\ell}\,(S)$ for some team $S\in\sat{\psi}{{M,\vz}}$, then $T\in\sat{Q_\cK\vy\,\psi}{{M,\vx}}$. Assuming that $(M,\emptyset)\notin\cK$, the converse implication is also true. Indeed, if $M\models_T Q_\cK\vec y\,\psi$, then there is a function $F\colon T\to \pot(M^r)$ such that $M\models_{T[F/\vy]}\psi$ and $(M,F(\va))\in \cK$ for all $\va\in T$. Since $(M,\emptyset)\notin\cK$, $F(\va)\not=\emptyset$ for every $\va\in T$, whence $T=\pi_{\cK,\vec\ell}\,(T[F/\vy])$. Moreover, the condition $\{\va\in M^{m-r}\mid S[\va]_{\vec\ell}\not=\emptyset\}\subseteq T$ clearly holds for any team $S$ of the form $T[F/\vy]$. Thus, we see that $T=\pi^p_{\cK,\vec\ell}\,(T[F/\vy])$.

Note however, that if $(M,\emptyset)\in\cK$, the argument for $T=\pi_{\cK,\vec\ell}\,(T[F/\vy])$ fails: by the definition we always have $\pi_{\cK,\vec\ell}\,(T[F/\vy])\subseteq T$, but if $F(\va)=\emptyset$ for some $\va\in T$, then $\va\notin \pi_{\cK,\vec\ell}\,(T[F/\vy])$. In this case the correct condition for a team $T$ being in the family $\sat{Q_\cK\vy\,\psi}{{M,\vx}}$ is that there exist  teams $S\in\sat{\psi}{{M,\vz}}$ and $T'\subseteq T$ such that $T'=\pi^p_{\cK,\vec\ell}\,(S)$.

We are now ready to define the operators on families of teams corresponding to Lindström quantifiers.

\begin{definition}\label{Q-oper}
The \emph{$(\cK,\vec\ell)$-projection operator}  $\Delta^{M^m}_{\cK,\vec\ell}\colon\pot(\pot(M^m))\to\pot(\pot(M^{m-r}))$ is defined separately in two cases.
\begin{itemize}
    \item 
If $(M,\emptyset)\notin\cK$, then for each $\cA\in\pot(\pot(M^m))$,
$$
    \Delta^{M^m}_{\cK,\vec\ell}(\cA)=\{B\in\pot(\pot(M^{m-r}))\mid B=\pi^p_{\cK,\vec\ell}\,(A)\text{ for some }A\in\cA\}.
$$
    \item 
If $(M,\emptyset)\in\cK$, then for each $\cA\in\pot(\pot(M^m))$,
$$
    \Delta^{M^m}_{\cK,\vec\ell}(\cA)=\{B\in\pot(\pot(M^{m-r}))\mid \pi^p_{\cK,\vec\ell}\,(A)\subseteq B\text{ for some }A\in\cA\}.
$$
\end{itemize}
\end{definition}

By the argument given before Definition \ref{Q-oper}, the operator $\Delta^{M^m}_{\cK,\vec\ell}$ captures the semantics of the quantifier $Q_\cK$:
$$
\begin{array}{lcl}
	\sat{Q_\cK \vec y\,\psi}{{M,\vx}}&=     &\Delta_{\cK,\vec\ell}^{M^m}(\sat{\psi}{{M,\vz}}).
\end{array}
$$

Note further that the standard existential and universal quantifiers are special cases of Lindström quantifiers: $\exists=Q_{\cK_\exists}$ for the class $\cK_\exists=\{(A,R)\mid R\subseteq A, R\not=\emptyset\}$, and $\forall=Q_{\cK_\forall}$ for $\cK_\forall=\{(A,R)\mid R= A\}$. Thus, the corresponding operators are also identical: for each $i<m$, $\Delta_{\exists i}^{M^m}= \Delta_{{\cK_\exists},\vec\ell}^{M^m}$ and $\Delta_{\forall i}^{M^m}= \Delta_{{\cK_\forall},\vec\ell}^{M^m}\,$, where $\vec\ell=i$ (i.e., $\ell$ is of length $1$, and $\ell_0=i$). For this reason there is no need to consider the operators $\Delta_{\exists i}^{M^m}$ and $\Delta_{\forall i}^{M^m}$ separately in the sequel.
\medskip

\begin{remark}
Note that if $(M,\emptyset)\in\cK$ and $M\models_\emptyset\psi$,
then $M\models_T Q_\cK\vec y\,\psi$ for any team $T$. Indeed, if
$F\colon T\to \pot(M^r)$ is the function with $F(s)=\emptyset$ for
all $s\in T$, then $T[F/\vec y]=\emptyset$, whence the truth
condition in Definition \ref{Linquantsem} holds. Every formula
$\psi$ in the extension of first-order logic by the atoms listed in
Definition \ref{atoms} has the Empty Team Property:
$M\models_\emptyset\psi$ holds for all models $M$ (see
\cite{Vaa}). It is easy to see that the same holds also if we add
arbitrary Lindström quantifiers to the logic. In fact, with the
exception of logics with the non-empty atom (see Definition
\ref{non-empty-atom}), all the logics we consider in this paper have the empty
team property. Thus we see that a quantifier $Q_\cK$ becomes trivial
(on $M$) in our setting if $(M,\emptyset)\in \cK$, as in this case
$M\models_T Q_\cK\vec y\,\psi$ holds for every team $T$ and every formula
$\psi$.
\end{remark}


%

\section{Dimension calculations}

In this section we compute exact values, or in some cases just upper
and lower bounds, to upper, dual and cylindrical dimensions of some
important concrete examples of families of sets. This will be used
later to estimate dimensions of definable families of teams in various
logics built around the atoms of Definition~\ref{atoms}.

\subsection{Convex shadows and hulls}

In this subsection, we develop some auxiliary tools useful in  concrete
dimension calculations.  In particular, we introduce the notions
of convex shadow and the dual notion of dual convex shadow,
which facilitate the calculation of upper and dual upper
dimension of a given family.
The point is, that when we need to check if a subfamily dominates
the given family,  the convex shadows are the canonical
dominated convex families we need to relate to the sets in the
dominating family.

\begin{definition}
Let $\cA$ be a family of sets and $A\in\cA$.
The \emph{convex shadow of $A$  in the family $\cA$} is the family
\[
  \varjo_A(\cA)=\{ B\osaj A \mid [B,A]\osaj\cA \}.
\]
Similarly, the \emph{dual convex shadow} of $A$ in $\cA$ is
\[
  \varjo^A(\cA)=\{ B\in\cA \mid A\osaj B,\,[A,B]\osaj\cA \}.
\]
A set $A\in\cA$ is called \emph{critical} in $\cA$
if its convex shadow is maximal in the family
\[
  \{ \varjo_B(\cA) \mid B\in\cA \}.
\]
We use the notation
\[
  \Crit(\cA) = \{ A\in\cA \mid A\text{ critical in $\cA$} \}.
\]
Similarly, we define the notion of \emph{dual criticality}.
We denote the family of dually critical sets in $\cA$ by $\Critd(\cA)$:lla.
\end{definition}

\begin{lemma} \label{dimcalc:shadowcrit}
Let  $\cA$ be a family of sets and $A\in\cA$.
\begin{enumerate}[(a)]
\item
$\varjo_A(\cA)$ is the largest dominated convex family $\cC\osaj\cA$
with $\bigcup\cC=A$. Similarly, $\varjo^A(\cA)$
is the largest supported convex family $\cC\osaj\cA$ with $\bigcap\cC=A$.

\item
A family $\cG\osaj\cA$ dominates $\cA$ if and only if $\bigcup_{G\in\cG}\varjo_G(\cA)=\cA$.  
Dually, $\cH\osaj\cA$ supports $\cA$ if and only if $\bigcup_{H\in\cH}\varjo^H(\cA)=\cA$.  

\item
If $\cG$ dominates $\cA$, then $\Max(\cA)\osaj\cG$, and
 if  $\cH$ supports $\cA$, then $\Min(\cA)\osaj\cH$.

\item
Suppose that the family of families $\{\varjo_A(\cA) \mid A\in\cA\}$
satisfies Zorn condition.
Then there is a family $\cG$ dominating $\cA$
such that  $\cG\osaj\Crit(\cA)$ and $|\cG|= \DD(\cA)$.
The dual result also holds.

\end{enumerate}
\end{lemma}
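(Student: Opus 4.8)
The plan is to dispatch parts (a)--(c) by unwinding the definitions and to concentrate the real work on part (d), where the Zorn hypothesis is used. For part (a), I would first verify that $\varjo_A(\cA)$ is itself a dominated convex subfamily of $\cA$ with $\bigcup\varjo_A(\cA)=A$: it sits inside $\cA$ because $B\in\varjo_A(\cA)$ forces $B\in[B,A]\osaj\cA$; it is dominated by $A$ since $A\in\varjo_A(\cA)$ while every member is $\osaj A$; and it is convex because $S\osaj U\osaj T$ with $S,T\in\varjo_A(\cA)$ gives $U\osaj A$ and $[U,A]\osaj[S,A]\osaj\cA$. For maximality, I would take any dominated convex $\cC\osaj\cA$ with $\bigcup\cC=A$, note that domination forces $A\in\cC$, and then for each $B\in\cC$ use convexity to get $[B,A]\osaj\cC\osaj\cA$, i.e.\ $B\in\varjo_A(\cA)$; hence $\cC\osaj\varjo_A(\cA)$. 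The statement for the dual shadow is the mirror image.

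Part (b) is then immediate from (a): if $\cG$ dominates $\cA$ via families $\cD_G$, each $\cD_G\osaj\varjo_G(\cA)$ by (a), so $\cA=\bigcup_G\cD_G\osaj\bigcup_G\varjo_G(\cA)\osaj\cA$; conversely, if $\bigcup_{G\in\cG}\varjo_G(\cA)=\cA$, the families $\cD_G:=\varjo_G(\cA)$ witness domination. Part (c) uses (b): given $M\in\Max(\cA)$, part (b) places $M$ in some $\varjo_G(\cA)$, so $M\osaj G\in\cA$, and maximality of $M$ forces $M=G\in\cG$. The support/$\Min$ versions are symmetric.

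The heart is part (d). The reduction I would prove is the claim that \emph{every $A\in\cA$ admits a critical $A'$ with $\varjo_A(\cA)\osaj\varjo_{A'}(\cA)$}. To get it, order the poset $P=\{\varjo_B(\cA)\mid B\in\cA\}$ by inclusion and apply Zorn's lemma to the up-set $P_A=\{X\in P\mid X\supseteq\varjo_A(\cA)\}$, which is nonempty since $\varjo_A(\cA)\in P_A$. A nonempty chain in $P_A$ has its union as an upper bound, and the Zorn-condition hypothesis guarantees this union is again a shadow $\varjo_B(\cA)$, hence lies in $P$; since it contains $\varjo_A(\cA)$ it lies in $P_A$. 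A maximal element of $P_A$ is maximal in all of $P$ (any strict enlargement would still contain $\varjo_A(\cA)$, hence sit in $P_A$), so it equals $\varjo_{A'}(\cA)$ for some critical $A'$. Granting the claim, I take a dominating family $\cG_0$ with $|\cG_0|=\DD(\cA)$ (the minimum is attained because cardinals are well-ordered), replace each $G$ by such a critical $G'$, and set $\cG=\{G'\mid G\in\cG_0\}\osaj\Crit(\cA)$. Then $\bigcup_G\varjo_G(\cA)\osaj\bigcup_G\varjo_{G'}(\cA)\osaj\cA$ upgrades $\bigcup_G\varjo_G(\cA)=\cA$ (from (b)) to $\bigcup_{H\in\cG}\varjo_H(\cA)=\cA$, so $\cG$ dominates $\cA$ by (b). Finally $|\cG|\le|\cG_0|=\DD(\cA)\le|\cG|$ forces $|\cG|=\DD(\cA)$. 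The dual statement follows by the mirror argument using $\varjo^A$, $\Critd$, supports, and the dual Zorn hypothesis on $\{\varjo^A(\cA)\mid A\in\cA\}$.

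The main obstacle I anticipate is precisely the Zorn step in (d): confirming that a chain of convex shadows again has a shadow as union (this is exactly what the Zorn-condition hypothesis supplies) and that maximality inside the up-set $P_A$ transfers to genuine criticality in all of $P$. Everything else is routine bookkeeping with the interval notation $[\,\cdot\,,\cdot\,]$ and the containments already established in (a) and (b).
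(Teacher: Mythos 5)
Your proposal is correct and follows essentially the same route as the paper: parts (a)--(c) by the same definitional unwinding (domination forces $A\in\cC$, convexity gives $[B,A]\osaj\cC$, and (b) plus maximality yields (c)), and part (d) by replacing each member of a minimal dominating family $\cG_0$ with a critical set whose shadow contains the original shadow. The only difference is one of detail: where the paper simply asserts that the Zorn condition on $\{\varjo_B(\cA)\mid B\in\cA\}$ yields, above each shadow, a maximal one, you spell out the Zorn's-lemma argument on the up-set $P_A$ and check that maximality there transfers to maximality in the whole poset of shadows --- a welcome elaboration, not a different proof.
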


\begin{proof}
The proofs of the dual claims are similar to the primary claims,
so we shall skip them.
  
(a) Clearly, $\varjo_A(\cA)$ is a dominated convex subfamily of $\cA$
with $\bigcup\cA=A$.  Suppose $\cC\osaj\cA$ is another dominated
convex subfamily with $\bigcup\cC=A$.  For $C\in\cC$, we have
$[C,A]\osaj\cC\osaj\cA$ by convexity of $\cC$, so $\cC\osaj\varjo_A(\cA)$.

(b) If $\bigcup_{G\in\cG}\varjo_G(\cA)=\cA$, then the dominated convex
families $\varjo_G(\cA)=\cA$, $G\in\cG$, witness that $\cG$
dominates $\cA$.

Conversely, suppose that $\cG$ dominates $\cA$ and
the families $\cD_G$, $G\in\cG$,
witness that (meaning that $\bigcup_{G\in\cG}\cD_G=\cA$ and $\bigcup\cD_G=G$,
for each $G\in\cG$).  Then by the preceding claim, we have that
for every $G\in\cG$, $\cD_G\osaj\varjo_G(\cA)$, implying
$\bigcup_{G\in\cG}\varjo_G(\cA)$.

(c)  Suppose $\cG$ dominates $\cA$ and $M$ is maximal in the family $\cA$.
Then by case~a, we have $M\in\varjo_G(\cA)$, for some $G\in\cG$.
However, $M\in\varjo_G(\cA)$ implies $M\osaj G$, so by maximality of $M$,
we have $M=G\in\cG$. Consequently, $\Max(\cA)\osaj\cG$.

(d) Pick a subfamily $\cG_0\osaj\cA$ dominating $\cA$ such that
$|\cG_0|=\DD(\cA)$.  Let $\mathbf{S}=\{\varjo_A(\cA) \mid A\in\cA\}$.
By assumption, $\mathbf{S}$ satisfies the Zorn condition, so for
each $G\in\cG_0$, there is a $D_G\in\cA$ with maximal convex shadow
such that $\varjo_G(\cA)\osaj \varjo_G(\cA)$.  In other words,
there is a critical $D_G\in\cA$ with $\varjo_G(\cA)\osaj \varjo_G(\cA)$.
Put $\cG=\{D_G \mid G\in\cG \}$.
As $\cG_0$ dominates $\cA$, we have $\bigcup_{G\in\cG_0}\varjo_G(\cA)=\cA$,
which clearly impies 
$$
\bigcup_{D\in\cG}\varjo_D(\cA)=\bigcup_{G\in\cG_0}\varjo_{D_G}(\cA)=\cA.
$$
Hence, $\cG$ also dominates $\cA$, and $|\cG|\le|\cG_0|=\DD(\cA)$,
so $|\cG|=\DD(G)$.
\end{proof}

Convex shadows and dual convex shadows are maximal \textsl{subfamilies}
satisfying the appropriate properties.  The similar operations that produce
superfamilies instead of subfamilies are called hulls.  We shall utilize
these latter concepts in later sections.

\begin{definition}
Let $\cA$ be a nonempty family of sets.
The \emph{convex hull} of $\cA$ is
\[
  \CH(\cA)=\bigcup_{A,A'\in\cA}[A,A'], 
\]
the \emph{dominated (convex) hull} of $\cA$ is
\[
  \DCH(\cA)=\bigcup_{A\in\cA}\left[A,\bigcup\cA\right], 
\]
and the \emph{supported (convex) hull} is 
\[
  \SCH(\cA)=\bigcup_{A\in\cA}\left[\bigcap\cA,A\right].
\]
We set also $\CH(\tyj)=\tyj$.
\end{definition}

Note that there is no unique least dominated, or supported,
convex family containing the empty family (all the singletons do).

If the family of sets is finite, we may drop the braces from the notation
in the customary manner, writing $\DCH(A_0,\ldots,A_{k-1})$
instead of $\DCH(\{A_0,\ldots,A_{k-1}\})$, or 
$\SCH(A_0,\ldots,A_{k-1})$ instead of $\DCH(\{A_0,\ldots,A_{k-1}\})$.
Note the special cases $\DCH(A,B)=[A,A\cup B]\cup[B,A\cup B]$
and $\SCH(A,B)=[A\cap B,A]\cup[A\cap B,B]$.

We omit the proof of the following lemma, as it is straightforward.

\begin{lemma}
Let $\cA$ be a family of sets.  Then
\begin{enumerate}[(a)]
\item   $\CH(\cA)$ is the least convex family containing $\cA$,
\item   $\DCH(\cA)$ is the least dominated convex family containing $\cA$ and
\item   $\SCH(\cA)$ is the least supported convex family containing $\cA$.
\end{enumerate}
\end{lemma}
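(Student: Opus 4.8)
The plan is to prove each of the three items by the same two-step pattern: first verify that the hull is a family of the required type (convex, dominated convex, or supported convex) and contains $\cA$, then verify minimality by showing any family of the required type containing $\cA$ must contain the hull. For item (a), convexity of $\CH(\cA)=\bigcup_{A,A'\in\cA}[A,A']$ follows because any $S,T\in\CH(\cA)$ satisfy $S,T\in[A_1,A_1']$ and $S,T\in[A_2,A_2']$ respectively for some elements of $\cA$; but to show $[S,T]\osaj\CH(\cA)$ I would observe that $S\in[A_1,A_1']$ and $T\in[A_2,A_2']$ give $A_1\osaj S$ and $T\osaj A_2'$, so any $C\in[S,T]$ satisfies $A_1\osaj S\osaj C\osaj T\osaj A_2'$, hence $C\in[A_1,A_2']\osaj\CH(\cA)$. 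Containment $\cA\osaj\CH(\cA)$ is immediate since $A\in[A,A]$. For minimality, if $\cC$ is convex and $\cA\osaj\cC$, then for any $A,A'\in\cA\osaj\cC$ convexity gives $[A,A']\osaj\cC$, so $\CH(\cA)=\bigcup_{A,A'\in\cA}[A,A']\osaj\cC$.

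For items (b) and (c) the structure is identical and dual to each other, so I would prove (b) and remark that (c) follows by the order-reversing symmetry captured in Lemma~\ref{dom-supp-conv}. For $\DCH(\cA)=\bigcup_{A\in\cA}[A,\bigcup\cA]$: it is dominated because $\bigcup\cA$ is its largest element (every set in each interval $[A,\bigcup\cA]$ is contained in $\bigcup\cA$, and $\bigcup\cA\in[A,\bigcup\cA]$ for any $A$), and it is convex by an argument like the one above, noting that if $S\in[A_1,\bigcup\cA]$ and $T\in[A_2,\bigcup\cA]$ with $S\osaj T$, then any $C\in[S,T]$ satisfies $A_1\osaj S\osaj C\osaj T\osaj\bigcup\cA$, so $C\in[A_1,\bigcup\cA]$. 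Containment is again immediate. For minimality, suppose $\cC$ is dominated convex with $\cA\osaj\cC$; then $\bigcup\cA\osaj\bigcup\cC$, and since $\cC$ is dominated, $\bigcup\cC\in\cC$ is its maximum, so $\bigcup\cA\osaj\bigcup\cC\in\cC$. For each $A\in\cA\osaj\cC$, convexity of $\cC$ applied to $A$ and $\bigcup\cC$ gives $[A,\bigcup\cC]\osaj\cC$, and since $[A,\bigcup\cA]\osaj[A,\bigcup\cC]$ we obtain $\DCH(\cA)\osaj\cC$.

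I expect the routine part to dominate, so the main thing to get right is the minimality direction in (b) and (c): the subtle point is that one cannot directly say $\bigcup\cA\in\cC$ — one only knows $\bigcup\cC\in\cC$ — so the argument must route through the maximum $\bigcup\cC$ of the dominated family and then shrink back down to the hull using $[A,\bigcup\cA]\osaj[A,\bigcup\cC]$. This is why the hull uses $\bigcup\cA$ rather than some larger set: it guarantees the hull is itself dominated by $\bigcup\cA\in\cA$ (so it genuinely is a dominated family, not merely contained in one) while remaining small enough to sit inside every dominated convex superfamily of $\cA$. Since the lemma is asserted to have a straightforward proof that the authors omit, I would present the above as a brief verification, likely stating (a) in full and indicating that (b) holds by the analogous argument with $\bigcup\cA$ as the dominating element and (c) by duality via Lemma~\ref{dom-supp-conv}.
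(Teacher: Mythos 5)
Your proof is correct, and since the paper explicitly omits the proof of this lemma as straightforward, your two-step verification (hull has the required property and contains $\cA$; any convex/dominated convex/supported convex superfamily of $\cA$ contains the hull) is precisely the intended routine argument, including the one genuinely non-obvious step, namely routing the minimality argument in (b) through $\bigcup\cC\in\cC$ and the inclusion $[A,\bigcup\cA]\osaj[A,\bigcup\cC]$ rather than claiming $\bigcup\cA\in\cC$ directly. The only caveat worth a word is that (b) and (c) presuppose $\cA\neq\tyj$, consistent with the paper's remark that no least dominated or supported convex family containing the empty family exists.
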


\subsection{Dimensions of particular families}

In this subsection, we calculate the dimensions of some concrete
families of sets that are relevant to team semantics but certainly are
familar from other contexts, too.

For non-empty finite base sets $X$ and $Y$, here is a list of families that
we consider:
\begin{align*}
\cF &= \{ f\osaj X\times Y \mid f\text{ is a mapping }\}, \\
\cX &=\{ R\osaj X\times X \mid \dom(R)\cap\rg(R)=\tyj \}\\
\cI_{\osaj} &=\{ R\osaj X\times X \mid \dom(R)\osaj \rg(R) \}, \\
\cY &=\{ R\osaj X\times Y \mid \text{$R$ is anonymous} \},\\
\cI_{\perp} &=\{ A\times B \mid A\osaj X,\,B\osaj Y \},
\end{align*}
where we call a relation $R\osaj X\times Y$ \emph{anonymous} if
for all $x\in\dom(R)$ there exist distinct $y,y'\in Y$
with $(x,y),(x,y')\in R$.

We calculate the dimensions of these families with the aid of shadows
and critical sets.  The families $\cF$ and $\cX$ are the easiest
cases, as they are downward closed.  We handle each of the other
families in a separate lemma of its own.

\begin{lemma} \label{dimcalc:indep}
Suppose $|X|,|Y|\ge 2$. Let $A\osaj X$ and $B\osaj Y$. Then:
\begin{enumerate}[(a)]
\item
If  $|A|,|B|\ge 2$,  we have
$\varjo_{A\times B}(\cI_{\perp})=\varjo^{A\times B}(\cI_{\perp})=\{A\times B\}$.

\item
If $|A|\le 1$ or $|B|\le 1$, then
$\varjo_{A\times B}(\cI_{\perp})=\pot(A\times B)$.

\item
If $|A|=1$ and $|B|\ge 2$, then
\[
   \varjo^{A\times B}(\cI_{\perp})=\{ A\times D \mid B\osaj D\osaj Y \}.
\]
Similarly, $|A|\ge 2$ and $|B|=1$ implies
$\varjo^{A\times B}(\cI_{\perp})=\{ C\times B \mid A\osaj C\osaj X \}$.

\item
If $|A|=|B|=1$, then $\varjo^{A\times B}(\cI_{\perp})$ consists of
set of the form $A\times D$ and $C\times B$ with $A\osaj C\osaj X$
and $B\osaj D\osaj Y$.

\item
$\varjo^\tyj(\cI_{\perp})$ consists of all the sets $C\times D$
where $C\osaj X$, $D\osaj Y$ and $|C|\le 1$ or $|D|\le 1$.

\item
The critical and dual critical families of $\cI_{\perp}$ are
\begin{align*}
  \Crit(\cI_{\perp}) =&\{A\times B\osaj X\times Y \mid |A|\ge 2,|B|\ge 2\} \\   
                     &\cup \{ \{a\}\times Y \mid a\in X \}\\
                     &\cup \{ X\times \{b\} \mid b\in Y \}. \\
  \Critd(\cI_{\perp})=&\{A\times B\osaj X\times Y \mid |A|\ge 2,|B|\ge 2\}
                       \cup \{\tyj\}. \\   
\end{align*}

\item
For each $R\in\cI_{\perp}$, the shadow $\varjo_R(\cI_\perp)$ is
an interval.
\end{enumerate}

\end{lemma}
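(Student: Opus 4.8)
The plan is to obtain part~(g) as an immediate corollary of the two cases~(a) and~(b) already isolated in the lemma, by observing that their hypotheses exhaust all possibilities for a rectangle $R=A\times B\in\cI_\perp$. Every element of $\cI_\perp$ is of the form $A\times B$ with $A\osaj X$ and $B\osaj Y$, and for such a pair either both factors satisfy $|A|\ge 2$ and $|B|\ge 2$, or at least one of them satisfies $|A|\le 1$ or $|B|\le 1$. There is no third option, so a two-way case split suffices.

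First I would treat the case $|A|,|B|\ge 2$. Here case~(a) gives $\varjo_R(\cI_\perp)=\{A\times B\}=\{R\}$, which is the degenerate interval $[R,R]$. Next, in the complementary case $|A|\le 1$ or $|B|\le 1$, case~(b) gives $\varjo_R(\cI_\perp)=\pot(A\times B)=\pot(R)$, and since $\pot(R)=[\tyj,R]$ this is again an interval. As the two cases are mutually exclusive and jointly exhaustive, $\varjo_R(\cI_\perp)$ is an interval for every $R\in\cI_\perp$.

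The only point requiring care is the non-uniqueness of the representation $R=A\times B$ when $R=\tyj$, together with the exhaustiveness of the split. For nonempty $R$ the factors $A$ and $B$ are uniquely determined and nonempty, so exactly one of the two size conditions holds; for $R=\tyj$ one simply chooses a representation with an empty factor, e.g.\ $\tyj\times Y$, which has $|A|=0\le 1$ and lands in case~(b). Since $\varjo_R(\cI_\perp)$ depends only on the set $R$ and not on the chosen representation, this causes no ambiguity, and it is the only mild obstacle. (Alternatively one could argue abstractly: by Lemma~\ref{dimcalc:shadowcrit}(a) the shadow $\varjo_R(\cI_\perp)$ is always dominated and convex, so by Lemma~\ref{dom-supp-conv}(a) it would suffice to verify that it is supported; but invoking the explicit descriptions in~(a) and~(b) is shorter and avoids this extra verification.)
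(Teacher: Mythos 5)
Your proof of part~(g) is correct and takes essentially the same route as the paper, which likewise derives~(g) directly from items~(a) and~(b) by observing that $\{A\times B\}=[A\times B,A\times B]$ and $\pot(A\times B)=[\tyj,A\times B]$. Your additional remark on the non-unique representation of $R=\tyj$ is a harmless (and mildly useful) refinement that the paper leaves implicit, since $\varjo_R(\cI_\perp)$ depends only on $R$.
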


\begin{proof}
(a) If $|A|\ge 2$ and $|B|\ge 2$, any addition or deletion of
a point $(x,y)$ to or from $A\times B$ results to a set that
is not a cartesian product of the form $A'\times B'$, which
implies the result.

(b) The claim is trivial if either of the sets $A$ or $B$ is empty,
so assume by symmetry that $A=\{a\}$.  Then every subset of $A\times B$
can be written as $A\times B'$ for some $B'\osaj B$, so
$\varjo_{A\times B}(\cI_{\perp})=\pot(A\times B)$.

(c) Suppose $A=\{a\}$ and $|B|\ge 2$.  Let $R\in\cI_{\perp}$ with
$R\supseteq A\times B$. Write $R=C\times D$  where
$C\osaj X$ and $D\osaj Y$ with $C\supseteq A$ and $D\supseteq B$.
If $C\neq A$, it is easy to see that
$C\times D\not\in\varjo^{A\times B}(\cI_{\perp})$ as we can
pick $c\in C\jer A$ and $b\in B$, whence
$A\times B\osaj (A\times B)\cup\{c,d\}\osaj C\times D$,
but $(A\times B)\cup\{c,d\}\not\in\cI_{\perp}$.
In contrast, for every $D\osaj Y$ with $D\supseteq B$
we have $[A\times B,A\times D]\osaj\cI_{\perp}$ as $A$ is
a singleton.

For items (d) and (e), the proof is quite similar to the proof of item (c).

(f)  Consider first the set $A\times B$ where $|A|\ge 2$ and
$|B|\ge 2$.  Then by items (a)--(e), the only convex shadow or
dual convex shadow that covers $A\times B$ is the shadow or 
dual shadow of $A\times B$ itself.  Hence, $A\times B$ is both
critical and dual critical.

Consider then the case $A=\{a\}$ ($a\in X$) is a singleton.
By item (b), among the sets $A\times B$ the set $A\times Y$
has the largest shadow, including the case $B=\tyj$.
The symmetric case when $B$ is a singleton and $A$ varies
is handled in the same way.  For the dual case, items (c)-(e)
show that the empty set has the largest dual shadow.

(g) This follows from items (a) and (b),  as we observe
that $\{A\times B\}=[A\times B,A\times B]$
and $\pot(A\times B)=[\tyj,A\times B]$.
\end{proof}

\begin{lemma}  \label{dimcalc:incl}
Assume that $m=|X|\ge 2$. Let $R,S\in \cI_{\osaj}$.

\begin{enumerate}[(a)]
\item
If $S\osaj R$, then  
\[
   [S,R] \osaj \cI_{\osaj} \text{ if and only if } \dom(R\jer\id_X)\osaj\rg(S).
\]     

\item
We have
\[
  \varjo_R(\cI_{\osaj})=\{ T\in\cI_{\osaj} \mid T\osaj R,\,A\osaj\rg(T)\}
\]
where $A=\dom(R\jer\id_X)$. 

\item
For $A\osaj X$, put
$R_A=(A\times X)\cup\id_X$.
Then we have that $R_A\in\cI_{\osaj}$ and
\[
  \varjo_{R_A}(\cI_{\osaj})
  = \{ T\in\cI_{\osaj} \mid \dom(T\jer\id_X)\osaj A\osaj\rg(T) \}.
\]  

\item
We have $\Crit(\cI_{\osaj})= \{ R_A \mid A\osaj X\}$.

\item
$\Crit(\cI_{\osaj})\jer\{R_{\{a\}} \mid a\in X \}$ is a family
of smallest size that dominates $\cI_{\osaj}$.

\item
Denote $B=\rg(R)$. Then
\[
  \varjo^R(\cI_{\osaj})
  =\{ T\in\cI_{\osaj} \mid R\osaj T,\,\dom(T\jer\id_X)\osaj B \}
  =[R,R_B].
\]

\item  
$\Critd(\cI_{\osaj})
=\{ R\in\cI_{\osaj} \mid R^{-1}\text{ is a mapping}\}$.

\item
$\Critd(\cI_{\osaj})\jer\{{\{(a,a)\}} \mid a\in X \}$ is the
smallest family that supports $\cI_{\osaj}$.
\end{enumerate}

\end{lemma}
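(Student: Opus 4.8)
The plan is to work throughout with the dual convex shadows, using the characterization from Lemma~\ref{dimcalc:shadowcrit}(b): a subfamily $\cH\osaj\cI_{\osaj}$ supports $\cI_{\osaj}$ if and only if $\bigcup_{R\in\cH}\varjo^R(\cI_{\osaj})=\cI_{\osaj}$. Writing $\cH^\ast=\Critd(\cI_{\osaj})\jer\{\{(a,a)\}\mid a\in X\}$, there are then two things to establish: \emph{(I)} the dual shadows of the members of $\cH^\ast$ cover $\cI_{\osaj}$, so that $\cH^\ast$ supports; and \emph{(II)} no supporting family is smaller, which by the dual form of Lemma~\ref{dimcalc:shadowcrit}(d) may be sought among subfamilies of $\Critd(\cI_{\osaj})$. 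By part~(f) every relevant shadow is the interval $\varjo^R(\cI_{\osaj})=[R,R_{\rg(R)}]$, so both tasks reduce to bookkeeping about which $T\in\cI_{\osaj}$ satisfy $R\osaj T$ and $\dom(T\jer\id_X)\osaj\rg(R)$. It is worth noting that, unlike families sent to themselves under complementation, $\cI_{\osaj}$ is not self-dual, so~(h) is genuinely \emph{not} the formal dual of parts~(d)--(e) and must be argued directly.

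For the covering direction~(I), I would, given $T\in\cI_{\osaj}$, build a dually critical $R\osaj T$ with $T\in\varjo^R(\cI_{\osaj})$ by a selection argument: for each $y\in\rg(T)$ choose a single edge $(x_y,y)\in T$ and set $R=\{(x_y,y)\mid y\in\rg(T)\}$. Then $R^{-1}$ is a mapping, so $R\in\Critd(\cI_{\osaj})$ by part~(g); since each source $x_y\in\dom(T)\osaj\rg(T)=\rg(R)$ we get $R\in\cI_{\osaj}$, and the same containment gives $\dom(T\jer\id_X)\osaj\rg(T)=\rg(R)$, whence $T\osaj R_{\rg(R)}$ and therefore $T\in[R,R_{\rg(R)}]=\varjo^R(\cI_{\osaj})$ by part~(f). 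The only way this $R$ can be a forbidden singleton $\{(a,a)\}$ is when $\rg(T)=\{a\}$, which (using $\dom(T)\osaj\rg(T)$) forces $T\osaj\id_X$; in that case I would instead use $R=\tyj\in\cH^\ast$, for which $\varjo^{\tyj}(\cI_{\osaj})=[\tyj,\id_X]=\pot(\id_X)$ already contains $T$. Applied to every $T$, this argument simultaneously shows that each removed singleton $\{(a,a)\}$ is redundant, since every element of $\varjo^{\{(a,a)\}}(\cI_{\osaj})$ is absorbed either by $\varjo^{\tyj}$ or by the shadow of a non-singleton member of $\cH^\ast$.

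For the lower bound~(II) the plan is, for each $R\in\cH^\ast$, to exhibit a \emph{private witness}: a relation $T_R\in\cI_{\osaj}$ lying in $\varjo^{R}(\cI_{\osaj})$ but in the dual shadow of no other member of $\cI_{\osaj}$, which forces $R$ into every supporting family and yields $|\cH|\ge|\cH^\ast|$; the forcing of $\tyj$ itself comes for free from the dual of Lemma~\ref{dimcalc:shadowcrit}(c), since $\Min(\cI_{\osaj})=\{\tyj\}$. I expect this step to be \textbf{the main obstacle}. The difficulty, which is absent in the dominating case of parts~(d)--(e), is that the dual shadows of distinct dually critical relations overlap heavily: because $T\in\varjo^{R'}(\cI_{\osaj})$ requires only $\rg(R')\supseteq\dom(T\jer\id_X)$ and not $\rg(R')\supseteq\rg(T)$, any range element of $R$ that issues no non-loop edge is ``free'', and a dually critical relation of \emph{strictly smaller range} can then already cover $R$. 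Consequently the obvious choice $T_R=R$ need not be private, and making~(II) go through appears to require first isolating exactly those dually critical relations whose every range element emits a non-loop edge, checking that this subclass still covers $\cI_{\osaj}$, and only then reconciling it with $\Critd(\cI_{\osaj})\jer\{\{(a,a)\}\mid a\in X\}$. This reconciliation, rather than the covering of part~(I) or the raw counting, is where I anticipate the real work — and the sharpest scrutiny — to be concentrated.
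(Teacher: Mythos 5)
Your proposal addresses only part~(h), taking (a)--(g) as given inputs, whereas the statement under review is the whole eight-part lemma; the paper proves (a)--(g) by direct element-chasing before reaching (h), so even on its own terms your submission covers only a fragment. Within (h), your covering step~(I) is correct and is essentially identical to the paper's: given $T\in\cI_{\osaj}$, select one edge per range element to get $R_0\osaj T$ with $(R_0)^{-1}$ a mapping and $\rg(R_0)=\rg(T)$, and handle the singleton-range case $T\osaj\id_X$ via $\varjo^{\tyj}(\cI_{\osaj})=[\tyj,\id_X]$. The genuine gap is step~(II): you never prove minimality, you only forecast where the difficulty will lie. A proof attempt that ends by announcing that ``the real work'' is still to be done is not a proof, so the attempt is incomplete exactly at the point where the paper itself is most terse (the paper dismisses minimality with ``the rest is proved similarly as above'', referring to the private-witness argument of part~(e)).

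What makes this worth spelling out is that your diagnosis of the obstacle is not a mere presentational worry: the private-witness argument of (e) provably does \emph{not} dualize, and in fact the minimality claim of (h) fails as literally stated. Take $X=\{a,b\}$, so $|\cC|=1+\binom{2}{2}2^2=5$ where $\cC=\Critd(\cI_{\osaj})\jer\{\{(a,a)\},\{(b,b)\}\}$. The four-element family $\cH=\{\tyj,\{(a,a)\},\{(b,b)\},\{(a,b),(b,a)\}\}\osaj\cI_{\osaj}$ already supports $\cI_{\osaj}$: by part~(f) its dual shadows are the intervals $[\tyj,\id_X]$, $[\{(a,a)\},\{(a,a),(a,b),(b,b)\}]$, $[\{(b,b)\},\{(b,b),(b,a),(a,a)\}]$ and $[\{(a,b),(b,a)\},X\times X]$, and a direct check shows these cover all twelve members of $\cI_{\osaj}$ (for instance $\{(a,a),(a,b)\}\in\varjo^{\{(a,a)\}}(\cI_{\osaj})$ because $\dom(T\jer\id_X)=\{a\}\osaj\rg(\{(a,a)\})$ --- precisely the ``free range element'' phenomenon you identified). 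So the loop singletons you excluded can be \emph{cheaper} than the selective relations they replace: the only elements of $\cC$ genuinely forced into every supporting subfamily of $\Critd(\cI_{\osaj})$ are $\tyj$ and those $R$ with $\dom(R\jer\id_X)=\rg(R)$, and for the rest a real optimization argument is needed, yielding a minimum strictly below $|\cC|$ in this case. Consequently your anticipated ``reconciliation'' in~(II) cannot succeed for the statement as written; any honest completion must either correct the claimed minimal family (and with it the value of $\DDd(\cI_{\osaj})$ used in Theorem~\ref{dim-calculations}) or weaken (h) to the assertion that $\cC$ supports $\cI_{\osaj}$.
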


\begin{proof}
(a)  Suppose $\dom(R\jer\id_X)\osaj\rg(S)$ and consider $T\in[S,R]$, i.e.,
$S\osaj T\osaj R$.  Let $x\in\dom(T)$.  Pick $y$ such that $(x,y)\in T$.
If $x=y$, then trivially $x\in\rg(T)$.  Otherwise $x\neq y$, so
\[
  x\in\dom(T\jer\id_X)\osaj\dom(R\jer\id_X)\osaj\rg(S)\osaj\rg(T).
\]
Thus in both cases, we have $x\in\rg(T)$, so $\dom(T)\osaj\rg(T)$.
Hence $T\in\cI_{\osaj}$, and $[S,R]\osaj\cI_{\osaj}$.

Suppose to the contrary that $\dom(R\jer\id_X)\not\osaj\rg(S)$.
Then we may choose $x\in\dom(R\jer\id_X)\jer\rg(S)$.
Pick $y\neq x$ with $(x,y)\in R$, and consider $T=S\cup\{(x,y)\}$.
Clearly, $x\in\dom(T)$, but $x\not\in\rg(T)=\rg(S)\cup\{y\}$,
so $T\not\in\cI_{\osaj}$.  This proves that $[S,R]\not\osaj\cI_{\osaj}$.

(b) This is a direct application of the previous item.

(c) We  first note that $\dom(R_A)=X=\rg(R_A)$, as $\id_X\osaj R_A$,
implying that $R_A\in\cI_{\osaj}$.  It is easy to see that
$T\osaj R_A$ if and only if $\dom(T\jer\id_X)\osaj A$, so the
latter result follows from the preceding item.

(d)  To prove that each critical set in $\cI_{\osaj}$
is of the form $R_A$, for some $A\osaj X$, let $R\in\cI_{\osaj}$.
Denote $A=\dom(R\jer\id_X)$.  One easily sees that $R\osaj R_A$,
and now the previous items imply that
$\varjo_R(\cI_{\osaj})\osaj\varjo_{R_A}(\cI_{\osaj})$.
Consequently, it is enough to show that the shadows of the sets $R_A$,
$A\osaj X$, are incomparable.  Let $A,A'\osaj X$, $A\neq A'$.
Suppose first that $|A|\ge 2$. Let $f$ be any permutation of $A$
without fixed points.  Then $\dom(f\jer\id_X)=\dom(f)=A=\rg(A)$,
so the preceding item implies that $f\in\varjo_{R_A}(\cI_{\osaj})$,
but $f\not\in\varjo_{R_{A'}}(\cI_{\osaj})$.  If $A=\tyj$,
we see similarly that
$\tyj\in\varjo_{R_A}(\cI_{\osaj})\jer\varjo_{R_{A'}}(\cI_{\osaj})$.
Now suppose $A=\{a\}$ is a singleton.  Then there is $b\in X$, $b\neq a$,
such that $A'\neq\{a,b\}$. Consider $T=\{(a,b),(a,a)\}$.
Then $\dom(T)=\{a\}\osaj\{a,b\}=\rg(T)$, whence
$T\in\varjo_{R_A}(\cI_{\osaj})$ but $T\not\in\varjo_{R_{A'}}(\cI_{\osaj})$.

(e) By Lemma~\ref{dimcalc:shadowcrit}, we know that
$\Crit(\cI_{\osaj})$ dominates $\cI_{\osaj}$, and we can find a
dominating family of the smallest size from the collection of
its subfamilies.  Now the proof of the preceding item actually
show that $\Crit(\cI_{\osaj})\jer\{R_A\}$ does not dominate $\cI_{\osaj}$,
for any $A\osaj X$ which is not a singleton.  However,
$\Crit(\cI_{\osaj})\jer\{R_{\{a\}} \mid a\in X \}$ does,
as we see from the following:  Let $a\in X$ and let $R\in\cI_{\osaj}$
be any set with $a\in\dom(R\jer\id_X)$. Pick $b\neq a$ with $(a,b)\in R$.
Then $a\in\dom(R)\osaj\rg(R)$, but also $b\in\rg(R)$, so $a,b\in\rg(R)$.
Item c now shows that $R\in R_A$ where $A=\rg(R)$, and $|A|\ge 2$.

(f)  The first equality is a direct consequence of item~a.
For the second equality, it is enough to observe that $R_B$
is, by definition, the largest $T\osaj X\times X$
such that $\dom(T\jer\id_X)\osaj B$.

(g)  Assume that $R\osaj S$ and $\rg(R)=\rg(S)=B$
hold for $R,S\in\cI_{\osaj}$.  Then by the previous item, we have
that $\varjo^S(\cI_{\osaj})=[S,R_B]\osaj[R,R_B]\osaj\varjo^R(\cI_{\osaj})$.
Consequently, $R$ can have a maximal dual shadow only if $R\in\cI_{\osaj}$
is minimal among all the relations having the same range, i.e.,
if $R^{-1}$ is a mapping.

Let us check that this condition is also
sufficient, i.e., if $R\in\cI_{\osaj}$ and $R^{-1}$ is a mapping, then
$R$ has a maximal dual shadow among
the dual shadows $\varjo^S(\cI_{\osaj})$, for $S\in\cI_{\osaj}$.
We need to consider only the case when $S\subsetneq R$ and
$S^{-1}$ is also a mapping.  But then
$\rg(S)=\dom(S^{-1})\subsetneq \dom(R^{-1})=\rg(R)=B$,
which implies that $\dom(R_B\jer\id_X)=B\not\osaj\rg(S)$.
Hence, $R_B\not\in\varjo^S(\cI_{\osaj})$ and
consequently $\varjo^R(\cI_{\osaj})\not\osaj\varjo^S(\cI_{\osaj})$.
This means that $R$ has a maximal dual shadow.
 
(h)  Denote $\cC=\Critd(\cI_{\osaj})\jer\{{\{(a,a)\}} \mid a\in X \}$.
We first show that $\cC$ supports $\cI_{\osaj}$.
Let $R\in\cI_{\osaj}$. If $\rg(R)$ is a singleton, say, $\rg(R)=\{a\}$,
then we must have $R=\{(a,a)\}$ and $R\in\varjo^{\tyj}(\cI_{\osaj})$,
where $\tyj\in\cC$. Otherwise, we select any $R_0\osaj R$ with
$(R_0)^{-1}$ a mapping and $\rg(R_0)=\rg(R)$ and observe that
$R_0\in\cC$.  The rest is proved similarly as above.
%
%
\end{proof}

\begin{lemma}  \label{dimcalc:anonym}
Assume $|Y|\ge 2$.
\begin{enumerate}[(a)]
\item
  Let $R\osaj R'\osaj X\times Y$. Then
\[
  [R,R']\osaj\cY \Leftrightarrow R,R'\in\cY\land \dom(R)=\dom(R').
\]  

\item $\Crit(\cY)=\{ A\times Y \mid A\osaj X\}$ is the smallest family
that dominates $\cY$.

\item  $\Critd(\cY)=\{ f\cup g \mid A\osaj X,f,g\colon A\to Y,\forall x\in
       A: f(x)\neq g(x)\}$ is the smallest family that supports $\cY$.
\end{enumerate}
\end{lemma}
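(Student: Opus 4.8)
The whole lemma rests on the interval characterization in part (a), so I would establish that first and then read off the shadows. For the direction $(\Leftarrow)$ the argument is direct: if $R,R'\in\cY$ with $\dom(R)=\dom(R')=A$ and $R\osaj T\osaj R'$, then $\dom(T)$ is squeezed between $\dom(R)$ and $\dom(R')$ and hence equals $A$, while over each $x\in A$ the $T$-fiber contains the $R$-fiber, which already has two distinct points; thus $T\in\cY$. For $(\Rightarrow)$, taking $T=R$ and $T=R'$ inside the interval shows $R,R'\in\cY$; and were there an $x\in\dom(R')\jer\dom(R)$, I would pick $(x,y)\in R'$ and form $T=R\cup\{(x,y)\}\in[R,R']$, whose fiber over $x$ is the singleton $\{y\}$, contradicting $T\in\cY$. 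Hence $\dom(R)=\dom(R')$.

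For part (b) I would feed (a) into the definition of the convex shadow to obtain $\varjo_R(\cY)=\{S\in\cY\mid S\osaj R,\ \dom(S)=\dom(R)\}$. Writing $A=\dom(R)$, the condition $\dom(S)=A$ forces $S\osaj A\times Y$, so $\varjo_R(\cY)\osaj\varjo_{A\times Y}(\cY)=\{S\in\cY\mid\dom(S)=A\}$, with equality iff $R=A\times Y$; moreover shadows attached to distinct domains are disjoint. Consequently the maximal shadows are exactly those of the sets $A\times Y$ (here $|Y|\ge 2$ guarantees $A\times Y\in\cY$), giving $\Crit(\cY)=\{A\times Y\mid A\osaj X\}$. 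For the minimality claim I note that $A\times Y$ lies in no shadow but its own: any $G$ with $A\times Y\in\varjo_G(\cY)$ must satisfy $\dom(G)=A$ and $A\times Y\osaj G$, forcing $G=A\times Y$. By Lemma~\ref{dimcalc:shadowcrit}(b) every dominating family must therefore contain each $A\times Y$, while $\Crit(\cY)$ itself dominates, so it is the smallest such family.

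The dual part (c) runs parallel but is the delicate one. Part (a) yields $\varjo^R(\cY)=[R,\dom(R)\times Y]$, and comparing two such intervals (using $|Y|\ge 2$ so that $A\times Y\osaj A'\times Y\Leftrightarrow A\osaj A'$) shows that $\varjo^R(\cY)\osaj\varjo^{R'}(\cY)$ forces $R'\osaj R$ together with $\dom(R)=\dom(R')$. Hence $R$ fails to be dually critical exactly when some anonymous $R'\subsetneq R$ shares its domain. The crux of the proof, and the step needing the most careful bookkeeping, is translating this into a condition on fibers: a fiber of size $\ge 3$ always contains a removable point (its deletion preserves both the domain and anonymity), whereas deleting from a fiber of size exactly $2$ destroys anonymity. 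Thus $R$ is dually critical precisely when every fiber has exactly two elements, i.e.\ $R=f\cup g$ with $f(x)\neq g(x)$ on $A=\dom(R)$, which is the claimed $\Critd(\cY)$. Minimality of the supporting family then follows as in (b): each such $R$ lies only in its own dual shadow, so every supporting family contains all of $\Critd(\cY)$; conversely, selecting two distinct points from each fiber of an arbitrary $S\in\cY$ produces a member of $\Critd(\cY)$ below $S$ with the same domain, so $\Critd(\cY)$ supports $\cY$ by Lemma~\ref{dimcalc:shadowcrit}(b).
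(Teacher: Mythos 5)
Your proof is correct and follows essentially the same route as the paper's: the paper also proves (a) directly, and then handles (b) and (c) via the shadowing relation $R\sqsubseteq S$ ($R\osaj S$ and $\dom(R)=\dom(S)$), of which your explicit formulas $\varjo_R(\cY)=\{S\in\cY\mid S\osaj R,\ \dom(S)=\dom(R)\}$ and $\varjo^R(\cY)=[R,\dom(R)\times Y]$ are just an unpacked form. The only deviations are to your credit: you actually prove the fiber-size characterization of the dually critical sets, which the paper merely asserts, and your direct selection of two points per fiber to show that $\Critd(\cY)$ supports $\cY$ replaces the paper's appeal to Lemma~\ref{dimcalc:shadowcrit}(d).
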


\begin{proof}
(a) This is obvious from the definition of $\cY$.

(b) Consider the following shadowing relation $\sqsubseteq$
between elements of $\cY$:  $R\sqsubseteq S$ if and only if
$R\in\varjo_S(\cY)$.  This appeared actually already in the
previous item, so for $R,S\in\cY$, it holds that
$R\sqsubseteq S$ if and only if $R\osaj S$ and $[R,S]\osaj\cY$
if and only if $R\osaj S$ and $\dom(R)=\dom(S)$.
It is immediate that $\sqsubseteq$ is a partial ordering on $\cY$.
Thus, if $R\sqsubseteq S$, then $\varjo_R(\cY)\osaj\varjo_S(\cY)$.
This implies that, in order to an element of $\cY$ be
critical, it must be maximal with respect to $\sqsubseteq$.
It is easy to see that these maximal elements are of the form $A\times Y$,
for some $A\osaj X$.  As each $R\in\cY$ is also included in
the set $\dom(X)\times Y$ for which $R\sqsubseteq \dom(X)\times Y$,
we also see that $\{A\times Y \mid A\osaj X\}$ dominates $\cY$.
$A\times Y$ is certainly critical, as adding any $(c,d)\not\in A\times Y$
to $A\times Y$ destroys anonymity.

(c)  By \ref{dimcalc:shadowcrit} item~d,
$\Critd(\cY)$ supports the family~$\cY$.
Studying the shadowing relation further, we observe that
for $R,S\in\cY$, we have that $R\sqsubseteq S$ if and only if
$S\in\varjo^R(\cY)$. Consequently, dual critical sets are those which
are minimal with respect to the shadowing relation. These are
exactly the sets of form $f\cup g$ where $f,g\colon A\to Y$ and
for all $x\in A$ we have $f(x)\neq g(x)$.  Clearly all such sets
have to be included in a supporting family (to support themselves),
so $\Critd(\cY)$ is the smallest family that supports $\cY$.
\end{proof}

\begin{theorem}\label{dim-calculations}
Let  $X$ and $Y$ be finite base sets with $\ell=|X|\ge 2$ and $n=|Y|\ge 2$.
Then:
\begin{align*}
&\DD(\cF)=n^\ell, && \DDd(\cF)=1,&& \CD(\cF)=\DD(\cF), \\ 
&\DD(\cX)=2^\ell-2, && \DDd(\cX)=1, && \CD(\cX)=\DD(\cX), \\
&\DD(\cI_{\osaj})=2^\ell-\ell,
      && \DDd(\cI_{\osaj})=1+\sum^\ell_{k=2}\binom{\ell}{k}k^k,
                         && \CD(\cI_{\osaj})=\DDd(\cI_{\osaj}), \\
&\DD(\cY)=2^\ell,   &&\DDd(\cY)=\sum^\ell_{k=0}\binom{\ell}{k}\binom{n}{2}^k ,
  && \CD(\cY)=\DDd(\cY).\\
  &\DD(\cI_{\perp})= \scriptstyle (2^\ell-\ell-1)(2^n-n-1)+\ell+n,
  && \DDd(\cI_{\perp})= \scriptstyle (2^\ell-\ell-1)(2^n-n-1)+1,
  && \CD(\cI_{\perp})=\DD(\cI_{\perp}), \\   
\end{align*}
\end{theorem}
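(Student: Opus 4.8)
The plan is to read off $\DD$ and $\DDd$ as the cardinalities of the minimal dominating and supporting families already identified in the preceding lemmas, to get the lower bounds for $\CD$ for free from Proposition~\ref{basic:dimEst} (namely $\DD\le\CD$ and $\DDd\le\CD$), and to obtain the matching upper bounds for $\CD$ by checking that the relevant shadows are intervals. Throughout, finiteness of $X$ and $Y$ guarantees the Zorn condition needed for Lemma~\ref{dimcalc:shadowcrit}(d), so a minimal dominating family may be taken inside $\Crit(\cA)$ and a minimal supporting family inside $\Critd(\cA)$.

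First I would dispatch the downward closed families $\cF$ and $\cX$. For a downward closed $\cA$ the shadow of a maximal set $M$ is $\varjo_M(\cA)=\pot(M)=[\tyj,M]$, so $\Max(\cA)$ dominates $\cA$ by intervals; by Lemma~\ref{dimcalc:shadowcrit}(c) every dominating family contains $\Max(\cA)$, whence $\DD(\cA)=|\Max(\cA)|$, and since these shadows are intervals, $\CD(\cA)\le|\Max(\cA)|$; with $\DD\le\CD$ this gives $\CD(\cA)=\DD(\cA)$. The maximal mappings are the total functions $X\to Y$, of which there are $n^\ell$, while the maximal members of $\cX$ are the rectangles $D\times(X\jer D)$ with $\tyj\neq D\aioj X$, of which there are $2^\ell-2$. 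Both families are convex and supported by $\tyj$, hence are themselves supported convex, giving $\DDd=1$.

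Next, $\cI_{\osaj}$ and $\cY$ follow from Lemmas~\ref{dimcalc:incl} and~\ref{dimcalc:anonym}, which already name the minimal dominating and supporting families, so here the work is counting and the $\CD$ bound. For $\DD(\cI_{\osaj})$ I count $\Crit(\cI_{\osaj})\jer\{R_{\{a\}}\mid a\in X\}$ via the bijection $A\mapsto R_A$, getting $2^\ell-\ell$; for $\DDd(\cI_{\osaj})$ I count $\Critd(\cI_{\osaj})\jer\{\{(a,a)\}\mid a\in X\}$ by noting that an $R$ with $R^{-1}$ a mapping and $\dom(R)\osaj\rg(R)$ is the same datum as a set $B\osaj X$ together with a function $B\to B$ (set $B=\rg(R)$ and $R=\{(\phi(y),y)\mid y\in B\}$), which gives $\sum_{k=0}^\ell\binom{\ell}{k}k^k$ and, after deleting the $\ell$ fixed-point singletons of the $k=1$ layer, $1+\sum_{k=2}^\ell\binom{\ell}{k}k^k$. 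For $\cY$ the analogous counts give $2^\ell$ (one $A\times Y$ per $A\osaj X$) and $\sum_{k=0}^\ell\binom{\ell}{k}\binom{n}{2}^k$ (one unordered pair of distinct $Y$-values per element of $A$). In both cases $\CD\ge\DDd$ by Proposition~\ref{basic:dimEst}, while the dual shadows of the supporting family are intervals---$\varjo^R(\cI_{\osaj})=[R,R_B]$ by Lemma~\ref{dimcalc:incl}(f) and $\varjo^R(\cY)=[R,\dom(R)\times Y]$ by Lemma~\ref{dimcalc:anonym}(a)---so these $\DDd$ intervals cover the family and $\CD=\DDd$.

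Finally, for $\cI_{\perp}$ Lemma~\ref{dimcalc:indep} supplies $\Crit(\cI_{\perp})$ and $\Critd(\cI_{\perp})$, whose cardinalities are exactly the claimed values for $\DD$ and $\DDd$; the new point is to verify that no (dual) critical set is removable. The key observation is a rigidity statement: if $R\in\Crit(\cI_{\perp})$ and $R\in\varjo_G(\cI_{\perp})$, then $G=R$, since any rectangle $G\supsetneq R$ would force a set in $[R,G]$ adding a single point to a row or column of $R$ and hence failing to be a rectangle (using $|A|,|B|\ge2$ for the genuine products and $\ell,n\ge2$ for the rows $X\times\{b\}$ and columns $\{a\}\times Y$). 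Thus every critical set lies in every dominating family, so $\DD(\cI_{\perp})=|\Crit(\cI_{\perp})|$, and the dual argument gives $\DDd(\cI_{\perp})=|\Critd(\cI_{\perp})|$. For $\CD$ I again use that the dominating shadows are intervals (Lemma~\ref{dimcalc:indep}(g)), so the $|\Crit(\cI_{\perp})|$ shadows cover $\cI_{\perp}$ by intervals, whence $\CD(\cI_{\perp})\le\DD(\cI_{\perp})$; with $\DD\le\CD$ this yields $\CD(\cI_{\perp})=\DD(\cI_{\perp})$. The main obstacle I anticipate is the bookkeeping: making the $\Critd$ counts rigorous (especially the $B\to B$ description for $\cI_{\osaj}$ and the unordered-pair description for $\cY$) and carrying out the rigidity argument for $\cI_{\perp}$ uniformly across its three types of critical sets.
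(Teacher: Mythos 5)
Your proposal is correct and follows essentially the same route as the paper's own proof: dispatch the downward closed families $\cF$ and $\cX$ by counting maximal sets, read off $\DD$ and $\DDd$ for $\cI_{\osaj}$, $\cY$ and $\cI_{\perp}$ from the minimal dominating/supporting families of Lemmas~\ref{dimcalc:incl}--\ref{dimcalc:indep}, and get $\CD$ by combining Proposition~\ref{basic:dimEst} with the fact that the relevant (dual) shadows are intervals. If anything, you supply details the paper compresses into ``a combinatorial calculation'' --- the $(B,\phi\colon B\to B)$ bijection behind $1+\sum_{k=2}^{\ell}\binom{\ell}{k}k^k$, the unordered-pair count for $\cY$, and the rigidity argument showing no critical set of $\cI_{\perp}$ is removable --- all of which check out.
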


\begin{proof}
Observe first that the family~$\cF$ is downwards closed, so it is
trivially supported by $\{\tyj\}$, implying $\DDd(\cF)=1$.
Downwards closedness and finiteness of $\cF$ also implies
that $\DD(\cF)=|\Max(\cF)|$.  Clearly, the maximal sets in $\cF$
are just total functions $f\colon X\to Y$, so there are
$|Y|^{|X|}=n^\ell$ of them and $\DD(\cF)=n^\ell$.
Finally, the downwards closedness of $\cF$ implies that for any
such maximal~$f$, we have $\varjo_f(\cF)=\pot(f)=[\tyj,f]$, i.e.,
shadow are intervals.  Hence, $\CD(\cF)=\DD(\cF)$.

The family $\cX$ is obviously also downward closed, so we have
$\DDd(\cX)=1$ and $\CD(\cX)=\DD(\cX)$ in this case, too.
It is easy to see that the maximal set in $\cX$ are of form~$A\times B$
where $\{A,B\}$ is a partition of the set~$X$.
(In contrast, $\tyj=\tyj\times X=X\times\tyj\in\cX$ is not maximal,
as $\{(a,b)\}\in\cX$ for any distinct $a,b\in X$.)
The number of possible $A$'s, i.e., non-empty proper subsets of $X$
is indeed $2^n-2$.

In all the other cases, we have already determined dominating and
supporting families of the smallest sizes in the previous lemmas,
so the rest is simply combinatorial counting.
By \ref{dimcalc:incl}, items d~and e,
\[
  \DD(\cI_{\osaj})=|\Crit(\cI_{\osaj})\jer\{R_{\{a\}} \mid a\in X \}|
  =|\pot(X)|-|X|=2^\ell-\ell.
\]
By item~f, dual shadows are always intervals, so
$\CD(\cI_{\osaj})=\DDd(\cI_{\osaj})$. A combinatorial calculation
related to items~g and~h  gives the formula for $\DDd(\cI_{\osaj})$.
%
%

By lemma~\ref{dimcalc:anonym} item~b, $\{ A\times Y \mid A\osaj X\}$
is the unique smallest subfamily dominating $\cY$, and obviously
it is equipotent with $\pot(X)$, so $\DD(\cY)=2^\ell$.
By item~c, the set in the smallest family supporting $\cY$
are of the form $f\cup g$ where $f,g\colon A\to Y$ with $A\osaj X$
and for everu $x\in A$ we have $f(x)\neq g(x)$.  If the size $k=|A|$
is known, there are $\binom{\ell}{k}$ ways to choose $A$, and given
that $A$ and $x\in A$, there are $\binom{n}{2}$ ways to choose the pair
$\{f(x),g(x)\}$ (this is all that matters).  So for every $A$ with
size $k$, there are $\binom{n}{2}^k$ ways to choose $f\cup g$.
Summing this up for different sizes of $A$, we get the displayes
formula.  $\CD(\cY)=\DD(\cY)$, as dual shadows are intervals.

By \ref{dimcalc:indep} item~g, shadows are intervals, so
$\CD(\cI_{\perp})=\DD(\cI_{\perp})$.  Calculating the sizes of
critical and dual critical subfamilies (determined in item f)
with get the corresponding
formulas for upper dimension and dual upper dimension.
%
%
\end{proof}

There remains one interesting team-semantics-related
class of families of sets we need to investigate.
Let $X$, $Y$ and $Z$ be non-empty finite sets.  We shall consider
$$
\cI_{\perp,\bullet}
=\bigl\{\, \bigcup_{c\in Z} (A_c\times B_c\times\{c\})
           \;\bigm\vert\; \forall c\in Z\, (A_c\osaj X,\,B_c\osaj Y) \,\bigr\}.
$$

This time we will content ourselves on evaluating only lower and
upper bounds for this family instead of the exact values.
However, this is done within a more general framework which can be
applied to other similar cases.

\begin{definition} 
Families of sets $\cA$ and $\cB$ are called \emph{similar}
if there exists a bijection $f\colon X\to Y$ such that
$\cA\osaj\pot(X)$ and 
$$
  \cB = \{ f[A] \mid A\in\cA \}.
$$
\end{definition}

It is then straightforward to show that:

\begin{proposition} \label{dim-sim}
Let $\cA$, $\cB$ and $\cC$ be similar families of sets.  Then:
\begin{enumerate}[(a)]
\item
If $\cA$ and $\cB$ are similar, then  
$\DD(\cA)=\DD(\cB)$, $\DDd(\cA)=\DDd(\cB)$  and $\CD(\cA)=\CD(\cB)$.

\item
If $\cC=\cA\cap \pot(C)$ for some $C$, then
$\DD(\cC)\le\DD(\cA)$, $\DDd(\cC)\le\DDd(\cA)$  and $\CD(\cC)\le\CD(\cA)$.  
\end{enumerate}
\end{proposition}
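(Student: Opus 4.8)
The plan is to treat both parts through a single observation: all three dimensions are defined purely from the inclusion order on $\pot(X)$, via intervals, convex families, and dominated/supported convex families. Hence any order isomorphism transports optimal covers, and intersecting with a power set $\pot(C)$ restricts covers without enlarging them.

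For part (a), since $f\colon X\to Y$ is a bijection, the induced map $\Phi\colon\pot(X)\to\pot(Y)$, $\Phi(A)=f[A]$, is an isomorphism of Boolean lattices: it is a bijection commuting with $\cup$ and $\cap$, and $A\osaj A'$ iff $f[A]\osaj f[A']$. Consequently $\Phi$ sends intervals to intervals, with $\Phi[[A_0,A_1]]=[f[A_0],f[A_1]]$, convex families to convex families, and dominated (resp.\ supported) convex families to dominated (resp.\ supported) convex families, preserving the maximum (resp.\ minimum). So if $\cG$ dominates $\cA$ with witnesses $\cD_G$, then $\{\Phi(G)\mid G\in\cG\}$ dominates $\cB$ with witnesses $\Phi[\cD_G]$, and symmetrically for $\cB$ using $f^{-1}$; the same applies verbatim to supporting families and to interval covers. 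This yields $\DD(\cA)=\DD(\cB)$, $\DDd(\cA)=\DDd(\cB)$, and $\CD(\cA)=\CD(\cB)$.

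For part (b), write $\cC=\cA\cap\pot(C)$ and use that intersection with $\pot(C)$ distributes over unions of families, so applying it to any cover of $\cA$ produces a cover of $\cC$. The cylindrical case is cleanest: $[A_0,A_1]\cap\pot(C)$ equals $[A_0,A_1\cap C]$ when $A_0\osaj C$ and is $\tyj$ otherwise; thus a minimal interval cover $(\cA_i)_{i\in I}$ of $\cA$ gives, after discarding empty pieces, an interval cover of $\cC$ of size at most $|I|$, whence $\CD(\cC)\le\CD(\cA)$. The key fact needed for the other two is that a dominated convex family $\cD$ is closed under finite unions and a supported convex family under finite intersections: if $S,T\in\cD$ then $S\cup T\in[S,\bigcup\cD]\osaj\cD$, since $\bigcup\cD\in\cD$ and $\cD$ is convex (dually for intersections). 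From this, $\cD\cap\pot(C)$ is again dominated convex whenever nonempty — convexity is immediate because $\pot(C)$ is downward closed, and $\bigcup(\cD\cap\pot(C))$ is a finite union of members of $\cD$ lying below $C$, hence a member of $\cD\cap\pot(C)$ and its maximum; the supported case is dual. Given a dominating family $\cG$ for $\cA$ with witnesses $\cD_G$, the families $\cD_G\cap\pot(C)$ are then dominated convex, contained in $\cC$, and cover $\cC$; their maxima form a subfamily $\cG'\osaj\cC$ with $|\cG'|\le|\cG|$. When distinct $G$ share a maximum $H$, one merges them into $\cD'_H=\bigcup\{\cD_G\cap\pot(C)\mid\bigcup(\cD_G\cap\pot(C))=H\}$, which remains dominated convex with maximum $H$ (for $D\in[S,T]$ with $S$ drawn from some $\cD_G\cap\pot(C)$, one has $D\in[S,H]\osaj\cD_G\cap\pot(C)$). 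Hence $\cG'$ dominates $\cC$ and $\DD(\cC)\le\DD(\cA)$; the dual argument gives $\DDd(\cC)\le\DDd(\cA)$.

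I expect the main obstacle to be exactly this last verification — confirming that restriction by $\pot(C)$ keeps a dominated (resp.\ supported) convex family inside its class, and cleanly handling the collapse of equal maxima (resp.\ minima) — since everything else is a transparent transport of the inclusion structure. In the finite setting of the intended applications the finite-union/intersection closure suffices; for wholly general families the same argument would be phrased using the Zorn condition.
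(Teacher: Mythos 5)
Your proof is correct, and in fact supplies exactly the details the paper declares ``straightforward'' and omits: part (a) by transporting intervals and dominated/supported convex covers along the order isomorphism $A\mapsto f[A]$, and part (b) by restricting an optimal cover with $\pot(C)$, which is the intended argument. One small remark: your interval trick $S\cup T\in[S,\bigcup\cD]\osaj\cD$ works verbatim for \emph{arbitrary} nonempty unions ($\bigcup_i S_i\in[S_{i_0},\bigcup\cD]$), so dominated convex families are closed under all nonempty unions and your closing caveat about finiteness and the Zorn condition is unnecessary --- the restriction argument, including your careful merging of witnesses with a common maximum, already holds for wholly general families.
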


\begin{definition}
Let $(\cA_i)_{i\in I}$ be an indexed family of families of sets.
Then its \emph{general tensor disjunction} is the family
$$
\gtd_{i\in I}\cA_i=\bigl\{\, \bigcup_{i\in I}A_i
     \;\bigm\vert\; \forall i\in I\,(A_i\in\cA) \,\bigr\}.
$$  
\end{definition}

\smallskip
Note that if the base sets of the families $\cA_i\osaj\pot(X_i)$
are all disjoint, i.e., if $(X_i)_{i\in I}$ is a disjoint family,
then there is a natural bijection $A\mapsto (A\cap X_i)_{i\in I}$
between $\gtd_{i\in I}\cA_i$ and $\prod_{i\in I}\cA_i$.
In the other end of the spectrum, if $\cA$ is closed under unions,
then $\gtd_{i\in I}\cA=\cA$.

\begin{proposition} \label{dim-gtd}
Let $(\cA_i)_{i\in I}$ be an indexed family of families of sets.
Then 
$$
  \CD\Bigl(\gtd_{i\in I}\cA_i\Bigr)\le\prod_{i\in I}\CD(A_i).
$$  
\end{proposition}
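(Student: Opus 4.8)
The plan is to build an interval cover of $\gtd_{i\in I}\cA_i$ out of minimal interval covers of the individual factors $\cA_i$, combining them by choice functions. The crux is the observation that the general tensor disjunction of a family of \emph{intervals} is itself an interval, with explicitly computable endpoints, so that a product of interval covers again consists of intervals.

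First I would establish the key lemma: if $B_i\osaj C_i$ for each $i\in I$, then
\[
  \gtd_{i\in I}[B_i,C_i]=\Bigl[\bigcup_{i\in I}B_i,\ \bigcup_{i\in I}C_i\Bigr].
\]
The inclusion $\osaj$ is immediate, since any set of the form $\bigcup_i A_i$ with $B_i\osaj A_i\osaj C_i$ satisfies $\bigcup_i B_i\osaj\bigcup_i A_i\osaj\bigcup_i C_i$. For the reverse inclusion, given $S$ with $\bigcup_i B_i\osaj S\osaj\bigcup_i C_i$, I would exhibit a witnessing decomposition by setting $A_i=(S\cap C_i)\cup B_i$. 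Then $B_i\osaj A_i$ by construction, and $A_i\osaj C_i$ because $S\cap C_i\osaj C_i$ and $B_i\osaj C_i$. Moreover $\bigcup_i A_i=\bigl(S\cap\bigcup_i C_i\bigr)\cup\bigcup_i B_i=S$, using $S\osaj\bigcup_i C_i$ for the first term and $\bigcup_i B_i\osaj S$ for the second. Hence $S\in\gtd_{i\in I}[B_i,C_i]$, proving the lemma.

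Next, for each $i\in I$ I would fix an interval cover $(\cA_i^{(j)})_{j\in J_i}$ of $\cA_i$ of minimal size, so that $|J_i|=\CD(\cA_i)$ and $\bigcup_{j\in J_i}\cA_i^{(j)}=\cA_i$, each $\cA_i^{(j)}$ being a (nonempty) interval. For every choice function $\sigma\in\prod_{i\in I}J_i$ I form the family $\gtd_{i\in I}\cA_i^{(\sigma(i))}$, which is an interval by the key lemma. I claim these intervals, indexed by $\sigma$, cover $\gtd_{i\in I}\cA_i$. Each of them is contained in $\gtd_{i\in I}\cA_i$ because $\cA_i^{(\sigma(i))}\osaj\cA_i$; conversely, any $S=\bigcup_{i}A_i\in\gtd_{i\in I}\cA_i$ with $A_i\in\cA_i$ lies in the interval indexed by any $\sigma$ for which $A_i\in\cA_i^{(\sigma(i))}$, and such a value $\sigma(i)\in J_i$ exists for each $i$ since the $\cA_i^{(j)}$ cover $\cA_i$.

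Finally, the counting step: the family of choice functions has cardinality $\bigl|\prod_{i\in I}J_i\bigr|=\prod_{i\in I}|J_i|=\prod_{i\in I}\CD(\cA_i)$, so $\gtd_{i\in I}\cA_i$ is covered by at most $\prod_{i\in I}\CD(\cA_i)$ intervals, which gives the desired bound $\CD\bigl(\gtd_{i\in I}\cA_i\bigr)\le\prod_{i\in I}\CD(\cA_i)$. I expect the only genuine obstacle to be the reverse inclusion in the key lemma, namely realizing an arbitrary member of $[\bigcup_i B_i,\bigcup_i C_i]$ as a union of interval members; the explicit choice $A_i=(S\cap C_i)\cup B_i$ dispatches it, and everything else is bookkeeping. (For infinite $I$ the product is read as cardinal arithmetic and invokes choice, but the intended application, to $\cI_{\perp,\bullet}$, has $I$ finite; the degenerate case where some $\cA_i=\tyj$ is automatically consistent, as then both sides are $0$.)
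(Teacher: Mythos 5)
Your proposal is correct and follows essentially the same route as the paper: both take minimal interval covers $(\cL_{i,j})_{j\in J_i}$ of each $\cA_i$ and, for each choice function in $\prod_{i\in I}J_i$, form the interval whose endpoints are the unions $\bigcup_i B_{i,f(i)}$ and $\bigcup_i C_{i,f(i)}$. The only difference is that the paper asserts the resulting cover ``clearly'' works, whereas you explicitly verify the key lemma $\gtd_{i\in I}[B_i,C_i]=\bigl[\bigcup_i B_i,\bigcup_i C_i\bigr]$ via the witness $A_i=(S\cap C_i)\cup B_i$ --- a detail the paper omits but which your argument correctly supplies.
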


\begin{proof}
Pick, for each $i\in I$, an index set $J_i$ and intervals
$\cL_{i,j}$, $j\in J_i$ with $|J_i|=\CD(\cA_i)$ and
$\bigcup_{j\in J_i}\cL_{i,j}=\cA_i$.  Write $\cL_{i,j}=[B_{i,j},C_{i,j}]$.
For each $f\in J=\prod_{i\in I}J_i$, consider the interval
$\cL_f=[B_f,C_f]$ where
$$
  B_f = \bigcup_{i\in I} B_{i,f(i)}
  \text{ and }
  C_f = \bigcup_{i\in I} C_{i,f(i)}.
$$
Then clearly $|J|=\prod_{i\in I}|J_i|=\prod_{i\in I}\CD(\cA_i)$
and $\gtd_{i\in I}\cA_i=\bigcup_{\in I}\cL_f$.
\end{proof}

As a corollary, we get the desired estimates.
\begin{proposition}\label{lower_upper-bound}
Let  $X$, $Y$, and $Z$ be finite base sets
with $\ell=|X|\ge 2$, $n=|Y|\ge 2$ and $s=|Z|\ge 1$.
Then
\begin{align*}
  (2^\ell-\ell-1)(2^n-n-1)+1
  &\le\min\{\DD(\cI_{\perp,\bullet}),\DDd(\cI_{\perp,\bullet})\} \\
  &\le\CD(\cI_{\perp,\bullet})
  \le ((2^\ell-\ell-1)(2^n-n-1)+\ell+n)^s.
\end{align*}
\end{proposition}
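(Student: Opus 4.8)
The plan is to bracket $\CD(\cI_{\perp,\bullet})$ from both sides and insert $\min\{\DD,\DDd\}$ on the left via Proposition~\ref{basic:dimEst}. The chained inequality then splits into three independent tasks: the middle inequality $\min\{\DD(\cI_{\perp,\bullet}),\DDd(\cI_{\perp,\bullet})\}\le\CD(\cI_{\perp,\bullet})$ is immediate from Proposition~\ref{basic:dimEst}; the upper bound comes from the general-tensor-disjunction machinery; and the lower bound comes from restricting to a single slice of the $Z$-coordinate. For each $c\in Z$ I introduce the slice family $\cI_{\perp}^{(c)}=\{A\times B\times\{c\}\mid A\osaj X,\,B\osaj Y\}\osaj\pot(X\times Y\times\{c\})$, which is similar to $\cI_{\perp}$ via the bijection $(x,y)\mapsto(x,y,c)$.

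For the upper bound I would first note that, since the base sets $X\times Y\times\{c\}$ are pairwise disjoint, the definitions give directly $\cI_{\perp,\bullet}=\gtd_{c\in Z}\cI_{\perp}^{(c)}$. Proposition~\ref{dim-gtd} then yields $\CD(\cI_{\perp,\bullet})\le\prod_{c\in Z}\CD(\cI_{\perp}^{(c)})$. By Proposition~\ref{dim-sim}(a) each factor equals $\CD(\cI_{\perp})$, and Theorem~\ref{dim-calculations} gives $\CD(\cI_{\perp})=(2^\ell-\ell-1)(2^n-n-1)+\ell+n$. As $|Z|=s$, the product has $s$ equal factors, which is exactly the claimed upper bound.

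For the lower bound I would fix one $c_0\in Z$ and restrict to $C=X\times Y\times\{c_0\}$. The key observation is that a member $\bigcup_{c\in Z}(A_c\times B_c\times\{c\})$ of $\cI_{\perp,\bullet}$ lies inside $C$ precisely when $A_c\times B_c=\tyj$ for every $c\neq c_0$; hence $\cI_{\perp,\bullet}\cap\pot(C)=\cI_{\perp}^{(c_0)}$. Proposition~\ref{dim-sim}(b) then gives $\DD(\cI_{\perp,\bullet})\ge\DD(\cI_{\perp}^{(c_0)})$ and $\DDd(\cI_{\perp,\bullet})\ge\DDd(\cI_{\perp}^{(c_0)})$, and by Proposition~\ref{dim-sim}(a) together with Theorem~\ref{dim-calculations} the right-hand sides equal $(2^\ell-\ell-1)(2^n-n-1)+\ell+n$ and $(2^\ell-\ell-1)(2^n-n-1)+1$ respectively. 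Taking the minimum over the two dimensions of $\cI_{\perp,\bullet}$, and using $1<\ell+n$, yields the asserted lower bound $(2^\ell-\ell-1)(2^n-n-1)+1$.

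I would expect the only genuinely delicate point to be the identification $\cI_{\perp,\bullet}\cap\pot(C)=\cI_{\perp}^{(c_0)}$: one must verify that cutting down to a single $Z$-slice collapses the entire general tensor disjunction back to one copy of the independence family, i.e.\ that no nontrivial interaction between distinct slices survives the restriction. Everything else is a direct bookkeeping application of the already-established Propositions~\ref{basic:dimEst}, \ref{dim-sim} and~\ref{dim-gtd} and the explicit values computed in Theorem~\ref{dim-calculations}.
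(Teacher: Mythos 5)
Your proof is correct and follows essentially the same route as the paper's: the paper likewise defines the slice families $\cJ_c=\cI_{\perp,\bullet}\cap\pot(X\times Y\times\{c\})$ (your $\cI_{\perp}^{(c)}$), obtains the lower bound via similarity to $\cI_\perp$ together with Propositions~\ref{dim-sim} and~\ref{basic:dimEst}, and the upper bound via $\cI_{\perp,\bullet}=\gtd_{c\in Z}\cJ_c$ and Proposition~\ref{dim-gtd}. The ``delicate point'' you flag, $\cI_{\perp,\bullet}\cap\pot(C)=\cI_{\perp}^{(c_0)}$, is exactly the identity the paper asserts when defining $\cJ_c$, and your verification of it is sound.
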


\begin{proof}
For each $c\in Z$, put 
$$
   \cJ_c=\cI_{\perp,\bullet}\cap\pot(X\times Y\times\{c\})
   =\bigl\{\, A_c\times B_c\times\{c\}
           \;\bigm\vert\; A_c\osaj X,\,B_c\osaj Y) \,\bigr\}.
$$
Clearly, $\cJ_c$ is similar to $\cI_{\perp}$, so
by Theorem~\ref{dim-calculations} and Proposition~\ref{dim-sim}, case a, 
we have
$$
(2^\ell-\ell-1)(2^n-n-1)+1  = \min\{\DD(\cI_{\perp}),\DDd(\cI_{\perp})\}
=\min\{\DD(\cJ_c),\DDd(\cJ_c)\}.
$$
Since $\cJ_c=\cI_{\perp,\bullet}\cap\pot(X\times Y\times\{c\})$,
Propositions~\ref{dim-sim} and \ref{basic:dimEst}, further imply that
$$
\min\{\DD(\cJ_c),\DDd(\cJ_c)\}
\le \min\{\DD(\cI_{\perp,\bullet}),\DDd(\cI_{\perp,\bullet})\}
\le \CD(\cI_{\perp,\bullet}).
$$
It easy to see that $\gtd_{c\in Z}\cJ_c=\cI_{\perp,\bullet}$,
so now when we combine the results of Theorem~\ref{dim-calculations}
and Proposition~\ref{dim-gtd}, we get the inequality
$$
   \CD(\cI_{\perp,\bullet})=\CD(\gtd_{c\in Z}\cJ_c)
   \le\prod_{c\in Z} \CD(\cJ_c)=((2^\ell-\ell-1)(2^n-n-1)+\ell+n)^s.
$$
\end{proof}

In our logical application, when we apply Theorem~\ref{dim-calculations}
and the previous proposition to determine
the dimension functions of the corresponding atomic formulas, we shall
face a technical complication:  The dimension functions of formulas
depend on the set of variables that are interpreted in the teams
of assignments.  The previous result corresponds exactly to the situation
where only the variables occuring in the atomic formula are interpreted,
but there might be dummy variables to be considered. We shall need
the next proposition to overcome this difficulty: the effect of dummy
variables is not critical.  In this intended application,
the surjective function in the proposition
will be the restriction of the assignment to the occuring variables.

\begin{proposition} \label{calc:dummyabs}
Let $p\colon X\to Y$ be a surjection.  Recall that the inverse
projection is the operation
$\Delta_{p^{-1}}\colon\cP(\cP(Y))\to\cP(\cP(X))$,
\[
  \Delta_{p^{-1}}(\cY)=\{A\in\cP(X)\mid p[A]\in\cY\}.
\]
Suppose that $s,r\in\NN$ are constants such that
for each $y\in Y$, we have $|p^{-1}\{y\}|\le s$,
and for each $B\in\cB$, we have $|B|\le r$.
Let $\cB\osaj\pot(Y)$ and $\cA=\Delta_{p^{-1}}(\cB)$.
Then
\[
  \DD(\cA)=\DD(\cB),\;
  \DDd(\cA)\le s^r\DDd(\cB) \text{ and }
  \CD(\cA)\le s^r\CD(\cB).
\]
\end{proposition}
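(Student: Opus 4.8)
The plan is to exploit three structural facts about the inverse projection $\Delta_{p^{-1}}$ and then treat the three dimensions separately, with $\DD$ giving an exact equality and $\DDd,\CD$ incurring the factor $s^r$. The facts are: (i) $\Delta_{p^{-1}}$ commutes with unions of families, $\Delta_{p^{-1}}(\bigcup_i\cF_i)=\bigcup_i\Delta_{p^{-1}}(\cF_i)$; (ii) $\Delta_{p^{-1}}$ preserves convexity, since $A_1\osaj A\osaj A_2$ gives $p[A_1]\osaj p[A]\osaj p[A_2]$, so a convex family of images pulls back to a convex family; and (iii) for any $B\osaj Y$ the set $p^{-1}[B]$ is the \emph{largest} $A$ with $p[A]=B$ (by surjectivity), so $p^{-1}[B]\in\cA$ whenever $B\in\cB$, $\Delta_p(\cA)=\cB$, and $\Delta_{p^{-1}}(\cD)$ is dominated by $p^{-1}[\bigcup\cD]$ whenever $\cD$ is dominated convex. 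The asymmetry driving the whole proof is that images have unique largest preimages but not unique smallest ones: the minimal $A$ with $p[A]=B$ are exactly the transversals picking one point from each fibre $p^{-1}\{y\}$, $y\in B$, of which there are $\prod_{y\in B}|p^{-1}\{y\}|\le s^{|B|}$ many.

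For the equality $\DD(\cA)=\DD(\cB)$ I would argue both inequalities through the convex-shadow criterion of Lemma~\ref{dimcalc:shadowcrit}(b). For $\DD(\cA)\le\DD(\cB)$: given $\cG$ dominating $\cB$ with dominated convex witnesses $\cD_G$ (so $\bigcup\cD_G=G$), the families $\Delta_{p^{-1}}(\cD_G)$ are convex by (ii) and dominated with top $p^{-1}[G]$ by (iii); by (i) their union is $\cA$, so $\{p^{-1}[G]\mid G\in\cG\}\osaj\cA$ dominates $\cA$, and surjectivity makes $G\mapsto p^{-1}[G]$ injective, giving $\DD(\cA)\le|\cG|$. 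For $\DD(\cB)\le\DD(\cA)$: given $\cG$ dominating $\cA$, I claim $\{p[G]\mid G\in\cG\}\osaj\cB$ dominates $\cB$. Indeed, for $B\in\cB$ we have $p^{-1}[B]\in\cA$, hence $p^{-1}[B]\in\varjo_G(\cA)$ for some $G$, i.e. $p^{-1}[B]\osaj G$ and $[p^{-1}[B],G]\osaj\cA$; one then checks $B\in\varjo_{p[G]}(\cB)$ by lifting any $B'$ with $B\osaj B'\osaj p[G]$ to $A'=p^{-1}[B]\cup\{a_y\mid y\in B'\jer B\}$ with $a_y\in G\cap p^{-1}\{y\}$, so that $A'\in[p^{-1}[B],G]\osaj\cA$ and $p[A']=B'$. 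No size hypotheses enter here, as expected.

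For $\DDd(\cA)\le s^r\DDd(\cB)$ I would take a supporting family $\cK$ of $\cB$ of size $\DDd(\cB)$ with supported convex witnesses $\cS_K$ (bottom $K=\bigcap\cS_K$). Each $\Delta_{p^{-1}}(\cS_K)$ is convex by (ii) but typically has many minimal elements, so I split it over transversals. For a choice function $\sigma\in\prod_{y\in K}p^{-1}\{y\}$ put $K_\sigma=\{\sigma(y)\mid y\in K\}$; then $\Delta_{p^{-1}}(\cS_K)\cap[K_\sigma,X]$ is convex and supported with bottom $K_\sigma$, and since every $A\in\Delta_{p^{-1}}(\cS_K)$ meets each fibre over $K$, these pieces cover $\Delta_{p^{-1}}(\cS_K)$. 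Taking $\cH=\{K_\sigma\mid K\in\cK,\ \sigma\in\prod_{y\in K}p^{-1}\{y\}\}$ with canonical witnesses $\varjo^{H}(\cA)$, fact (i) gives $\bigcup_{H\in\cH}\varjo^{H}(\cA)=\cA$, and $|\cH|\le\sum_{K\in\cK}\prod_{y\in K}|p^{-1}\{y\}|\le s^r|\cK|=s^r\DDd(\cB)$, using $|K|\le r$. The bound $\CD(\cA)\le s^r\CD(\cB)$ is the exact analogue with intervals replacing supported convex families: covering $\cB$ by $\CD(\cB)$ intervals $[B_0,B_1]\osaj\cB$ (so $|B_0|\le|B_1|\le r$), I decompose $\Delta_{p^{-1}}([B_0,B_1])=\{A\mid B_0\osaj p[A]\osaj B_1\}$ as the union over choice functions $\sigma$ on $B_0$ of the intervals $[K_\sigma,p^{-1}[B_1]]$, at most $\prod_{y\in B_0}|p^{-1}\{y\}|\le s^r$ intervals per piece.

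The main obstacle, and the only real content beyond bookkeeping, is precisely the failure of $\Delta_{p^{-1}}$ to preserve a unique minimum (equivalently, supportedness and intervals): the transversal splitting must be arranged so that the pieces are genuinely supported convex (resp. intervals), stay inside $\cA$, and still cover. Verifying that $\Delta_{p^{-1}}(\cS_K)\cap[K_\sigma,X]$ has bottom exactly $K_\sigma$ and that every member contains some $K_\sigma$ is where care is needed; once this is in place the counting is immediate and the hypotheses $|p^{-1}\{y\}|\le s$ and $|B|\le r$ feed directly into the factor $s^r$.
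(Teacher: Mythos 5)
Your proof is correct and follows essentially the same route as the paper: your transversals $K_\sigma$ (choice functions through the fibres $p^{-1}\{y\}$, $y\in K$) are exactly the paper's ``selective inverse images'', the $\DD$-equality is obtained by the same pull-back/push-forward of dominating families, and the factor $s^r$ arises from the same count of at most $s^{|B|}\le s^r$ transversals per supporting set or interval bottom. If anything, you supply details the paper compresses into ``clearly'' and ``the proof is similar'' --- notably the lifting argument for $\DD(\cB)\le\DD(\cA)$ and the explicit supported convex pieces $\Delta_{p^{-1}}(\cS_K)\cap[K_\sigma,X]$ and intervals $[K_\sigma,p^{-1}[B_1]]$.
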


\begin{proof}
Choose a subfamily $\cG\osaj\cB$ such that $\cG$ dominates $\cB$
and  $\DD(\cB)=|\cG|$.  Now clearly $\cG'=\{p^{-1}[G]\mid G\in\cG\}$
dominates $\cA$, so $\DD(\cA)\le\DD(\cB)$.  If there were
a family $\cG''\osaj\cA$ dominating $\cA$ such that $|\cG''|<\DD(\cB)$,
then $\cG^*=p[[\cG'']]=\{p[A] \mid A\in\cG''\}$ would dominate $\cB$
contrary to the definition of the upper dimension.
Hence, $\DD(\cA)=\DD(\cB)$.

The cases of dual upper dimension and cylindrical dimension are
slightly more involved.  The point is that even if $L$ were minimal
in $\cB$, the inverse image $p^{-1}[L]$ is not in general minimal in $\cA$.
Call $A$ a \emph{selective inverse image}
of $B$, if $p[A]=B$
and $p\restriction A$ is an injection.  Note that $A$ is a selective
inverse image of $B$ if and only if $A$ is a minimal  set with $p[A]=B$.
Choose now $\cK$ that supports $\cB$
and $\DDd(\cB)=|\cK|$.  Consider the family $\cK'$ of all sets $A\in\cA$
such that $A$ is selective inverse image of some $B\in\cK$.
Clearly $\cK'$ supports $\cA$.  Each $B\in\cK$ has at most
$s^{|B|}\le s^r$ selective inverse images, as for every $b\in B$,
we have $|p^{-1}\{b\}|\le s$.  Hence, $\DDd(\cA)\le |\cK'|\le s^r\DD(\cB)$.
In the case of the cylindrical dimension, the proof is similar.
\end{proof}

\subsection{Dimensions of definable families}

We have defined three dimension concepts for totally arbitrary
families of sets on a finite base set. We now apply these concepts to
definable families of subsets of a cartesian product $M^m$. In
particular, we are interested in calculating the three dimensions for
families of the form $\sat{\phi}{{M,\vec x}}$.

\begin{lemma}
If $\phi$ is first order, then $\DD(\sat{\phi}{{M,\vec x}})=\DDd(\sat{\phi}{{M,\vec x}})=\CD(\sat{\phi}{{M,\vec x}})=1$.
\end{lemma}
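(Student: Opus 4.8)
The plan is to use the fact, established earlier in the excerpt, that for any first-order formula $\phi$ the family $\sat{\phi}{{M,\vec x}}$ is an interval. Specifically, the text already notes that $\sat{\phi}{{M,\vec x}}=[\emptyset,T_\phi]$ where $T_\phi=\{\vec a\in M^m\mid M\models\phi(\vec a)\}$, since a team satisfies a first-order $\phi$ exactly when every assignment in it does, and this property is downward closed with largest element $T_\phi$. Thus $\sat{\phi}{{M,\vec x}}$ is a single interval.

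Given this, I would argue as follows. Since $\sat{\phi}{{M,\vec x}}$ is an interval $[\emptyset,T_\phi]$, it is covered by exactly one interval (itself), so $\CD(\sat{\phi}{{M,\vec x}})\le 1$. Because $\phi$ is first-order, the family is nonempty (it always contains $\emptyset$ by the empty team property), so $\CD(\sat{\phi}{{M,\vec x}})\ge 1$, giving $\CD(\sat{\phi}{{M,\vec x}})=1$. Then Proposition~\ref{basic:dimEst} yields $\DD(\sat{\phi}{{M,\vec x}})\le\CD(\sat{\phi}{{M,\vec x}})=1$ and $\DDd(\sat{\phi}{{M,\vec x}})\le\CD(\sat{\phi}{{M,\vec x}})=1$. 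Since all three dimensions are at least $1$ for any nonempty family (a single dominating, supporting, or covering family is needed), equality follows in every case.

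The only point requiring a moment of care is to confirm that the minimal dimension values are indeed $1$ rather than $0$, i.e.\ that the family is nonempty so that at least one interval, dominated convex family, or supported convex family is genuinely required. This is immediate from the empty team property: $M\models_\emptyset\phi$ holds for every first-order $\phi$, so $\emptyset\in\sat{\phi}{{M,\vec x}}$ and the family is nonempty. No genuine obstacle arises; the entire statement reduces to the already-recorded observation that first-order formulas define intervals, combined with the basic dimension inequalities of Proposition~\ref{basic:dimEst}.
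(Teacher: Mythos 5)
Your proof is correct and follows essentially the same route as the paper: both rest on the observation that $\sat{\phi}{{M,\vec x}}=[\emptyset,T_\phi]$ is a single nonempty interval, from which all three dimension computations are immediate. You merely spell out the details the paper dismisses as trivial (the lower bound via nonemptiness and the upper bounds via Proposition~\ref{basic:dimEst}), and these details are all sound.
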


\begin{proof}
The claim follows from the fact that, as we noted above, if $\phi(x_0,\ldots,x_{m-1})$ is first order, then $\sat{\phi}{{M,\vec x}}=[\emptyset,T_\phi]$. This makes the dimension computations trivial.\end{proof}

As alluded to in Section~\ref{families-of-teams}, team semantics permits the extension of first order logic  by a number of new atoms (see Definition~\ref{atoms}) leading to dependence logic (\cite{Vaa}), inclusion logic (\cite{Ga}), exclusion logic (\cite{Ga}), independence logic (\cite{Ga}), and anonymity logic (\cite{anon}). In order to estimate the dimensions of families definable in these logics we first note the following consequence of Theorem~\ref{dim-calculations}:

\begin{theorem}\label{dim-atoms}
Suppose $|M|=n$.
\begin{enumerate}[(a)] 
\item Let $\alpha$ be the dependence atom $\dep(\vx,y)$, where
  $\len(\vx)=m$, and let $\vz=\vx y$. Then
  $\DD(\sat{\alpha}{{M,\vz}})=\CD(\sat{\alpha}{{M,\vz}})=n^{n^m}$ and
  $\DDd(\sat{\alpha}{{M,\vz}})=1$.
\item Let $\alpha$ be the exclusion atom $\vx\mid\vy$, where
  $\len(\vx)=\len(\vy)=m$, and let $\vz=\vx \vy$. Then
  $\DD(\sat{\alpha}{{M,\vz}})=\CD(\sat{\alpha}{{M,\vz}})=2^{n^m}-2$
  and $\DDd(\sat{\alpha}{{M,\vz}})=1$.

\item Let $\alpha$ be the inclusion atom $\vx\subseteq\vy$, where
  $\len(\vx)=\len(\vy)=m$, and let $\vz=\vx \vy$. Then
  $\DD(\sat{\alpha}{{M,\vz}})=2^{n^m}-n^m$ and
  $\DDd(\sat{\alpha}{{M,\vz}})=\CD(\sat{\alpha}{{M,\vz}})
  =1+\sum^{n^m}_{k=2}\binom{n^m}{k}k^k$.
\item Let $\alpha$ be the anonymity atom $\vx\anonym y$, where
  $\len(\vx)=m$. Then
  $\DD(\sat{\alpha}{{M,\vz}})=\CD(\sat{\alpha}{{M,\vz}})=2^{n^m}$
  and $\DDd(\sat{\alpha}{{M,\vz}})
       =\sum^{n^m}_{k=0}\binom{n^m}{k}\binom{n}{2}^k $.
\item Let $\alpha$ be the pure independence atom $\vx\perp\vy$, where
  $\len(\vx)=m$ and $\len(\vy)=k$, and let $\vz=\vx \vy$. Then
  $\DD(\sat{\alpha}{{M,\vz}})=\CD(\sat{\alpha}{{M,\vz}})=
  (2^{n^m}-n^m-1)(2^{n^k}-n^k-1)+n^m+n^k$ and
  $\DDd(\sat{\alpha}{{M,\vz}})=(2^{n^m}-n^m-1)(2^{n^k}-n^k-1)+1$.
\item Let $\alpha$ be the conditional independence atom $ \vx\perp_{\vu} \vy$, where
  $\len(\vx)=m$, $\len(\vy)=k$, $\len(\vu)=s$, and let $\vz=\vx \vu\vy$. Then
  $(2^{n^m}-n^m-1)(2^{n^k}-n^k-1)+n^m+n^k\le\DD(\sat{\alpha}{{M,\vz}})\le\CD(\sat{\alpha}{{M,\vz}})\le
  ((2^{n^m}-n^m-1)(2^{n^k}-n^k-1)+n^m+n^k)^{n^s}$ and
  $(2^{n^m}-n^m-1)(2^{n^k}-n^k-1)+1\le\DDd(\sat{\alpha}{{M,\vz}})\le((2^{n^m}-n^m-1)(2^{n^k}-n^k-1)+1)^{n^s}$.

\end{enumerate}
\end{theorem}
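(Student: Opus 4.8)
The plan is to reduce every case to the dimension values computed in Theorem~\ref{dim-calculations} (and, for the conditional independence atom, to the bounds of Proposition~\ref{lower_upper-bound}) by exhibiting, in each case, a \emph{similarity} between the family $\sat{\alpha}{{M,\vz}}$ and one of the abstract families $\cF$, $\cX$, $\cI_{\osaj}$, $\cY$, $\cI_{\perp}$, $\cI_{\perp,\bullet}$. A team with domain $\vz$ is a subset of $M^{\len(\vz)}$, and by grouping and reordering coordinates according to the syntactic roles of the variables one obtains a coordinate-permuting bijection $f$ from $M^{\len(\vz)}$ onto a product of the form $M^m\times M$, $M^m\times M^m$, $M^m\times M^k$, or $M^m\times M^k\times M^s$. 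Lifting $f$ to teams, it suffices to check that $f$ carries $\sat{\alpha}{{M,\vz}}$ exactly onto the corresponding abstract family; Proposition~\ref{dim-sim}(a) then transfers all three dimensions verbatim. Throughout, the base sets of the abstract families have cardinalities among $n^m$, $n^k$, $n^s$ and $n$, all at least $2$ (since $n=|M|\ge 2$ and the tuples occurring in the atoms are nonempty), so Theorem~\ref{dim-calculations} applies; one only substitutes $\ell=n^m$ and the size of the second base set ($n^k$ or $n$) into its formulas, keeping in mind that the symbol $n$ of Theorem~\ref{dim-calculations} denotes the size of the second base set, not $|M|$.

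First I would dispose of the five exact cases (a)--(e) by identifying the defining condition of each atom with the combinatorial condition of the matching abstract family. Writing a team as a relation $R$ via $f$: a dependence atom $\dep(\vx,y)$ holds exactly when $R\osaj M^m\times M$ is a mapping, so $\sat{\alpha}{{M,\vz}}$ is similar to $\cF$ with $|X|=n^m$, $|Y|=n$; an exclusion atom $\vx\mid\vy$ holds exactly when $\dom(R)\cap\rg(R)=\tyj$, giving similarity to $\cX$ with $|X|=n^m$; an inclusion atom $\vx\subseteq\vy$ holds exactly when $\dom(R)\osaj\rg(R)$, giving $\cI_{\osaj}$ with $|X|=n^m$; an anonymity atom $\vx\anonym y$ holds exactly when $R\osaj M^m\times M$ is anonymous, giving $\cY$ with $|X|=n^m$, $|Y|=n$; and a pure independence atom $\vx\perp\vy$ holds exactly when $R$ is a Cartesian product $A\times B$, giving $\cI_{\perp}$ with $|X|=n^m$, $|Y|=n^k$. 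Substituting these cardinalities into Theorem~\ref{dim-calculations} yields precisely the stated formulas.

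For the conditional independence atom (f) the relation $R$ lives in $M^m\times M^k\times M^s$ after moving the $\vu$-block to the last coordinate, and the truth condition says exactly that for each value $c\in M^s$ of $\vu$ the slice $\{(a,b)\mid (a,b,c)\in R\}$ is a Cartesian product. Hence $\sat{\alpha}{{M,\vz}}$ is similar to $\cI_{\perp,\bullet}$ with $|X|=n^m$, $|Y|=n^k$, $|Z|=n^s$, and Proposition~\ref{lower_upper-bound} (transferred through Proposition~\ref{dim-sim}(a)) gives the displayed lower and upper bounds after the same substitution.

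The routine part is the coordinate bookkeeping and the arithmetic of plugging cardinalities into the formulas of Theorem~\ref{dim-calculations}. The only steps with genuine mathematical content, and hence the main obstacle, are the exact translations of the semantic truth conditions into the combinatorial defining conditions of the abstract families---in particular, verifying for the pure independence atom that the closure condition of Definition~\ref{atoms}(e) forces $R=\dom(R)\times\rg(R)$, and for the conditional atom that the same closure holds within each $\vu$-slice but not across slices. Once these equivalences are established set-by-set (together with the surjectivity of $f$ onto the abstract family, which is immediate since every abstract relation pulls back to a team with the required property), the theorem follows.
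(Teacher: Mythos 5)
Your proposal is correct and follows essentially the same route as the paper, whose own proof simply observes that by Definition~\ref{atoms} each family $\sat{\alpha}{{M,\vz}}$ \emph{is} the corresponding abstract family ($\cF$, $\cX$, $\cI_{\osaj}$, $\cY$, $\cI_{\perp}$) on base sets of sizes $n^m$, $n^k$, $n$, and then reads off the values from Theorem~\ref{dim-calculations}, invoking Proposition~\ref{lower_upper-bound} for case (f) exactly as you do. If anything, you are more explicit than the paper's three-sentence proof about the coordinate bookkeeping (via Proposition~\ref{dim-sim}(a)) and about verifying the semantic-to-combinatorial translations, which the paper leaves implicit.
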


\begin{proof}
(a) Letting $\cF = \{ f\osaj M^m\times M \mid f\text{ is a mapping }\}$, Theorem~\ref{dim-calculations} gives $\DD(\cF)=\CD(\cF)=n^{n^m}$ and $\DDd(\cF)=1$. By Definition~\ref{atoms} we have $\cF=\sat{\alpha}{{M,\vz}}$ and the claim follows. The short argument is the same in each other case (b)-(f). In (f) we use Proposition~\ref{lower_upper-bound}.
\end{proof}


\begin{table}
    \centering
    \begin{tabularx}{0.8\linewidth}{cXl}
atom & upper dimension & notes \\      
\hline
$x=y$&$1$&\\
$\dep(\vec{y})$&$n^m$&$\len(\vec{y})=m$\\
$\vec{x}\subseteq\vec{y}$&$2^{n^m}-n^m$&$\len(\vec{x})=\len(\vec{y})=m$\\
$\vec{x} \mid \vec{y}$ & $2^{n^m}-2$ & $\len(\vec{x})=\len(\vec{y})=m$\\
$\vec{x}\anonym {y}$ & $2^{n^m}$ & $\len(\vec{x})=m$ \\
$\vec{x}\perp\vec{y} $ & $\approx 2^{n^m+n^k}$
     & $\len(\vec{x})=m, \len(\vec{y})=k$\\
$\dep(\vec{x},{y})$ & $n^{n^m}$ & $\len(\vec{x})=m$ \\
$\vec{x}\perp_{\vec{u}}\vec{y}$ & $\approx [2^{n^{m}+n^{k}},2^{n^{m+s}+n^{k+s}}]$ & $\len(\vec{x})=m, \len(\vec{y})=k, \len(\vec{u})=s$ \\
    \end{tabularx}
    \caption{Upper dimensions of atoms.}
    \label{tab:my_label}
\end{table}

We may notice that, keeping $m$ and $k$ fixed, the upper and the
cylindrical dimension of the dependence atom grows faster than the
respective dimensions of the other atoms, except the relativized independence atom. Varying $m$ and $k$ we
obtain a host of comparisons between dimensions of the atoms. These
will become relevant below when we combine the atoms with logical
operations.

 Let us now define the important concept of locality:

\begin{definition}\label{locality}
A formula $\phi$ of any logic, with the free variables $\vx$, is said to be \emph{local} if for all models $M$ and teams $T$ with $\vx\subseteq\dom(T)$ we have $$M\models_T\phi\ \Leftrightarrow\  M\models_{T\restriction\vx}\phi.$$
\end{definition}

All the atoms of Definition~\ref{atoms} are local and the logical operations of Definition~\ref{fol}, as well as all Lindstr\"om quantifiers (see Definition~\ref{Linquantsem}), preserve locality. 

The semantics defined in Definition~\ref{local} has  a variant called \emph{strict semantics}. In strict semantics we define the meaning of tensor disjunction by 
$M\models_T\phi\vee\psi$ if and only if $T=Y\cup Z$ such that
    $M\models_Y\phi$, $M\models_Z\psi$, and $Y\cap Z=\emptyset$. The meaning of existential quantifier in strict semantics is 
   $M\models_T\exists x\phi$ if and only if there is
  $F:T\to M$ such that
  $M\models_{T[F/x]}\phi$. For dependence logic this change of semantics does have no effect because of downward closure. However, inclusion logic with strict semantics is not local. We will not consider  strict semantics in detail in this paper.

It is also important to notice that above, we have calculated
the dimensions of the teams related to certain atomic formulas
relative to the variables occurring in the formulas.
In general, we need to consider atomic formulas -- or, in general,
also other formulas -- as subformulas of larger formulas, so
we need to attach also other variables in the context.
Usually, the following estimates 
are good enough for our purposes.

\begin{proposition} \label{calc:dummydf}
Let $\mM$ be a structure and $\phi$ a local formula with a common vocabulary,
$\vy$ the sequence of variables occurring in $\phi$  and $\vx$
a finite sequence of variables extending~$\vy$. Suppose $\mM$
has size $n$, $r$ is constant such that for every team $T$ in variables $\vy$
we have that $\mM\models_T\phi$ implies $|T|\le r$,
and $t=\len(\vx)-\len(\vy)$.
Then
\[
  \DD(\sat{\phi}{\mM,\vx})=\DD(\sat{\phi}{\mM,\vy}),\,
  \DDd(\sat{\phi}{\mM,\vx})\le n^{tr}\DD(\sat{\phi}{\mM,\vy})\text{ and }
  \CD(\sat{\phi}{\mM,\vx})\le n^{tr}\CD(\sat{\phi}{\mM,\vy}).
\]
\end{proposition}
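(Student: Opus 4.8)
The plan is to recognize that extending the tuple of variables from $\vy$ to $\vx$ is, thanks to locality, exactly an application of the inverse projection operator, so that the abstract Proposition~\ref{calc:dummyabs} applies almost verbatim. Put $m=\len(\vx)$ and $k=\len(\vy)$, and let $p\colon M^m\to M^k$ be the surjection that sends each $\vx$-assignment (identified with a tuple in $M^m$) to its restriction to the variables $\vy$; concretely $p$ is the coordinate projection onto the positions occupied by $\vy$, and for any team $T\osaj M^m$ we have $p[T]=T\restriction\vy$. This $p$ is surjective since every $\vy$-assignment extends to some $\vx$-assignment.

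First I would establish the key identity
\[
  \sat{\phi}{\mM,\vx}=\Delta_{p^{-1}}(\sat{\phi}{\mM,\vy}).
\]
Since $\phi$ is local and its variables $\vy$ satisfy $\vy\osaj\dom(T)=\vx$, Definition~\ref{locality} gives $\mM\models_T\phi$ iff $\mM\models_{T\restriction\vy}\phi$, i.e.\ $T\in\sat{\phi}{\mM,\vx}$ iff $p[T]\in\sat{\phi}{\mM,\vy}$. Comparing with the definition of $\Delta_{p^{-1}}$ in Example~\ref{operator-ex}(f), this is precisely the displayed equality. This step is the only one requiring care: one must check that locality genuinely turns ``adding dummy variables'' into the inverse projection, and that $p$ is surjective, but both are routine.

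Next I would verify the two hypotheses of Proposition~\ref{calc:dummyabs} for this $p$ and for $\cB=\sat{\phi}{\mM,\vy}$. Each fiber $p^{-1}\{\vec b\}$ consists of the $\vx$-assignments extending a fixed $\vy$-assignment, and there are exactly $n^{t}$ of them (one free choice in $M$ for each of the $t=m-k$ extra variables); hence we may take $s=n^{t}$. The size hypothesis on $\phi$ supplies the other constant: every $B\in\sat{\phi}{\mM,\vy}$ is a team $T$ in variables $\vy$ with $\mM\models_T\phi$, so $|B|=|T|\le r$ by assumption. Thus the hypotheses hold with $s=n^t$ and this $r$.

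Finally, Proposition~\ref{calc:dummyabs} applies directly and yields $\DD(\sat{\phi}{\mM,\vx})=\DD(\sat{\phi}{\mM,\vy})$ together with $\DDd(\sat{\phi}{\mM,\vx})\le s^{r}\DDd(\sat{\phi}{\mM,\vy})$ and $\CD(\sat{\phi}{\mM,\vx})\le s^{r}\CD(\sat{\phi}{\mM,\vy})$. Substituting $s^{r}=(n^{t})^{r}=n^{tr}$ gives the stated estimates. There is essentially no remaining obstacle once the identity $\sat{\phi}{\mM,\vx}=\Delta_{p^{-1}}(\sat{\phi}{\mM,\vy})$ and the fiber count $s=n^{t}$ are in place; the conclusion is then an immediate specialization of the abstract result.
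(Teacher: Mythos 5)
Your proposal is correct and matches the paper's own proof essentially verbatim: both reduce the claim to Proposition~\ref{calc:dummyabs} via the natural projection $p\colon M^{k+t}\to M^k$, using locality to obtain $\sat{\phi}{\mM,\vx}=\Delta_{p^{-1}}(\sat{\phi}{\mM,\vy})$ and the fiber count $|p^{-1}\{\vec b\}|=n^t$, so that $s^r=n^{tr}$. Your write-up is in fact slightly more careful than the paper's (checking surjectivity and the hypotheses explicitly), but it is the same argument.
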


\begin{proof}
This is a simple application of the Proposition~\ref{calc:dummyabs}.
Put $k=\len(\vy)$.
Consider the case where $T=M^{k+t}$, $Y=M^k$, $p\colon T\to Y$
is the natural projection, $\cA=\sat{\phi}{\mM,\vx}$ and
$\cB=\sat{\phi}{\mM,\vy}$.  The locality of $\phi$ implies that
$\cA=\Delta_{p^{-1}}(\cB)$, and for each $y\in Y$,
$|p^{-1}\{y\}|=n^t$, and for every $T\in\cB$, it holds that
$|T|\le r$.  As $|Y|=n^k$, the results follow. 
\end{proof}

\section{Growth classes and operators} \label{growth}

Although the basic dimension concepts above apply perfectly to any family of sets, we can say more when we focus on families of subsets of cartesian powers of finite sets i.e. families of teams. In such a framework the concept of a growth class arises naturally and is the topic of this section.

\subsection{Growth classes}\label{gc}

As we apply our dimensional techniques to definability problems
on the class of finite structures, we are constantly facing the dilemma
that it is usually not sufficient to consider a single structure
and families of sets arising from team semantics in that structure,
but we rather have to consider the class of all appropriate
finite structures.  That means that we have to accept the possibility
that the size of the base set may change, which calls for a dynamical
way to handle matters.  To that end, we consider growth classes.

In the definitions that follow, we generalize the arithmetical notation
in the pointwise fashion, e.g., for functions $f,g\colon \NN\to\NN$
we set $f+g$ to be the function $\NN\to\NN$ such that
$(f+g)(n)=f(n)+g(n)$, for $n\in\NN$, and $f\le g$ means that
$f(n)\le g(n)$ holds for every $n\in\NN$.

\begin{definition}
A set $\OO$ of mappings $f\colon\NN\to\NN$ is a \emph{growth class}
if the following conditions hold for all $f,g\colon\NN\to\NN$:
\begin{enumerate}[(a)]
\item
If $g\in\OO$ and $f\le g$, then $f\in\OO$.
\item
If $f,g\in\OO$, then $f+g\in\OO$ and $fg\in\OO$.  
\end{enumerate}
\end{definition}

The point of growth classes is that they are closed under natural operators arising from logical operations. As it turns out, if we figure out the growth classes of some atoms, anything definable from those atoms by means of most of the logical operations we deal with will be in the same growth class. Thus the growth classes represent important dividing lines.

We are interested in the following particular classes:
For $k\in\NN$, the class $\EE_k$ consist all $f\colon\NN\to\NN$
such that there exists a polynomial $p\colon\NN\to\NN$ of degree $k$ and
with coefficients in $\NN$ such that for all $n\in\NN$ $$f(n)\le 2^{p(n)}.$$  In addition,
$\FF_k$ is the class of functions $f\colon\NN\to\NN$
such that there exists a polynomial $p\colon\NN\to\NN$ of degree $k$ and
with coefficients in $\NN$ such that for every $n\in\NN\jer\{0,1\}$
we have that 
\[
  f(n)\le n^{p(n)}. 
\]

Note that $\EE_0$ is the class of bounded functions and $\FF_0$
the class of functions of polynomial growth.
The following is immediate:
\begin{proposition}
Each $\EE_k$ and $\FF_k$ (for $k\in\NN$) is a growth class.
Furtheoremore, we have that
\[
  \EE_0\aioj\FF_0\aioj\EE_1\aioj\FF_1\aioj\cdots\aioj\EE_k\aioj\FF_k\cdot
\]
\end{proposition}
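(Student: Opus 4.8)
The plan is to verify separately the two assertions: that each $\EE_k$ and $\FF_k$ satisfies the two closure axioms of a growth class, and that the displayed chain consists of strict inclusions. Throughout I will use the harmless normalisation that in both definitions the phrase ``polynomial of degree $k$'' may be read as ``polynomial of degree at most $k$ with coefficients in $\NN$'': a witness $p$ of degree $<k$ can be replaced by $p+n^k$, which still has natural coefficients and only weakens the bounds, since $2^{p(n)}\le 2^{p(n)+n^k}$ and, for $n\ge 2$, $n^{p(n)}\le n^{p(n)+n^k}$. With this reading the set of admissible witness polynomials is closed under addition and under adding a positive constant, which is exactly what the closure proofs need.

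For the growth-class axioms, downward closure (axiom (a)) is immediate: if $f\le g$ and $g$ is bounded by $2^{p}$ (resp.\ by $n^{p}$), then the same $p$ witnesses $f$. For axiom (b) I would fix witnesses $p,q$ for $f,g$ and use the elementary inequalities $2^{a}2^{b}=2^{a+b}$ and $2^{a}+2^{b}\le 2^{a+b+1}$ in the case of $\EE_k$, and $n^{a}n^{b}=n^{a+b}$ together with $n^{a}+n^{b}\le n^{a+b+1}$ (valid for $n\ge 2$, since there $2\le n$) in the case of $\FF_k$. These show at once that $fg$ is witnessed by $p+q$ and $f+g$ by $p+q+1$, both of degree at most $k$, so each class is closed under sums and products.

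Next I would establish the two inclusion patterns $\EE_k\osaj\FF_k$ and $\FF_k\osaj\EE_{k+1}$ that generate the whole chain. The first follows from $2^{p(n)}\le n^{p(n)}$ for $n\ge 2$, so the same witness carries over. The second is the only place where the two scales genuinely interact: writing $n^{p(n)}=2^{p(n)\log_2 n}$ and bounding $\log_2 n\le n$ gives $n^{p(n)}\le 2^{n\,p(n)}$ for $n\ge 2$, and $n\,p(n)$ has degree at most $k+1$; the two values $n\in\{0,1\}$, which $\FF_k$ leaves unconstrained, are absorbed by adding a large enough constant to the exponent so that the bound holds at those points as well. This degree shift by one is precisely what separates the $\EE$-scale from the $\FF$-scale, and it is where I expect the main (though still modest) care to be required.

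Finally, for strictness I would exhibit explicit separating witnesses. The function $n\mapsto n^{n^{k}}$ lies in $\FF_k$ (witness $n^{k}$) but not in $\EE_k$, since $\EE_k$-membership would force $n^{k}\log_2 n\le q(n)$ for some $q$ of degree at most $k$, which is impossible asymptotically because $q(n)=O(n^{k})$. Dually, $n\mapsto 2^{n^{k+1}}$ lies in $\EE_{k+1}$ (witness $n^{k+1}$) but not in $\FF_k$, as that would force $n^{k+1}\le q(n)\log_2 n=O(n^{k}\log_2 n)$, again impossible for large $n$. Both non-membership arguments rest only on the fact that a polynomial of degree at most $k$ grows like $O(n^{k})$, so the extra $\log_2 n$ factor cannot be compensated. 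Combining the two inclusion patterns with these two examples yields the strictly increasing chain $\EE_0\aioj\FF_0\aioj\EE_1\aioj\cdots$.
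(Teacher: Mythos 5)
Your proof is correct, and there is nothing to compare it against in detail: the paper states this proposition with no proof at all (``The following is immediate''), and your argument supplies precisely the routine verification intended, namely the pointwise inequalities $2^{a}2^{b}=2^{a+b}$, $2^{a}+2^{b}\le 2^{a+b+1}$ (and their base-$n$ analogues for $n\ge 2$) for the closure axioms, the inclusions $\EE_k\osaj\FF_k\osaj\EE_{k+1}$, and the separating witnesses $n\mapsto n^{n^{k}}$ and $n\mapsto 2^{n^{k+1}}$, whose non-membership follows from the $O(n^k)$ growth of degree-$k$ witness polynomials. You also correctly handle the two genuinely delicate points that ``immediate'' glosses over: normalising ``degree $k$'' to ``degree at most $k$'' via $p\mapsto p+n^k$, and absorbing the unconstrained values $n\in\{0,1\}$ of an $\FF_k$-function by adding a constant to the exponent when embedding into $\EE_{k+1}$.
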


\begin{definition}
To each formula $\phi$ with free variables in $\vx$
allowing a team-semantical interpretation we relate
the following \emph{dimension functions}:

\begin{align*}
&\Dim_{\phi,\vec x}\colon\NN\to\Card,
  &\Dim_{\phi,\vec x}(n)&=\sup\left\{\DD(\sat{\phi}{{M,\vx}}) \mid
                \text{$M$ is a model}, |M|=n\right\},\\  
&\Dimd_{\phi,\vec x}\colon\NN\to\Card,
  &\Dimd_{\phi,\vec x}(n)&=\sup\left\{\DDd(\sat{\phi}{{M,\vx}}) \mid
                \text{$M$ is a model}, |M|=n\right\},\\  
&\CDim_{\phi,\vec x}\colon\NN\to\Card,
  &\CDim_{\phi,\vec x}(n)&=\sup\left\{\CD(\sat{\phi}{{M,\vx}}) \mid
                \text{$M$ is a model}, |M|=n\right\}.\\  
\end{align*}
\end{definition}

\begin{example}\label{atomiendimensioita}
\begin{enumerate}[(a)]
    \item $\CDim_{\phi,\vec x}(n)=1$, hence $\CDim_{\phi,\vec x}$ is in $\EE_0$, for every first order $\phi$.  Hence the same holds for $\Dim_{\phi,\vec x}$ and $\Dimd_{\phi,\vec x}$, by Proposition~\ref{basic:dimEst}.
    \item $\Dim_{=\!(\vx,y),\vx y}(n)=n^{n^k}$,  hence $\Dim_{=\!(\vx,y),\vx y}$ is in $ \FF_k$, where $\len(\vx)=k$. The same holds for   $\CDim_{=\!(\vx,y),\vx y}$. However, $\Dimd_{=\!(\vx,y),\vx y}(n)=1$,  whence $\Dimd_{=\!(\vx,y),\vec x y}$ is in $ \EE_0$.
    \item $\Dim_{\vx|\vy,\vx\vy}(n)=2^{n^k}-2$,  hence $\Dim_{\vx|\vy,\vx\vy}$ is in $ \EE_k$, where  $\len(\vx)=\len(\vy)=k$. The same holds for   
$\CDim_{\vx|\vy,\vx\vy}$. However, $\Dimd_{\vx|\vy,\vx \vy}(n)=1$,  whence $\Dimd_{\vx|\vy,\vx\vy}$ is in $ \EE_0$.
   \item $\Dim_{\vx\subseteq\vy,\vx\vy}(n)=2^{n^k}-n^k$,  hence $\Dim_{\vx\subseteq\vy,\vx\vy}$ is in $ \EE_k$, where $\len(\vx)=\len(\vy)=k$. \item $\Dim_{\vx\Upsilon y,\vx y}(n)=2^{n^k}$,  hence $\Dim_{\vx\Upsilon y,\vx y}\in \EE_k$, where $\len(\vx)=k$.
\item $\Dim_{\vx\perp_{\vz}\vy,\vx\vz\vy}(n)\in [r,r^{n^s}]$, where $r=(2^{n^m}-n^m-1)(2^{n^k}-n^k-1)+n^m+n^k$,  hence $\Dim_{\vx\perp_{\vz}\vy,\vx\vz\vy}$ is in $ \EE_{m+k+s}$, where $\len(\vx)=k$, $\len(\vy)=m$, and $\len(\vz)=s$. 

 \end{enumerate}
\end{example}

For a summary of the above example, see Table~\ref{classes}. Note that the last row of the table indicates an upper bound only.

\begin{table}[h]
\begin{center}
\def\arraystretch{1.2}\begin{tabularx}{0.65\linewidth}{ccccclll}
family & $X$ & $Y$ & $Z$&atom $\alpha$
           & $\Dim_\alpha$ & $\Dimd_\alpha$ & $\CDim_\alpha$\\
\hline
$\cF$ & $M^k$ & $M$ && $\dep(\vx,t)$
           & $\FF_k$ & $\EE_0$ & $\FF_k$ \\
$\cX$ & $M^k$ & $M^k$ && $\vx\mid\vy$
           & $\EE_k$ & $\EE_0$ & $\EE_k$ \\
$\cI_{\osaj}$ & $M^k$ & $M^k$ && $\vx\osaj\vy$
           & $\EE_k$ & $\FF_k$ & $\FF_k$ \\
$\cY$ & $M^k$ & $M$ & &$\vx\anonym  y$
           & $\EE_k$ &$\FF_0$ & $\EE_k$ \\
           $\cI_{\perp}$ & $M^k$ & $M^l$ && $\vx\perp\vz$
           & $\EE_{k+l}$ & $\EE_{k+l}$ & $\EE_{k+l}$ \\
$\cI_{\perp,\bullet}$ & $M^k$ & $M^l$ &$M^s$& $\vx\perp_{\vz}\vy$
           & $\EE_{k+l+s}$ & $\EE_{k+l+s}$ &$\EE_{k+l+s}$  \\
\end{tabularx}\def\arraystretch{1}
\end{center}
\caption{\label{classes}Growth classes of  families arising from atoms.}
\end{table}

In the example above, the growth classes of the dimension functions of
some atoms were determined relative to variables occurring in the formula.
In the general case, it is conceivable that the dimensions functions
are not preserved in the same classes.  We need the following concept
to show that the situation is, by and large, conserved.

\begin{definition}
A formula $\phi$ with free variables $\vx$ is of \emph{degree}~$k\in\NN$
if there is a polynomial function $p\colon\NN\to\NN$ of degree~$k$,
with coefficients in $\NN$, such that the following holds:
For every structure $\mM$ for the common vocabulary of size $n\in\NN$,
if $\mM\models_{T}\phi$ holds for a team in variables~$\vx$,
then $|T|\le p(n)$.
\end{definition}

For a local formula with $k$ free variables the degree is always at most $k$.


\begin{proposition}
  Let $l\in\NN$, $\OO$ be a growth class,
  $\phi$ be a local formula of degree $k$,
  $\vy$ be the tuple of variables
  occurring in $\phi$, and $\vx$ be a finite tuple extending $\vy$.
  \begin{enumerate}[(a)]
  \item If $\Dim_{\phi,\vy}$ is in $\OO$,
        then $\Dim_{\phi,\vx}$ is also in $\OO$.
  \item If $\FF_k\osaj\OO$ and $\Dimd_{\phi,\vy}$ is in $\OO$, 
        then $\Dimd_{\phi,\vx}$ is also in $\OO$.
  \item If $\FF_k\osaj\OO$ and $\CDim_{\phi,\vy}$ is in $\OO$, 
        then $\CDim_{\phi,\vx}$ is also in $\OO$.
  \end{enumerate}
\end{proposition}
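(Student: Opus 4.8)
\emph{The plan} is to reduce everything to the per-model estimates of Proposition~\ref{calc:dummydf} and then track how the resulting multiplicative factors behave as functions of $n$. The only genuine wrinkle is that Proposition~\ref{calc:dummydf} is phrased with a \emph{constant} bound $r$ on team sizes, whereas the degree-$k$ hypothesis supplies only a polynomial bound $|T|\le p(n)$ that grows with $n=|\mM|$. I would handle this by applying the estimate size-by-size: fix $n$, let $p$ be the degree-$k$ polynomial (with coefficients in $\NN$) witnessing that $\phi$ has degree $k$, and apply Proposition~\ref{calc:dummydf} (more precisely, its underlying abstract version Proposition~\ref{calc:dummyabs}, with $s=n^t$ and $r:=p(n)$) to every model $\mM$ of size $n$, where $t=\len(\vx)-\len(\vy)$.

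For part (a), recall that the first assertion of Proposition~\ref{calc:dummydf} is the equality $\DD(\sat{\phi}{\mM,\vx})=\DD(\sat{\phi}{\mM,\vy})$, which does not involve $r$ at all. Thus for every model $\mM$ the two upper dimensions agree, and taking the supremum over models of size $n$ gives $\Dim_{\phi,\vx}=\Dim_{\phi,\vy}$ as functions $\NN\to\Card$. Membership in $\OO$ is then immediate, and no hypothesis on $\OO$ is needed.

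For parts (b) and (c), the per-model estimate reads $\DDd(\sat{\phi}{\mM,\vx})\le n^{tp(n)}\,\DDd(\sat{\phi}{\mM,\vy})$ and $\CD(\sat{\phi}{\mM,\vx})\le n^{tp(n)}\,\CD(\sat{\phi}{\mM,\vy})$. Since the factor $n^{tp(n)}$ depends only on $n$ and not on the particular model, I can take the supremum over all $\mM$ of size $n$ on both sides to obtain $\Dimd_{\phi,\vx}(n)\le n^{tp(n)}\,\Dimd_{\phi,\vy}(n)$ and likewise for $\CDim$. Now observe that $tp$ is again a polynomial of degree $k$ with coefficients in $\NN$, so the function $n\mapsto n^{tp(n)}$ lies in $\FF_k$ by the very definition of that class. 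The hypothesis $\FF_k\osaj\OO$ then places this factor in $\OO$; since $\Dimd_{\phi,\vy}\in\OO$ by assumption and growth classes are closed under products, the product $n\mapsto n^{tp(n)}\Dimd_{\phi,\vy}(n)$ is in $\OO$, and finally the downward closure of $\OO$ under pointwise $\le$ yields $\Dimd_{\phi,\vx}\in\OO$. Part (c) is verbatim the same with $\CD$ and $\CDim$ in place of $\DDd$ and $\Dimd$.

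I expect the main obstacle to be precisely the bookkeeping around the non-constant $r$: one must justify that applying the estimate separately for each $n$ (with $r=p(n)$) is legitimate and that the model-independent factor $n^{tp(n)}$ survives the supremum, after which recognizing this factor as a member of $\FF_k$ and invoking the closure properties of growth classes is routine. A minor point to check is the degenerate case $t=0$ (no dummy variables), where all three statements hold trivially since $\vx$ and $\vy$ then carry the same dimension functions.
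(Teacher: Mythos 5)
Your proposal is correct and follows essentially the same route as the paper's own proof: the paper likewise applies Proposition~\ref{calc:dummydf} with $r=p(n)$ for each structure of size $n$, notes that $tp$ is a degree-$k$ polynomial so $n\mapsto n^{tp(n)}$ lies in $\FF_k$, and concludes via the closure properties of growth classes. Your explicit attention to the non-constant $r$ and the supremum step merely spells out bookkeeping the paper leaves implicit.
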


\begin{proof}
The proof is a direct application of Proposition~\ref{calc:dummydf}.
Fix the polynomial function~$p$ of degree~$k$ witnessing that
$\phi$ is of degree~$k$, and put $t=\len(\vx)-\len(\vy)$.
Consider an appopriate structure $\mM$ of size~$n$.
By the Proposition (putting $r=p(n)$), we have
\begin{align*}
  &\DD(\sat{\phi}{\mM,\vx})=\DD(\sat{\phi}{\mM,\vy})\\
  &\DDd(\sat{\phi}{\mM,\vx})\le n^{tp(n)}\DDd(\sat{\phi}{\mM,\vy})\text{ and }\\
  &\CD(\sat{\phi}{\mM,\vx})\le n^{tp(n)}\CD(\sat{\phi}{\mM,\vy}).\\
\end{align*}
As $tp$ is a polynomial function of degree~$k$, the function
$n\mapsto n^{tp(n)}$ is in $\FF_k$, and the results follow.
\end{proof}

It is worth noting that dual and cylindrical dimensions of formulas do not behave as well as upper dimension when new variables are added (see Theorem~\ref{calc:dummydf}).
Thus the dual or cylindrical dimension of a formula may be in $\EE_k$, but when new variables are taken into account, even if they do not occur in the formula, the (dual or cylindrical) dimension may jump into $\FF_k$ as a a new factor $n^{n^k}$ may appear.

\subsection{Kripke-operators}

Our goal in this section is to find natural criteria for operators to preserve growth classes. 
We start by defining a class of operators that is inspired by the Kripke semantics of modal logic. Let $X$ and $Y$ be nonempty base sets, and let
$\cR\subseteq \cP(Y)\times\cP(X)^n$ be an $(n+1)$-ary relation. Then we define a corresponding operator $\Delta_\cR\colon \cP(\cP(X))^n\to\cP(\cP(Y))$
by the condition
\begin{align*}
   B\in\Delta_\cR(\cA_0,\ldots,\cA_{n-1})
	\Leftrightarrow \exists A_0\in \cA_0\ldots
	\exists A_{n-1}\in \cA_{n-1}:
	\; (A,A_0,\ldots,A_{n-1})\in \cR.
\end{align*}
Note that $\Delta_\cR$ can be seen as the $n$-ary second-order version of the function mapping the truth set of a formula $\varphi$ to the truth set of $\Diamond\varphi$ in a Kripke model.

\begin{definition}
Let $X$ and $Y$ be nonempty sets. A function  $\Delta\colon \cP(\cP(X))^n\to\cP(\cP(Y))$
is a \emph{(second-order) Kripke-operator\footnote{This notion is
defined by  \cite{Lu}; he calls functions satisfying the
condition just ``operators''.}}, if there is a relation
$\cR\subseteq \cP(Y)\times\cP(X)^{n}$ such that $\Delta=\Delta_\cR$.
\end{definition}

In the next example we go through the operators introduced in Example~\ref{operator-ex}, and check
which of them are Kripke-operators.

\begin{example}\label{kripke-ex} 
\begin{enumerate}[(a)]
\item 
Intersection of families is a Kripke-operator on any base set
$X$: If $\cA,\cB\subseteq\cP(X)$ and $C\in \cP(X)$, then
$C\in\cA\cap\cB$ if and only if there exist $A\in\cA$ and $B\in\cB$
such that $(C,A,B)\in\cR_\cap$, where $\cR_\cap$ is the simple
relation $\{(D,D,D)\mid D\in\cP(X)\}$.

\item
Union of families on $X$ is \emph{not} a Kripke-operator. This
is because for any relation $\cR\subseteq (\cP(X))^3$ and any
nonempty family $\cA\subseteq\cP(X)$ we have
$\Delta_\cR(\cA,\emptyset)=\emptyset\not=\cA=\cA\cup\emptyset$. However,
defining
$\cR_{\cup^*}=\{(A,A,\emptyset),(A,\emptyset,A)\mid A\in\cP(X)\}$ we
obtain a Kripke-operator $\Delta_{\cR_{\cup^*}}$ that captures union
when restricting to families that contain $\emptyset$.

\item
It is also easy to see that complementation $\Delta^X_c$ is not
a Kripke-operator: $\Delta_\cR(\emptyset)=\emptyset$ for any
relation $\cR\subseteq (\cP(X))^2$, but
$\Delta^X_c(\emptyset)=\cP(X)\not=\emptyset$.


\item
 Tensor disjunction  and tensor negation on $X$ are Kripke-operators: clearly $\cA\lor\cB=\Delta_{\cR_\lor}(\cA,\cB)$ and $\Delta^X_\lnot(\cA)=\Delta_{\cR_\lnot}(\cA)$ where $\cR_\lor=\{(A\cup B,A,B)\mid A,B\in\cP(X)\}$ and $\cR_\lnot=\{(X\jer A,A)\mid A\in\cP(X)\}$. More generally, for any binary operation $\oast$ on the set $\{0,1\}$ the corresponding tensor operator is a Kripke-operator:  $\Delta^X_{\oast}=\Delta_{\cR_\oast}$, where $\cR_\oast=\{(A\ast B,A,B)\mid A,B\in\cP(X)\}$ (see Definition \ref{tensor-def}).

\item
Projections and inverse projections are
Kripke-operators. Indeed, if $f\colon X\to Y$ is a surjection, then
clearly $\Delta_f=\Delta_{\cR_f}$, where
$\cR_f=\{(f[A],A)\mid A\in\cP(X)\}$. Similarly,
$\Delta_{f^{-1}}=\Delta_{\cR_{f^{-1}}}$, where
$\cR_{f^{-1}}=\{(A,f[A])\mid A\in\cP(X)\}$.


\item 
Finally we observe that the $(\cK,\vec\ell)$-projection operators $\Delta^{M^m}_{\cK,\vec\ell}$ corresponding to Lindström quantifiers are Kripke-operators: by Definition \ref{Q-oper} we have $\Delta^{M^m}_{\cK,\vec\ell}=\Delta_{\cR_{\cK,\vec\ell}}\,$, where $\cR_{\cK,\vec\ell}=\{(B,A)\mid B=\pi^p_{\cK,\vec\ell}\,(A)\}$ if $(M,\emptyset)\notin\cK$, and $\cR_{\cK,\vec\ell}=\{(B,A)\mid \pi^p_{\cK,\vec\ell}\,(A)\subseteq B\}$ if $(M,\emptyset)\in\cK$.

In particular, the existential quantification operators $\Delta^{M^m}_{\exists i}$ and the universal quantification operators $\Delta^{M^m}_{\forall i}$ are Kripke-operators.
\end{enumerate}

\end{example}

An important property of Kripke-operators is that they preserve unions of families:

\begin{lemma}[Union Lemma]\label{union-lem}
Let $\Delta_\cR\colon \pot(\pot(X))^n\to\pot(\pot(Y))$ be a Kripke-operator, and let 
$\cA_i^k\in\pot(\pot(X))$, $k\in K_i$, be families of sets for some index sets $K_i$, $i<n$.
Then 
$$
	\Delta_\cR\bigr(\bigcup_{k\in K_0}\cA_0^k,\ldots,\bigcup_{k\in K_{n-1}}\cA_{n-1}^k\bigl)
	= \bigcup_{\vec k\in K}
	\Delta_\cR(\cA_0^{k_0},\ldots,\cA_{n-1}^{k_{n-1}}),
$$
where we use the notation $\vec k=(k_0,\ldots,k_{n-1})$ and $K=K_0\times\cdots\times K_{n-1}$.
\end{lemma}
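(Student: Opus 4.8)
The plan is to prove the asserted set equality by a chain of logical equivalences, unfolding the defining condition of the Kripke-operator $\Delta_\cR$ and then interchanging the order of the existential quantifiers involved. Throughout, fix an arbitrary $B\in\pot(Y)$ and abbreviate $\cB_i=\bigcup_{k\in K_i}\cA_i^k$ for $i<n$, so that the left-hand side is $\Delta_\cR(\cB_0,\ldots,\cB_{n-1})$. First I would unfold membership in the left-hand side using the definition of $\Delta_\cR$: we have $B\in\Delta_\cR(\cB_0,\ldots,\cB_{n-1})$ if and only if there exist sets $A_0\in\cB_0,\ldots,A_{n-1}\in\cB_{n-1}$ with $(B,A_0,\ldots,A_{n-1})\in\cR$. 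Now for each $i<n$ the condition $A_i\in\cB_i=\bigcup_{k\in K_i}\cA_i^k$ is equivalent to the existence of an index $k_i\in K_i$ with $A_i\in\cA_i^{k_i}$. Substituting this in, $B$ lies in the left-hand side if and only if there exist indices $k_0\in K_0,\ldots,k_{n-1}\in K_{n-1}$ and sets $A_i\in\cA_i^{k_i}$ (for $i<n$) such that $(B,A_0,\ldots,A_{n-1})\in\cR$.

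Next I would collect the $n$ independent existential quantifiers over the $k_i$ into a single existential quantifier over the tuple $\vec k=(k_0,\ldots,k_{n-1})$ ranging over $K=K_0\times\cdots\times K_{n-1}$; since every quantifier in sight is existential, this reorganization is valid. For a fixed $\vec k\in K$, the inner statement ``there exist $A_i\in\cA_i^{k_i}$ for $i<n$ with $(B,A_0,\ldots,A_{n-1})\in\cR$'' is, again by the definition of $\Delta_\cR$, precisely the assertion that $B\in\Delta_\cR(\cA_0^{k_0},\ldots,\cA_{n-1}^{k_{n-1}})$. Hence $B$ lies in the left-hand side if and only if $B\in\Delta_\cR(\cA_0^{k_0},\ldots,\cA_{n-1}^{k_{n-1}})$ for some $\vec k\in K$, which is exactly membership of $B$ in $\bigcup_{\vec k\in K}\Delta_\cR(\cA_0^{k_0},\ldots,\cA_{n-1}^{k_{n-1}})$, the right-hand side. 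Since $B\in\pot(Y)$ was arbitrary, the two families coincide.

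There is no genuine obstacle here: the statement is a direct consequence of the defining property of Kripke-operators, and its entire content is the interchange of the independent existential quantifiers $\exists A_i$ and $\exists k_i$ together with the bundling of the $k_i$ into a single tuple $\vec k$. The only point requiring mild care is the book-keeping across the $n$ coordinates—one must ensure that each witness $A_i$ is paired with an index $k_i$ drawn from the correct index set $K_i$, so that the resulting tuple genuinely lies in the product $K$—but this is immediate because the conditions $A_i\in\cA_i^{k_i}$ are entirely independent for distinct $i$, so the componentwise choices can be made freely.
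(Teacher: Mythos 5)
Your proof is correct and follows essentially the same route as the paper's: both unfold membership in $\Delta_\cR(\cB_0,\ldots,\cB_{n-1})$ via the defining relation $\cR$, replace each condition $A_i\in\bigcup_{k\in K_i}\cA_i^k$ by an existential over $k_i\in K_i$, and bundle the independent existentials into a single tuple $\vec k\in K$ to recover the union on the right-hand side. Nothing is missing; your explicit remark about pairing each witness $A_i$ with an index from the correct $K_i$ is exactly the (routine) point the paper's chain of equivalences handles implicitly.
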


\begin{proof}
Using the notation $\cA_i=\bigcup_{k\in K_i}\cA_i^k$ for $i<n$ the left hand side of the equation can be written as $\cA:=\Delta_\cR(\cA_0,\ldots,\cA_{n-1})$. The claim follows now from the chain of equivalences below:
$$
\begin{array}{lcl}
	A\in\cA&\Leftrightarrow     &\forall i<n\,\exists A_i\in\cA_i: (A,A_0,\ldots,A_{n-1})\in\cR \\
	&\Leftrightarrow    &\forall i<n\,\exists k_i\in K_i\,\exists A_i\in\cA_i^{k_i}:
	(A,A_0,\ldots,A_{n-1})\in\cR \\
	&\Leftrightarrow    &\exists\vec k\in K: A\in\Delta_\cR(\cA_0^{k_0},\ldots,\cA_{n-1}^{k_{n-1}})\\
	&\Leftrightarrow    &A\in\bigcup_{\vec k\in K}\Delta_\cR(\cA_0^{k_0},\ldots,\cA_{n-1}^{k_{n-1}}).
	\end{array}
 $$

\end{proof}

Kripke-operators that preserve the property of being dominated (and/or supported) and convex have a crucial role in our considerations. This is because for such an operator $\Delta_\cR$ the (corresponding) dimension of the image $\Delta_\cR(\cA_0,\ldots,\cA_{n-1})$ is at most the product of the dimensions of $\cA_i$, $i<n$, and consequently, $\Delta_\cR$ preserves growth classes.

\begin{definition}
Let $\Delta\colon \cP(\cP(X))^n\to\cP(\cP(Y))$ be an operator. 
We say that $\Delta$ \emph{weakly preserves dominated (supported, resp.) convexity} if 
$\Delta(\cA_0,\ldots,\cA_{n-1})$ is dominated (supported, resp.) and convex or 
$\Delta(\cA_0,\ldots,\cA_{n-1})=\emptyset$ whenever 
$\cA_i$ is dominated and convex for each $i<n$. 
Furtheoremore, we say that $\Delta$ \emph{weakly preserves intervals} if $\Delta(\cA_0,\ldots,\cA_{n-1})$ is an interval or 
$\Delta(\cA_0,\ldots,\cA_{n-1})=\emptyset$ whenever 
$\cA_i$ is an interval for each $i<n$.
\end{definition}

\begin{example}\label{tensorintpres}
\begin{enumerate}[(a)]
\item
Proposition~\ref{basic:tensor-intpres} shows that each
tensor operator $\oast$ weakly preserves intervals.
(In this case, if $\cA$ and $\cB$ are nonempty, then so is $\cA\oast\cB$,
too, so we could blatantly state that $\oast$ preserves intervals,
dropping the specifier ``weakly''.)

\item
Suppose now the binary operation $\oast$ on the set $\{0,1\}$
is not monotone.  Recall that monotonicity of $\oast$ means that
for all $a,a',b,b'\in\{0,1\}$, whenever $a\le a'$ and $b\le b'$,
then $a\oast b\le a'\oast b'$ where $\le$ is the natural ordering
of the truth values with $0<1$.  10 of the 16 operations are not monotone,
i.e., all apart from the constant operations, projections, conjunction
and disjunction.  As $\oast$ is not monotone, there is $c\in\{0,1\}$
with
\[
\text{either }
  \begin{cases}
    c\oast 0=1\\
    c\oast 1=0\\ 
  \end{cases}
\text{ or }
  \begin{cases}
    0\oast c=1\\
    1\oast c=0.\\ 
  \end{cases}
\]
By symmetry, assume the former pair of equations. 
Consider now any $\cA\osaj\pot(X)$, and choose $C=\tyj$ if $c=0$,
and $C=X$ if $c=1$.  Note that $\{C\}=[C,C]\osaj\pot(X)$ is an
interval, so it is both dominated convex and supported convex.
Now
\[
  \{C\}\oast\cA = \{ C*A \mid A\in\cA \}
  = \{ X\jer A \mid A\in\cA \} =\lnot\cA.
\]
Picking any $\cA$ that is nonempty, dominated convex, but not
supported convex, we see that $\{C\}\oast\cA=\lnot\cA$ is nonempty, but
not dominated.  Thus, $\oast$ does not weakly preserve dominated
convexity. Similarly, interchanging the roles of ``dominated'' and ``supported''
we get that $\oast$ does not weakly preserve supported
convexity. 
\end{enumerate}

\end{example}

\begin{theorem}\label{sailyminen}
Let $\Delta_\cR\colon \cP(\cP(X))^n\to\cP(\cP(Y))$ be a Kripke-operator, and
let $\cA=\Delta(\cA_0,\ldots,\cA_{n-1})$.

\begin{enumerate}[(a)]
\item 
If $\Delta$ weakly preserves dominated convexity then 
$\DD(\cA)\le \DD(\cA_0)\cdot\ldots\cdot \DD(\cA_{n-1})$.

\item
If $\Delta$ weakly preserves supported convexity then 
$\DDd(\cA)\le \DDd(\cA_0)\cdot\ldots\cdot \DDd(\cA_{n-1})$.

\item
If $\Delta$ weakly preserves intervals
then $\CD(\cA)\le \CD(\cA_0)\cdot\ldots\cdot \CD(\cA_{n-1})$.
\end{enumerate}
\end{theorem}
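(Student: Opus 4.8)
The plan is to treat all three parts in parallel, the common engine being the Union Lemma (Lemma~\ref{union-lem}) together with the relevant weak-preservation hypothesis. In each case I would start by fixing, for each input family $\cA_i$, an optimal decomposition of the appropriate type; apply $\Delta_\cR$ via the Union Lemma to turn the product of these decompositions into a cover of $\cA$; observe that weak preservation forces each piece of this cover to be of the same good type (or empty); and finally count the pieces.

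Concretely, for part (a) I would, for each $i<n$, choose a dominating family $\cG_i\osaj\cA_i$ of minimal size, so $|\cG_i|=\DD(\cA_i)$. By Lemma~\ref{dimcalc:shadowcrit}(b) this gives $\cA_i=\bigcup_{G\in\cG_i}\varjo_G(\cA_i)$, and by Lemma~\ref{dimcalc:shadowcrit}(a) each convex shadow $\varjo_G(\cA_i)$ is dominated convex. Feeding these unions into $\Delta_\cR$ and invoking the Union Lemma yields
\[
  \cA=\bigcup_{\vec G}\Delta_\cR\bigl(\varjo_{G_0}(\cA_0),\ldots,\varjo_{G_{n-1}}(\cA_{n-1})\bigr),
\]
where $\vec G=(G_0,\ldots,G_{n-1})$ ranges over $\cG_0\times\cdots\times\cG_{n-1}$. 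Since $\Delta$ weakly preserves dominated convexity, every summand $\cE_{\vec G}:=\Delta_\cR(\varjo_{G_0}(\cA_0),\ldots,\varjo_{G_{n-1}}(\cA_{n-1}))$ is either empty or dominated convex, and there are at most $\prod_{i<n}\DD(\cA_i)$ of them.

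The one subtlety---and the step I expect to require the most care---is converting this \emph{cover of $\cA$ by dominated convex subfamilies} back into a \emph{dominating family} in the exact sense of Definition~\ref{dimension-def}, since a priori several pieces could share the same maximum whereas the definition wants one labelled family per element of $\cG$. I would handle this again through shadows: for each nonempty summand $\cE_{\vec G}$ set $H_{\vec G}=\bigcup\cE_{\vec G}\in\cA$; since $\cE_{\vec G}$ is a dominated convex subfamily of $\cA$ with that union, Lemma~\ref{dimcalc:shadowcrit}(a) gives $\cE_{\vec G}\osaj\varjo_{H_{\vec G}}(\cA)$. Hence the family $\cH=\{H_{\vec G}\mid \cE_{\vec G}\neq\tyj\}$ satisfies $\bigcup_{H\in\cH}\varjo_H(\cA)=\cA$, so by Lemma~\ref{dimcalc:shadowcrit}(b) it dominates $\cA$, and $|\cH|\le\prod_{i<n}\DD(\cA_i)$. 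This proves (a).

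Part (b) is the exact dual, replacing dominating families, convex shadows and Lemma~\ref{dimcalc:shadowcrit}(a),(b) by supporting families, dual convex shadows and the dual halves of the same lemma, and using weak preservation of supported convexity. Part (c) is the easiest and needs no relabeling: I would cover each $\cA_i$ by $\CD(\cA_i)$ intervals, apply the Union Lemma exactly as above, and use that $\Delta$ weakly preserves intervals to conclude that each of the $\prod_{i<n}\CD(\cA_i)$ summands is an interval or empty. Discarding the empty ones gives an interval cover of $\cA$ of size at most $\prod_{i<n}\CD(\cA_i)$, whence $\CD(\cA)\le\prod_{i<n}\CD(\cA_i)$.
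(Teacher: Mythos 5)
Your proof is correct and follows essentially the same route as the paper's: decompose each input family optimally into dominated convex (resp.\ supported convex, interval) pieces, push the product of these decompositions through $\Delta_\cR$ via the Union Lemma~\ref{union-lem}, invoke the weak-preservation hypothesis on each summand, and count. The only difference is presentational: the paper takes the decompositions directly from Definition~\ref{dimension-def} and leaves implicit the step of turning the resulting cover of $\cA$ back into a dominating/supporting family, whereas you make that step explicit via convex shadows and Lemma~\ref{dimcalc:shadowcrit} --- a legitimate refinement of the same argument rather than a different one.
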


\begin{proof}
(a) By Definition \ref{dimension-def}, for each $i<n$ there are dominated and convex subfamilies 
$\cA_i^{k}\subseteq\cA_i$, $k\in K_i$, such that $\cA_i=\bigcup_{k\in K_i}\cA_i^k$ and $|K_i|=\DD(\cA_i)$.
For each tuple $\vec k:=(k_0,\ldots,k_{n-1})$ in $K:=K_0\times\cdots\times K_{n-1}$, let
$\cA_{\vec k}$ denote the family $\Delta(\cA_0^{k_0},\ldots,\cA_{n-1}^{k_{n-1}})$.
By our assumption, each $\cA_{\vec k}$ is either dominated and convex, or empty.
By Lemma \ref{union-lem}, $\cA=\bigcup_{\vec k\in K}\cA_{\vec k}$. Thus we see that
$\DD(\cA)\le |K|=|K_0|\cdot\ldots\cdot |K_{n-1}|=\DD(\cA_0)\cdot\ldots\cdot \DD(\cA_{n-1})$.

Claim (b) is proved in the same way just by replacing dominated convexity by supported convexity.
Finally, to prove (c) it suffices to observe that a non-empty family is an interval if and only if it is dominated,
supported and convex. 
\end{proof}

As seen above in Example \ref{kripke-ex}, there are well-behaved operators that are not Kripke-operators, but on the other hand, most of the operators arising in our applications are Kripke-operators. Moreover, we can prove relatively simple exact characterizations for weak preservation of dominated convexity and supported convexity for Kripke-operators. 

Below we will use the notation 
$$
    \cR[A]:= \{(A_0,\ldots,A_{n-1})\mid (A,A_0,\ldots,A_{n-1})\in \cR\}.
$$

\begin{lemma}\label{domcon}
Let $\Delta_\cR\colon \cP(\cP(X))^n\to\cP(\cP(Y))$ be a Kripke-operator for finite $X$ and $Y$. 
Then $\Delta_\cR$ weakly preserves dominated convexity if and only if the following condition holds:
\begin{itemize}
\item[($*^\sharp$)] If $(A_0,\ldots,A_{n-1})\in \cR[A]$, $(B_0,\ldots,B_{n-1})\in \cR[B]$, and 
$C\in \DCH(A,B)$,
then there are 
$C_0,\ldots,C_{n-1}$ such that
$(C_0,\ldots,C_{n-1})\in \cR[C]$ and $C_i\in \DCH(A_i,B_i)$ for each $i<n$.
\end{itemize}
\end{lemma}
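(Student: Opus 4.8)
The plan is to prove both implications of the equivalence directly, relying throughout on the fact (from the lemma characterizing $\DCH$ as the least dominated convex family containing a given family) that $\DCH(A,B)$ is the \emph{smallest} dominated convex family containing $\{A,B\}$, together with the explicit description $\DCH(A,B)=[A,A\cup B]\cup[B,A\cup B]$. Two small observations will be used repeatedly: first, $[A,B]\osaj\DCH(A,B)$ (indeed the two coincide when $A\osaj B$, and $[A,B]=\tyj$ otherwise), and second, $A\cup B\in\DCH(A,B)$. It is also convenient to recall that $C\in\Delta_\cR(\cA_0,\ldots,\cA_{n-1})$ says precisely that the fibre $\cR[C]=\{(C_0,\ldots,C_{n-1})\mid(C,C_0,\ldots,C_{n-1})\in\cR\}$ meets $\cA_0\times\cdots\times\cA_{n-1}$; this dictionary is what makes the extraction of the tuples $(C_i)_{i<n}$ in both directions routine.

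For the direction $(*^\sharp)\Rightarrow$ weak preservation, I would assume each $\cA_i$ is dominated convex and that $\cA:=\Delta_\cR(\cA_0,\ldots,\cA_{n-1})$ is nonempty, and show $\cA$ is dominated and convex. The leverage is that whenever $A,B\in\cA$, witnessed by tuples $(A_i)_{i<n}\in\cR[A]$ and $(B_i)_{i<n}\in\cR[B]$ with $A_i,B_i\in\cA_i$, minimality of $\DCH(A_i,B_i)$ forces $\DCH(A_i,B_i)\osaj\cA_i$. Hence for any $C\in\DCH(A,B)$, condition $(*^\sharp)$ produces $(C_i)_{i<n}\in\cR[C]$ with $C_i\in\DCH(A_i,B_i)\osaj\cA_i$, so $C\in\cA$; that is, $\DCH(A,B)\osaj\cA$. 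Convexity then follows by taking $C\in[A,B]\osaj\DCH(A,B)$, and dominatedness by taking $C=A\cup B\in\DCH(A,B)$, which shows $\cA$ is closed under binary unions and hence, as $Y$ is finite and so $\cA$ is finite, contains $\bigcup\cA$.

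For the converse, given witnesses $(A_i)_{i<n}\in\cR[A]$, $(B_i)_{i<n}\in\cR[B]$ and $C\in\DCH(A,B)$, I would set $\cA_i:=\DCH(A_i,B_i)$ for each $i<n$; these are dominated convex by the hull lemma. Since $A_i,B_i\in\DCH(A_i,B_i)$, both $A$ and $B$ lie in $\cA:=\Delta_\cR(\cA_0,\ldots,\cA_{n-1})$, so $\cA$ is nonempty, and weak preservation makes it dominated and convex. Then $\DCH(A,B)\osaj\cA$ by minimality of the dominated convex hull, so $C\in\cA$; unwinding the definition of $\Delta_\cR$ yields $(C_i)_{i<n}\in\cR[C]$ with $C_i\in\DCH(A_i,B_i)$, which is exactly the conclusion of $(*^\sharp)$.

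The step requiring the most care is the \emph{dominated} half of the first implication. Convexity of $\cA$ is immediate from $[A,B]\osaj\DCH(A,B)$, but dominatedness is not itself a convexity statement; the clean route is the observation that $A\cup B\in\DCH(A,B)$ turns $(*^\sharp)$ into closure of $\cA$ under binary unions, after which finiteness of the base set $Y$ upgrades this to $\bigcup\cA\in\cA$. This is also the one place the finiteness hypothesis on $X$ and $Y$ is genuinely invoked, so I would be explicit about it there; everywhere else the argument is a formal manipulation of the fibres $\cR[\,\cdot\,]$ and the minimality property of $\DCH$.
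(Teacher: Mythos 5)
Your proof is correct and follows essentially the same route as the paper's: both directions hinge on applying $(*^\sharp)$ with $C=A\cup B$ (for dominatedness) and with $C\in[A,B]$ (for convexity), together with the fact that $\DCH(A_i,B_i)$ is the least dominated convex family containing $A_i$ and $B_i$, so that $\DCH(A_i,B_i)\subseteq\cA_i$. The only cosmetic differences are that the paper derives dominatedness from uniqueness of maximal sets rather than from closure under binary unions plus finiteness, and in the converse the paper routes through an auxiliary dominating set $D$ and the hull $\DCH(A,B,D)$ where you invoke minimality of $\DCH(A,B)$ directly; both variants are sound.
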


\begin{proof}
Assume that ($*^\sharp$) holds. Let $\cA_i$ be dominated convex sets with maximum sets
$D_i$ for $i<n$. If $\Delta_\cR(\cA_0,\ldots,\cA_{n-1})\not=\emptyset$, it contains maximal sets.
We show first that it has a unique maximal set.

Thus, assume that $A$ and $B$ are maximal sets in $\Delta_\cR(\cA_0,\ldots,\cA_{n-1})$.
Then there are $A_i,B_i\in\cA_i$, $i<n$, such that $(A_0,\ldots,A_{n-1})\in \cR[A]$ and
$(B_0,\ldots,B_{n-1})\in \cR[B]$. %
By ($*^\sharp$), there are $C_i$ such that
$(C_0,\ldots,C_{n-1})\in \cR[A\cup B]$ and $C_i\in \DCH(A_i,B_i)
\subseteq\cA_i$ for each $i<n$.
Hence $A\cup B\in \Delta_\cR(\cA_0,\ldots,\cA_{n-1})$. Since
$A,B\subseteq A\cup B$, this is possible only if $A=B$.

To prove that $\Delta_\cR(\cA_0,\ldots,\cA_{n-1})$ is convex, assume that  $A\subseteq C\subseteq B$, $(A_0,\ldots,A_{n-1})\in \cR[A]$ and 
$(B_0,\ldots,B_{n-1})\in \cR[B]$. Then $C\in [A,B]=\DCH(A,B)$,
whence by ($*^\sharp$), there are $C_i$ such that $C_i\in \DCH(A_i,B_i)\subseteq\cA_i$ for each $i<n$ and
$(C_0,\ldots,C_{n-1})\in \cR[C]$. Thus, $C\in \Delta_\cR(\cA_0,\ldots,\cA_{n-1})$.

Assume then that $\Delta_\cR$ weakly preserves dominated convexity. Let $(A_0,\ldots,A_{n-1})\in \cR[A]$, 
$(B_0,\ldots,B_{n-1})\in \cR[B]$, and $C\in \DCH(A,B)$. Since the families 
$\cA_i:=\DCH(A_i,B_i)$, $i<n$, are convex and dominated, there is a set
$D\in\Delta_\cR(\cA_0,\ldots, \cA_{n-1})$ such that $A\cup B\subseteq D$. Now $C\in \DCH(A,B,D)$,
and since $\Delta_\cR$ weakly preserves dominated convexity, $\DCH(A,B,D)\subseteq\Delta_\cR(\cA_0,\ldots,\cA_{n-1})$.  Thus ($*^\sharp$) holds. 
\end{proof}

\begin{lemma}\label{supcon}
Let $\Delta_\cR\colon \cP(\cP(X))^n\to\cP(\cP(Y))$ be a Kripke-operator for finite $X$ and $Y$. 
Then $\Delta_\cR$ weakly preserves supported convexity if and only if the following condition holds:
\begin{itemize}
\item[($*^\flat$)] If $(A_0,\ldots,A_{n-1})\in \cR[A]$, $(B_0,\ldots,B_{n-1})\in \cR[B]$, and
$C\in \SCH(A,B)$,  then there are 
$C_0,\ldots,C_{n-1}$ such that
$(C_0,\ldots,C_{n-1})\in \cR[C]$ and $C_i\in \SCH(A_i,B_i)$ for each $i<n$.
\end{itemize}
\end{lemma}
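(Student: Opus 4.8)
The plan is to mirror the proof of Lemma~\ref{domcon}, since $(*^\flat)$ is obtained from $(*^\sharp)$ by the order-reversing dictionary that swaps \emph{dominated} with \emph{supported}, \emph{maximal} with \emph{minimal}, union $\cup$ with intersection $\cap$, and the dominated hull $\DCH$ with the supported hull $\SCH$ (cf.\ the duality recorded in Lemma~\ref{dom-supp-conv}). I would carry out the two implications exactly as before, with these replacements, invoking in both directions the routine fact that if $A_i,B_i\in\cA_i$ and $\cA_i$ is supported convex, then $\SCH(A_i,B_i)\subseteq\cA_i$: the support $\bigcap\cA_i$ lies below $A_i\cap B_i$, so $A_i\cap B_i\in\cA_i$ by convexity, and the two intervals $[A_i\cap B_i,A_i]$, $[A_i\cap B_i,B_i]$ are then filled in by convexity as well.

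For the forward direction, assume $(*^\flat)$ and let $\cA_0,\ldots,\cA_{n-1}$ be supported convex. Supposing $\Delta_\cR(\cA_0,\ldots,\cA_{n-1})\neq\emptyset$, I would first show it has a unique minimal set: if $A,B$ are both minimal, pick witnesses $(A_0,\ldots,A_{n-1})\in\cR[A]$ and $(B_0,\ldots,B_{n-1})\in\cR[B]$ with $A_i,B_i\in\cA_i$; since $A\cap B\in\SCH(A,B)$, condition $(*^\flat)$ produces $C_i\in\SCH(A_i,B_i)\subseteq\cA_i$ with $(C_0,\ldots,C_{n-1})\in\cR[A\cap B]$, whence $A\cap B\in\Delta_\cR(\cA_0,\ldots,\cA_{n-1})$ and minimality forces $A=B$. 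Convexity is analogous: for $A\subseteq C\subseteq B$ one has $\SCH(A,B)=[A,B]\ni C$, so $(*^\flat)$ again places $C$ in the image.

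For the converse, assume $\Delta_\cR$ weakly preserves supported convexity, and let $(A_0,\ldots,A_{n-1})\in\cR[A]$, $(B_0,\ldots,B_{n-1})\in\cR[B]$, and $C\in\SCH(A,B)$. I would apply the hypothesis to the supported convex families $\cA_i:=\SCH(A_i,B_i)$ (supported by $A_i\cap B_i$). Since $A\in\Delta_\cR(\cA_0,\ldots,\cA_{n-1})$ the image is nonempty, hence supported convex; writing $S$ for its support, we get $S\subseteq A\cap B$ because $A,B$ both lie in the image. As $S\subseteq A\cap B$ gives $\SCH(A,B)\subseteq\SCH(A,B,S)$, and $\SCH(A,B,S)$ is the least supported convex family containing $A,B,S$, it is contained in $\Delta_\cR(\cA_0,\ldots,\cA_{n-1})$; therefore $C$ is in the image, and its witnesses $C_i\in\cA_i=\SCH(A_i,B_i)$ deliver $(*^\flat)$.

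The only real subtlety, and the step I would treat most carefully, is the bookkeeping around the empty-image clause in the definition of weak preservation: one must confirm that the relevant families are genuinely nonempty before invoking supportedness. The forward direction handles this by assumption, and the converse by noting that $A$ itself belongs to $\Delta_\cR(\cA_0,\ldots,\cA_{n-1})$. Everything else is a mechanical transcription of the $\DCH$-argument of Lemma~\ref{domcon}, relying only on the order-duality of $\SCH$ versus $\DCH$ and on the characterization of $\SCH(\,\cdots)$ as the least supported convex family containing its arguments.
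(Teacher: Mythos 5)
Your proposal is correct and matches the paper's intent exactly: the paper proves Lemma~\ref{supcon} by simply noting it ``is proved in the same way as the previous result,'' i.e.\ by dualizing the proof of Lemma~\ref{domcon}, which is precisely the transcription you carry out (including the correct dual of the set $D$ with $A\cup B\subseteq D$, namely the support $S\subseteq A\cap B$ of the image, and the needed containments $\SCH(A_i,B_i)\subseteq\cA_i$ and $\SCH(A,B)\subseteq\SCH(A,B,S)$). Your extra care about nonemptiness before invoking supportedness is also consistent with the ``weakly preserves'' clause.
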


\begin{proof} The claim is proved in the same way as in the previous result.
\end{proof}

\subsection{Local Kripke-operators}

Many natural Kripke-operators $\Delta_\cR$ are local in the sense that the relation $\cR[A]$ is completely determined by its behaviour on singletons $\{a\}\subseteq A$. 

\begin{definition}\label{local}
A Kripke-operator $\Delta_\cR\colon \cP(\cP(X))^n\to\cP(\cP(Y))$ is
\emph{local}\footnote{This should not be confused with the concept
of locality for formulas. In \cite{Lu} this notion is defined
under the name ``transversal''.} if, for any $A\in\cP(Y)$,
$\cR[A]$ is determined by the relations $\cR[\{a\}]$, $a\in A$, as follows:
$(A_0,\ldots,A_{n-1})\in \cR[A]\Leftrightarrow$ for each $a\in A$
there is $(A^a_0,\ldots,A^a_{n-1})\in \cR[\{a\}]$ such that
$A_i=\bigcup_{a\in A}A_i^a$ for $i<n$.
\end{definition}

Lück proved (\cite{Lu}) that all local Kripke-operators $\Delta$
preserve flatness: if $\cA_i$, $i<n$, are flat (i.e., dominated and
downward closed), then $\Delta(\cA_0,\ldots,\cA_{n-1})$ is also
flat. We generalize this result to dominated convexity.

\begin{theorem}\label{LocalDomConPres}
If $\Delta_\cR\colon \cP(\cP(X))^n\to\cP(\cP(Y))$ is a local Kripke-operator for finite $X$ and $Y$, then it weakly preserves
dominated convexity.
\end{theorem}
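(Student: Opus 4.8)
The plan is to reduce everything to the combinatorial criterion already established in Lemma~\ref{domcon}. That lemma tells us that $\Delta_\cR$ weakly preserves dominated convexity if and only if condition $(*^\sharp)$ holds, so it suffices to verify $(*^\sharp)$ using only the locality hypothesis. Accordingly, I would fix tuples $(A_0,\ldots,A_{n-1})\in\cR[A]$ and $(B_0,\ldots,B_{n-1})\in\cR[B]$ together with a set $C\in\DCH(A,B)$, and aim to produce witnesses $C_0,\ldots,C_{n-1}$ with $(C_0,\ldots,C_{n-1})\in\cR[C]$ and $C_i\in\DCH(A_i,B_i)$ for each $i<n$.

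First I would unpack locality on both given tuples. Since $(A_0,\ldots,A_{n-1})\in\cR[A]$, for each $a\in A$ there is a singleton witness $(A_0^a,\ldots,A_{n-1}^a)\in\cR[\{a\}]$ with $A_i=\bigcup_{a\in A}A_i^a$; likewise for each $b\in B$ there is $(B_0^b,\ldots,B_{n-1}^b)\in\cR[\{b\}]$ with $B_i=\bigcup_{b\in B}B_i^b$. Next I would exploit the explicit description of the dominated hull recorded earlier, namely $\DCH(A,B)=[A,A\cup B]\cup[B,A\cup B]$. By symmetry it is enough to treat the case $A\osaj C\osaj A\cup B$ (the case $B\osaj C\osaj A\cup B$ being identical with the roles of $A$ and $B$ interchanged). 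The key observation is that $C\osaj A\cup B$, so every $c\in C$ already carries a singleton witness: if $c\in A$ I take $C_i^c:=A_i^c$, and otherwise $c\in C\jer A\osaj B$ and I take $C_i^c:=B_i^c$.

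Reassembling via locality then yields $(C_0,\ldots,C_{n-1})\in\cR[C]$ for $C_i:=\bigcup_{c\in C}C_i^c$. The remaining point is to check that each $C_i$ lands in $\DCH(A_i,B_i)$. Splitting the union over $c\in A$ and $c\in C\jer A$ gives $C_i=\bigl(\bigcup_{a\in A}A_i^a\bigr)\cup\bigl(\bigcup_{c\in C\jer A}B_i^c\bigr)$, where the first part is exactly $A_i$ and the second is contained in $\bigcup_{b\in B}B_i^b=B_i$. Hence $A_i\osaj C_i\osaj A_i\cup B_i$, so $C_i\in[A_i,A_i\cup B_i]\osaj\DCH(A_i,B_i)$, which is precisely what $(*^\sharp)$ demands.

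The proof is structurally short once Lemma~\ref{domcon} is invoked, so there is no single hard computation. I expect the only point requiring care to be the bookkeeping in the reassembly step: one must make sure that the inclusion $C\osaj A\cup B$ (coming from $C\in\DCH(A,B)$) is what guarantees that \emph{every} $c\in C$ receives a singleton witness, and that the resulting $C_i$ satisfies $A_i\osaj C_i$ (not merely $C_i\osaj A_i\cup B_i$), so that $C_i$ really lies in the lower interval $[A_i,A_i\cup B_i]$ rather than drifting outside $\DCH(A_i,B_i)$. The symmetric case must then be noted to complete the verification of $(*^\sharp)$.
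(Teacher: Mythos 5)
Your proposal is correct and follows essentially the same route as the paper's own proof: reduce to condition $(*^\sharp)$ of Lemma~\ref{domcon}, unpack locality into singleton witnesses, treat the case $C\in[A,A\cup B]$ (with the case $C\in[B,A\cup B]$ symmetric), define $C_i^c$ piecewise as $A_i^c$ for $c\in A$ and $B_i^c$ for $c\in C\jer A\osaj B$, and reassemble via locality to get $(C_0,\ldots,C_{n-1})\in\cR[C]$ with $A_i\osaj C_i\osaj A_i\cup B_i$. Your closing remark correctly identifies the one point needing care, namely that $A\osaj C\osaj A\cup B$ guarantees every $c\in C$ has a witness and that $A_i\osaj C_i$ places $C_i$ inside $[A_i,A_i\cup B_i]$.
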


\begin{proof}
It suffices to show that $\cR$ satisfies the condition
$(*^\sharp)$ of Lemma \ref{domcon}. Assume for this that $(A_0,\ldots,A_{n-1})\in \cR[A]$, $(B_0,\ldots,B_{n-1})\in \cR[B]$, and $C\in\DCH(A,B)=[A,A\cup B]\cup[B,A\cup B]$. We assume that
$C\in [A,A\cup B]$; the case $C\in [B,A\cup B]$ is similar. 

Since $\Delta_\cR$ is local,  for each $a\in A$ there are sets $A_i^a$
such that $A_i=\bigcup_{a\in A}A_i^a$ for $i<n$,
and $(A_0^a,\ldots, A_{n-1}^a)\in \cR[\{a\}]$. Similarly, for each $b\in B$ there are sets $B_i^b$
such that $B_i=\bigcup_{b\in B}B_i^b$ for $i<n$,
and $(B_0^b,\ldots, B_{n-1}^b)\in \cR[\{b\}]$.

Now, for each $i<n$, we define $C_i^c:=A_i^c$ for all $c\in A$, and $C_i^c:=B_i^c$ for all $c\in C\jer A$
(note that $A\subseteq C$ and $C\jer A\subseteq B$). Let $C_i:=\bigcup_{c\in C}C_i^c$ for $i<n$. By Definition
\ref{local} we have $(C_0,\ldots,C_{n-1})\in\cR[C]$.

We still need to show that $C_i\in \DCH(A_i,B_i)$ for $i<n$. Clearly
$C_i^c\subseteq A_i\cup B_i$ for each $c\in C$, whence $C_i\subseteq A_i\cup B_i$ for $i<n$. Furtheoremore, 
$A_i=\bigcup_{a\in A}A_i^a=\bigcup_{c\in A}C_i^c\subseteq C_i$, whence we conclude that $C_i\in [A_i,A_i\cup B_i]\subseteq\DCH(A_i,B_i)$.
\end{proof}

On the other hand, it is not the case that all local Kripke-operators weakly preserve supported convexity. This is seen in the next example.

\begin{example}
\begin{enumerate}[(a)]
\item 
Let $X=\{a,b\}$, and let $\cR$ be the relation
$\{(Y,X)\mid Y\not=\emptyset\}\subseteq \cP(X)^2$. Then $\Delta_\cR$
is clearly local, but it does not weakly preserve supported convexity,
since the family $\SCH(X)=\{X\}$ is convex and supported, but its
image $\Delta_\cR(\{X\})=\{\{a\},\{b\},X\}$ is not supported.

\item
More generally, if $\Delta_\cR\colon\cP(\cP(X))^n\to\cP(\cP(Y))$ is
local and there are tuples \hfill\break
$(\{a\},A_0,\ldots,A_{n-1}),(\{b\},B_0,\ldots,B_{n-1})$ in $\cR$ such
that $a\not=b$ and $A_i\cap B_i\not=\emptyset$ for some $i<n$, then
$\Delta_\cR$ does not weakly preserve supported convexity. This is
because by Definition \ref{local},
$\cR[\emptyset]=\{(\emptyset,\ldots,\emptyset)\}$, whence
$\emptyset\notin\Delta_\cR(\SCH(A_0,B_0),\ldots,\SCH(A_{n-1},B_{n-1}))$
even though $\emptyset\in\SCH(\{a\},\{b\})$.
\end{enumerate}
\end{example}

To avoid the problem exhibited in the example above, we consider the following additional requirement for (local) Kripke-operators:

\begin{definition}
A Kripke-operator $\Delta_\cR\colon \cP(\cP(X))^n\to\cP(\cP(Y))$ is \emph{separating} if $A_i\cap B_i=\emptyset$
for all $i<n$ whenever $(A_0,\ldots,A_{n-1})\in\cR[\{a\}]$, $(B_0,\ldots,B_{n-1})\in\cR[\{b\}]$ and $a\not=b$.
\end{definition}

\begin{theorem}\label{localseparating}
If $\Delta_\cR\colon \cP(\cP(X))^n\to\cP(\cP(Y))$ is a local and separating Kripke-operator for finite $X$ and $Y$, then it weakly preserves
supported convexity.
\end{theorem}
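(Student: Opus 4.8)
The plan is to verify condition $(*^\flat)$ of Lemma~\ref{supcon}; once that is established, the conclusion that $\Delta_\cR$ weakly preserves supported convexity follows immediately. The argument runs parallel to the proof of Theorem~\ref{LocalDomConPres}, with the separating hypothesis supplying exactly the extra control needed to handle the lower endpoint $A\cap B$ of the supported hull, which is where the dominated-convexity argument had nothing to prove.

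So I would assume $(A_0,\ldots,A_{n-1})\in\cR[A]$, $(B_0,\ldots,B_{n-1})\in\cR[B]$ and $C\in\SCH(A,B)=[A\cap B,A]\cup[A\cap B,B]$. By symmetry I treat the case $C\in[A\cap B,A]$, i.e., $A\cap B\subseteq C\subseteq A$. Using locality, I first fix witnessing decompositions: for each $a\in A$ a tuple $(A_0^a,\ldots,A_{n-1}^a)\in\cR[\{a\}]$ with $A_i=\bigcup_{a\in A}A_i^a$, and likewise $(B_0^b,\ldots,B_{n-1}^b)\in\cR[\{b\}]$ with $B_i=\bigcup_{b\in B}B_i^b$, for each $i<n$. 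Since $C\subseteq A$, I may then set $C_i:=\bigcup_{c\in C}A_i^c$; locality (taking $C_i^c=A_i^c$) immediately yields $(C_0,\ldots,C_{n-1})\in\cR[C]$.

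It then remains to check $C_i\in\SCH(A_i,B_i)$, i.e., $A_i\cap B_i\subseteq C_i\subseteq A_i$ for each $i<n$. The upper inclusion is free, since $C\subseteq A$ forces $C_i\subseteq A_i$. The lower inclusion is the crux, and this is where the separating hypothesis enters. Expanding $A_i\cap B_i=\bigcup_{a\in A,\,b\in B}(A_i^a\cap B_i^b)$, the separating property kills every cross term with $a\neq b$, leaving $A_i\cap B_i=\bigcup_{a\in A\cap B}(A_i^a\cap B_i^a)$. Since $A\cap B\subseteq C$, each such $a$ lies in $C$, so $A_i^a\subseteq C_i$ and hence $A_i^a\cap B_i^a\subseteq C_i$; summing over $a\in A\cap B$ gives $A_i\cap B_i\subseteq C_i$. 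This yields $C_i\in[A_i\cap B_i,A_i]\subseteq\SCH(A_i,B_i)$, establishing $(*^\flat)$.

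The main obstacle, as indicated, is precisely the lower inclusion $A_i\cap B_i\subseteq C_i$: without the separating hypothesis the intersection $A_i\cap B_i$ can acquire contributions $A_i^a\cap B_i^b$ coming from distinct indices $a\in A$ and $b\in B$ that are not captured by any single-index piece $A_i^c$ with $c\in C$, and the desired inclusion fails. This is exactly the obstruction exhibited in the example preceding the theorem, and separating is the condition that removes these cross terms. Everything else is a routine bookkeeping verification that the chosen $C_i$ witness membership in $\cR[C]$ via locality.
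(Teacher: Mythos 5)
Your proof is correct and follows essentially the same route as the paper: both verify condition $(*^\flat)$ of Lemma~\ref{supcon} by using locality to define $C_i=\bigcup_{c\in C}A_i^c$ and then invoking the separating property to get $A_i\cap B_i\subseteq C_i$. The only cosmetic difference is that you eliminate the cross terms $A_i^a\cap B_i^b$ ($a\neq b$) in the expanded union, whereas the paper argues elementwise with $d\in A_i\cap B_i$ --- the same use of separation in different packaging.
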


\begin{proof}
We show that $\cR$ satisfies the condition
$(*^\flat)$ of Lemma \ref{supcon}. Thus, assume that $(A_0,\ldots,A_{n-1})\in \cR[A]$, $(B_0,\ldots,B_{n-1})\in \cR[B]$, and $C\in\SCH(A,B)=[A\cap B,A]\cup[A\cap B,B]$. We consider the case
$C\in [A\cap B,A]$; the other case is similar. 

By Definition \ref{local}, for each $i<n$ and each $a\in A$ there are sets
$A_i^a$ such that $(A_0^a,\ldots,A_{n-1}^a)\in \cR[\{a\}]$ and $A_i=\bigcup_{a\in A}A^a_i$. Similarly, for each $b\in B$, there are sets $B_i^b$ such that $(B_0^a,\ldots,B_{n-1}^a)\in \cR[\{b\}]$ and $B_i=\bigcup_{b\in B}B^b_i$. We define now $C_i=\bigcup_{c\in C} A^c_i$ for $i<n$. Then by Definition \ref{local} we have $(C_0,\ldots,C_{n-1})\in \cR[C]$.

It is clear from the definition that $C_i\subseteq A_i$. Thus, to complete the proof it suffices to show that $A_i\cap B_i\subseteq C_i$ for each $i<n$. To show this, assume that $d\in A_i\cap B_i$. Then there are elements $a\in A$ and $b\in B$ such that $d\in A_i^a$ and $d\in B_i^b$. As $\Delta_\cR$ is separating this implies that $a=b\in A\cap B\subseteq C$, whence $d\in A_i^a=C_i^a\subseteq C_i$.
\end{proof}

Recall from Example \ref{kripke-ex}
the Kripke-relations $\cR_\cap$,  $\cR_\lor$  and $\cR_{\cK,\vec\ell}$ that define the Kripke-operators that correspond to conjunction, (tensor) disjunction and quantification with the Lindström quantifier $Q_\cK$. 
We prove next that the corresponding Kripke operators are local and separating.

\begin{proposition}\label{LocSepProp}
The operators $\Delta^{M^m}_\cap$, $\Delta^{M^m}_\lor$ and $\Delta^{M^m}_{\cK,\vec\ell}$ are local and separating.
\end{proposition}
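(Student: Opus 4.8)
The plan is to verify, for each of the three operators, the two defining conditions---locality (Definition~\ref{local}) and separation---directly from the explicit Kripke relations $\cR_\cap$, $\cR_\lor$ and $\cR_{\cK,\vec\ell}$ recorded in Example~\ref{kripke-ex}. In every case the strategy is the same: first compute the singleton sections $\cR[\{a\}]$, then check that an arbitrary section $\cR[A]$ is reconstructed from the singleton sections by the union recipe of Definition~\ref{local}, and finally read off separation from the fact that the inputs attached to distinct singletons live in disjoint parts of the base set. I would dispatch $\cap$ and $\lor$ quickly and spend the effort on the projection operator.

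For intersection, $\cR_\cap=\{(D,D,D)\mid D\in\cP(M^m)\}$, so $\cR_\cap[\{a\}]=\{(\{a\},\{a\})\}$ and $\cR_\cap[A]=\{(A,A)\}$; locality follows because $\bigcup_{a\in A}\{a\}=A$ forces $A_0=A_1=A$, and separation is immediate since $\{a\}\cap\{b\}=\tyj$ whenever $a\neq b$. For tensor disjunction, $\cR_\lor[\{c\}]=\{(A_0,A_1)\mid A_0\cup A_1=\{c\}\}$; given $(A_0,A_1)\in\cR_\lor[A]$, i.e.\ $A_0\cup A_1=A$, the slices $A_i\cap\{c\}$ witness locality, and conversely any union of such singleton witnesses reassembles a pair with union $A$. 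Separation holds because every pair in $\cR_\lor[\{a\}]$ consists of subsets of $\{a\}$, which are disjoint from subsets of $\{b\}$ when $a\neq b$.

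The projection operator $\Delta^{M^m}_{\cK,\vec\ell}$ is the heart of the argument, and the right picture is the decomposition of $M^m$ into the pairwise disjoint \emph{columns} $\{\va\otimes_{\vec\ell}\vb\mid\vb\in M^r\}$ indexed by $\va\in M^{m-r}$, with the fibre $A[\va]_{\vec\ell}$ recording the content of $A$ over $\va$. Working first in the substantive case $(M,\tyj)\notin\cK$, I would prove the key lemma that $\cR_{\cK,\vec\ell}[\{\va\}]$ consists exactly of those $A$ contained in the single column over $\va$ with $(M,A[\va]_{\vec\ell})\in\cK$: indeed $\pi^p_{\cK,\vec\ell}(A)=\{\va\}$ forces every nonempty fibre of $A$ to sit over $\va$ (by the properness clause $\{\vb\mid A[\vb]_{\vec\ell}\neq\tyj\}\osaj\{\va\}$), while $(M,\tyj)\notin\cK$ makes the remaining, empty fibres automatically fall outside the projection. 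Granting this, locality is the statement that $\pi^p_{\cK,\vec\ell}(A)=B$ if and only if the column slices $A\cap(\text{column over }\va)$, for $\va\in B$, lie in $\cR_{\cK,\vec\ell}[\{\va\}]$ and reassemble to $A$; the forward direction uses properness to guarantee that $A$ has no fibre outside the columns over $B$, and the backward direction uses column disjointness to compute each fibre of $A$ from a single slice. Separation is then immediate, since sets in $\cR_{\cK,\vec\ell}[\{\va\}]$ and $\cR_{\cK,\vec\ell}[\{\vb\}]$ occupy the disjoint columns over $\va$ and $\vb$.

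The main obstacle is precisely this bookkeeping of the proper projection $\pi^p_{\cK,\vec\ell}$ together with the case split in Definition~\ref{Q-oper}. The properness clause is exactly what is needed to make the slicing recover $A$ on the nose---an ordinary projection $\pi_{\cK,\vec\ell}$ would forget empty fibres and break locality---so it must be threaded carefully through both directions of the equivalence. In the degenerate case $(M,\tyj)\in\cK$, where the quantifier is trivial, I would verify locality and separation from the relation $\cR_{\cK,\vec\ell}=\{(B,A)\mid\pi^p_{\cK,\vec\ell}(A)\osaj B\}$ by the same column analysis, taking particular care that the altered (subset) clause and the empty team are handled consistently with the membership $(M,\tyj)\in\cK$; this is the point at which one must be most attentive.
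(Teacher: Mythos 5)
Your proposal is correct and follows essentially the same route as the paper's proof: you compute the singleton sections $\cR[\{\va\}]$ explicitly (for $\cap$ and $\lor$ these match the paper's $\cR_\cap[\{\va\}]=\{(\{\va\},\{\va\})\}$ and the three-element $\cR_\lor[\{\va\}]$), and for $\Delta^{M^m}_{\cK,\vec\ell}$ your column decomposition is exactly the paper's slicing $S=\bigcup_{\va\in T}S^{\va}$ with $S^{\va}=\{\vc\in S\mid \vc=\va\otimes_{\vec\ell}\vb\text{ for some }\vb\in M^r\}$, with the same case split on $(M,\tyj)\in\cK$ handled the same way (effectively adjoining $\tyj$ to the singleton sections, i.e.\ setting $S^{\va}=\tyj$ for $\va\in T\jer\pi^p_{\cK,\vec\ell}(S)$). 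Your key lemma characterizing $\cR_{\cK,\vec\ell}[\{\va\}]$ via the properness clause is implicit in the paper's argument, and your identification of properness as what prevents locality from breaking on empty fibres is accurate.
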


\begin{proof}
Note first that $\cR_\cap[A]=\{(A,A)\}$ for any $A\in \cP(M^m)$. Hence
we have $(A_0,A_1)\in\cR_\cap[A]$ if and only if
$A_0=A_1=A=\bigcup_{\va\in A}\{\va\}$ if and only if for each $\va\in A$
there are sets $A^{\va}_0,A^{\va}_1$ such that
$(A^{\va}_0,A^{\va}_1)\in\cR_\cap[\{\va\}]$ and $A_i=\bigcup_{\va\in A}A^{\va}_i$ for
$i<2$. Thus $\Delta^{M^m}_\cap$ is local. Since
$\cR_\cap[\{\va\}]=\{(\{\va\},\{\va\})\}$, it is clearly separating, too.

Consider then the tensor disjunction operator on $M^m$. By the definition of $\cR_\lor$ we have
$\cR_\lor[\{\va\}]=\{(\{\va\},\{\va\}),(\{\va\},\emptyset),(\emptyset,\{\va\})\}$
for any $\va\in M^m$.  Using this it is straightforward to verify that
$\Delta^{M^m}_\lor$ is local and separating.

Finally we show that the $(\cK,\vec\ell)$-projection operator $\Delta^{M^m}_{\cK,\vec\ell}$ is separating and local.
Assume first that $(M,\emptyset)\notin\cK$. Then by the definition of $\cR_{\cK,\vec\ell}$ we see that for any tuple $\vec a\in M^{m-r}$,
$\cR_{\cK,\vec\ell}\,[\{\vec a\}]=\{S\in\pot(M^m)\mid \pi^p_{\cK,\vec\ell}\,(S)=\{\vec a\}\}$. Clearly $S\cap S'=\emptyset$ if $\pi^p_{\cK,\vec\ell}\,(S)=\{\vec a\}$ and $\pi^p_{\cK,\vec\ell}\,(S')=\{\vec b\}$ for $\vec a\not=\vec b$, whence $\Delta^{M^m}_{\cK,\vec\ell}$ is separating.

To show locality, observe that $S\in \cR_{\cK,\vec\ell}\,[T]$ if and only if 
$T=\pi^p_{\cK,\vec\ell}\,(S)=\{\vec a\in M^{m-r}\mid S[\vec a]_{\vec\ell}\in \cK\}$. Assume first that this equality
holds. Then $S=\bigcup_{\vec a\in T}S^{\vec a}$, where $S^{\vec a}:=
\{\vc\in S\mid\vc=\va\otimes_{\vec\ell}\,\vb \text{ for some }\vb\in M^r\}$. Moreover, the equality implies that $S[\vec a]_{\vec\ell}\in \cK$, whence 
$S^{\vec a}\in \cR_{\cK,\vec\ell }\,[\{\vec a\}]$ for all $\vec a\in A$. 

Assume then that $S=\bigcup_{\vec a\in T}S^{\vec a}$ for some sets 
$S^{\vec a}\in \cR_{\cK,\vec\ell}\,[\{\vec a\}]$, $\vec a\in A$. Then by definition $\pi^p_{\cK,\vec\ell}\,(S^{\va})=\{\va\}$ for each $\vec a\in A$, whence $\pi^p_{\cK,\vec\ell}\,(S)=\bigcup_{\va\in T}\pi^p_{\cK,\vec\ell}\,(S^{\va})=T$,
and consequently $S\in\cR_{\cK,\vec\ell}\,[T]$.

In the case $(M,\emptyset)\in\cK$, we have $\cR_{\cK,\vec\ell}\,[T]=\{S\in\pot(M^m)\mid \pi^p_{\cK,\vec\ell}\,(S)\subseteq T\}$. This just means that  $\emptyset$ is added to $\cR_{\cK,\vec\ell}\,[\{\vec a\}]$ for each $\va\in M^{m-r}$.
Clearly this does not affect the proof that $\Delta^{M^m}_{\cK,\vec\ell}$ is separating. The proof of locality also goes through by defining $S^{\va}=\emptyset$ for $\va\in T\jer \pi^p_{\cK,\vec\ell}\,(S)$.
\end{proof}

We end this section by showing that not all of the Kripke-operators of Example~\ref{kripke-ex} are local and separating.

\begin{example}\label{nonlocal}

\begin{enumerate}[(a)]

\item
Consider the restricted union operator  $\Delta_{\cR_{\cup^*}}$ on a base set $X$. By the
definition of $\cR_{\cup^*}$ we have
$\cR_{\cup^*}[\{a\}]=\{(\{a\},\emptyset),(\emptyset,\{a\})\}$ for
any $a\in X$. Hence $\Delta_{\cR_{\cup^*}}$ is clearly
separating. However, it is \emph{not} local: if $a\not=b$, then
$(\{a\},\emptyset)\in\cR_{\cup^*}[\{a\}]$ and
$(\emptyset,\{b\})\in\cR_{\cup^*}[\{b\}]$, but
$(\{a\},\{b\})=(\{a\}\cup\emptyset,\emptyset\cup\{b\})
\notin\cR_{\cup^*}[\{a,b\}]$.

\item
Tensor conjunction on a base set $X$ with at least three elements 
is not local: if $a,b,c\in X$ are distinct elements, then 
$(\{a,b\},\{a,c\})\in \cR_{\land}[\{a\}]$
and $(\{b,c\},\{a,b\})\in \cR_{\land}[\{b\}]$, but
$(\{a,b\}\cup\{b,c\},\{a,c\}\cup\{b,c\})=(\{a,b,c\},\{a,b,c\})\notin \cR_{\land}[\{a,b\}]$. It is neither
separating as the intersection of first components
$\{a,b\}$ and $\{b,c\}$ (as well as that of the second components) is nonempty.

By a similar argument we see that tensor negation and other non-monotone tensor operators are neither local nor separating.

Note however that, as mentioned in 
Example~\ref{tensorintpres}, all tensor operators weakly preserve intervals. Moreover, we will later prove that tensor conjunction weakly preserves both dominated and supported convexity (see Proposition~\ref{cap-lor-land}).

\end{enumerate}
\end{example}

\subsection{Logical operators preserving dimensions}

We are now ready to prove that the basic logical operators of first-order logic (except for negation), as well as arbitrary Lindström quantifiers, preserve growth classes.

\begin{corollary}\label{DimPresCor}
Let $\OO$ be a growth class, and let $\gdim$ be one of the dimension functions $\Dim$, $\Dimd$ and $\CDim$. Furtheoremore, let $\phi=\phi(\vx)$ and $\psi=\psi(\vx)$ be formulas of some logic $\cL$ with team semantics.
\begin{enumerate}[(a)]
    \item If $\gdim_{\phi,\vx},\gdim_{\psi,\vx}\in\OO$, then $\gdim_{\phi\land\psi,\vx}\in\OO$.
    \item If $\gdim_{\phi,\vx},\gdim_{\psi,\vx}\in\OO$, then  $\gdim_{\phi\lor\psi,\vx}\in\OO$.
    \item If $\gdim_{\phi,\vx}\in\OO$, then $\gdim_{\exists x_i\phi,\vx^-}\in\OO$ and $\gdim_{\forall x_i\phi,\vx^-}\in\OO$, where $\vx^-$ is $\vx$ without the component $x_i$.
    \item If $Q_\cK$ is a Lindström quantifier, $\vx=\vz\otimes_{\vec\ell}\vy$ and $\gdim_{\phi,\vx}\in\OO$, then $\gdim_{Q_\cK\vy\,\phi,\vz}\in\OO$.
\end{enumerate}
\end{corollary}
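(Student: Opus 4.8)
The plan is to reduce the statement entirely to the preservation theorems of this section together with the two closure axioms of a growth class, performing no new computation. First I would observe that all the operators involved are local and separating Kripke-operators: this is exactly the content of Proposition~\ref{LocSepProp} for $\Delta^{M^m}_\cap$, $\Delta^{M^m}_\lor$ and $\Delta^{M^m}_{\cK,\vec\ell}$. By Theorem~\ref{LocalDomConPres} locality alone yields weak preservation of dominated convexity, and by Theorem~\ref{localseparating} locality together with the separating property yields weak preservation of supported convexity. Since a nonempty family is an interval if and only if it is dominated, supported and convex (the observation used in the proof of Theorem~\ref{sailyminen}(c)), each of these operators therefore weakly preserves intervals as well. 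Consequently Theorem~\ref{sailyminen} applies to all three dimensions at once: for $\Delta$ any of these operators and for any admissible input families $\cA_0,\ldots,\cA_{n-1}$, each of $\DD$, $\DDd$ and $\CD$ of $\Delta(\cA_0,\ldots,\cA_{n-1})$ is at most the product of the corresponding dimensions of the $\cA_i$.

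Next I would translate these family-level bounds into bounds on the dimension functions by means of the semantic identities recorded in Section~\ref{families-of-teams}. For conjunction, $\sat{\phi\land\psi}{{M,\vx}}=\sat{\phi}{{M,\vx}}\cap\sat{\psi}{{M,\vx}}$, and applying the product bound for the local separating operator $\Delta^{M^m}_\cap$ at every model $M$ of size $n$ gives, after taking suprema,
\[
  \gdim_{\phi\land\psi,\vx}(n)\le\gdim_{\phi,\vx}(n)\cdot\gdim_{\psi,\vx}(n),
\]
where $\gdim$ stands for any of $\Dim$, $\Dimd$, $\CDim$. The identical argument with $\Delta^{M^m}_\lor$ and $\sat{\phi\lor\psi}{{M,\vx}}=\sat{\phi}{{M,\vx}}\lor\sat{\psi}{{M,\vx}}$ handles disjunction. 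For a Lindström quantifier the operator is unary, so the identity $\sat{Q_\cK\vy\,\phi}{{M,\vz}}=\Delta^{M^m}_{\cK,\vec\ell}(\sat{\phi}{{M,\vx}})$ together with Theorem~\ref{sailyminen} yields the pointwise bound $\gdim_{Q_\cK\vy\,\phi,\vz}\le\gdim_{\phi,\vx}$; the existential and universal quantifier cases are subsumed here, being the special quantifiers $Q_{\cK_\exists}$ and $Q_{\cK_\forall}$ with $\vec\ell$ of length one.

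Finally I would invoke the defining closure properties of a growth class. In the binary cases, $\gdim_{\phi,\vx},\gdim_{\psi,\vx}\in\OO$ gives $\gdim_{\phi,\vx}\cdot\gdim_{\psi,\vx}\in\OO$ by closure under products, and then $\gdim_{\phi\land\psi,\vx}\in\OO$ and $\gdim_{\phi\lor\psi,\vx}\in\OO$ by downward closure under $\le$; in the unary quantifier case, downward closure applied to $\gdim_{Q_\cK\vy\,\phi,\vz}\le\gdim_{\phi,\vx}\in\OO$ suffices. No genuine obstacle remains, since the mathematical weight has already been carried by Proposition~\ref{LocSepProp} and Theorems~\ref{LocalDomConPres}, \ref{localseparating} and \ref{sailyminen}. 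The only point that demands a little care is the passage from the model-wise product inequality to the inequality of suprema: one uses that at each model $M$ of size $n$ every factor on the right is already bounded by its own supremum over such models, so that the supremum of the product is dominated by the product of the suprema.
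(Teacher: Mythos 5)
Your proposal is correct and takes essentially the same route as the paper's proof: Proposition~\ref{LocSepProp} together with Theorems~\ref{LocalDomConPres}, \ref{localseparating} and \ref{sailyminen}, applied modelwise via the semantic identities and finished by the two closure axioms of a growth class, with case (c) subsumed under the Lindstr\"om-quantifier case (d). Your explicit justification that the operators also weakly preserve intervals (a nonempty family being an interval iff it is dominated, supported and convex) is precisely the observation the paper relies on in Theorem~\ref{sailyminen}(c), so nothing is missing.
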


\begin{proof}
(a) Let $M$ be a finite model, and let $\len(\vx)=m$. By Proposition \ref{LocSepProp}, the operator $\Delta^{M^m}_\cap$ is local and separating, whence by Theorems \ref{LocalDomConPres} and \ref{localseparating} it weakly preserves both dominated and supported convexity. Thus it follows from Theorem \ref{sailyminen} that 
$$
    D(\sat{\phi\land\psi}{M,\vx})=D(\sat{\phi}{M,\vx}\cap\sat{\psi}{M,\vx})\le D(\sat{\phi}{M,\vx})\cdot D(\sat{\psi}{M,\vx})
$$
for each of the dimensions $D\in\{\DD,\DDd,\CD\}$.
Since this holds for all finite models $M$, we have $\gdim_{\phi\land\psi,\vx}\le \gdim_{\phi,\vx}\cdot\gdim_{\psi,\vx}$, and hence $\gdim_{\phi\land\psi,\vx}\in\OO$. 

(b) is proved in the same way as (a).

(c) follows from (d) as a special case.

(d) Let $M$ be a finite model. As in (a), it follows from Proposition~\ref{LocSepProp} and Theorems~\ref{LocalDomConPres} and \ref{localseparating} that the operator $\Delta^{M^m}_{\cK,\vec\ell}$ weakly preserves both dominated and supported convexity, whence using Theorem~\ref{sailyminen}, we see that
$$
    D(\sat{Q_\cK\vy\,\phi}{M,\vz})=D(\Delta^{M^m}_{\cK,\vec\ell}(\sat{\phi}{M,\vx}))\le D(\sat{\phi}{M,\vx})
$$
for each of the dimensions $D\in\{\DD,\DDd,\CD\}$. Hence $\gdim_{Q_\cK\vy\,\phi,\vz}\le\gdim_{\phi,\vx}$, and consequently $\gdim_{Q_\cK\vy\,\phi,\vz}\in\OO$.
\end{proof}

The list of logical operators that preserve growth classes of dimensions can be extended by simply appealing to basic definitions. We have already seen in Example~\ref{tensorintpres} that all tensor operators (weakly) preserve intervals. Moreover, in spite of the fact that tensor conjunction is not local (see Example~\ref{nonlocal}(b)), we can prove that it weakly preserves both dominated and supported convexity. 

\begin{proposition}\label{cap-lor-land}
The operator $\Delta_\land$ weakly preserves dominated convexity and supported convexity.
\end{proposition}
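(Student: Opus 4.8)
The plan is to verify directly the combinatorial criteria from Lemmas~\ref{domcon} and~\ref{supcon}, rather than appeal to locality: by Example~\ref{nonlocal}(b) tensor conjunction is not local, so Theorems~\ref{LocalDomConPres} and~\ref{localseparating} do not apply. Recall from Example~\ref{kripke-ex}(d) that $\Delta_\land=\Delta_{\cR_\land}$ with $\cR_\land=\{(A\cap B,A,B)\mid A,B\in\pot(X)\}$, so that $\cR_\land[C]=\{(A,B)\mid A\cap B=C\}$. It therefore suffices to check condition $(*^\sharp)$ for dominated convexity and condition $(*^\flat)$ for supported convexity for this relation.

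For dominated convexity I would verify $(*^\sharp)$. Suppose $A_0\cap A_1=A$, $B_0\cap B_1=B$, and $C\in\DCH(A,B)=[A,A\cup B]\cup[B,A\cup B]$. By the symmetry of $\cR_\land$ in its two argument slots I may assume $A\osaj C\osaj A\cup B$; the case $B\osaj C\osaj A\cup B$ is identical after swapping the roles of the $A_i$ and $B_i$. The construction is to set $C_i:=A_i\cup(C\cap B)$ for $i<2$. Since $C\cap B\osaj B\osaj B_i$, we get $A_i\osaj C_i\osaj A_i\cup B_i$, so $C_i\in[A_i,A_i\cup B_i]\osaj\DCH(A_i,B_i)$; and the distributive law $(P\cup Q)\cap(P\cup R)=P\cup(Q\cap R)$ with $P=C\cap B$ gives $C_0\cap C_1=A\cup(C\cap B)=C$, using that $C=A\cup(C\cap B)$ holds because $A\osaj C\osaj A\cup B$. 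Thus $(C_0,C_1)\in\cR_\land[C]$, establishing $(*^\sharp)$.

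For supported convexity I would verify $(*^\flat)$ analogously, but with a genuinely different formula for the witnesses. Suppose now $C\in\SCH(A,B)=[A\cap B,A]\cup[A\cap B,B]$, and assume $A\cap B\osaj C\osaj A$ (the other half being symmetric). Here the right choice is $C_i:=C\cup(A_i\cap B_i)$. Then $A_i\cap B_i\osaj C_i$, while $C\osaj A\osaj A_i$ forces $C_i\osaj A_i$, so $C_i\in[A_i\cap B_i,A_i]\osaj\SCH(A_i,B_i)$; and again by the distributive law, now with $P=C$, we obtain $C_0\cap C_1=C\cup(A_0\cap B_0\cap A_1\cap B_1)=C\cup(A\cap B)=C$ since $A\cap B\osaj C$. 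This gives $(*^\flat)$, and the proposition follows from Lemmas~\ref{domcon} and~\ref{supcon}.

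The only real subtlety I anticipate is guessing the correct explicit witnesses: the dominated case wants the common ``lower'' part $C\cap B$ adjoined to each $A_i$, whereas the supported case wants the common ``upper'' target $C$ enlarged by each $A_i\cap B_i$, and in each case one must check that the witness lands in the correct one of the two intervals comprising $\DCH$ (resp.\ $\SCH$). Once the formulas are in hand, every verification collapses to a single application of the distributive law $(P\cup Q)\cap(P\cup R)=P\cup(Q\cap R)$, so I expect no serious obstacle beyond this bookkeeping.
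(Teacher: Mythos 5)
Your proof is correct, but it takes a genuinely different route from the paper's. The paper verifies weak preservation directly from the definitions: given dominated convex $\cA_0,\cA_1$ and $A_0\cap A_1=A\osaj C\osaj B=B_0\cap B_1$, it exhibits the witnesses $C_i=A_i\cup C$, which lie in $\cA_i$ because $\cA_i$ is dominated and convex (so $A_i\cup B_i\in\cA_i$ and $C_i\in[A_i,A_i\cup B_i]$), checks $C_0\cap C_1=C$, and gets dominatedness for free from the observation that if $D_i$ dominates $\cA_i$ then $D_0\cap D_1$ dominates the image; the supported half is only declared ``similar''. You instead route through the exact characterizations $(*^\sharp)$ and $(*^\flat)$ of Lemmas~\ref{domcon} and~\ref{supcon}, and your explicit witnesses $C_i=A_i\cup(C\cap B)$ and $C_i=C\cup(A_i\cap B_i)$ do verify both conditions correctly (note that in the paper's convexity step, where $C\osaj B$, your dominated-case witness reduces to the paper's $A_i\cup C$). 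Your version buys a uniform, fully explicit treatment of both halves, which is a genuine improvement over the paper's elided supported case; what it costs is generality: Lemmas~\ref{domcon} and~\ref{supcon} are stated, and proved (their forward direction needs maximal sets), only for \emph{finite} base sets, whereas the paper's direct argument needs no finiteness and the proposition is stated without it. For the intended applications (dimension functions over finite models) this restriction is harmless, but you should flag it. One cosmetic point: the reduction to the case $A\osaj C\osaj A\cup B$ is justified by swapping the pairs $(A;A_0,A_1)\leftrightarrow(B;B_0,B_1)$ together with the symmetry of $\DCH$ in its arguments, not by the symmetry of $\cR_\land$ in its two argument slots; your parenthetical ``swapping the roles of the $A_i$ and $B_i$'' is the accurate justification.
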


\begin{proof}
Assume that $\cA_0$ and $\cA_1$ are dominated and convex. We show first that $\Delta_\land(\cA_0,\cA_1)$ is convex. Thus, assume that $A\subseteq C\subseteq B$ and $A,B\in\Delta_\land(\cA_0,\cA_1)$. Then there are $A_0,B_0\in\cA_0$ and $A_1,B_1\in\cA_1$ such that $A=A_0\cap A_1$ and $B=B_0\cap B_1$. Let $C_0=A_0\cup C$ and
$C_1=A_1\cup C$.  Then 
$A_0\subseteq C_0$ and $C_0\subseteq A_0\cup B\subseteq A_0\cup B_0$, and since $\cA_0$ is dominated and convex, $A_0\cup B_0\in\cA_0$. Thus, by convexity of $\cA_0$, we have $C_0\in\cA_0$. In the same way we see that $C_1\in\cA_1$. Observe now that $C\subseteq C_0\cap C_1\subseteq (A_0\cap A_1)\cup C= A\cup C=C$, whence $C\in\Delta_\land(\cA_0,\cA_1)$.

To prove that $\Delta_\land(\cA_0,\cA_1)$ is dominated, it suffices to observe that if $\cA_0$ and $\cA_1$ are dominated by $D_0$ and $D_1$, respectively, then clearly $\Delta_\land(\cA_0,\cA_1)$ is dominated by $D_0\cap D_1$.

The proof that $\Delta_\land$ weakly preserves supported convexity is similar.
\end{proof}

Finally, for the union operator we obtain the following dimension inequalities:

\begin{proposition}\label{uniondim}
Let $\cA_0,\cA_1\subseteq \cP(X)$ for a base set $X$, and let $\cA=\cA_0\cup\cA_1$. Then $\DD(\cA)\le\DD(\cA_0)+\DD(\cA_1)$, $\DDd(\cA)\le\DDd(\cA_0)+\DDd(\cA_1)$ and $\CD(\cA)\le\CD(\cA_0)+\CD(\cA_1)$.
\end{proposition}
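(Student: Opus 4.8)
The plan is to treat the three inequalities in increasing order of difficulty, beginning with the cylindrical dimension, which requires nothing beyond the definition. Let $(\cB_i)_{i\in I}$ be an indexed family of intervals of minimal size with $\bigcup_{i\in I}\cB_i=\cA_0$ and $(\cC_j)_{j\in J}$ one for $\cA_1$, so that $|I|=\CD(\cA_0)$ and $|J|=\CD(\cA_1)$. Forming the disjoint union of these two indexed families yields an indexed family of $|I|+|J|$ intervals whose union is $\cA_0\cup\cA_1=\cA$. Hence $\CD(\cA)\le|I|+|J|=\CD(\cA_0)+\CD(\cA_1)$, with no further work needed.

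For the upper dimension the naive idea — take minimal dominating families $\cG_0$ of $\cA_0$ and $\cG_1$ of $\cA_1$ and form $\cG_0\cup\cG_1$ — is essentially right, but one must then exhibit a \emph{single} dominated convex witness $\cD_G$ for each $G\in\cG_0\cup\cG_1$, and the sets in $\cG_0\cap\cG_1$ threaten this, since the union $\cD^0_G\cup\cD^1_G$ of the two available witnesses need not be convex. I would sidestep this entirely by invoking the convex-shadow characterization of domination in Lemma~\ref{dimcalc:shadowcrit}(b): a subfamily $\cG\osaj\cA$ dominates $\cA$ precisely when $\bigcup_{G\in\cG}\varjo_G(\cA)=\cA$. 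The key auxiliary observation is monotonicity of shadows under enlargement of the ambient family: if $A\in\cA_0$ then, since $\cA_0\osaj\cA$, any interval $[B,A]\osaj\cA_0$ also satisfies $[B,A]\osaj\cA$, so $\varjo_A(\cA_0)\osaj\varjo_A(\cA)$.

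Granting this, set $\cG=\cG_0\cup\cG_1\osaj\cA_0\cup\cA_1=\cA$. Combining $\bigcup_{G\in\cG_0}\varjo_G(\cA_0)=\cA_0$ and $\bigcup_{H\in\cG_1}\varjo_H(\cA_1)=\cA_1$ with the monotonicity just noted gives $\bigcup_{G\in\cG}\varjo_G(\cA)\supseteq\cA_0\cup\cA_1=\cA$, while the reverse inclusion is immediate because $\varjo_G(\cA)\osaj\cA$ for every $G$. Thus $\cG$ dominates $\cA$ by Lemma~\ref{dimcalc:shadowcrit}(b), and $\DD(\cA)\le|\cG|\le|\cG_0|+|\cG_1|=\DD(\cA_0)+\DD(\cA_1)$. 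The dual upper dimension is handled by the mirror-image argument: replace $\varjo_G$ by the dual convex shadow $\varjo^G$, use the dual half of Lemma~\ref{dimcalc:shadowcrit}(b), and note the dual monotonicity $\varjo^A(\cA_0)\osaj\varjo^A(\cA)$. The only genuine subtlety is the overlap $\cG_0\cap\cG_1$ flagged above, and the shadow formulation is precisely what dissolves it: passing through shadows makes the witness bookkeeping automatic, and any repetition of sets across $\cG_0$ and $\cG_1$ can only help, since it shrinks $|\cG|$.
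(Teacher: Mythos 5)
Correct, and essentially the paper's own proof: the paper likewise takes minimal dominating (respectively supporting, interval-covering) families and observes that $\cG_0\cup\cG_1$ dominates $\cA_0\cup\cA_1$, leaving as ``clearly'' the witness bookkeeping that you carry out explicitly via the shadow characterization of Lemma~\ref{dimcalc:shadowcrit}(b). The only inaccuracy is the subtlety you flag: for $G\in\cG_0\cap\cG_1$ the union $\cD^0_G\cup\cD^1_G$ \emph{is} automatically dominated convex, because both families contain their common maximum $G$, so for $S\osaj U\osaj T$ with $S\in\cD^i_G$ we have $U\osaj T\osaj G$ and hence $U\in[S,G]\osaj\cD^i_G$ by convexity (equivalently, both witnesses lie inside $\varjo_G(\cA)$ by Lemma~\ref{dimcalc:shadowcrit}(a)); thus the naive construction already goes through verbatim, and your shadow detour, while perfectly valid, is not needed to dissolve any genuine obstacle.
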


\begin{proof}
Observe that if a subfamily $\cG_0$ dominates $\cA_0$ and a subfamily $\cG_1$ dominates $\cA_1$,
then clearly $\cG_0\cup\cG_1$ dominates $\cA_0\cup\cA_1$. Thus, $\DD(\cA)\le |\cG_0\cup\cG_1|\le |\cG_0|+|\cG_1|$. The first inequality follows from the case where $\cG_0$ and $\cG_1$ are of minimal cardinality. The other two inequalities are proved in the same way.
\end{proof}

We can now add the cases of tensor connectives and intuitionistic disjunction to Corollary~\ref{DimPresCor}.

\begin{corollary}\label{DimPresCor2}
Let $\OO$ be a growth class, and let $\gdim$ be one of the dimension functions $\Dim$, $\Dimd$ and $\CDim$. Furtheoremore, let $\phi=\phi(\vx)$ and $\psi=\psi(\vx)$ be formulas of some logic $\cL$ with team semantics, and let $\oast$ be a binary tensor connective.
\begin{enumerate}[(a)]
    \item If $\gdim_{\phi,\vx},\gdim_{\psi,\vx}\in\OO$, then $\gdim_{\phi\ilor\psi,\vx}\in\OO$.
    \item If $\gdim_{\phi,\vx},\gdim_{\psi,\vx}\in\OO$, then  $\gdim_{\phi\tland\psi,\vx}\in\OO$.
    \item If $\CDim_{\phi,\vx},\CDim_{\psi,\vx}\in\OO$, then  $\CDim_{\phi\oast\psi,\vx}\in\OO$.
\end{enumerate}
\end{corollary}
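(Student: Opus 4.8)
The plan is to reduce all three parts to the dimension inequalities already established for the underlying operators, exactly in the spirit of the proof of Corollary~\ref{DimPresCor}. The key observation is that the semantic identities recorded in Section~\ref{families-of-teams} express each connective as one of our operators applied to the families of its constituents: writing $\cA_0=\sat{\phi}{M,\vx}$ and $\cA_1=\sat{\psi}{M,\vx}$, we have $\sat{\phi\ilor\psi}{M,\vx}=\cA_0\cup\cA_1$, $\sat{\phi\tland\psi}{M,\vx}=\Delta_\land(\cA_0,\cA_1)$, and $\sat{\phi\oast\psi}{M,\vx}=\Delta_\oast(\cA_0,\cA_1)$. In each case I would fix a finite model $M$ of size $n$, bound the relevant dimension of the composite family $\cA$ by a sum or a product of the dimensions of $\cA_0$ and $\cA_1$, and then pass to the dimension functions by taking suprema over all $M$ with $|M|=n$. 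Since a growth class $\OO$ is closed under pointwise sums and products, membership in $\OO$ follows.

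For part (a) the relevant operator is union $\cup$. Proposition~\ref{uniondim} gives, for every finite $M$, the inequalities $\DD(\cA)\le\DD(\cA_0)+\DD(\cA_1)$, $\DDd(\cA)\le\DDd(\cA_0)+\DDd(\cA_1)$ and $\CD(\cA)\le\CD(\cA_0)+\CD(\cA_1)$. Taking suprema yields $\gdim_{\phi\ilor\psi,\vx}\le\gdim_{\phi,\vx}+\gdim_{\psi,\vx}$ for each of the three dimension functions, and closure of $\OO$ under sums finishes this case. For part (b) the operator is $\Delta_\land$, which is a Kripke-operator by Example~\ref{kripke-ex}(d). By Proposition~\ref{cap-lor-land} it weakly preserves both dominated and supported convexity, and by Example~\ref{tensorintpres}(a) it weakly preserves intervals. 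Hence Theorem~\ref{sailyminen} applies for all three of $\DD$, $\DDd$, $\CD$, giving $D(\cA)\le D(\cA_0)\cdot D(\cA_1)$ for each $D\in\{\DD,\DDd,\CD\}$, so $\gdim_{\phi\tland\psi,\vx}\le\gdim_{\phi,\vx}\cdot\gdim_{\psi,\vx}$, and closure of $\OO$ under products completes it. Part (c) is the same argument applied to a general tensor operator $\Delta_\oast$: by Example~\ref{tensorintpres}(a) every tensor operator weakly preserves intervals, so Theorem~\ref{sailyminen}(c) yields $\CD(\cA)\le\CD(\cA_0)\cdot\CD(\cA_1)$, whence $\CDim_{\phi\oast\psi,\vx}\le\CDim_{\phi,\vx}\cdot\CDim_{\psi,\vx}\in\OO$.

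The only point requiring care, and the reason part (c) is stated only for $\CDim$, is that a general tensor operator need not weakly preserve dominated or supported convexity: Example~\ref{tensorintpres}(b) exhibits non-monotone operators $\oast$ for which $\{C\}\oast\cA=\lnot\cA$ destroys both properties. Thus for $\Dim$ and $\Dimd$ one cannot invoke parts (a) and (b) of Theorem~\ref{sailyminen} for arbitrary $\oast$, and only the interval-based bound for $\CD$ survives. There is no genuine obstacle here; the corollary is a bookkeeping combination of Propositions~\ref{uniondim} and \ref{cap-lor-land}, Example~\ref{tensorintpres}, and Theorem~\ref{sailyminen}, together with the closure properties of growth classes.
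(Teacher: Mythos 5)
Your proposal is correct and follows essentially the same route as the paper's proof: part (a) via Proposition~\ref{uniondim} and closure of growth classes under sums, part (b) via Proposition~\ref{cap-lor-land} together with Theorem~\ref{sailyminen}, and part (c) via the interval-preservation of tensor operators (Proposition~\ref{basic:tensor-intpres}, Example~\ref{tensorintpres}) and Theorem~\ref{sailyminen}(c). Your added explanation of why part (c) is restricted to $\CDim$ (non-monotone tensor operators destroying dominated and supported convexity, Example~\ref{tensorintpres}(b)) is accurate and matches the paper's own discussion.
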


\begin{proof}
(a) Let $M$ be a finite model. By Proposition \ref{uniondim}  we have
$$
    D(\sat{\phi\ilor\psi}{M,\vx})=D(\sat{\phi}{M,\vx}\cup\sat{\psi}{M,\vx})\le D(\sat{\phi}{M,\vx})+ D(\sat{\psi}{M,\vx})
$$
for each of the dimensions $D\in\{\DD,\DDd,\CD\}$.
Since this holds for all finite models $M$, we have $\gdim_{\phi\ilor\psi,\vx}\le \gdim_{\phi,\vx}+\gdim_{\psi,\vx}$, and hence $\gdim_{\phi\ilor\psi,\vx}\in\OO$. 

(b) Using Proposition \ref{cap-lor-land} and Theorem \ref{sailyminen} we obtain the inequality $\gdim_{\phi\tland\psi,\vx}\le \gdim_{\phi,\vx}\cdot\gdim_{\psi,\vx}$. Thus we see that $\gdim_{\phi\tland\psi,\vx}\in\OO$.

(c) is proved in the same way as (b) by using Proposition~\ref{basic:tensor-intpres} (see Example \ref{tensorintpres}) in place of Proposition~\ref{cap-lor-land}. 
\end{proof}




\section{Applications}

The main application of our dimension theory is to hierarchies of definability in logics based on the atoms of Definition~\ref{atoms} and the logical operations of Definition~\ref{fol}. We obtain also non-expressibility results for some other connectives and quantifiers based on observations that they do not preserve dimension.

\subsection{Hierarchy results}

We can now apply our results to obtain hierarchy results for
extensions of first order logic by various team-based atoms. We start
by defining a family of logics the definition of which is based solely
on dimension-theoretic considerations. We use these somewhat
artificial logics as yardsticks to compare more traditional logics.




\begin{definition}\label{massive}
\begin{enumerate}[(a)]
    \item The logic $\LE^U_k$ is the closure of literals  and all atoms whose upper dimension function is in the growth class $\EE_k$ under the connectives $\wedge$, $\ilor$, $\vee$, $\tland$, and any Lindstr\"om quantifiers. Similarly $\LF^U_k$ for $\FF_k$.
       \item The logic $\LF^D_k$ is the closure of literals and all atoms whose dual dimension function is in the growth class $\FF_k$ under the connectives $\wedge$, $\ilor$, $\vee$, $\tland$, and any Lindstr\"om quantifiers. 
            \item The logic $\LF^C_k$ is the closure of literals and all atoms whose cylindrical dimension function is in the growth class $\FF_k$ under the connectives $\wedge$, $\ilor$, $\vee$, any tensor operators, and any Lindstr\"om quantifiers. 
           
\end{enumerate}

\end{definition}

We did not define what would be denoted $\LE^D_k$ and $\LE^{C}_k$, for the very special reason that the estimates given by Proposition~\ref{calc:dummydf}  are not good enough for the dual and the cylindric dimensions, rendering logics based on them less natural. See remarks at the end of Subsection~\ref{gc}.  

The logics defined above have some unusual properties. For example, each logic is closed under \emph{all} Lindstr\"om quantifiers which means that \emph{every} property of finite models, closed under isomorphism, is definable in each of these logics. On the other hand, each of these logics is limited as to what their formulas can express. In classical logic formulas and sentences have more or less the same expressive power because we can always form a sentence from a formula by substituting constant symbols in place of free variables. In team semantics this does not work because constant symbols do not convey the plural nature of team semantics. The  reason for the introduction of these logics is that they help us estimate and delineate dimensions of formulas and thereby expressive power of formulas in a multitude of logics.

\begin{theorem}
\begin{enumerate}[(a)]
    \item The upper  dimension of every formula in $\LE^U_k$ 
     is in the growth class~$\EE_k$.
    \item The upper (dual, cylindrical) dimension of every formula in $\LF^U_k$ 
    ($\LF^D_k$, $\LF^{CD}_k$, respectively) is in the growth class $\FF_k$.\end{enumerate}
\end{theorem}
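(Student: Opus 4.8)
The plan is to prove both parts by one induction on the construction of formulas, using Corollaries~\ref{DimPresCor} and~\ref{DimPresCor2} at the inductive steps and Proposition~\ref{calc:dummydf} to pass between a subformula and the ambient tuple of free variables of the formula that contains it.

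For the base case, a literal is first order, so $\sat{\phi}{M,\vx}=[\emptyset,T_\phi]$ is an interval and all three dimensions equal~$1$; hence its dimension functions lie in $\EE_0$, which is contained in every $\EE_k$ and every $\FF_k$. For an atom $\alpha$, membership in the relevant logic is arranged exactly so that the pertinent dimension function of $\alpha$, relative to the variables occurring in $\alpha$, lies in the prescribed class --- this is the content of Example~\ref{atomiendimensioita} and Table~\ref{classes}. So the base case is immediate from the definitions.

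For the inductive step I take $\theta$ built from $\phi$ (and possibly $\psi$) by one admitted operation, let $\vx$ list the free variables of $\theta$, first pass from $\gdim_{\phi,\vy}$ to $\gdim_{\phi,\vx}$ by adjoining the missing variables, and then apply the matching clause of the preservation corollaries. Conjunction and the Lindström quantifiers (hence $\exists,\forall$) are handled by Corollary~\ref{DimPresCor}, giving $\gdim_\theta\le\gdim_\phi\cdot\gdim_\psi$ (or $\gdim_\theta\le\gdim_\phi$ for a quantifier); $\ilor$ yields the sum bound and $\tland$ the product bound by Corollary~\ref{DimPresCor2}(a,b); and for the cylindrical logic the remaining tensor connectives are covered by Corollary~\ref{DimPresCor2}(c). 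As every growth class is closed under sums and products, the result remains in the class. For the upper-dimension statements --- part~(a) and the $\LF^U_k$ clause of~(b) --- the variable-adjoining step costs nothing, since Proposition~\ref{calc:dummydf} preserves the upper dimension exactly, with no side condition; those cases are therefore routine.

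The real difficulty lies in the dual and cylindrical clauses of~(b). There Proposition~\ref{calc:dummydf} bounds the dimension only by $n^{t\,r}\cdot\gdim_{\phi,\vy}$, where $r$ is a bound on the size of all teams satisfying $\phi$, and its growth-class form consequently demands $\FF_{\deg\phi}\subseteq\OO$. This can fail: an $m$-ary exclusion or inclusion atom has degree about $2m$, which may exceed $k$ although its dual dimension sits in $\FF_0$ or $\FF_m$. I expect this to be the crux. The resolution is to notice that, in the proof of Proposition~\ref{calc:dummydf}, the blow-up factor counts selective inverse images only of the members of an optimal supporting family (for $\DDd$), respectively of the lower endpoints of an optimal interval cover (for $\CD$); so the exponent is governed not by the maximal team size but by the largest dual-critical, respectively critical, set. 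For the atoms admitted into $\LF^D_k$ and $\LF^C_k$ the critical-set descriptions of Lemmas~\ref{dimcalc:indep}--\ref{dimcalc:anonym} show these sizes to be of order $n^{q(n)}$ with $\deg q\le k$ --- the same polynomial degree that already places the dual or cylindrical dimension in $\FF_k$. Feeding this sharpened estimate into the product and sum bounds of the preservation corollaries then keeps the whole induction inside $\FF_k$.
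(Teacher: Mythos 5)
Your skeleton is exactly the paper's proof: the paper disposes of the theorem by the same induction, with the base case supplied by Definition~\ref{massive} and the inductive steps by Corollaries~\ref{DimPresCor} and~\ref{DimPresCor2}, and your treatment of part~(a) and of the $\LE^U_k$/$\LF^U_k$ clauses is fine, since Proposition~\ref{calc:dummydf} preserves upper dimension exactly under the addition of dummy variables. You are also right that the dual and cylindrical clauses are where the paper's stated tools run out: the growth-class form of the dummy-variable estimate demands $\FF_{\deg\phi}\subseteq\OO$, and atoms admitted to $\LF^D_k$ or $\LF^C_k$ can have degree exceeding $k$ (the paper's own proof never addresses this). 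Your sharpening of Proposition~\ref{calc:dummydf} --- that the factor $s^{|B|}$ is incurred only for the members $B$ of a chosen supporting family, resp.\ for the lower endpoints of an interval cover, so the relevant parameter is the size of the dual-critical (critical) sets rather than the maximal team size --- is correct, and is in fact latent in the paper's proof of that proposition, which uses the bound $|B|\le r$ only for $B\in\cK$. (A small slip in wording: the sizes of those sets are polynomial in $n$, at most $n^{\mathrm{arity}}$; it is the blow-up factor $n^{t|B|}$ that is of order $n^{q(n)}$.)

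The gap is in your last step. You justify the size bound on dual-critical sets only via Lemmas~\ref{dimcalc:indep}--\ref{dimcalc:anonym}, i.e.\ only for the concrete families $\cI_\perp$, $\cI_\osaj$, $\cY$; but Definition~\ref{massive} admits \emph{every} atom whose dual (cylindrical) dimension function lies in $\FF_k$, including the general atoms of Definition~\ref{atoms}(f), and for these no bound on minimal-set size follows from the dimension hypothesis. Concretely, for $m>k$ take the $m$-ary ``full team'' atom $\alpha$ with $M\models_T\alpha(\vx)$ iff $T\restriction\vx=M^m$: this is isomorphism-closed and local, $\sat{\alpha}{{M,\vx}}=\{M^m\}$, so $\Dimd_{\alpha,\vx}=\CDim_{\alpha,\vx}=1\in\EE_0\subseteq\FF_k$ and $\alpha$ is admitted; yet relative to one dummy variable $y$ (e.g.\ inside the $\LF^D_k$-formula $\alpha(\vx)\wedge y=y$) the family $\sat{\alpha}{{M,\vx y}}$ has $n^{n^m}$ minimal sets --- the graphs of functions $M^m\to M$ --- each of which must belong to every supporting family by Lemma~\ref{dimcalc:shadowcrit}(c). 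Hence $\Dimd_{\alpha,\vx y}(n)\ge n^{n^m}\notin\FF_k$, and likewise for $\CDim$ since $\DDd\le\CD$ by Proposition~\ref{basic:dimEst}. So your resolution (and, read at this level of generality, the dual and cylindrical clauses themselves) does not go through as stated: either the admission criterion in Definition~\ref{massive} must be read as requiring the dimension function to be in $\FF_k$ relative to \emph{every} tuple extending the atom's variables --- under that reading the dummy-variable step disappears and the paper's plain induction is complete, with your critical-set computation then serving to verify that the concrete atoms meet this stronger criterion --- or an explicit degree-$\le k$ bound on the sizes of the atoms' dual-critical sets (lower interval endpoints) must be added as a hypothesis, which is precisely what your sharpened estimate consumes.
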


\begin{proof}
(a) By Definition~\ref{massive} the atoms of $\LE^U_k$ 
     are in  $\EE_k$. By an inductive argument based on Corollaries~\ref{DimPresCor} and \ref{DimPresCor2} the upper dimension of every formula from $\LE^U_k$ is in $\EE_k$, too.
     
(b) By Definition~\ref{massive} the atoms of $\LF^U_k$ 
     are in  $\FF_k$. By an inductive argument based on Corollaries~\ref{DimPresCor} and \ref{DimPresCor2}, again, the upper dimension of every formula of $\LF^U_k$ is in  $\FF_k$, too. The argument is the same in the case of $\LF^D_k$ and $\LF^{CD}_k$.
\end{proof}

Note that we have not added the intuitionistic implication $\to$ (see Definition~\ref{IntImpl}) to the lists of logical operations in the above definition. The reason is that we want to keep dimension under control and  intuitionistic implication increases dimension exponentially (Lemma~\ref{exponential}). The non-empty atom $\nem$ is in  $\LE^U_0$.
For $k>0$ the logics $\LE^U_k$, $\LE^D_k$, and $\LE^C_k$, $\LF^U_k$, $\LF^D_k$, and $\LF^C_k$ are closed under $\exists^1$, but never under $\forall^1$ (see Section~\ref{otheratomsand}.) 

The trivial properties of the logics of Definition~\ref{massive} are summarized in the following lemma (see also Figure~\ref{hasse}):

\begin{lemma}
\begin{enumerate}[(a)]
    \item $\LF^U_k\subseteq\LF^U_{k+1}$, $\LF^D_k\subseteq\LF^D_{k+1}$, and $\LF^C_k\subseteq\LF^C_{k+1}$.
    \item $\LE^U_k\subseteq\LF^U_k\subseteq \LE^U_{k+1}$.    
    \item $\LF^C_k\subseteq\LF^U_k$ and $\LF^C_k\subseteq\LF^D_k$.
    \end{enumerate}
\end{lemma}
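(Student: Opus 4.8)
The plan is to reduce all three parts to two ingredients that are already at hand: the strict chain of growth classes $\EE_0\aioj\FF_0\aioj\EE_1\aioj\FF_1\aioj\cdots$, and the pointwise estimates $\DD\le\CD$, $\DDd\le\CD$ of Proposition~\ref{basic:dimEst} combined with the downward closure of growth classes (condition (a) in the definition of a growth class). Each logic of Definition~\ref{massive} is the closure of a set of generating atoms under a fixed set of connectives, so an inclusion $\cL\osaj\cL'$ follows by a routine structural induction once two things are checked: (i) every generating atom of $\cL$ is a generating atom of $\cL'$, and (ii) every connective allowed in $\cL$ is allowed in $\cL'$. Literals generate every logic, so they need no attention.

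First I would dispose of (a) and (b), where (ii) is immediate because the logics being compared share exactly the same connective set ($\wedge$, $\ilor$, $\vee$, $\tland$ and all Lindström quantifiers). Hence only (i) is at issue, and it drops out of the growth-class chain. For (a), since $\FF_k\osaj\FF_{k+1}$, any atom whose upper (resp.\ dual, cylindrical) dimension function lies in $\FF_k$ also has it in $\FF_{k+1}$, giving $\LF^U_k\osaj\LF^U_{k+1}$, $\LF^D_k\osaj\LF^D_{k+1}$ and $\LF^C_k\osaj\LF^C_{k+1}$. For (b), $\EE_k\osaj\FF_k$ turns every generating atom of $\LE^U_k$ into one of $\LF^U_k$, while $\FF_k\osaj\EE_{k+1}$ turns every generating atom of $\LF^U_k$ into one of $\LE^U_{k+1}$, yielding the two inclusions $\LE^U_k\osaj\LF^U_k\osaj\LE^U_{k+1}$.

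For (c) the generator side is exactly where Proposition~\ref{basic:dimEst} enters: if an atom $\alpha$ has $\CDim_\alpha\in\FF_k$, then $\DD(\cA)\le\CD(\cA)$ and $\DDd(\cA)\le\CD(\cA)$, taken as suprema over models of fixed size, give $\Dim_\alpha\le\CDim_\alpha$ and $\Dimd_\alpha\le\CDim_\alpha$ pointwise, whence by downward closure $\Dim_\alpha\in\FF_k$ and $\Dimd_\alpha\in\FF_k$. Thus every generator of $\LF^C_k$ is a generator of both $\LF^U_k$ and $\LF^D_k$.

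The main obstacle is condition (ii) for part (c): $\LF^C_k$ is closed under \emph{all} tensor operators $\oast$, whereas $\LF^U_k$ and $\LF^D_k$ are closed only under $\tland$ and $\vee$. Because the equivalences of propositional logic do not transfer to tensor operations (for instance $\oplus$ cannot in general be rewritten using $\wedge,\vee,\tland,\ilor$), a naive induction breaks on a subformula $\phi\oast\psi$, so the inclusion cannot be read as literal containment of formula sets. I would resolve this at the level of the dimension bound that each logic certifies, which is precisely how the preceding theorem characterizes them: by that theorem every formula of $\LF^C_k$ has cylindrical dimension in $\FF_k$, and the pointwise estimates together with downward closure then show it also has upper and dual dimension in $\FF_k$, i.e.\ it meets the defining bounds of $\LF^U_k$ and of $\LF^D_k$. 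This is the only step beyond bookkeeping, and I would flag explicitly which reading of $\osaj$ is intended so that the effect of the extra tensor connectives of $\LF^C_k$ is unambiguous.
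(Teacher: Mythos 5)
Your parts (a) and (b), and the atom half of (c), coincide with what the paper intends: it states this lemma without proof (``trivial properties''), and the evident argument is exactly yours --- the chain $\EE_k\aioj\FF_k\aioj\EE_{k+1}$ handles the generating atoms in (a) and (b) since the connective lists agree verbatim, and for (c) the pointwise bounds $\DD\le\CD$, $\DDd\le\CD$ of Proposition~\ref{basic:dimEst} together with downward closure of growth classes show that any atom with $\CDim_{\alpha}\in\FF_k$ also has $\Dim_{\alpha},\Dimd_{\alpha}\in\FF_k$. Where you go beyond the paper is in spotting that (c) cannot be a literal containment of formula sets: by Definition~\ref{massive}, $\LF^C_k$ is closed under \emph{all} tensor operators, so $\phi\oplus\psi\in\LF^C_k$ while $\oplus$ is simply not in the connective inventory of $\LF^U_k$ or $\LF^D_k$; the paper is silent on this. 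Your repair --- pass to the dimension bounds each logic certifies, using that every $\LF^C_k$-formula has $\CDim$ in $\FF_k$ and hence, pointwise and by downward closure, $\Dim$ and $\Dimd$ in $\FF_k$ --- is sound and yields exactly the property that all downstream applications of the lemma actually use. It is worth adding why this detour is forced rather than cosmetic: one cannot instead run a structural induction on $\Dim$ or $\Dimd$ through an $\oast$-subformula, because non-monotone tensor operators fail to weakly preserve dominated and supported convexity (Example~\ref{tensorintpres}(b)), and only $\CDim$ is preserved by arbitrary tensor connectives (Corollary~\ref{DimPresCor2}(c)); so routing through $\CDim$ and then applying $\DD\le\CD$, $\DDd\le\CD$ per formula is the unique viable reading under which (c) holds, and your closing request to fix the intended meaning of $\osaj$ is a legitimate correction to the paper's statement rather than a gap in your proof.
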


\begin{figure}
    \centering
\begin{tikzpicture}
  \matrix (m) [matrix of math nodes, row sep=3em,
    column sep=3em]{
                        \LF^U_{k+1}&                  &   \LF^D_{k+1}  \\     
   \LE^U_{k+1}                     &     \LF^C_{k+1}       &              \\
                        \LF^U_{k}  &                  &   \LF^D_{k}  \\
      \LE^U_{k}                    &    \LF^C_{k}     &               \\
};
  \path[-stealth]
(m-2-1) edge (m-1-1)
(m-3-1) edge (m-2-1)
(m-4-1) edge (m-3-1)
(m-4-2) edge (m-3-1) edge (m-3-3) edge (m-2-2)
(m-3-3) edge (m-1-3)
(m-2-2) edge (m-1-1) edge (m-1-3)

 ; \end{tikzpicture}
\vspace{10mm}
\caption{Logics built from growth classes.\label{hasse}} \end{figure}
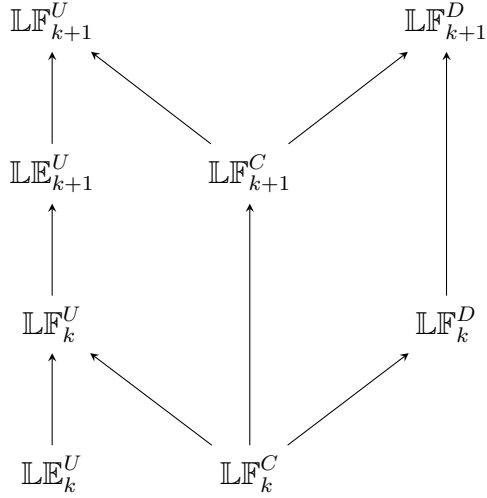

As it turns out, a crucial factor in the hierarchy results is the length of variable-tuples allowed in the atoms. Let us therefore specify the concept of arity for our atoms: 

\begin{definition}
We say:
\begin{itemize}
    \item 
 the atom $\dep(\vx,y)$ is $k$-ary, if $\len(\vx)=k$, 
 \item the atoms $\vx\ |\ \vy$ and $\vx\ \Upsilon\ y$ are $k$-ary if $\len(\vx)(=\len(\vy))=k$, 
 \item the atom $ \vt_2\perp_{\vt_1}\vt_3$ is $m+\max(k,l)$-ary, or alternatively $(k,l,m)$-ary,  if $\len(\vt_1)=m,\len(\vt_2)=k,\mbox{ and }\len(\vt_3)=l$,
 \item the atom 
$ \vt_2\perp\vt_3$ is $\max(k,l)$-ary, or alternatively $(k,l)$-ary,  if $\len(\vt_2)=k,\mbox{ and }\len(\vt_3)=l$, 
\item a general atom $\alpha_C\vec{x}$ (as in Definition~\ref{atoms}) $\len(\vec{x})=k$, is called $k$-ary,
\item a logic is $k$-ary (respectively, $(k,l)$-ary or $(k,l,m)$-ary) if its atoms are.
\end{itemize}\end{definition}


\begin{theorem} 
\begin{enumerate}[(a)]
    \item $k$-ary inclusion, anonymity, and exclusion  logics are all included in $\LE^U_k$.
       \item The $k$-ary dependence logic is included in $\LF^U_k$.
\item The $(k,l,m)$-ary independence logic is included in $\LF^U_{\max(k,l)+m}$.
\end{enumerate}
\end{theorem}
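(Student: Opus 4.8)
The plan is to reduce the whole statement to a single \emph{generator check}, using the observation (from the discussion following Definition~\ref{atoms}) that each of the logics in question is merely the closure of first-order literals and the relevant atom under the operations (b), (c), (f) and (g) of Definition~\ref{fol}: conjunction $\wedge$, tensor disjunction $\vee$, and the quantifiers $\exists$ and $\forall$. The yardstick logics $\LE^U_k$ and $\LF^U_k$ of Definition~\ref{massive} are closures of literals and dimension-bounded atoms under $\wedge$, $\ilor$, $\vee$, $\tland$ and all Lindström quantifiers. Since $\exists=Q_{\cK_\exists}$ and $\forall=Q_{\cK_\forall}$ are Lindström quantifiers (see the remark after Definition~\ref{Q-oper}), the operation set $\{\wedge,\vee,\exists,\forall\}$ is a subset of the operations of each yardstick logic. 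Hence I would argue that once the generating atoms of an atom-logic are admissible generators of the target yardstick logic — that is, once their upper dimension functions lie in the required growth class — a routine induction on formula structure delivers the inclusion.

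For parts (a) and (b) the generator check is already contained in Example~\ref{atomiendimensioita}. First I would record that, for $\len(\vx)=\len(\vy)=k$, the inclusion atom satisfies $\Dim_{\vx\subseteq\vy}\in\EE_k$ (item (d)), the exclusion atom $\Dim_{\vx\mid\vy}\in\EE_k$ (item (c)), and, for $\len(\vx)=k$, the anonymity atom $\Dim_{\vx\anonym y}\in\EE_k$ (item (e)); this makes the $k$-ary inclusion, anonymity and exclusion atoms legitimate generators of $\LE^U_k$, settling (a). For (b) I would invoke item (b), $\Dim_{\dep(\vx,y)}\in\FF_k$ for $\len(\vx)=k$, placing the $k$-ary dependence atom among the generators of $\LF^U_k$.

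Part (c) is where the only real computation lies, and where I would spend care on the arithmetic of exponents. Taking the conditional independence atom $\vt_2\perp_{\vt_1}\vt_3$ with $\len(\vt_2)=k$, $\len(\vt_3)=l$ and conditioning length $\len(\vt_1)=m$, I would specialise the upper bound of Theorem~\ref{dim-atoms}(f) to these lengths, obtaining
\[
  \DD(\sat{\alpha}{{M,\vz}})\le
  \bigl((2^{n^{k}}-n^{k}-1)(2^{n^{l}}-n^{l}-1)+n^{k}+n^{l}\bigr)^{\,n^{m}}.
\]
Bounding the base by $2^{n^{k}}\cdot 2^{n^{l}}=2^{n^{k}+n^{l}}$ and raising to the power $n^{m}$ gives
\[
  \DD(\sat{\alpha}{{M,\vz}})\le 2^{(n^{k}+n^{l})n^{m}}
  =2^{\,n^{k+m}+n^{l+m}}\le 2^{\,2n^{\max(k,l)+m}}.
\]
As $2n^{\max(k,l)+m}$ is a polynomial of degree $\max(k,l)+m$ with coefficients in $\NN$, this shows the upper dimension of the $(k,l,m)$-ary independence atom lies in $\EE_{\max(k,l)+m}\osaj\FF_{\max(k,l)+m}$, so that it is an admissible generator of $\LF^U_{\max(k,l)+m}$ and the induction finishes (c).

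I expect the main obstacle to be nothing deep but rather this exponent bookkeeping: one must see that the two doubly-exponential factors in $n^{k}$ and $n^{l}$ jointly contribute degree $\max(k,l)$ (not $k+l$), while the conditioning variables add the extra $n^{m}$ factor, so that the total degree is exactly $\max(k,l)+m$. I would also note explicitly that the bound actually lands in $\EE_{\max(k,l)+m}$, which is contained in $\FF_{\max(k,l)+m}$ — this containment is why part (c) can be, and is, phrased with $\FF$ rather than with the sharper $\EE$.
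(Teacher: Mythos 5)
Your proposal is correct and follows essentially the same route as the paper: verify via Theorem~\ref{dim-atoms} (equivalently Example~\ref{atomiendimensioita}) that each generating atom's upper dimension function lies in the required growth class, then close under the operations using Corollaries~\ref{DimPresCor} and \ref{DimPresCor2}, noting that $\exists$ and $\forall$ are Lindström quantifiers so the atom logics' operations are a subset of the yardstick logics'. Your only addition is making explicit the exponent bookkeeping for part (c) — bounding $\bigl((2^{n^{k}}-n^{k}-1)(2^{n^{l}}-n^{l}-1)+n^{k}+n^{l}\bigr)^{n^{m}}$ by $2^{2n^{\max(k,l)+m}}$ — which the paper compresses into ``the argument is the same,'' and your computation is sound.
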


\begin{proof}
(a)  By Theorem~\ref{dim-atoms} the atoms of the  $k$-ary inclusion, anonymity, exclusion and  independence logics have upper dimension in  $\EE_k$. 
     
(b) The proof here is entirely similar: By Theorem~\ref{dim-atoms}   the  $k$-ary dependence atom has upper dimension in  $\FF_k$. 
     \end{proof}

The following theorem is our main application of the dimension analysis of families of sets of $n$-tuples.

\begin{theorem}
\begin{enumerate}[(a)]
    \item 
The $k+1$-ary inclusion, anonymity, and exclusion atoms are not definable in $\LE^U_k$.
    \item The $k+1$-ary dependence atom is not definable in $\LF^U_k$.
    \item The $(k,l,m)$-ary independence atom is not definable in $\LF^U_{i}$ if $i<\max(k,l)+m$.
\end{enumerate}
\end{theorem}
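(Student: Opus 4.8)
The plan is to argue by contraposition using the theorem proved above, which bounds the upper dimension of every formula of $\LE^U_k$ (resp.\ $\LF^U_k$) inside the growth class $\EE_k$ (resp.\ $\FF_k$). If one of the listed atoms $\alpha$, with free variables $\vz$, were definable in such a logic by a formula $\phi$, then $\sat{\phi}{M,\vz}=\sat{\alpha}{M,\vz}$ for every model $M$, so $\Dim_{\phi,\vz}=\Dim_{\alpha,\vz}$. Hence each claim reduces to a single assertion: the upper dimension function of the relevant atom, computed in Theorem~\ref{dim-atoms}, does \emph{not} lie in the indicated growth class.

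For (a) and (b) I would just substitute and compare. By Theorem~\ref{dim-atoms} the $(k{+}1)$-ary inclusion, anonymity and exclusion atoms have upper dimension functions $2^{n^{k+1}}-n^{k+1}$, $2^{n^{k+1}}$ and $2^{n^{k+1}}-2$, each at least $2^{n^{k+1}-1}$ for large $n$; since every $f\in\EE_k$ obeys $f(n)\le 2^{p(n)}$ with $\deg p=k$, and $n^{k+1}-1>p(n)$ eventually, none of these functions is in $\EE_k$. Similarly the $(k{+}1)$-ary dependence atom has upper dimension $n^{n^{k+1}}$, and $n^{n^{k+1}}\le n^{p(n)}$ would force $n^{k+1}\le p(n)$ with $\deg p=k$, impossible for large $n$; thus it is not in $\FF_k$. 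With the preservation theorem this settles (a) and (b).

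The substance is (c). I must show $\Dim_{\alpha}\notin\FF_{\max(k,l)+m-1}$ for the $(k,l,m)$-ary conditional independence atom; since $\FF_i\osaj\FF_{\max(k,l)+m-1}$ for $i<\max(k,l)+m$, this yields the theorem. The obstacle is that the lower bound recorded in Proposition~\ref{lower_upper-bound} and Theorem~\ref{dim-atoms}(f) is only the single-copy estimate $r\approx 2^{n^k+n^l}$, which lies in $\EE_{\max(k,l)}\osaj\FF_{\max(k,l)+m-1}$ as soon as $m\ge1$ and so gives no contradiction. (Only for the pure atom, $m=0$, does this bound already suffice, since then $\Dim_\alpha\ge 2^{n^{\max(k,l)}}\notin\FF_{\max(k,l)-1}$.) For $m\ge1$ I would instead prove the \emph{multiplicative} lower bound $\DD(\cI_{\perp,\bullet})\ge\DD(\cI_\perp)^{|Z|}$, using that $\cI_{\perp,\bullet}=\gtd_{c\in Z}\cJ_c$ is a general tensor disjunction of the pairwise disjointly supported copies $\cJ_c\cong\cI_\perp$ on $X\times Y\times\{c\}$.

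I expect this multiplicative bound to be the main obstacle, and I would attack it through critical sets. The crucial structural fact, already implicit in Lemma~\ref{dimcalc:indep} and in the equality $\DD(\cI_\perp)=|\Crit(\cI_\perp)|$ of Theorem~\ref{dim-calculations}, is that $\cI_\perp$ is \emph{sharp}: for $R\in\Crit(\cI_\perp)$ one has $R\in\varjo_G(\cI_\perp)\Rightarrow G=R$ (a rectangle $A\times B$ with $|A|,|B|\ge2$ has shadow $\{A\times B\}$, while the sets $\{a\}\times Y$ and $X\times\{b\}$ admit no strictly larger interval inside $\cI_\perp$), so by Lemma~\ref{dimcalc:shadowcrit}(b) every dominating family must contain all of $\Crit(\cI_\perp)$. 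Because the supports are disjoint, shadows factor as $\varjo_{\bigcup_c R_c}(\cI_{\perp,\bullet})=\gtd_{c\in Z}\varjo_{R_c}(\cJ_c)$, so a slicewise product $\bigcup_c R_c$ of critical sets is again sharp; there are $|\Crit(\cI_\perp)|^{|Z|}=\DD(\cI_\perp)^{|Z|}$ distinct such products, each forced into every dominating family. This gives $\DD(\cI_{\perp,\bullet})\ge\DD(\cI_\perp)^{n^m}$, which exceeds $2^{n^{\max(k,l)+m}}$ for large $n$ and hence outgrows every $n^{p(n)}$ with $\deg p=\max(k,l)+m-1$. Therefore $\Dim_\alpha\notin\FF_{\max(k,l)+m-1}$, and the preservation theorem completes (c).
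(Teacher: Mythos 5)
Your proposal is correct, and for parts (a) and (b) it coincides with the paper's argument: the paper's entire proof is a two-line verification that the upper dimension $2^{n^{k+1}}-n^{k+1}$ of the $(k{+}1)$-ary inclusion atom is not in $\EE_k$, followed by the remark that ``the argument is the same in the other cases,'' everything resting on the preceding preservation theorem together with Theorem~\ref{dim-atoms}. Where you genuinely diverge is (c), and your diagnosis of the difficulty there is exactly right: the paper's ``same argument'' silently appeals to Theorem~\ref{dim-atoms}(f), whose recorded lower bound $(2^{n^k}-n^k-1)(2^{n^l}-n^l-1)+n^k+n^l\approx 2^{n^k+n^l}$ lies in $\EE_{\max(k,l)}\osaj\FF_{\max(k,l)+m-1}$ as soon as $m\ge 1$, so as written it only excludes definability in $\LF^U_i$ for $i<\max(k,l)$, not for $i<\max(k,l)+m$; the paper nowhere proves the stronger bound that its statement requires. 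Your multiplicative estimate $\DD(\cI_{\perp,\bullet})\ge\DD(\cI_{\perp})^{|Z|}$ supplies precisely the missing step, and your proof of it is sound: since the supports $X\times Y\times\{c\}$ are pairwise disjoint, every member of $\cI_{\perp,\bullet}$ decomposes canonically into slices and convex shadows factor slice-wise as you claim; Lemma~\ref{dimcalc:indep}(a), together with the easy check that $\{a\}\times Y$ and $X\times\{b\}$ admit no strictly larger rectangle whose shadow contains them, shows each member of $\Crit(\cI_{\perp})$ lies only in its own shadow; and Lemma~\ref{dimcalc:shadowcrit}(b) then forces all $\DD(\cI_{\perp})^{n^m}$ products of critical slices into every dominating family. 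This yields $\Dim_{\alpha}(n)\ge 2^{n^{\max(k,l)+m}}$, which eventually exceeds $n^{p(n)}$ for every polynomial $p$ of degree $\max(k,l)+m-1$, settling (c); as a pleasant by-product, your lower bound matches the upper bound of Proposition~\ref{lower_upper-bound}, so it pins down $\DD(\cI_{\perp,\bullet})$ exactly and sharpens Theorem~\ref{dim-atoms}(f). In short: same skeleton as the paper, but for (c) your argument is more careful than the published proof and in fact repairs a genuine gap in it.
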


\begin{proof}
Suppose $\len(\vec{x})=\len(\vec{y})=k+1$. By Theorem~\ref{dim-atoms} the upper dimension of $\sat{\vec{x}\subseteq\vec{y}}{\vec{x}\vec{y}}$ is $2^{n^{k+1}}-n^{k+1}$. Therefore $\Dim_{\vec{x}\subseteq\vec{y},\vec{z}}\notin \EE_k$. The argument is the same in the other cases.
\end{proof}

Despite the above non-definability results, there are some obvious and also some not so obvious inter-definability results between the atoms. The basic picture is that dependence atoms are definable from the independence atoms but not from the inclusion atoms. The inclusion atoms are definable from the independence atoms but not from the dependence atoms. In both cases the non-definability is a consequence of structural properties of the logics, namely, dependence logic is downward closed and inclusion logic is closed under unions (of teams). The known relationships are as follows:

\begin{proposition}[ \cite{Ga}]\label{translations}
\begin{enumerate}[(a)]
\item The $k$-ary dependence atom $\dep(\vx,y)$ is definable from the $k+1$-ary
  exclusion atom with the formula
  $$\forall z(z=y\vee \vx z\ |\ \vx y)$$ and also in terms of the $k+1$-ary pure independence
  atom with the formula\footnote{Here, as in the sequel, $\vz=\vx$ is shorthand for $\bigwedge_{i=1}^k z_i=x_i$ and, respectively, $\vz\ne\vx$ is shorthand for $\bigvee_{i=1}^k \neg z_i=x_i$ } $$  \forall\vz\ \exists w((\vz\ne\vx\vee w=y)\wedge\vz y\perp\vz w).$$ In the other direction, the $k$-ary exclusion atom $\vt_1\ |\ \vt_2$ is
  definable from the $k$-ary dependence atom with the formula $$\forall\vz\exists u_1 u_2(\dep(\vz,u_1)\wedge \dep(\vz,u_2)\wedge((u_1=u_2\wedge\vz\ne \vt_1)\vee(
u_1\ne u_2\wedge\vz\ne\vt_2))).$$
\item The $k$-ary exclusion atom $\vx\ |\ \vy$ can be defined in terms of the
  $k$-ary inclusion and the $k$-ary pure independence atoms with the formula:
  $$\exists \vz(\vx\subseteq \vz\wedge \vy\perp\vz\wedge\vy\ne\vz).$$
\item The $k$-ary inclusion atom  $\vt_1\subseteq\vt_2$ can be defined from the ($k$,2)-ary  pure independence atom with the formula
$$\forall v_1v_2\vz((\vz\ne\vt_1\wedge \vz\ne\vt_2)\vee(v_1\ne v_2\wedge
\vz\ne \vt_2)\vee ((v_1=v_2\vee\vz=\vt_2)\wedge \vz\perp v_1v_2)).$$ It is also definable from the $k$-ary anonymity atom with the formula (\cite{raine})
$$\exists x\forall y(x=y)\vee\forall w_1\forall w_2\exists \vy\exists z(((w_1=w_2\wedge\vy=\vt_1)\vee(\neg w_1=w_2\wedge\vy=\vt_2))\wedge\vy\anonym {z})).$$

\item The $k$-ary anonymity atom $\vx\anonym{y}$ is definable in terms of the
  $k+1$-ary inclusion atom with the formula
  $$\exists u(\neg u=y\wedge \vx u\subseteq \vx y).$$
\item The $(k,l,m)$-ary independence atom $\vt_2\perp_{\vt_1}\vt_3$ is definable in terms of the
  $k+l+m$-ary dependence atom,  $k+m$-ary exclusion
  atoms, and the $k+l+m$-ary inclusion atom with the formula
 $$\begin{array}{l}
 \forall\vp\vq\vr\ \exists u_1 u_2 u_3 u_4((\bigwedge_{i=1}^4\dep(\vp\vq\vr,u_i))
\wedge((u_1\ne u_2\wedge(\vp\vq\ |\ \vt_1\vt_2))\vee\\
(u_1=u_2\wedge u_3\ne u_4\wedge
(\vp\vr \ |\ \vt_1\vt_3))\vee\\
(u_1=u_2\wedge u_3=u_4\wedge
(\vp\vq\vr\subseteq\vt_1\vt_2\vt_3)))). \end{array}$$

\item The $(k,l,m)$-ary  independence atom
  $\vx\perp_{\vz}\vy$ is definable in terms of the  $(k+m,l+m)$-ary pure independence atom with the formula
    (\cite{Wi})
    $$\forall\vp\vq\exists\vu\exists\vw((\vz\ne\vp\vee\vz\ne\vq\vee\vu\vw=\vx\vy)\wedge(\vz\ne\vp\vee\vz\ne\vq\vee\vp\ne\vq\vee\vz=\vp)\wedge\vp\vu
    \perp\vq\vw)).$$
\end{enumerate}
\end{proposition}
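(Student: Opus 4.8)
The plan is to establish each item (a)--(f) by a direct verification of the displayed semantic equivalence: for every model $M$ and every team $T$ whose domain contains the relevant variables, I would show that $M\models_T\phi$ holds, where $\phi$ is the given defining formula, if and only if $M\models_T\alpha$ holds for the corresponding atom $\alpha$. Since the formulas and their correctness are due to \cite{Ga} (and to \cite{raine} and \cite{Wi} for the two indicated sub-cases), the task is purely one of unwinding the inductive truth definition of Definition~\ref{fol} together with the atomic clauses of Definition~\ref{atoms}. By locality (Definition~\ref{locality}), which every atom and connective occurring here respects, I may assume throughout that $\dom(T)$ consists of exactly the displayed free variables, so no dummy coordinates intervene.

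For the translations built only from $\forall$ and tensor disjunction --- in particular the exclusion definition of $\dep(\vx,y)$ in (a) --- the method is to expand $M\models_T\forall z\,\psi$ as $M\models_{T[M/z]}\psi$ and then analyse the split $T[M/z]=U\cup V$ forced by the disjunction. The key observation is that every assignment in $T[M/z]$ whose $z$-value differs from its $y$-value is compelled into the part $V$ carrying the exclusion atom $\vx z\mid\vx y$; confronting the exclusion condition on these assignments with a pair $s_1,s_2\in T$ having $s_1(\vx)=s_2(\vx)$ but $s_1(y)\neq s_2(y)$ produces a forbidden coincidence $(s_1(\vx),s_2(y))=(s_2(\vx),s_2(y))$ (here $|M|\ge 2$ is used to realise the offending extension of $s_2$), which is exactly the contrapositive of $\dep(\vx,y)$; conversely the dependence condition makes the split $z=y$ / $z\neq y$ valid. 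The reverse direction of (a) (exclusion from dependence) is symmetric, the two constancy witnesses $u_1,u_2$ encoding the two excluded tuples $\vt_1,\vt_2$.

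For the translations whose outer shape is existential or independence-based --- the independence definition of $\dep(\vx,y)$ in (a), and items (b)--(f) --- the plan is, in the ``if'' direction, to exhibit explicitly the supplement function $F\colon T\to\pot(M^r)\setminus\{\emptyset\}$ (or the witnessing subteams) required by the semantics of $\exists$ and of the independence atom, and in the ``only if'' direction to read the desired inclusion / exclusion / dependence / anonymity data off an arbitrary such witness. Each case rests on one clean structural fact: for (b), that $\vy\perp\vz\wedge\vy\neq\vz$ forces the $\vz$-values to realise every $\vx$-value while avoiding every $\vy$-value, giving exclusion; for (c) and (d), that an inclusion (resp.\ anonymity) witness is precisely what supplies, for each assignment, a second assignment matching (resp.\ differing in) the prescribed coordinate; and for (e), that the three disjuncts partition the universally quantified tuples so that the dependence and exclusion conjuncts enforce functionality while the inclusion conjunct supplies the amalgamated witness characterising $\vt_2\perp_{\vt_1}\vt_3$.

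I expect the main obstacle to lie in the conditional-independence items (e) and (f), where the deeply nested prefix $\forall\cdots\exists\cdots$ must be shown to reconstruct the amalgamation property of $\vt_2\perp_{\vt_1}\vt_3$: the existence, for any two assignments agreeing on $\vt_1$, of a third assignment combining their $\vt_2$- and $\vt_3$-values. Checking that the displayed formula neither over- nor under-generates such witnesses, while keeping track of the auxiliary equalities among $\vp,\vq,\vr,u_1,\ldots,u_4$ (resp.\ $\vu,\vw$), is the delicate bookkeeping. A secondary subtlety is the degenerate case in (c): the leading disjunct $\exists x\forall y(x=y)$ of the anonymity-based formula covers one-element universes, on which the main subformula would otherwise misbehave, so the equivalence must be verified separately on singleton models.
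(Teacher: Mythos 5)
You should first note what you were being compared against: the paper contains \emph{no} proof of this proposition at all --- it is imported wholesale from \cite{Ga}, with \cite{raine} and \cite{Wi} credited for two sub-items --- so your plan of direct semantic verification is the only self-contained route, and where you make it concrete it is sound. Your treatment of the exclusion translation of $\dep(\vx,y)$ in (a) is exactly the standard argument: assignments with $s(z)\ne s(y)$ are forced into the disjunct carrying $\vx z\mid\vx y$, and with $|M|\ge 2$ one manufactures the forbidden coincidence $(s_1(\vx),s_2(y))=(s_2(\vx),s_2(y))$. The structural facts you isolate for (b)--(d), and your observation that the leading disjunct $\exists x\forall y(x=y)$ in the anonymity formula of (c) exists precisely to handle one-element universes, are likewise correct.

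There is, however, one concrete point where your uniform claim that every item reduces to ``routine unwinding of Definitions~\ref{fol} and \ref{atoms}'' breaks down: the second formula of (a). Under the paper's own semantics of the \emph{pure} atom (Definition~\ref{atoms}(e) with empty conditioning tuple), $\vz y\perp\vz w$ demands, for every pair $s,s'$ in the team, a witness $s''$ with $s''(\vz y)=s(\vz y)$ and $s''(\vz w)=s'(\vz w)$; both constraints fix $s''(\vz)$, so the atom fails on any team realizing two distinct $\vz$-values. But after $\forall\vz$ the team $T[M/\vz][F/w]$ realizes \emph{every} $\vz$-value, so for $|M|\ge 2$ the displayed right-hand side is satisfied only by $T=\emptyset$, whereas $\dep(\vx,y)$ holds, e.g., on every singleton team. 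A literal verification of that formula as printed therefore cannot succeed: the working core of the translation is the conditional atom $w\perp_{\vz}w$ (i.e.\ $\dep(\vz,w)$), whose conditioning must then be eliminated via item (f), or else one must consult the source's exact formula. Your proposal, which promises to ``read the desired data off an arbitrary witness'' for all independence-based items alike, would run straight into this. Secondarily, for (e) and (f) you correctly locate the difficulty in the amalgamation property of $\vt_2\perp_{\vt_1}\vt_3$, but the bookkeeping you defer \emph{is} the entire proof of those items, so there the proposal remains a blueprint rather than an argument.
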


Note that (a) above is in harmony
with Theorem~\ref{dim-calculations}, as for $n>2$
$$2^{n^k}-2< n^{n^k}< 2^{n^{k+1}}-2.$$

\begin{corollary}
[Hierarchy Theorem]
Dependence logic, exclusion logic, inclusion logic, anonymity logic and  pure independence logic each has a proper definability hierarchy for formulas  based on the arity of the non-first order atoms.
\end{corollary}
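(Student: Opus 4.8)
The plan is to obtain the Corollary as a direct assembly of the two theorems immediately preceding it. For a given logic $L$, write $L^{(k)}$ for the fragment generated by the first-order literals together with the atoms of arity at most $k$, closed under the logical operations (b), (c), (f), (g) of Definition~\ref{fol}. To prove a proper hierarchy it suffices to exhibit, for each $k$, a formula of $L^{(k+1)}$ that is not equivalent to any formula of $L^{(k)}$; the natural witness is the $(k+1)$-ary atom itself, which trivially lies in $L^{(k+1)}$.

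First I would treat inclusion, anonymity and exclusion logics simultaneously. For each of these the containment theorem gives $L^{(k)}\osaj\LE^U_k$, while the non-definability theorem states that the relevant $(k+1)$-ary atom is not definable in $\LE^U_k$. Since $L^{(k)}$ sits inside $\LE^U_k$, the $(k+1)$-ary atom cannot be definable in $L^{(k)}$ either; as it belongs to $L^{(k+1)}$, we conclude $L^{(k)}\aioj L^{(k+1)}$. Dependence logic is handled verbatim, with $\LE^U_k$ replaced by $\LF^U_k$ and using parts~(b) of the two theorems.

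For pure independence logic the only extra care concerns the arity convention: the atom $\vx\perp\vy$ with $\len(\vx)=k$ and $\len(\vy)=l$ is $\max(k,l)$-ary, i.e.\ the case $m=0$ of the conditional atom. Thus the $j$-ary fragment is contained in $\LF^U_j$ (the $m=0$ instance of the containment theorem), whereas a $(j+1)$-ary pure independence atom has $\max(k,l)=j+1$, so by the non-definability theorem it is undefinable in $\LF^U_i$ for every $i<j+1$, in particular in $\LF^U_j$. The same combine-and-conclude step then yields $L^{(j)}\aioj L^{(j+1)}$.

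This argument is essentially a repackaging of earlier results, so no deep obstacle arises. The one point demanding attention is to match each atom's arity to the correct growth-class index, most notably using $\max(k,l)$ rather than $k+l$ for independence, and to remember that the separation ultimately rests on Theorem~\ref{dim-atoms}: the witnessing $(k+1)$-ary atom has upper dimension of the form $2^{n^{k+1}}$ (or $n^{n^{k+1}}$ for dependence), which is super-polynomial in the exponent and hence lies outside $\EE_k$ (respectively $\FF_k$), exactly as the non-definability theorem records.
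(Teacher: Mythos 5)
Your proposal is correct and matches the paper's intended justification: the corollary carries no separate proof precisely because it follows by combining the containment theorem ($k$-ary inclusion/anonymity/exclusion fragments lie in $\LE^U_k$, dependence in $\LF^U_k$, independence in $\LF^U_{\max(k,l)+m}$) with the immediately following non-definability theorem for the $(k+1)$-ary atoms, which is exactly the assembly you carry out. Your treatment of pure independence as the $m=0$ instance with the $\max(k,l)$ arity convention is also the reading the paper intends, so nothing is missing.
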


The Corollary holds in fact in a stronger form:

\begin{theorem}\label{hierarchy}
Suppose $k$ is a positive integer.
\begin{enumerate}[(a)]
 \item The $k$-ary dependence atom is not definable in the extension of first order logic by $<k$-ary dependence (or any other\footnote{See Definition~\ref{atoms}.} $<k$-ary) atoms,  $\le k$-ary independence, exclusion, inclusion, anonymity, constancy atoms, and any Lindstr\"om quantifiers.
 \item The $k$-ary exclusion atom is not definable in the extension of first order logic by $<k$-ary exclusion,  inclusion, anonymity, dependence, independence, constancy (or any other $<k$-ary) atoms, and any Lindstr\"om quantifiers.
  \item The $k$-ary inclusion atom is not definable in the extension of first order logic by $<k$-ary inclusion, exclusion, anonymity, dependence, or constancy (or any other $<k$-ary)  atoms, and any Lindstr\"om quantifiers.
  \item The $k$-ary anonymity atom is not definable in the extension of first order logic by $<k$-ary inclusion, anonymity, exclusion, dependence, constancy (or any other $<k$-ary)  atoms, and any Lindstr\"om quantifiers.

 \item The $k$-ary  independence atom (whether pure or not) is not definable in the extension of first order logic by $<k$-ary independence, inclusion, anonymity, exclusion, dependence, constancy (or any other $<k$-ary) atoms, and any Lindstr\"om quantifiers.

\end{enumerate}
\end{theorem}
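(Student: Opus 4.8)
The plan is to run, for each atom, the dimension dichotomy that the preceding machinery sets up: every formula of the weak logic in question has upper dimension confined to a fixed growth class, while the target atom's upper dimension function lies outside that class. Concretely I would lean on two facts: (i) the structural theorem following Definition~\ref{massive}, which says that every formula of $\LE^U_k$ (resp.\ $\LF^U_k$) has upper dimension in $\EE_k$ (resp.\ $\FF_k$); and (ii) Theorem~\ref{dim-atoms}, which pins down the upper dimension of each atom. The proof then reduces to two bookkeeping tasks: show that the listed \emph{permitted} atoms all have upper dimension in the appropriate growth class, so that the extension of first order logic by them---which uses only $\wedge$, $\vee$, $\exists$, $\forall$ and Lindström quantifiers, all among the operations closing $\LE^U_k$ and $\LF^U_k$---is a sublogic of the relevant $\LE$ or $\LF$; and show that the forbidden target atom's upper dimension function escapes that class.

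For part (a) I would take the weak logic to be $\LE^U_k$. Each permitted atom lies in $\EE_k$: the $\le k$-ary exclusion, inclusion and anonymity atoms have dimension of order $2^{n^k}$, the $\le k$-ary (pure or conditional) independence atoms are bounded above by a function in $\EE_k$ by Theorem~\ref{dim-atoms}, constancy atoms are polynomial, and every $<k$-ary atom---dependence included---has dimension in $\FF_{k-1}\subseteq\EE_k$, the largest being the $<k$-ary dependence atom with $n^{n^{k-1}}$. Hence the extension sits inside $\LE^U_k$ and all its formulas have upper dimension in $\EE_k$; but the $k$-ary dependence atom has $\Dim(n)=n^{n^k}\in\FF_k\setminus\EE_k$, so it is not definable. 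Parts (b)--(d) and the \emph{pure} case of (e) are identical in spirit with the dividing line $\FF_{k-1}$: there every permitted atom is strictly $<k$-ary, the largest growth class occurring is $\FF_{k-1}$ (realised by $<k$-ary dependence), the extension sits inside $\LF^U_{k-1}$, and its formulas have upper dimension in $\FF_{k-1}$. Since the $k$-ary exclusion, inclusion, anonymity and pure independence atoms all have dimension at least of order $2^{n^k}$, and $2^{n^k}\notin\FF_{k-1}$ by an elementary comparison of exponents (a degree-$(k-1)$ polynomial times $\log n$ is $o(n^k)$), none of them is definable. The reason (a) may admit $\le k$-ary partners while (b)--(e) may not is exactly that dependence's $\FF_k$-sized dimension would otherwise swamp the $\EE_k$-sized targets---and in fact defines several of them, by Proposition~\ref{translations}.

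The genuinely delicate point is the \emph{conditional} independence atom in part (e). Writing it as $\vx\perp_{\vu}\vy$ with $\len(\vx)=k'$, $\len(\vy)=l'$, $\len(\vu)=m'$ and arity $\max(k',l')+m'=k$, the family is $\cI_{\perp,\bullet}$, and Proposition~\ref{lower_upper-bound} supplies only the lower bound $\min\{\DD,\DDd\}\ge(2^{n^{k'}}-\cdots)(2^{n^{l'}}-\cdots)+1\approx 2^{n^{\max(k',l')}}$, which for $m'\ge 1$ lies in $\FF_{k-1}$ and is \emph{too weak} to separate the atom from $\LF^U_{k-1}$. To close this gap I would strengthen Proposition~\ref{lower_upper-bound} to a matching lower bound for general tensor disjunction over disjoint base sets, proving $\DD(\gtd_{c\in Z}\cJ_c)=\prod_{c\in Z}\DD(\cJ_c)$ and likewise for $\DDd$, so that $\DD(\cI_{\perp,\bullet})\approx(2^{n^{k'}+n^{l'}})^{n^{m'}}\approx 2^{n^{\max(k',l')+m'}}=2^{n^k}\notin\FF_{k-1}$. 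Such multiplicativity is immediate for downward closed families (maximal sets multiply), and for $\cI_{\perp}$ one expects it to persist because the disjoint base sets $X\times Y\times\{c\}$ force any dominating or supporting family to resolve each coordinate $c$ independently; making this independent resolution precise is the crux. An alternative route, avoiding a new dimension computation, is to exhibit a definition---using only growth-class-preserving operators---of some $k$-ary atom of known dimension $\notin\FF_{k-1}$ from the $k$-ary conditional independence atom, whence the structural theorem forces the conditional atom itself out of $\FF_{k-1}$.

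I expect the main obstacle to be precisely this lower bound for conditional independence. All the other cases are a routine matching of the dimension values of Theorem~\ref{dim-atoms} against the containment chain $\EE_0\subsetneq\FF_0\subsetneq\EE_1\subsetneq\cdots$, whereas the conditional case requires the dimension of a product family to be bounded \emph{below} by the product of the factor dimensions, a statement not contained in the material above.
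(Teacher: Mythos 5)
Your overall route is exactly the one the paper intends: Theorem~\ref{hierarchy} is stated without a separate proof and is meant to follow from the containment of each weak logic in the appropriate yardstick logic of Definition~\ref{massive} (via the structural theorem that every formula of $\LE^U_k$, resp.\ $\LF^U_k$, has upper dimension in $\EE_k$, resp.\ $\FF_k$), matched against the atom dimensions of Theorem~\ref{dim-atoms}. Your bookkeeping for (a)--(d) and for the pure case of (e) --- including the observation that upper dimension is the right notion because Proposition~\ref{calc:dummydf} makes dummy variables free for $\DD$, and the remark that (a) can tolerate $\le k$-ary partners while (b)--(e) cannot because $k$-ary dependence would define the targets via Proposition~\ref{translations} --- coincides with the paper's argument, which spells out only the inclusion case and says ``the argument is the same in the other cases.''

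On the conditional case of (e) you have put your finger on a genuine soft spot, and in fact one the paper itself glosses over: Theorem~\ref{dim-atoms}(f) and Proposition~\ref{lower_upper-bound} only give a lower bound of order $2^{n^{k'}+n^{l'}}$, which for $m'\ge 1$ lies in $\EE_{k-1}\osaj\FF_{k-1}$ and hence does not separate the $k$-ary conditional atom from $\LF^U_{k-1}$. Your diagnosis that a product-type lower bound is what is missing is correct, but you do not need full multiplicativity of $\DD$ under general tensor disjunction (which, as you suspect, is not automatic: covering numbers of products by boxes are in general only submultiplicative, so a blanket claim $\DD(\gtd_{c}\cJ_c)=\prod_c\DD(\cJ_c)$ would require proof and might fail for other families). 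For $\cI_{\perp,\bullet}$ a local argument already in the paper suffices. By Lemma~\ref{dimcalc:indep}(a),(b), the convex shadow in $\cI_\perp$ of $A\times B$ with $|A|,|B|\ge 2$ is the singleton $\{A\times B\}$, and since the base sets $X\times Y\times\{c\}$ are disjoint, the shadow in $\cI_{\perp,\bullet}$ of $R=\bigcup_{c}A_c\times B_c\times\{c\}$ is the fiberwise product of shadows: $B\in\varjo_R(\cI_{\perp,\bullet})$ if and only if $B\cap(X\times Y\times\{c\})\in\varjo_{R_c}(\cJ_c)$ for every $c$. Hence every $R$ all of whose fibers satisfy $|A_c|,|B_c|\ge 2$ has singleton shadow, and no other shadow contains such a fiber (a shadow $\varjo_{G_c}(\cJ_c)=\pot(G_c)$ with a side of size $\le 1$ cannot contain it, and a shadow with both sides $\ge 2$ is a different singleton); so by Lemma~\ref{dimcalc:shadowcrit}(b) every dominating family must contain all such $R$. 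This yields
\[
\DD(\cI_{\perp,\bullet})\;\ge\;\bigl((2^{n^{k'}}-n^{k'}-1)(2^{n^{l'}}-n^{l'}-1)\bigr)^{n^{m'}},
\]
which is of order $2^{n^{\max(k',l')+m'}}=2^{n^k}$ and thus outside $\FF_{k-1}$, since $p(n)\log n=o(n^k)$ for any polynomial $p$ of degree $k-1$. Inserting this strengthened lower bound closes (e) in full; with that replacement for your ``prove multiplicativity'' step, your proposal is a complete and correct proof along the paper's own lines.
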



There are many open problems arising from comparing the definability results of Lemma~\ref{translations} and the non-definability results of Theorem~\ref{hierarchy}. We mention a few in Section 9.
  
Theorem~\ref{hierarchy} shows that the translations in Lemma~\ref{translations} necessarily involve increase of arity.

Earlier hierarchy results have been mostly for sentences. In \cite{DK} it is shown  that $k$-ary dependence atom is weaker
than $k+1$-ary dependence atom for {\bf sentences} in vocabulary
having arity $k+1$.  In \cite{GHK} it is shown
 that independence logic with $k$-ary independence atoms is
strictly weaker than independence logic with $k+1$-ary independence
atoms on the level of sentences.
In \cite{Ha}
it is shown (using similar results from \cite{Gr} on transitive closure and
fixpoint operator) that inclusion logic with $k-1$-ary inclusion atoms
is strictly weaker than inclusion logic with $k$-ary inclusion atoms
for sentences when $k\ge2$.  As to earlier results for formulas, in \cite[Theorem 5.17, Corollary 5.18]{raine} it is shown that the fullness (the property of containing \emph{every} assignment of the appropriate kind) of a team with domain $\{x_1,\ldots,x_{k+1}\}$, which can be defined by means of the $k+1$-ary inclusion atom, cannot be defined in the extension of first order logic by what are called $k$-invariant atoms in \cite{raine} and any downward closed atoms.

\subsection{Other atoms and logical operations}\label{otheratomsand}

The atoms and logical operations  $\wedge$, $\ilor$, $\vee$, $\forall$, and $\exists$ are by no means the only ones that can be or have been considered. In this section we first introduce two new atoms that have particularly big upper or other dimension. We then show that many other logical operations occurring in the literature actually fail to preserve dimension. We use this to conclude some interesting non-definability results concerning these alternative logical operations.


\subsubsection*{Intuitionistic implication and disjunction}
\smallskip

\begin{definition}[Intuitionistic implication]\label{IntImpl}The intuitionistic implication $\phi\to\psi$ is defined by
$M\models_T\phi\to\psi$ if and only if every $Y\subseteq T$ that satisfies in $M$ the formula $\phi$ satisfies also the formula $\psi$.

\end{definition}

As the following lemma demonstrates, the dependence atom can be defined in terms of the constancy atoms and the intuitionistic implication:

\begin{lemma}[\cite{AV}]\label{exponential}
$\models \dep(x_1,\ldots,x_n,y)\leftrightarrow\footnote{We use $\models\phi\leftrightarrow\psi$ as a shorthand to ``For all models $M$ and all teams $T$, $M\models_T\phi$ if and only if $M\models_T\psi$".} \bigl((\dep(x_1)\wedge\ldots\wedge\dep(x_n))\ \to\ \dep(y)\bigr)$
\end{lemma}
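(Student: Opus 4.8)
The plan is to fix an arbitrary model $M$ and an arbitrary team $T$ whose domain contains $x_1,\ldots,x_n,y$, and to verify the two directions of the biconditional directly from the semantic clauses, appealing to Definition~\ref{fol} for $\wedge$, to the constancy-atom clause of Definition~\ref{atoms}(a), and to Definition~\ref{IntImpl} for $\to$. The one preliminary observation I would record is that, for any team $Y$, we have $M\models_Y \dep(x_1)\wedge\cdots\wedge\dep(x_n)$ if and only if each coordinate $x_i$ is constant on $Y$, i.e. if and only if $s(\vx)=s'(\vx)$ for all $s,s'\in Y$; this is immediate from the conjunction clause together with the constancy-atom clause.

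For the left-to-right implication, I would assume $M\models_T\dep(\vx,y)$ and take an arbitrary subteam $Y\subseteq T$ with $M\models_Y\dep(x_1)\wedge\cdots\wedge\dep(x_n)$. By the preliminary observation, any two $s,s'\in Y$ satisfy $s(\vx)=s'(\vx)$; since $Y\subseteq T$ and $T$ satisfies the dependence atom, this forces $s(y)=s'(y)$, so $y$ is constant on $Y$, i.e. $M\models_Y\dep(y)$. As $Y$ was an arbitrary subteam satisfying the antecedent, Definition~\ref{IntImpl} yields $M\models_T (\dep(x_1)\wedge\cdots\wedge\dep(x_n))\to\dep(y)$.

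For the converse I would assume $M\models_T (\dep(x_1)\wedge\cdots\wedge\dep(x_n))\to\dep(y)$ and verify the dependence atom in $T$ directly. Given $s,s'\in T$ with $s(\vx)=s'(\vx)$, the crucial move is to pass to the two-element subteam $Y=\{s,s'\}\subseteq T$: on $Y$ every $x_i$ is constant, so $M\models_Y\dep(x_1)\wedge\cdots\wedge\dep(x_n)$, and the intuitionistic implication then gives $M\models_Y\dep(y)$, whence $s(y)=s'(y)$. Since $s,s'$ were arbitrary with $s(\vx)=s'(\vx)$, we conclude $M\models_T\dep(\vx,y)$.

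There is no genuine difficulty here; the only point deserving attention is the asymmetry between the pairwise condition defining the dependence atom and the universal quantification over all subteams built into $\to$. The converse direction is where this is reconciled: realizing the antecedent on the minimal subteam $\{s,s'\}$ is exactly what converts the ``for all subteams'' of the implication into the ``for all pairs'' of the dependence atom. I would also note in passing that both sides hold vacuously on the empty team, so no separate base case is needed.
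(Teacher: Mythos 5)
Your proof is correct: the paper states this lemma without proof, citing \cite{AV}, and your argument is exactly the standard one that the citation points to. The key move — realizing the antecedent on the two-element subteam $\{s,s'\}$ to convert the universal quantification over subteams in Definition~\ref{IntImpl} into the pairwise condition of the dependence atom — together with the routine left-to-right direction, is a complete and accurate verification, including the (vacuous) empty-team case.
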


This gives an example where the use of $\phi\to\psi$ leads to something we know is exponential (Example~\ref{atomiendimensioita}). It shows that we cannot hope to prove 
that the dimension of $\phi\to\psi$ is in general better than exponential in the dimensions of $\phi$ and $\psi$. 

Note, that we can add intuitionistic implication to $\LE_0$, because it does not increase upper dimension, when the latter is bounded by a constant.


Intuitionistic disjunction can be defined in terms of constancy atoms:
$$\models\phi\ \ilor\ \psi\leftrightarrow\exists x\exists y(\dep(x)\wedge\dep(y)\wedge ((x=y\wedge \phi)\vee(\neg x=y\wedge\psi))).$$
But since it increases upper dimension additively, it cannot be defined in first order logic alone. In fact, the formula
$x=y\ \ilor\ \neg x=y$ has upper dimension 2. 

\subsubsection*{The non-empty atom $\nem$}
\smallskip

\begin{definition}[The non-empty atom]\label{non-empty-atom}
The non-empty atom $\nem$ is defined by $M\models_T\nem$ if and only if $T\ne\emptyset$.
\end{definition}

The atom $\nem$ says that a team is non-empty. Most of
the atoms we have considered (dependence, inclusion, independence,
etc) satisfy the Empty Team Property, i.e., the empty team satisfies
the atom (see the remark in the end of Section~\ref{families-of-teams}) and our logical operations (conjunction, disjunction,
existential quantifier, universal quantifier) preserve the Empty Team
Property. Thus we can immediately observe that $\nem$ is not
definable in them. Still it is sometimes useful. For example, we may
want to enhance the disjunction $\phi\vee\psi$ to
$(\phi\wedge\nem)\vee(\psi\wedge\nem)$. The latter would be
satisfied by a team which splits into a team satisfying $\phi$ and a
team satisfying $\psi$, both non-empty. An example in natural language
would be the statement ``On Mondays I play tennis or go to swim" with
the intention that both cases actually happen.

\begin{lemma}
The upper dimension of $\nem$ is $1$. The dual upper dimension $\DDd(\sat{\nem}{{M,\vx}})$ and the cylindric dimension $\CD(\sat{\nem}{{M,\vx}})$ in a domain of size $n$ are $n^k$, where $k=\len(\vx)$.
\end{lemma}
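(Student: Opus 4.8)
The first step is to identify the family of teams explicitly. Since $\nem$ imposes no constraint on the values of the assignments, for any team $T\osaj M^k$ (with $k=\len(\vx)$) we have $M\models_T\nem$ precisely when $T\neq\tyj$. Writing $X=M^k$ and $N=|X|=n^k$, this means
\[
  \sat{\nem}{{M,\vx}}=\pot(X)\jer\{\tyj\}.
\]
Call this family $\cA$. Two elementary observations drive everything: $\cA$ is \emph{convex} (if $\tyj\neq S\osaj C\osaj T$ then $C\neq\tyj$), and $\cA$ is \emph{dominated} by $X=\bigcup\cA$, which lies in $\cA$ since $|M|\ge 2$ forces $X\neq\tyj$.

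For the upper dimension I would observe that $\cA$ is itself a dominated convex family with $\bigcup\cA=X$, so the single-element subfamily $\{X\}$ already dominates $\cA$ in the sense of Definition~\ref{dimension-def} (take $\cD_X=\cA$). Hence $\DD(\cA)\le 1$, and since $\cA\neq\tyj$ we conclude $\DD(\cA)=1$.

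For the dual upper dimension the key is the family of minimal sets $\Min(\cA)=\{\{x\}\mid x\in X\}$, which has exactly $N$ elements. By Lemma~\ref{dimcalc:shadowcrit}(c) every subfamily that supports $\cA$ must contain all of $\Min(\cA)$, giving the lower bound $\DDd(\cA)\ge N$. For the matching upper bound I would compute the dual convex shadows: each $\varjo^{\{x\}}(\cA)=\{B\osaj X\mid x\in B\}$ is supported convex with intersection $\{x\}$, and $\bigcup_{x\in X}\varjo^{\{x\}}(\cA)=\cA$ because every nonempty set contains some point. By Lemma~\ref{dimcalc:shadowcrit}(b) this shows that $\Min(\cA)$ supports $\cA$, so $\DDd(\cA)\le N$ and therefore $\DDd(\cA)=N=n^k$.

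Finally, the cylindrical dimension follows formally from Proposition~\ref{basic:dimEst} together with the two values already computed: from $\DDd(\cA)\le\CD(\cA)$ we get $\CD(\cA)\ge N$, while convexity of $\cA$ yields $\CD(\cA)\le\DD(\cA)\cdot\DDd(\cA)=1\cdot N=N$, whence $\CD(\cA)=N=n^k$. Equivalently, one can exhibit the covering $\cA=\bigcup_{x\in X}[\{x\},X]$ by $N$ intervals directly. The computation presents no serious obstacle; the only delicate point is matching the lower and upper bounds for $\DDd$, namely recognizing that the $N$ singletons are forced into any supporting family by minimality while simultaneously sufficing to cover $\cA$.
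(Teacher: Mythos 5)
Your proof is correct and follows essentially the same route as the paper's: the family $\sat{\nem}{{M,\vx}}=\pot(M^k)\jer\{\tyj\}$ is dominated convex (giving $\DD=1$), is supported by its $n^k$ singleton teams (which are forced into any supporting family by Lemma~\ref{dimcalc:shadowcrit}, giving $\DDd=n^k$), and is covered by the $n^k$ intervals $[\{x\},M^k]$ (giving $\CD=n^k$). You merely spell out the lower-bound and interval-covering details that the paper's terse proof leaves implicit.
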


\begin{proof}
Non-emptyness is a convex property dominated by the maximal team. Hence the upper dimension of $\nem$ is 1. It is  supported by the family of all singleton teams. Hence the dual upper dimension and the cylindrical dimension of $\sat{\nem}{{M,\vx}}$, $\len(\vx)=k$, is $n^k$.
\end{proof}

\begin{corollary} 
$\Dim_{\nem,{\vx}}$ is in $\mathbb{E}_0$ while
$\Dimd_{\nem,{\vx}}$ and $\CDim_{\nem,{\vx}}$ are in
 $\mathbb{F}_0$.   
\end{corollary}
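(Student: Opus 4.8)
The plan is to read the claim off directly from the preceding lemma, using the definitions of the three dimension functions together with the characterisations of $\EE_0$ and $\FF_0$ noted right after their definition. First I would observe that the values computed in the lemma depend only on $n=|M|$ and $k=\len(\vx)$, and not on the particular model $M$ of size $n$; hence for each $n$ every model of size $n$ realises the same dimension, and the suprema defining $\Dim_{\nem,\vx}$, $\Dimd_{\nem,\vx}$ and $\CDim_{\nem,\vx}$ are simply those common values:
\[
  \Dim_{\nem,\vx}(n)=1,\qquad \Dimd_{\nem,\vx}(n)=\CDim_{\nem,\vx}(n)=n^k .
\]

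Next I would invoke the remark that $\EE_0$ is exactly the class of bounded functions and $\FF_0$ exactly the class of functions of polynomial growth. The constant function $n\mapsto 1$ is bounded, so $\Dim_{\nem,\vx}\in\EE_0$; and $n\mapsto n^k$ is polynomially bounded (take the constant witnessing polynomial $p\equiv k$, so that $n^k\le n^{p(n)}$ for all $n\ge 2$), so $\Dimd_{\nem,\vx}\in\FF_0$ and $\CDim_{\nem,\vx}\in\FF_0$.

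There is essentially no obstacle here: all the mathematical content is already carried by the lemma's dimension computation, and what remains is the routine bookkeeping of matching the explicit functions $1$ and $n^k$ against the definitions of the two growth classes. The only point worth a moment's attention is that $\FF_0$ requires a bound of the form $n^{p(n)}$ with $p$ of degree $0$, so the correct witness is the constant polynomial $p\equiv k$ (rather than, say, a linear one); with that choice the verification is immediate.
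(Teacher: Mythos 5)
Your proof is correct and follows exactly the route the paper intends: the corollary is read off immediately from the preceding lemma (whose values $1$ and $n^k$ depend only on $n=|M|$ and $k=\len(\vx)$, so the suprema in the definitions of $\Dim_{\nem,\vx}$, $\Dimd_{\nem,\vx}$ and $\CDim_{\nem,\vx}$ equal those values), combined with the remark that $\EE_0$ is the class of bounded functions and $\FF_0$ the class of functions of polynomial growth. Your attention to the witnessing polynomial $p\equiv k$ of degree $0$, giving $n^k\le n^{p(n)}$ for $n\ge 2$, is precisely the bookkeeping the paper leaves implicit.
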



The atom $\nem$ is an example of an upper dimension 1 operation which still extends the expressive power of first order logic.


\subsubsection*{The quantifiers $\forall^1$, $\exists^1$, and $\delta^1$}
\smallskip

We now recall three quantifiers which represent alternative definitions for the semantics of ordinary quantifiers $\exists$ and $\forall$. As we shall see, these alternative quantifiers do not preserve dimension in the same strong sense as the received $\exists$ and $\forall$. 

\begin{definition}If $a\in M$, let $F_a$ be the constant function $F_a(s)=\{a\}$ for all $s\in T$. The \emph{$\exists^1$}-quantifier is defined as follows:
$M\models_T\exists^1x\phi$ if for some $a\in M$ we have $M\models_{T[F_a/x]}\phi$.  The \emph{$\forall^1$}-quantifier is defined as follows:  
$M\models_T\forall^1x\phi$ if for all $a\in M$ we have $M\models_{T[F_a/x]}\phi.$
   The \emph{public announcement}-quantifier $\delta^1x$ is defined as follows:
$M\models_T\delta^1x\phi$ if for all $a\in M$ we have $M\models_{T_a}\phi$, where $T_a=\{s\in T:s(x)=a\}$.
\end{definition}

We shall now see that the quantifiers $\forall^1$, $\delta^1$ and $\exists^1$ do not preserve upper dimension, whence they are not Lindstr\"om quantifiers in the sense of Definition~\ref{Linquantsem}.

\begin{lemma}[\cite{MR3038038}]\label{forallone}
\hphantom{plaa}
\begin{enumerate}[(a)]
\item $\models\forall^1x\phi(x)\leftrightarrow\forall x(\dep(x)\to\phi(x))$
\item $\models\delta^1x\phi(x)\leftrightarrow\forall^1 y(x\ne y\vee\phi(x))$
\item $\models\dep(x_1,...,x_n,y)\leftrightarrow\delta^1x_1...\delta^1x_n\dep(y)$
\item $\models\dep(x_1,...,x_n,y)\leftrightarrow\forall^1z_1...\forall^1z_n(z_1\ne x_1\vee\ldots\vee z_n\ne x_k\vee\dep(y))$.
\end{enumerate}\end{lemma}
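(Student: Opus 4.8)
The plan is to prove each equivalence by unfolding the team semantics of the quantifier in question and matching the resulting condition against the defining clause of the dependence or constancy atom; parts (c) and (d) carry the real content, while (a) and (b) are the supporting identities that convert $\delta^1$ and $\forall^1$ into the ordinary quantifier and tensor disjunction.

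First I would dispatch (c), which is the cleanest. The public announcement quantifier acts by pure restriction: $M\models_T\delta^1 x_i\,\psi$ holds iff $M\models_{T_a}\psi$ for every $a\in M$, where $T_a=\{s\in T\mid s(x_i)=a\}$. Iterating, $M\models_T\delta^1 x_1\cdots\delta^1 x_n\,\dep(y)$ holds iff for every $\va=(a_1,\dots,a_n)\in M^n$ the subteam $\{s\in T\mid s(\vx)=\va\}$ satisfies $\dep(y)$, i.e. all its assignments agree on $y$. Ranging over all $\va$, this is exactly the condition that $s(\vx)=s'(\vx)$ implies $s(y)=s'(y)$ for all $s,s'\in T$, which is $\dep(\vx,y)$. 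For (d) I would unfold the $\forall^1$-quantifiers directly: the formula holds iff for every $\vb\in M^n$ the team $T_{\vb}$ obtained from $T$ by setting each $z_i$ constantly to $b_i$ satisfies the tensor disjunction $z_1\ne x_1\vee\cdots\vee z_n\ne x_n\vee\dep(y)$. The key point is that since $z_i=b_i$ is constant on $T_{\vb}$, an assignment satisfies the literal $z_i\ne x_i$ precisely when $s(x_i)\ne b_i$; hence every assignment with $s(\vx)\ne\vb$ can be absorbed into an inequality disjunct, whereas every assignment with $s(\vx)=\vb$ must fall into the $\dep(y)$-disjunct. Using associativity of tensor disjunction and locality of the literals, the disjunction is satisfiable on $T_{\vb}$ iff the subteam $\{s\mid s(\vx)=\vb\}$ satisfies $\dep(y)$, and as in (c) ranging over $\vb$ yields $\dep(\vx,y)$. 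Alternatively, (d) follows from (b) and (c) by rewriting each $\delta^1 x_i$ and then flattening the nested disjunctions via associativity of $\vee$ and commuting $\forall^1$ past the fresh-variable disjuncts.

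For the auxiliary identities I would unfold the intuitionistic implication in (a) and the tensor split in (b). In (a), $M\models_T\forall x(\dep(x)\to\phi(x))$ means $M\models_{T[M/x]}\dep(x)\to\phi(x)$, and the maximal subteams of $T[M/x]$ on which $\dep(x)$ holds (those on which $x$ is constant) are exactly the teams $T[F_a/x]$, $a\in M$; matching these against the clause for $\forall^1$ gives the equivalence, with the direction from the implication to $\forall^1$ immediate and the converse using downward closure of $\phi$. Identity (b) is handled by the same inequality-splitting device as in (d), together with locality to discard the auxiliary variable.

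The main obstacle, and the only place needing genuine care, is the bookkeeping of the tensor-disjunction splits in (b) and (d) and of the intuitionistic implication in (a): one must check that a witnessing split (or witnessing subteam) exists exactly when the matching subteam satisfies the inner formula, and that the ``wrong'' assignments are genuinely absorbed by the inequality disjuncts. This is precisely where one uses that the inner formulas are downward closed and local (both hold for the constancy atom $\dep(y)$ and the literals involved), so the reductions are harmless; I would flag this assumption explicitly rather than claim the identities for arbitrary $\phi$, since they fail for non-downward-closed formulas such as $\nem$.
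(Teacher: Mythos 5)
Your argument is correct, and there is nothing internal to compare it with: the paper does not prove Lemma~\ref{forallone} but imports it from \cite{MR3038038}, so your direct unfolding of the team semantics is exactly the argument the citation stands in for. Parts (c) and (d) go through unconditionally as you describe, since the inner formula there is the constancy atom $\dep(y)$, which is local and downward closed; in (d) the one delicate point --- that on $T[F_{b_1}/z_1]\cdots[F_{b_n}/z_n]$ an assignment can be absorbed into some disjunct $z_i\ne x_i$ exactly when $s(\vx)\ne\vb$, while every assignment with $s(\vx)=\vb$ is forced into the $\dep(y)$-component of the split --- is handled correctly, and downward closure of $\dep(y)$ then shrinks that component to the subteam $\{s\mid s(\vx)=\vb\}$. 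Your closing caveat about (a) and (b) is a genuine observation, not pedantry: as printed the lemma quantifies over arbitrary $\phi$, but the left-to-right direction of (a) (and the analogous step in (b), where the $\phi$-component $V$ of the split may properly contain the subteam $(T_a)[F_a/y]$) really does require $\phi$ to be downward closed, and both require locality and freshness of the auxiliary variable. Indeed $\phi=\nem$ refutes (a) as stated: $\forall^1x\,\nem$ holds on every nonempty team, while $\forall x(\dep(x)\to\nem)$ never holds, since the empty subteam satisfies $\dep(x)$ but not $\nem$. The restriction is harmless wherever the paper applies the lemma --- in Lemma~\ref{uniform} and Corollary~\ref{corforallone} the formulas involved are dependence-logic formulas, which are local and downward closed, and parts (c), (d) are unconditional --- but you are right to flag it explicitly. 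One trivial remark: in (d) the paper's ``$z_n\ne x_k$'' should read $z_n\ne x_n$, as your proof tacitly assumes.
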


Items (a) and (b) show that $\forall^1 x\phi$ and $\delta^1x\phi(x)$
increase upper dimension of $\phi$ at most exponentially.  Items (c) and (d) 
shows that, as  operators,
$\delta^1x\phi(x,y)$ and $\forall^1x\phi(x)$ increase dimension in the
worst case exponentially. This shows that we cannot hope to prove that
they are in general better than exponential. This also shows that
these operators do not arise from a Lindstr\"om quantifier.

Note that by iterating $\forall^1x$ or $\delta^1x$ we can defined
dependence atoms of arbitrary arity. This shows that $\forall^1x$ and
$\delta^1x$ increase dimension more than any $k$-ary atom for a fixed
$k$.

\begin{lemma}
\hphantom{plaa}
\begin{enumerate}[(a)]
\item $\models\exists^1 x\phi\leftrightarrow\exists x(\dep( x)\wedge\phi)$. 
\item $\models \dep(x)\leftrightarrow\exists^1y(x=y)$.

\end{enumerate}
\end{lemma}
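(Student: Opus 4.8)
The plan is to prove both equivalences by directly unwinding the team-semantic definitions involved: the $\exists^1$-quantifier, the constancy atom $\dep$, ordinary existential quantification, and the first-order literal $x=y$. No preservation machinery is needed here; everything reduces to a bookkeeping argument about value-choice functions.

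For part~(a), the key observation I would isolate first is the following: if $F\colon T\to\pot(M)\jer\{\tyj\}$ is any choice function such that the team $T[F/x]$ satisfies $\dep(x)$, then (for $T\neq\tyj$) $F$ must be the constant function $F_a$ for some $a\in M$. Indeed, the $x$-values occurring in $T[F/x]=\{s(a/x)\mid s\in T,\,a\in F(s)\}$ are exactly the elements of $\bigcup_{s\in T}F(s)$, and $M\models_{T[F/x]}\dep(x)$ says precisely that this union is a singleton $\{a\}$; since each $F(s)$ is nonempty, this forces $F(s)=\{a\}$ for every $s$, i.e.\ $F=F_a$. With this in hand the backward implication is immediate: a witness $F$ for $\exists x(\dep(x)\wedge\phi)$ is necessarily $F_a$, and then $M\models_{T[F_a/x]}\phi$ gives $M\models_T\exists^1 x\phi$. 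For the forward implication, given $a$ with $M\models_{T[F_a/x]}\phi$, the function $F_a$ is a legal existential witness (its values are nonempty singletons), and $T[F_a/x]$ trivially satisfies $\dep(x)$ because all its assignments give $x$ the value $a$; hence $M\models_T\exists x(\dep(x)\wedge\phi)$. The empty-team case is handled separately and is immediate, since both sides reduce to $M\models_\tyj\phi$.

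For part~(b), I would simply expand both sides. On the left, $M\models_T\dep(x)$ holds iff $T=\tyj$ or a single value is assigned to $x$ throughout $T$. On the right, $M\models_T\exists^1 y\,(x=y)$ holds iff there is $a\in M$ with $M\models_{T[F_a/y]}(x=y)$; here $T[F_a/y]$ consists of the assignments $s(a/y)$, whose $x$-value is $s(x)$ and whose $y$-value is $a$, and since $x=y$ is a first-order literal it holds of a team exactly when every assignment satisfies $s(x)=s(y)$. Thus the right-hand side is equivalent to the statement that $s(x)=a$ for all $s\in T$, which is precisely $\dep(x)$, with the empty team again matching vacuously.

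I do not anticipate a genuine obstacle here; the only points requiring care are the matching of the vacuous cases on the empty team and the observation---used in part~(a)---that constancy of the image team $T[F/x]$ forces constancy of the choice function $F$ itself. These I would state explicitly rather than leave implicit.
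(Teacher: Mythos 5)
Your proof is correct, and it is exactly the argument the paper intends: the paper's own proof consists of the single word ``Easy,'' and your direct unwinding of the definitions---in particular the key observation that $M\models_{T[F/x]}\dep(x)$ forces the witnessing function $F$ to be the constant function $F_a$, together with the explicit treatment of the empty team---is the routine verification being left to the reader. Nothing is missing and no different route is taken.
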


\begin{proof}
Easy.
\end{proof}
Hence $\exists^1$  increases upper dimension at most linearly.
Also, $\exists^1$ does indeed increase dimension, as the dimension of $x=y$ is 1 and the dimension of $\dep(x)$ is $n$. Hence $\exists^1$ is not first order definable and not definable even if we add arbitrary Lindstr\"om quantifiers to first order logic.

The point is that $\exists^1$ preserves dimension in the growth class where constancy logic  is, but not in the lower growth class where $\FO$ is.

\subsubsection*{Uniform definability}
\smallskip

Uniform definability, introduced by P. Galliani, is a phenomenon which does not exist in classical logic. It seems to be particularly characteristic to team based logics. Roughly speaking, a quantifier $Qx\phi(x,y)$ is uniformly definable in a logic if there is a single definition which works by substitution. In classical logic all definitions are uniform. In team based logics some quantifiers are definable but the definition is not uniform. In this section we use our dimension theory to prove this fact.

\begin{definition}[\cite{MR3038038}]
A generalized quantifier (which need not be a Lindstr\"om quantifier) $Q$ of a logic $L_1$ is said to be \emph{{uniformly definable}} in another logic $L_2$ if the logic $L_2$ has a sentence $\Phi(P)$, $P$ unary, with only positive occurrences of $P$,
 such that for all formulas $\phi(x,y)$ of the logic $L_1$ we have
$$\models Qx\phi(x,y)\leftrightarrow\Phi(\phi(z,y)/P(z)).$$ Similarly, if there are several formulas, as in $Qxy\phi(x,z)\psi(y,z)$.
\end{definition}

\begin{example}The equivalence
$$\models\exists^1x\phi(x,y)\leftrightarrow\exists x(\dep(x)\wedge\phi(x,y))$$ shows that the quantifier $\exists^1$ is uniformly definable in dependence logic, with $\Phi(P)$ the formula $\exists x(\dep(x)\wedge P(x))$.
The equivalence 
$$\models\phi\ \ilor\ \psi\leftrightarrow\exists x\exists y(\dep(x)\wedge\dep(y)\wedge ((x=y\wedge \phi)\vee(\neg x=y\wedge\psi)))$$ shows that the intuitionistic disjunction is uniformly definable in dependence logic, with $\Phi(P_0,P_1)$ the formula $\exists x\exists y(\dep(x)\wedge\dep(y)\wedge ((x=y\wedge P_0)\vee(\neg x=y\wedge P_1)$.

\end{example}

\begin{lemma}\label{uniform}
Suppose $$\models Qx\phi(x,y)\leftrightarrow\Phi(\phi(z,y)/P(z))$$ where $\Phi(P)$ is a sentence in dependence logic. 
Then $$\Dim_{Qx\phi(x,y),x y}(n)\le (n^{n^m}\cdot \Dim_{\phi(x,y)}(n))^k,$$ where $k$ is the length of $\Phi(P)$ and $m$ is the maximum of the lengths of $\vec{x}$ such that $\dep(\vec{x},y)$ for some  $y$ occurs in $\Phi(P)$.
\end{lemma}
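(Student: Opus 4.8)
The plan is to argue by induction on the structure of the sentence $\Phi(P)$, carrying the substitution along. For a subformula $\Psi$ of $\Phi(P)$ write $\Psi^{*}$ for $\Psi(\phi(z,y)/P(z))$, and set $N=N(n)=n^{n^m}$ and $D=D(n)=\Dim_{\phi(x,y)}(n)$. Since the paper assumes $|M|\ge 2$ we work with $n\ge 2$, so that $N\ge 2$, and by the Empty Team Property (which forces $\emptyset\in\sat{\phi}{M,\cdot}$, hence a nonempty family) we have $D\ge 1$; in particular $ND\ge 2$. I will show that
\[
  \Dim_{\Psi^{*}}(n)\le (ND)^{\len(\Psi)}
\]
for every subformula $\Psi$, the upper dimension being taken relative to any context containing the free variables of $\Psi^{*}$; this is unambiguous because, by Proposition~\ref{calc:dummydf}, the upper dimension is unchanged by the addition of dummy variables. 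Taking $\Psi=\Phi(P)$ and using $\models Qx\phi(x,y)\leftrightarrow\Phi(\phi(z,y)/P(z))$, hence $\sat{Qx\phi(x,y)}{M,xy}=\sat{\Phi^{*}}{M,xy}$, this gives $\Dim_{Qx\phi(x,y),xy}(n)\le(ND)^{k}=(n^{n^m}\Dim_{\phi(x,y)}(n))^{k}$, as required.

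For the base cases, the atomic subformulas of $\Phi(P)$ are of three kinds. If $\Psi=P(z)$, then $\Psi^{*}=\phi(z,y)$, whose upper dimension equals $\Dim_{\phi(x,y)}(n)=D$ (renaming $z$ to $x$ is a similarity, so Proposition~\ref{dim-sim}(a) applies, and dropping any extraneous context again appeals to dummy-variable invariance). If $\Psi$ is a first-order literal, then $\Psi^{*}=\Psi$ has upper dimension $1$. If $\Psi$ is a dependence atom $\dep(\vec w,v)$ occurring in $\Phi(P)$, then $\len(\vec w)\le m$, so by Theorem~\ref{dim-atoms}(a) its upper dimension is at most $n^{n^{\len(\vec w)}}\le n^{n^m}=N$. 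In all three cases $\Dim_{\Psi^{*}}(n)\le ND=(ND)^{\len(\Psi)}$, since $N,D\ge 1$ and $\len(\Psi)=1$.

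For the inductive step I invoke the quantitative inequalities established inside the proofs of Corollaries~\ref{DimPresCor} and \ref{DimPresCor2}, which cover every connective and quantifier that can occur in a dependence-logic sentence. For the multiplicative connectives, $\Dim_{(\Psi_0\land\Psi_1)^{*}}\le\Dim_{\Psi_0^{*}}\cdot\Dim_{\Psi_1^{*}}$, and likewise for $\vee$ and $\tland$; by the induction hypothesis this is at most $(ND)^{\len(\Psi_0)+\len(\Psi_1)}\le(ND)^{\len(\Psi)}$. For a quantifier step, $\Dim_{(Q_\cK\vec w\,\Psi_0)^{*}}\le\Dim_{\Psi_0^{*}}\le(ND)^{\len(\Psi_0)}\le(ND)^{\len(\Psi)}$, since quantification does not increase upper dimension (Corollary~\ref{DimPresCor}(c),(d)); this covers $\exists$, $\forall$ and any Lindström quantifier. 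Finally, for $\ilor$, Corollary~\ref{DimPresCor2}(a) gives $\Dim_{(\Psi_0\ilor\Psi_1)^{*}}\le\Dim_{\Psi_0^{*}}+\Dim_{\Psi_1^{*}}\le(ND)^{a}+(ND)^{b}$ with $a=\len(\Psi_0)$, $b=\len(\Psi_1)$; as $ND\ge 2$ this is at most $2(ND)^{\max(a,b)}\le(ND)^{\max(a,b)+1}\le(ND)^{\len(\Psi)}$, completing the induction.

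The bookkeeping that pins the exponent to $k=\len(\Phi)$ is the point to watch: the induction must run on the structure of $\Phi(P)$ itself, treating each occurrence of $P(z)$ as a single leaf of weight $D$, rather than on the possibly far longer substituted formula, so that the number of leaves is controlled by $\len(\Phi)$ and not by $\len(\phi)$. The one genuinely delicate input is that a copy of $\phi(z,y)$ buried under arbitrarily many quantifiers of $\Phi$ still contributes only the factor $D$; this is exactly the dummy-variable invariance of upper dimension from Proposition~\ref{calc:dummydf}, and it is the reason the lemma is confined to $\Dim$ rather than $\Dimd$ or $\CDim$, for which adding dummy variables can multiply the dimension by a factor of $n^{n^m}$ and wreck the bound.
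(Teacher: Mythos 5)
Your proof is correct and takes essentially the same route as the paper's own argument: induction on the structure of $\Phi(P)$, bounding the atomic cases ($P(z)$ by $\Dim_{\phi(x,y)}(n)$, dependence atoms by $n^{n^m}$ via Theorem~\ref{dim-atoms}, first-order literals by $1$) and handling the connective and quantifier steps by the preservation inequalities behind Corollary~\ref{DimPresCor}. You merely make explicit what the paper leaves implicit --- the dummy-variable invariance of upper dimension (Proposition~\ref{calc:dummydf}) that keeps each substituted copy of $\phi$ contributing only a single factor $D$ --- and you cover some operations ($\ilor$, Lindstr\"om quantifiers) that are not strictly part of dependence logic as the paper defines it, which is harmless.
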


\begin{proof}
We use induction on $\Phi$. The cases of atoms $\dep(\vec{x},y)$, the atom $P(z)$ and other atomic formulas are clear. The induction step for the connectives and the first order quantifiers follow from 
Corollary~\ref{DimPresCor}.
\end{proof}

\begin{corollary}[\cite{MR3038038}]\label{corforallone}
The quantifier $\forall^1$ is not uniformly definable in dependence logic.
\end{corollary}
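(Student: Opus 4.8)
The plan is to turn the assumed uniform definability into a rigid dimension bound, and then contradict it with high-arity dependence atoms. Suppose, towards a contradiction, that $\forall^1$ is uniformly definable in dependence logic, and fix a witnessing sentence $\Phi(P)$. Let $k$ be its length and $m$ the maximal arity of a dependence atom occurring in it. The crucial observation is that $k$ and $m$ are \emph{constants}, independent of the formula substituted for $P$. Consequently Lemma~\ref{uniform} applies, with these same fixed parameters, to \emph{every} dependence-logic formula placed in the $P$-slot: for each such $\phi(x,y)$,
\[
  \Dim_{\forall^1 x\,\phi(x,y)}(n)\le\bigl(n^{n^m}\cdot\Dim_{\phi}(n)\bigr)^{k}.
\]

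First I would use Lemma~\ref{forallone}(d) to present a high-arity dependence atom as a bounded iteration of $\forall^1$ over a base formula of small dimension. Concretely, $\dep(x_1,\dots,x_{m+1},y)$ is equivalent to $\forall^1 z_1\cdots\forall^1 z_{m+1}\,\chi$, where $\chi$ is the tensor disjunction $z_1\ne x_1\vee\cdots\vee z_{m+1}\ne x_{m+1}\vee\dep(y)$. Each literal $z_i\ne x_i$ is first order (upper dimension $1$) and the constancy atom $\dep(y)$ has upper dimension $n$; since tensor disjunction multiplies upper dimensions (Corollary~\ref{DimPresCor}) and dummy variables leave upper dimension unchanged (Proposition~\ref{calc:dummydf}), this yields $\Dim_{\chi}(n)\le n$.

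Next I would apply the displayed inequality once for each of the $m+1$ quantifiers $\forall^1 z_i$, peeling them off from the inside out; this is legitimate because, under the assumption, every intermediate formula $\forall^1 z_{i+1}\cdots\forall^1 z_{m+1}\,\chi$ is again (equivalent to) a genuine dependence-logic formula, so Lemma~\ref{uniform} reapplies. Writing a current bound as $n^{e}$, one step sends $e\mapsto k(n^m+e)$; starting from $e_0=1$ this linear recurrence solves to a polynomial in $n$ of degree $m$ with coefficients depending only on $k$ and $m$. Hence $\Dim_{\dep(x_1,\dots,x_{m+1},y)}$ would lie in the growth class $\FF_m$. But by Theorem~\ref{dim-atoms}(a) its true value is $n^{n^{m+1}}$, and since $\log_n\!\bigl(n^{n^{m+1}}\bigr)=n^{m+1}$ exceeds every polynomial of degree $m$ for large $n$, we have $n^{n^{m+1}}\notin\FF_m$ directly from the definition of $\FF_m$. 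This contradiction completes the argument.

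The main obstacle is the bookkeeping in the iteration: I must verify that Lemma~\ref{uniform} can be reapplied after each substitution (which requires each inner formula to be genuinely expressible in dependence logic, guaranteed precisely by the assumed uniform definability) and that shuffling the quantified and free variables in and out of the relevant contexts does not inflate the upper dimension. This last point is exactly where invariance of upper dimension under dummy variables (Proposition~\ref{calc:dummydf}) does the decisive work; once it is in hand, the separation reduces to the elementary fact that $n^{n^{m+1}}$ escapes $\FF_m$.
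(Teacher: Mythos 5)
Your proof is correct and takes essentially the same route as the paper's: both use Lemma~\ref{forallone}(d) to write a dependence atom of arity exceeding $m$ as an iteration of $\forall^1$, repeatedly apply the bound of Lemma~\ref{uniform} (legitimately, since uniform definability makes each intermediate formula a dependence-logic formula), and contradict the known value $n^{n^{m+1}}$ from Example~\ref{atomiendimensioita}, which escapes $\FF_m$. Your explicit recurrence $e\mapsto k(n^m+e)$ and the fixed choice of arity $m+1$ merely spell out the bookkeeping that the paper compresses into the single bound $n^{n^m\cdot l^k}$ after ``$k$ repeated substitutions.''
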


\begin{proof} Suppose $\Phi(P)$, a sentence of length $l$, defines  $\forall^1$ uniformly in dependence logic. Let $m$ be as in Lemma~\ref{uniform}. Then there is by Lemma~\ref{forallone} a  formula $\Psi(P)$ of dependence logic, obtained from $\Phi(P)$ by $k$ repeated substitutions, which defines  $\dep(x_1,\ldots,x_k,y)$. By Lemma~\ref{uniform} we obtain an upper bound of $n^{n^m\cdot l^k}$ for $\Dim_{\dep(x_1,\ldots,x_k,y),\vec{x}y}(n)$.  However, we know from Example~\ref{atomiendimensioita} that
$\Dim_{\dep(x_1,\ldots,x_k,y),\vec{x}y}(n)=n^{n^k}$.
\end{proof}

Although Corollary \ref{corforallone} is not new, its proof shows that the concept of upper dimension offers a general method for demonstrating failure of uniform definability.


\subsubsection*{The ``at most half" atom}
\smallskip

\begin{definition}[The ``at most half" atom]Suppose $\len(\vx)=k$ and the model $M$ has size $n$. We define a new atom as follows:
$M\models_T\  {\tt\bf H}(\vx)$ if $|\{s(\vx) \mid s\in T\}|\le n^k/2$.

\end{definition}

Note that ${\tt\bf H}(\vx)$ is clearly definable in dependence logic (see Example~\ref{halfandparity}).

\begin{theorem}
Suppose $\len(\vec{x})=k$. The upper dimension of  ${\tt\bf H}(\vx)$ is $\sim \sqrt{\frac{2}{\pi}}2^{n^k-\frac{k}{2}log(n)}$.
\end{theorem}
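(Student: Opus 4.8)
The plan is to recognize that $\sat{{\tt\bf H}(\vx)}{M,\vx}$ is a downward closed family whose upper dimension is simply the number of its maximal sets, and that this number is a central binomial coefficient. First I would identify the family. By locality (Definition~\ref{locality}) the relevant teams are subsets of $M^k$, and $M\models_T{\tt\bf H}(\vx)$ holds exactly when $|T|\le n^k/2$. Writing $N=n^k$ for the size of the base set $M^k$, we have
\[
  \sat{{\tt\bf H}(\vx)}{M,\vx}=\{\,S\osaj M^k \mid |S|\le N/2\,\},
\]
and by Proposition~\ref{dim-sim}(a) the three dimensions of this family depend only on $N$, since any two base sets of the same cardinality yield similar families.

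This family is downward closed, as passing to a subset never increases cardinality. As observed in the proof of Theorem~\ref{dim-calculations}, for a finite downward closed family $\cA$ one has $\DD(\cA)=|\Max(\cA)|$: here the shadows are $\varjo_A(\cA)=[\tyj,A]$, so $\varjo_A(\cA)\osaj\varjo_{A'}(\cA)$ iff $A\osaj A'$, whence the critical sets are exactly the maximal ones, and $\Max(\cA)$ is the smallest dominating family by Lemma~\ref{dimcalc:shadowcrit}(c). The maximal members of our family are precisely the subsets of $M^k$ of cardinality $\lfloor N/2\rfloor$, the largest size not exceeding $N/2$, irrespective of the parity of $N$. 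Hence
\[
  \DD\bigl(\sat{{\tt\bf H}(\vx)}{M,\vx}\bigr)=\binom{N}{\lfloor N/2\rfloor}.
\]

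It remains to estimate this central binomial coefficient as $N\to\infty$ (with $k$ fixed). By Stirling's formula,
\[
  \binom{N}{\lfloor N/2\rfloor}\sim\sqrt{\frac{2}{\pi N}}\;2^{N}
  \qquad(N\to\infty),
\]
which holds for both parities of $N$: for even $N=2m$ this is the standard estimate $\binom{2m}{m}\sim 4^m/\sqrt{\pi m}$, and for odd $N=2m+1$ the identity $\binom{2m+1}{m}=\frac{2m+1}{m+1}\binom{2m}{m}$ contributes a correcting factor tending to $2$, exactly what is needed so that the clean formula survives. Substituting $N=n^k$ and writing $N^{-1/2}=2^{-\frac12\log_2 N}=2^{-\frac{k}{2}\log_2 n}$ turns the right-hand side into $\sqrt{\tfrac{2}{\pi}}\,2^{\,n^k-\frac{k}{2}\log_2 n}$, which is the asserted asymptotic (with $\log=\log_2$).

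There is no serious obstacle: the whole argument rests on the single structural observation that downward closure collapses the upper dimension to a count of maximal sets, which is a central binomial coefficient. The only point deserving a moment's care is the parity check in the Stirling step, ensuring that $\binom{N}{\lfloor N/2\rfloor}\sim\sqrt{2/(\pi N)}\,2^N$ is justified uniformly in $N$ and not merely along the even values.
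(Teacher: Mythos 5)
Your proposal is correct and follows essentially the route the paper intends: the paper's own ``proof'' consists solely of the citation \cite[Page 4]{B1}, which supplies exactly the asymptotics for the central binomial coefficient that you derive via Stirling, while the reduction you make explicit (by locality the family is $\{S\osaj M^k \mid |S|\le n^k/2\}$, which is downward closed, so its upper dimension equals $|\Max(\cA)|=\binom{N}{\lfloor N/2\rfloor}$ with $N=n^k$, by the same shadow/critical-set argument the paper uses for $\cF$ in Theorem~\ref{dim-calculations}) is precisely what the paper leaves implicit. Your parity check for odd $N$ and the rewriting $N^{-1/2}=2^{-\frac{k}{2}\log_2 n}$ are both accurate, so the argument is complete.
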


\begin{proof}\cite[Page 4]{B1}

\end{proof}

\begin{corollary}Suppose $\len(\vx)=k$.
The atom ${\tt\bf H}(\vx)$ is not definable in the extension of first order logic by $<k$-ary dependence (or other) atoms.
\end{corollary}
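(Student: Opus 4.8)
The plan is to separate the atom ${\tt\bf H}(\vx)$ from everything definable in the given extension by a growth-class argument on the upper dimension, exactly in the style of the preceding hierarchy theorems. Concretely, I would show that every formula of the extension of first-order logic by $<k$-ary atoms has upper dimension function in the growth class $\EE_{k-1}$, whereas the upper dimension of ${\tt\bf H}(\vx)$, which the preceding theorem gives as asymptotic to $\sqrt{2/\pi}\,2^{n^k-\frac{k}{2}\log n}$, lies outside $\EE_{k-1}$. The whole argument then reduces to pinning down the correct growth class for the two sides and comparing them.

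First I would bound the dimension of the atoms. Let $\alpha$ be any $j$-ary atom with $j\le k-1$, and let $\vy$ be the tuple of variables occurring in it, so $\len(\vy)=j$. For a model $\mM$ of size $n$ the family $\sat{\alpha}{{\mM,\vy}}$ lies in $\pot(M^j)$, the power set of a set of size $n^j\le n^{k-1}$; by the bound $\DD(\cA)\le 2^{|X|-1}$ for $\cA\osaj\pot(X)$ we get $\DD(\sat{\alpha}{{\mM,\vy}})\le 2^{n^{k-1}-1}$, hence $\Dim_{\alpha,\vy}\in\EE_{k-1}$. Since the atoms of Definition~\ref{atoms} (and the general atoms $\alpha_C$) are local, Proposition~\ref{calc:dummydf} gives $\DD(\sat{\alpha}{{\mM,\vx}})=\DD(\sat{\alpha}{{\mM,\vy}})$ for any context tuple $\vx$ extending $\vy$, so the upper dimension of every $<k$-ary atom stays in $\EE_{k-1}$ regardless of dummy variables. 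As first-order literals have upper dimension $1\in\EE_0\osaj\EE_{k-1}$, the extension is contained in $\LE^U_{k-1}$: its atoms all lie in $\EE_{k-1}$, and the admitted operations ($\wedge$, $\ilor$, $\vee$, $\tland$, and arbitrary Lindström quantifiers, which subsume $\exists$ and $\forall$) are exactly those under which $\LE^U_{k-1}$ is closed.

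Then, invoking the theorem that every formula of $\LE^U_{k-1}$ has upper dimension in $\EE_{k-1}$ (an induction on formula structure via Corollaries~\ref{DimPresCor} and \ref{DimPresCor2}), I conclude $\Dim_{\phi,\vx}\in\EE_{k-1}$ for every candidate defining formula $\phi$ with free variables $\vx$. It remains to check that $\Dim_{{\tt\bf H}(\vx),\vx}\notin\EE_{k-1}$: were it bounded by $2^{p(n)}$ for a polynomial $p$ of degree $k-1$, then taking base-$2$ logarithms the asymptotics $n^k-\frac{k}{2}\log n+O(1)\le p(n)$ would force a quantity of order $n^k$ below one of order $n^{k-1}$, which fails for all large $n$. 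A defining formula would satisfy $\sat{\phi}{{M,\vx}}=\sat{{\tt\bf H}(\vx)}{{M,\vx}}$ for all $M$, hence $\Dim_{\phi,\vx}=\Dim_{{\tt\bf H}(\vx),\vx}$, contradicting $\Dim_{\phi,\vx}\in\EE_{k-1}$. This yields the claimed non-definability.

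The main obstacle, I expect, is not the asymptotic comparison (which is routine) but making the ``(or other)'' clause precise: one must confirm that the arity bound \emph{alone} forces every admissible $<k$-ary atom into $\EE_{k-1}$, using only the crude $2^{|X|-1}$ dimension bound applied to its base set $M^j$, and that enlarging the variable tuple by dummies never pushes the upper dimension out of $\EE_{k-1}$. Both ingredients are already supplied by the dimension-bound proposition and by Proposition~\ref{calc:dummydf}, so once the extension is located inside $\LE^U_{k-1}$ the argument is pure bookkeeping.
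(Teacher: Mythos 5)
Your overall strategy---locate the extension inside one of the yardstick logics via the growth-class preservation corollaries, then show that $\Dim_{{\tt\bf H}(\vx),\vx}$ escapes that class---is exactly the argument the paper leaves implicit for this corollary. But your choice of class $\EE_{k-1}$ is wrong, and the step that forces it is false. You claim that a $j$-ary atom with $j\le k-1$ has its family inside $\pot(M^j)$, ``where $\vy$ is the tuple of variables occurring in it, so $\len(\vy)=j$''. Under the paper's arity conventions this fails for all the named atoms: the $j$-ary dependence atom $\dep(\vx,y)$ has $j+1$ variables, the $j$-ary exclusion atom $\vx\mid\vy$ has $2j$ variables, and the $(k',l',m')$-ary independence atom has $k'+l'+m'$ variables. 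Consequently the crude bound $\DD(\cA)\le 2^{|X|-1}$ does not place these atoms in $\EE_{k-1}$, and the containment you need is in fact false: by Theorem~\ref{dim-atoms}(a), the $(k-1)$-ary dependence atom---explicitly allowed in the statement of the corollary---has upper dimension $n^{n^{k-1}}=2^{n^{k-1}\log n}$, which is \emph{not} in $\EE_{k-1}$, since no polynomial of degree $k-1$ majorizes $n^{k-1}\log n$. So the extension is not contained in $\LE^U_{k-1}$, and your separation collapses at the containment step.

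The repair is to run the same argument one growth class higher, inside $\LF^U_{k-1}$. By Theorem~\ref{dim-atoms} and Table~\ref{classes}, every $<k$-ary dependence atom has upper dimension $n^{n^{k-1}}\in\FF_{k-1}$, and the other named $<k$-ary atoms lie in $\EE_{k-1}\osaj\FF_{k-1}$; your crude $2^{n^{k-1}-1}$ bound is valid precisely for the general atoms $\alpha_C(\vx)$ of Definition~\ref{atoms}(f) with $\len(\vx)\le k-1$, where arity does equal the number of variables. The dummy-variable step via locality and Proposition~\ref{calc:dummydf} (which you correctly invoke, and which is needed since it is only the \emph{upper} dimension that is exactly preserved) and the closure step via Corollaries~\ref{DimPresCor} and~\ref{DimPresCor2} go through unchanged with $\OO=\FF_{k-1}$. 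On the other side, $2^{n^k-(k/2)\log n}\notin\FF_{k-1}$: a bound $f(n)\le n^{p(n)}=2^{p(n)\log n}$ with $\deg p=k-1$ has exponent $O(n^{k-1}\log n)=o(n^k)$, so the asymptotics from the preceding theorem still give the contradiction, only against the slightly larger class. (As a sanity check, $\Dim_{{\tt\bf H}(\vx),\vx}\le 2^{n^k}$ does lie in $\EE_k$, consistent with ${\tt\bf H}(\vx)$ being definable using $k$-ary atoms as in Example~\ref{halfandparity}; the separating line really is $\FF_{k-1}$ versus $\EE_k$, not $\EE_{k-1}$ versus everything above it.)
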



\subsubsection*{The parity atom}
\smallskip

\begin{definition}[The parity atom]Suppose $\len(\vx)=k$. The $k$-ary \emph{parity atom} is defined by
$M\models_T\ev(\vx)$ if and only if $|\{s(\vx) \mid \vx\in T\}|$ is even.
\end{definition}

Note that $\ev(\vx)$ is   definable in independence logic (see Example~\ref{halfandparity}).

\begin{lemma}
The upper, dual and cylindrical dimension of the $k$-ary $\ev(\vx)$ is $2^{n^k-1}$.
\end{lemma}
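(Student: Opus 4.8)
The plan is to reduce the statement directly to Example~\ref{basic:evenEx}, since the parity family is literally the family of even-cardinality subsets already analysed there. First I would make the identification explicit. By the definition of the parity atom together with team semantics, and because the team domain consists exactly of the variables $\vx$, each assignment $s$ may be identified with the tuple $s(\vx)\in M^k$ and hence each team $T$ with the set $\{s(\vx)\mid s\in T\}\osaj M^k$. Under this identification the counted quantity $|\{s(\vx)\mid s\in T\}|$ is exactly the cardinality of the corresponding subset, so, writing $X=M^k$,
\[
  \sat{\ev(\vx)}{{M,\vx}}=\{A\osaj X\mid |A|\text{ is even}\}=\cE,
\]
which is precisely the family of Example~\ref{basic:evenEx}, now taken on the base set $X=M^k$ of cardinality $|X|=n^k$ (recall $n=|M|$ and $\len(\vx)=k$; since $n\ge 2$ and $k\ge 1$ we have $|X|=n^k\ge 2$, so the hypotheses of that example are met).

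Second, I would invoke the computation of Example~\ref{basic:evenEx} verbatim, with the role of the base-set cardinality played by $N:=n^k$ in place of $n$. That example establishes, for an arbitrary nonempty finite base set of cardinality $N$, that $\DD(\cE)=\DDd(\cE)=\CD(\cE)=2^{N-1}$. The upper-dimension half is the observation that whenever $A$ is dominated by some $G$ in a dominating family, convexity forces $[A,G]\osaj\cE$, and $[A,G]$ contains a set of odd cardinality unless $A=G$; hence any dominating family must contain every member of $\cE$, giving $\DD(\cE)=|\cE|=2^{N-1}$. Running the identical convexity argument in the reversed inclusion order (supported convex families and intervals $[K,A]$ in place of $[A,G]$) yields $\DDd(\cE)=2^{N-1}$ as well. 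Finally $\CD(\cE)$ is squeezed between $\max\{\DD(\cE),\DDd(\cE)\}=2^{N-1}$, by Proposition~\ref{basic:dimEst}, and the universal upper bound $\CD(\cE)\le 2^{N-1}$ from the $2^{n-1}$-bound proposition preceding the example, so $\CD(\cE)=2^{N-1}$ too. Substituting $N=n^k$ gives $\DD=\DDd=\CD=2^{n^k-1}$, as claimed.

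There is essentially no genuine obstacle here beyond bookkeeping, so I would present it as a short corollary of Example~\ref{basic:evenEx}; no preservation machinery from Section~\ref{growth} is needed, as the dimension of the atomic family is computed directly. The only two points deserving a line of care are the identification of teams on $\vx$ with subsets of $M^k$ (ensuring that the quantity bounded in the atom really is the set cardinality) and the convention that the empty team, of parity $0$, counts as even, matching $\emptyset\in\cE$. Both are immediate.
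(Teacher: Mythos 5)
Your proposal is correct and follows essentially the same route as the paper, whose entire proof is the one-line observation that the lemma is a special case of Example~\ref{basic:evenEx} applied to the base set $M^k$ of cardinality $n^k$. Your extra care with the team-to-subset identification and the squeeze $\max\{\DD(\cE),\DDd(\cE)\}\le\CD(\cE)\le 2^{N-1}$ simply spells out what that example and the two propositions preceding it already establish.
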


\begin{proof}
This is a special case of 
Example~\ref{basic:evenEx}.  
\end{proof}

\begin{corollary} The $k$-ary
$\ev(\vx)$ is definable from the independence atoms but not from $l$-ary independence atoms for $l<k$.
\end{corollary}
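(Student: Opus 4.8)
The plan is to treat the corollary's two claims separately: the definability of $\ev(\vx)$ from independence atoms reduces to results already available, while the non-definability from low-arity independence atoms is an upper-dimension computation combined with the growth-class preservation machinery.

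For the positive claim, I would start from Example~\ref{halfandparity}, which gives a formula $\theta(\vx)$ built from dependence, inclusion, exclusion and pure independence atoms together with $\wedge$, $\vee$ and $\exists$ that is satisfied by a team $T$ precisely when $|\,T\restriction\vx\,|$ is even, that is, precisely when $M\models_T\ev(\vx)$. By Proposition~\ref{translations} the dependence, inclusion and exclusion atoms occurring in $\theta$ are each definable from (pure or conditional) independence atoms. Substituting these equivalences into $\theta$ (renaming bound variables where necessary) produces a formula over the independence atoms alone that still defines $\ev(\vx)$; since team logics have a compositional semantics, replacing subformulas by provably equivalent ones preserves the defined family $\sat{\cdot}{M,\vx}$. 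This establishes definability from the independence atoms.

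For the negative claim I would argue via upper dimension. By the preceding Lemma, $\Dim_{\ev(\vx),\vx}(n)=2^{n^k-1}$. I would first record that this function lies outside $\EE_{k-1}$: were it in $\EE_{k-1}$, there would be a polynomial $p$ of degree $k-1$ with $n^k-1\le p(n)$ for all $n$, which is impossible since the left side has degree $k$. On the other hand, fixing $l<k$, Theorem~\ref{dim-atoms}(e)--(f) shows that every $l$-ary independence atom, whether pure or conditional, has upper-dimension function bounded above by $2^{cn^{l}}$ for a suitable constant $c$, hence lies in $\EE_{l}\osaj\EE_{k-1}$. Consequently the extension of first-order logic by $l$-ary independence atoms is a sublogic of $\LE^U_{k-1}$, and by the theorem computing the growth class of $\LE^U_{k-1}$ every one of its formulas has upper dimension in $\EE_{k-1}$. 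Since $\Dim_{\ev(\vx),\vx}\notin\EE_{k-1}$, the atom $\ev(\vx)$ is not definable from $l$-ary independence atoms for any $l<k$.

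I expect the main obstacle to be the arity bookkeeping in the negative claim, namely verifying that a conditional independence atom whose arity is $l$ really has upper dimension in $\EE_{l}$. Here one must feed the lengths of the component tuples into the $(\,\cdot\,)^{n^{s}}$ upper bound of Theorem~\ref{dim-atoms}(f) and check that the resulting exponent is a polynomial of degree exactly equal to the atom's arity, so that the atom lands in $\EE_{l}$ and not in a strictly larger class; the remaining ingredients --- the strict separation $2^{n^k-1}\notin\EE_{k-1}$ and the closure of $\LE^U_{k-1}$ under the relevant operations (Corollaries~\ref{DimPresCor} and~\ref{DimPresCor2}, packaged into the $\LE^U_{k-1}$ growth-class theorem) --- are routine.
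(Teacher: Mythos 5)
Your proposal is correct and follows essentially the route the paper intends: the positive half is exactly the appeal to Example~\ref{halfandparity} (whose dependence, inclusion and exclusion atoms reduce to independence atoms via Proposition~\ref{translations}), and the negative half is the paper's growth-class argument, using the preceding lemma's value $\Dim_{\ev(\vx),\vx}(n)=2^{n^k-1}\notin\EE_{k-1}$ against the fact that formulas built from $l$-ary (pure or conditional) independence atoms with $l<k$ stay in $\LE^U_{k-1}$ and hence have upper dimension in $\EE_{k-1}$. Your arity bookkeeping for the conditional atom (exponent $n^s(n^m+n^k)$, of degree $s+\max(m,k)$, the atom's arity) is also the right check and goes through.
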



\subsubsection*{(In)dependence friendly logic}
\smallskip

The so-called \emph{dependence friendly existential quantifier}, as in
independence friendly logic (\cite{MSS}), can be defined in terms of the
dependence atom. Hence we can estimate its effect on the dimension of
a formula.  We have
$$\begin{array}{lcl}
 \models\quad\exists x/\vy\phi&\leftrightarrow&\exists x(\dep(\vy,x)\wedge\phi)       \\
 \models\quad \dep(\vy,x)&\leftrightarrow&\exists z/\vy(z=x)      
\end{array}$$

\begin{corollary}
The quantifier $\exists x/\vy$, $\len(\vy)=k$, is not definable in the extension of first order logic by $<k$-ary independence, inclusion, exclusion, dependence  and constancy atoms.
\end{corollary}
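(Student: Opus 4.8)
The plan is to argue by contradiction, reducing the claim to the non-definability of the $k$-ary dependence atom that was already established in Theorem~\ref{hierarchy}(a). The crucial observation is the second equivalence stated just above the corollary, namely $\models\dep(\vy,x)\leftrightarrow\exists z/\vy(z=x)$ with $\len(\vy)=k$, which exhibits the $k$-ary dependence atom as definable from the dependence-friendly quantifier $\exists z/\vy$ together with the first-order atom $z=x$.

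First I would suppose, toward a contradiction, that the quantifier $\exists x/\vy$ (with $\len(\vy)=k$) were definable in the logic $\cL$ obtained by extending first order logic with $<k$-ary independence, inclusion, exclusion, dependence and constancy atoms. Substituting a definition of $\exists z/\vy$ into the equivalence $\dep(\vy,x)\leftrightarrow\exists z/\vy(z=x)$, and noting that $z=x$ is merely a first-order literal, we would obtain a formula of $\cL$ equivalent to $\dep(\vy,x)$. Thus the $k$-ary dependence atom would itself be definable in $\cL$.

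Next I would observe that $\cL$ is a sublogic of the logic appearing in Theorem~\ref{hierarchy}(a): every atom of $\cL$ is $<k$-ary, whereas that theorem permits $<k$-ary atoms of \emph{every} kind together with $\le k$-ary independence, exclusion, inclusion, anonymity and constancy atoms, as well as arbitrary Lindström quantifiers. Consequently, definability of the $k$-ary dependence atom in $\cL$ would entail its definability in this larger logic, contradicting Theorem~\ref{hierarchy}(a). Hence $\exists x/\vy$ is not definable in $\cL$, which is exactly the assertion of the corollary.

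I expect no serious obstacle in this argument, since it is a direct reduction via the given inter-definability. The only points requiring care are the arity bookkeeping—verifying that $\dep(\vy,x)$ with $\len(\vy)=k$ is genuinely the $k$-ary dependence atom in the sense of the arity definition, and that each atom of $\cL$ indeed falls under the hypotheses of Theorem~\ref{hierarchy}(a)—together with the trivial remark that the equality $z=x$ contributes nothing beyond first order logic and so does not enlarge the class of atoms used.
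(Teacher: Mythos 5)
Your proof is correct and is essentially the paper's intended argument: the corollary is stated immediately after the equivalence $\models\ \dep(\vy,x)\leftrightarrow\exists z/\vy(z=x)$, which is precisely the reduction you carry out, concluding that definability of $\exists x/\vy$ would make the $k$-ary dependence atom definable, contradicting Theorem~\ref{hierarchy}(a). Your arity bookkeeping is also right, since every atom allowed in the corollary's logic is $<k$-ary and hence falls within the (even more generous) hypotheses of that theorem.
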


A kind of ``dependence friendly" disjunction can be defined as follows: 
$M\models_T\phi\vee_{\vec{x}}\psi$ if $T=Y\cup Z$ such that $M\models_Y\phi$, $M\models_Z\psi$ and if $s,s'\in T$ with $s(\vec{x})=s'(\vx)$, then $(s\in Y\Leftrightarrow s'\in Y)$ and $(s\in Z\Leftrightarrow s'\in Z)$.

\begin{lemma}\label{depfridis}
$\models\phi\vee_{\vx}\psi\leftrightarrow\exists u\exists v(\dep(\vx,u)\wedge\dep(\vx,v)\wedge(u=v\to\phi)\wedge(u\ne v\to\psi))$.
\end{lemma}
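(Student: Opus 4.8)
The plan is to prove the biconditional by unwinding the team semantics in each direction. Write $\theta$ for the matrix $\dep(\vx,u)\wedge\dep(\vx,v)\wedge(u=v\to\phi)\wedge(u\ne v\to\psi)$ of the right-hand side, where $u,v$ are fresh variables not occurring in $\phi$ or $\psi$, and recall the standing assumption $|M|\ge 2$.

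First I would treat the implication from right to left. Suppose $M\models_T\exists u\exists v\,\theta$. By Definition~\ref{fol}(f) there are maps $F\colon T\to\pot(M)\jer\{\tyj\}$ and $G\colon T[F/u]\to\pot(M)\jer\{\tyj\}$ with $M\models_{T''}\theta$ for $T''=T[F/u][G/v]$. The conjunct $\dep(\vx,u)$ forces $F$ to be single-valued on each $\vx$-class (otherwise two assignments of $T''$ with equal $\vx$ would carry different $u$-values), and similarly for $\dep(\vx,v)$ and $G$; hence $u$ and $v$ are functions of $s(\vx)$ on $T''$, and the projection $T''\to T$ forgetting $u,v$ is a bijection. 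Put $Y'=\{s\in T''\mid s(u)=s(v)\}$ and $Z'=\{s\in T''\mid s(u)\ne s(v)\}$, the maximal subteams satisfying the antecedents $u=v$ and $u\ne v$. Definition~\ref{IntImpl} applied to these maximal subteams yields $M\models_{Y'}\phi$ and $M\models_{Z'}\psi$. Let $Y,Z\osaj T$ be their images under the projection; since $u,v$ depend only on $\vx$, membership of $s$ in $Y$ (resp.\ $Z$) depends only on $s(\vx)$, and $Y\cup Z=T$ because $F,G$ take only nonempty values. As $\phi,\psi$ are local and $u,v$ do not occur in them, $M\models_Y\phi$ and $M\models_Z\psi$, so $M\models_T\phi\vee_{\vx}\psi$.

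For the converse, suppose $M\models_T\phi\vee_{\vx}\psi$, witnessed by $T=Y\cup Z$ with $M\models_Y\phi$, $M\models_Z\psi$, and membership in $Y$ and $Z$ determined by the value of $\vx$. Using $|M|\ge 2$, I would define $F,G$ so that $u,v$ are functions of $s(\vx)$ with $s(u)=s(v)$ exactly when $s(\vx)$ occurs in $Y$ and $s(u)\ne s(v)$ otherwise. Then $\dep(\vx,u)$ and $\dep(\vx,v)$ hold on $T''=T[F/u][G/v]$; the maximal $u=v$-subteam projects onto $Y$, while the maximal $u\ne v$-subteam projects into $Z$ (here I use that a class outside $Y$ lies in $Z$, since $Y\cup Z=T$). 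It remains to verify the two implications.

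The main obstacle is precisely this last verification. Definition~\ref{IntImpl} demands that \emph{every} subteam $W\osaj T''$ on which $u=v$ holds satisfy $\phi$, not merely the maximal one; since these $W$ are exactly the subteams projecting into $Y$, the implication $u=v\to\phi$ amounts to $\phi$ holding on \emph{all} subteams of $Y$, and dually $u\ne v\to\psi$ to $\psi$ holding on all subteams of the $u\ne v$-region $\osaj Z$. Thus the converse genuinely requires $\phi$ and $\psi$ to be \emph{downward closed}, so that $M\models_Y\phi$ propagates to every $W\osaj Y$ and $M\models_Z\psi$ to every subteam of $Z$. This is exactly the content of the lemma and the delicate point in the argument; it holds for the dependence-logic formulas to which the dependence-friendly disjunction is applied. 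Granting downward closure, both implications follow from the downward monotonicity of $\phi$ and $\psi$, completing the proof.
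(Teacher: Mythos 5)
Your proof is correct, and it is worth noting at the outset that the paper states Lemma~\ref{depfridis} with no proof at all, so there is no official argument to compare against; your unwinding of the semantics is the natural one. Your right-to-left direction is sound: since $G$ takes only nonempty values, every value in $F(s)$ survives into $T''=T[F/u][G/v]$, so the atoms $\dep(\vx,u)$ and $\dep(\vx,v)$ do force $F$ and $G$ to be singleton-valued and constant on each $\vx$-class, the projection to $T$ is a bijection, and applying Definition~\ref{IntImpl} to the maximal subteams $Y'$ and $Z'$ together with locality and $Y\cup Z=T$ gives exactly the witness required for $\phi\vee_{\vx}\psi$; this direction needs no assumption on $\phi,\psi$. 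The genuinely valuable part of your write-up is the diagnosis of the converse: because intuitionistic implication quantifies over \emph{all} subteams satisfying the antecedent, $u=v\to\phi$ demands $\phi$ on every subteam of the $u=v$ region, so left-to-right really does require $\phi$ and $\psi$ to be downward closed. Your caveat is not pedantry -- the equivalence as literally stated fails for arbitrary team-semantical formulas: taking $\phi=\psi=\nem$, any nonempty $T$ satisfies $\nem\vee_{\vx}\nem$ (split $T=T\cup T$), while the right-hand side always fails, since the empty subteam satisfies the first-order antecedent $u=v$ vacuously but never satisfies $\nem$. In the paper's context the lemma is applied inside dependence logic, whose formulas are all downward closed (and have the empty team property), so your reading matches the intended scope; granting that, your construction for the converse (setting $u=v$ precisely on the $\vx$-classes contained in $Y$, using $|M|\ge 2$, and propagating $M\models_Y\phi$ and $M\models_Z\psi$ to all subteams by downward closure plus locality) is complete. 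If anything, you might state explicitly in the hypotheses that $u,v$ are fresh and that $\phi,\psi$ are downward closed, since the bare statement with $\models$ suppresses both.
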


In the proof of Lemma~\ref{depfridis} it is actually enough to use the 2-valued dependence atom $\dep(\vx,y)\wedge(\dep(y)\vee\dep(y))$. This has dimension $2^{m^k}$, when $\len(\vx)=k$ and the domain has cardinality m. Dimension analysis shows the full dependence atom cannot be defined from the $s$-valued dependence atom $\dep_s(\vx,y)$, defined by
$$\dep(\vx,y)\wedge(\dep(y)\vee\ldots\vee\dep(y)) \mbox{\hspace{1cm}($s$ disjuncts)}$$ for any $s>0$.
The 2-valued dependence atom $=_2\!(\vx,y)$ can be defined from $\vee_{\vx}$ and constancy atoms as follows:
$$\exists u\exists v(\dep(u)\wedge \dep(v)\wedge(y=u\vee_{\vx}y=v)).$$
This shows that the operation $\phi\vee_{\vx}\psi$ does not preserve dimension. The situation is similar to the dependence friendly existential quantifier.

\section{VC-dimension}\label{vc}

An important dimension in finite combinatorics is the
Vapnik-Cervonenkis (VC) dimension of a family of sets. It is defined
as follows: Let us say that a set $A$ is \emph{shattered} by a family
$H$ of subsets of a finite set if $\{h\cap A\mid h\in H\}$ contains
all the subsets of $A$. The VC-dimension of $H$ is the largest
cardinality of a set shattered by $H$. This dimension has turned out
to be useful e.g. in learning theory (\cite{Vap}). However, it does not
have the same flexibility as our dimension concepts and does not seem
to be applicable in the kind of analysis we have at hand in this
paper.

The VC-dimension of the family of teams of an even number of
$k$-tuples in a domain of $n$ elements is ${n^k}$. Yet evenness can be
expressed in independence logic. As the VC-dimension of the
independence atom is 1, this shows that our logical operations do not
preserve VC-dimension.


\section{Cylindrical dimension and the DNF}\label{dnf}

Our cylindircal dimension for a family of sets is actually known in the study of disjunctive normal forms (DNF) of Boolean functions: 
%
Suppose $X=\{a_1,\ldots,a_n\}$ is a finite set. We fix a proposition symbol $p_i$ for each $i\in [1,n]$. Now subsets $A$ of $X$ correspond canonically to valuations (truth functions) $v_A$ of $\{p_1,\ldots,p_n\}$. Respectively, families $\mathcal{A}$ of subsets of $X$ correspond to Boolean functions on $\{p_1,\ldots,p_n\}$ and thereby to propositional formulas $\phi_{{\mathcal{A}}}$ in $\{p_1,\ldots,p_n\}$. This brings a connection between families of sets and Boolean functions (\cite{Od}). An interval $I=\{Y\subseteq X : A\subseteq Y\subseteq B\}$ corresponds
to the set $\bar{I}$ of valuations in which some proposition symbols have a fixed value, namely $p_i$ for $a_i\in A$ must be $1$ and $p_i$ for $a_i\notin B$ must be $0$. The set $\bar{I}$ can be defined in propositional logic with a conjunction of literals i.e. propositional symbols and their negations. If a family $\mathcal{A}$ of subsets of $X$ can be expressed as the union of $d$ intervals, then the defining formula $\phi_{{\mathcal{A}}}$ can be taken to be a disjunction of $d$ conjunctions of literals.
In the theory of Boolean functions our concept of cylindrical dimension corresponds exactly to the concept of length $m(f)$ of the shortest disjunctive normal form for the Boolean function $f$, meaning the smallest number of disjuncts in the disjunctive normal form of $f$. The conjunctions in such a ``minimal DNF" (where we also stipulate that these consist of as few variables as possible) are the well-known prime implicants of $f$. The algorithm of  \cite{Qu} and McCluskey determines these and hence also the number $m(f)$. 
A classic result about $m(f)$ is the following estimate (\cite{Gl}) for almost all $f$ of $n$ Boolean variables:
$$c_1\frac{2^n}{(\log n)\log\log n} < m(f) < c_2\frac{(\log\log n)2^n}{\log n}.$$
Thus this is also an estimate for the cylindrical dimension of almost all families of subsets of a set of $n$ elements.
The DNF-dimension has been studied extensively and more estimates have been found, see 
\cite{Ko,Ma,We,Ku,As,Rm}.
For example, 
in \cite{Ku} the following better lower bound is proved
\begin{equation}\label{kuz}
    \frac{(1-\epsilon_n)\cdot 2^n}{\log n-\log\log n},
    \end{equation}
    where $\lim \epsilon_n=0$,
for almost all Boolean functions on $n$ variables.

In the following application of the estimate (\ref{kuz}), we measure probabilities of team properties by using the uniform distribution for teams on $k+1$ variables in a model of size $n$. Note that in a non-rigid model a random team property is almost surely not definable in any logic. Therefore the interesting case is the definability of random team properties in rigid models.

\begin{corollary}
In the class of finite rigid models a random $k+1$-ary team property $(k\ge 1)$ is almost surely not definable in the extension of first order logic by $k$-ary dependence, independence, inclusion, exclusion and anonymity atoms. 
\end{corollary}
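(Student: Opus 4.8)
The plan is to read a $k+1$-ary team property through the dictionary of Section~\ref{dnf}. Fix a rigid model $M$ with $|M|=n$; since $M$ is rigid we may treat the $n^{k+1}$ tuples of $M^{k+1}$ as distinguishable ``points'', so a team property on $\vx=(x_0,\dots,x_k)$ is exactly a family $\cA\subseteq\pot(M^{k+1})$, i.e.\ a Boolean function on $N:=n^{k+1}$ variables, and by that section $\CD(\cA)$ is its minimal DNF length. Applying estimate~(\ref{kuz}) with $N=n^{k+1}$, all but a vanishing fraction of these properties satisfy
\[
  \CD(\cA)\ \ge\ \frac{(1-\epsilon_N)\,2^{n^{k+1}}}{(k+1)\log n-\log\!\big((k+1)\log n\big)}\ =:\ g_k(n).
\]
The key feature of $g_k$ is that $\log_2 g_k(n)\sim n^{k+1}$, whereas every $h\in\FF_k$ obeys $\log_2 h(n)\le p(n)\log n$ for a degree-$k$ polynomial $p$, i.e.\ $\log_2 h(n)=O(n^{k}\log n)=o(n^{k+1})$; hence $g_k\notin\FF_k$, and in fact $g_k$ eventually exceeds every fixed $\FF_k$ function.

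On the definable side, let $L$ be first-order logic extended by the listed $k$-ary atoms. By Theorem~\ref{dim-atoms} and Example~\ref{atomiendimensioita} each of these atoms, relative to the variables occurring in it, has all three dimensions in $\FF_k$, and by Corollaries~\ref{DimPresCor} and~\ref{DimPresCor2} the admissible connectives and Lindström quantifiers preserve growth classes. An induction on formulas therefore places the dimension of every $L$-formula in $\FF_k$; for the \emph{upper} dimension this is completely clean, since adding dummy variables leaves $\DD$ unchanged. The argument would then be completed by a counting estimate: $\pot(M^{k+1})$ contains at most $3^{n^{k+1}}$ intervals, so at most $3^{(d+1)n^{k+1}}$ families satisfy $\CD\le d$; when $d\in\FF_k$ the logarithm of this count is $O(d\,n^{k+1})=2^{o(n^{k+1})}$, so the count is $2^{2^{o(n^{k+1})}}=o\big(2^{2^{n^{k+1}}}\big)$, a vanishing fraction of all families. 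Thus, if the $L$-definable $k+1$-ary families had $\CD$ bounded uniformly by a single $\FF_k$ function, they would form a vanishing fraction, and a random property -- having $\CD\ge g_k\notin\FF_k$ -- would almost surely differ from all of them.

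The hard part is exactly this uniform cylindrical bound, and it is where the theory resists. The clean, dummy-insensitive $\FF_k$ estimate is available only for $\DD$; the cylindrical dimension really does inflate under dummy variables (Proposition~\ref{calc:dummydf}), and since the inclusion and independence atoms have degree $2k$, the induction delivers only $\CDim\in\FF_{2k}$ -- a class that already contains $g_k$, so $\CD$ by itself fails to separate the two sides. I see two ways to close the gap. The first is to show directly that \emph{after} the auxiliary variables are quantified back down to the $k+1$ free variables, the cylindrical dimension of the resulting family collapses into $\FF_k$ (the $\FF_{2k}$ blow-up being an artefact of the intermediate high-arity stages); this is the cleanest fix but requires a genuinely new estimate rather than the black-box preservation results. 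The second route avoids $\CD$ on the definable side altogether: keep the clean bound $\DD(\sat{\phi}{M,\vx})\in\FF_k$ and instead lower-bound the upper dimension of a random family, using that any dominated convex subfamily with maximum $G$ sits inside the convex shadow $\varjo_G(\cA)$ (Lemma~\ref{dimcalc:shadowcrit}), whose expected size is only quasi-polynomial in $N$, as it is dominated by subcubes $[B,G]$ of dimension about $\log_2\log_2 N$. This forces $\DD(\cA)\ge 2^{N}/N^{O(\log\log N)}\notin\FF_k$ for almost all $\cA$, which compared with $\DD(\sat{\phi}{M,\vx})\in\FF_k$ again yields non-definability. The main obstacle in this second route is the concentration argument needed to make the shadow bound hold simultaneously for all $2^{N}$ candidate maxima $G$, so that the lower bound on $\DD$ is almost sure rather than merely in expectation.
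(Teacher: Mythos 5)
Your proposal is not a completed proof---you yourself leave the decisive step open---so as submitted it has a gap; but the comparison with the paper is instructive, because the paper's entire proof is just the first half of your argument: a random property has cylindrical dimension of order $2^{n^{k+1}}$ by (\ref{kuz}), while a property definable in $\LF_k$ would have cylindrical dimension roughly $n^{n^k}$, i.e.\ in $\FF_k$; the paper says nothing at all about the dummy-variable inflation you raise. Your diagnosis of that point is substantively correct relative to the paper's stated estimates: Proposition~\ref{calc:dummydf} only yields the factor $n^{t n^{2k}}$ for the $2k$-variable atoms, landing $\CDim$ in $\FF_{2k}$, which contains the Kuznetsov threshold for every $k\ge 1$. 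Moreover, for the pure independence atom the blow-up is genuine and not an artefact of the estimate: with $t\ge 1$ dummy coordinates, two distinct selective liftings of the full product $M^k\times M^k$ have an intersection whose projection omits two generic points and hence is not a product, so no interval inside the lifted family can contain both of them, and the cylindrical dimension of $\sat{\vx\perp\vy}{M,\vz}$ is of order $n^{tn^{2k}}$. (By contrast, for inclusion and anonymity the inflation is repairable by a finer count: the bottoms of the covering intervals---the dual critical sets of $\cI_{\osaj}$ and $\cY$---have size $O(n^k)$, so the selective-preimage multiplicity is $n^{O(tn^k)}\in\FF_k$; for $\cI_\perp$ the dual critical sets $A\times B$ can have size $n^{2k}$, which is why no such repair exists there.) So a purely compositional cylindrical bound cannot stay below $2^{n^{k+1}}$ once independence atoms occur under quantifiers.

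What you underestimate is your own second route: it closes the proof, and the obstacle you name is illusory. No concentration argument is needed beyond a union bound over intervals. With $N=n^{k+1}$ there are at most $3^N$ intervals $[B,G]$ in $\pot(M^{k+1})$, and the probability that some interval of codimension $\ge d$ lies entirely inside a uniformly random $\cA$ is at most $3^N 2^{-2^d}$, which vanishes for $d=\lceil\log_2 N\rceil+\omega(1)$. On the complementary event, \emph{deterministically and simultaneously for every} $G\in\cA$, the convex shadow satisfies $\varjo_G(\cA)\osaj\{B\osaj G \mid |G\jer B|\le d\}$, a set of size $N^{O(\log N)}$; since $|\cA|\ge 2^{N-2}$ with high probability, Lemma~\ref{dimcalc:shadowcrit}(b) forces $\DD(\cA)\ge 2^{N-O(\log^2 N)}$ almost surely, which lies outside $\FF_k$. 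Paired with the dummy-insensitive bound $\Dim_{\phi,\vx}\in\FF_k$ on the definable side (this part of your text is exactly the paper's upper-dimension machinery, and it is sound for all five atoms) and a countable union over formulas, this proves the corollary---and, unlike the paper's two-line cylindrical argument, it does so using only bounds the framework actually supplies.
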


\begin{proof}
If a random $k+1$-ary team property is definable in $\LF_{k}$, its cylindrical dimension is asymptotically $n^{n^{k}}$. But by (\ref{kuz}) the cylindrical dimension is asymptotically almost surely at least of the order $2^{n^{k+1}}$.
\end{proof}
We do not know whether upper dimension and dual upper dimension have been isolated in the study of Boolean functions and whether they have a role there.

\section{Infinite models}

Our dimension analysis can be adapted to the realm of infinite domains but it does not have similar power. The infinite dimensions tend to be all the same and we do not get applications to definability. In fact, the hierarchy results are false in the following sense: Three and higher arity dependence atoms can  be expressed in terms of binary dependence atoms. The trick is to use the binary dependence atom to introduce a pairing function:

\begin{theorem}In infinite domains all dependence atoms are definable in terms of $2$-ary dependence atoms. Respectively, in infinite domains  the ternary independence atom $xyz\perp uvw$ can express all dependence, independence, inclusion,
    anonymity, and exclusion atoms.
\end{theorem}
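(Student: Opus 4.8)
The plan is to exploit that an infinite domain $M$ admits a bijection $g\colon M^2\to M$, since $|M^2|=|M|$, and hence injections $M^a\to M$ for every $a\ge 1$. Such a $g$ codes a whole tuple of values by a single element, and the coding is expressible in team semantics by combining an existential quantifier with dependence atoms: if we existentially quantify a fresh variable $x^*$ and assert $\dep(\vx,x^*)\wedge\bigwedge_i\dep(x^*,x_i)$, then on the resulting team $x^*$ is forced to be a bijective recoding of $\vx$ (the first atom makes $x^*$ a function of $\vx$, the rest make $\vx$ a function of $x^*$, so distinct $\vx$-values get distinct codes). Infiniteness guarantees the witnessing $F(s)=\{g(s(\vx))\}$ exists, and the rigidity of the dependence atoms means this works even under lax semantics. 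Since $\dep(\vx,\vy)$ abbreviates $\bigwedge_i\dep(\vx,y_i)$, it suffices throughout to treat atoms $\dep(\vx,y)$ with a single consequent.

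For the first statement I would induct on the arity $k$ of $\dep(x_1,\dots,x_k,y)$, the cases $k\le 2$ being trivial. For $k\ge 3$ I pair the first two determining variables into one:
\[
\phi_k \;=\; \exists z\bigl(\dep(x_1,x_2,z)\wedge\dep(z,x_1)\wedge\dep(z,x_2)\wedge \chi_{k-1}\bigr),
\]
where $\chi_{k-1}$ is the formula (from the induction hypothesis) defining the $(k-1)$-ary atom $\dep(z,x_3,\dots,x_k,y)$ by $2$-ary dependence atoms, and each $\dep(z,x_i)$ is written as the $2$-ary atom $\dep(z,z,x_i)$. In the forward direction, choosing $z=g(x_1,x_2)$ makes $z$ bijective with $(x_1,x_2)$, so agreement on $(z,x_3,\dots,x_k)$ is agreement on $(x_1,\dots,x_k)$ and $\dep(z,x_3,\dots,x_k,y)$ holds. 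Conversely, $\dep(x_1,x_2,z)$ forces any witness $z$ to be functional in $(x_1,x_2)$, so agreement on $(x_1,\dots,x_k)$ yields agreement on $(z,x_3,\dots,x_k)$, hence on $y$. Each step lowers the arity by one and introduces only $2$-ary dependence atoms, so the induction closes.

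For the second statement I would first note, via Proposition~\ref{translations}(a) with $k=2$, that the $2$-ary dependence atom is definable from the ternary (that is, $(3,3)$-ary) pure independence atom $\vx\perp\vy$, and that constancy $\dep(c)$ equals $ccc\perp ccc$, an instance of the same atom. By the first statement these already yield all dependence atoms, hence the coding of an arbitrary tuple. I then claim that every pure independence atom $\vx\perp\vy$ (with $\len(\vx)=a$, $\len(\vy)=b$) is equivalent in infinite domains to
\[
\exists x^*\exists y^*\bigl(\dep(\vx,x^*)\wedge\textstyle\bigwedge_i\dep(x^*,x_i)\wedge\dep(\vy,y^*)\wedge\textstyle\bigwedge_j\dep(y^*,y_j)\wedge x^*\perp y^*\bigr),
\]
where the $(1,1)$-pure independence $x^*\perp y^*$ is in turn $\exists c(\dep(c)\wedge x^*cc\perp y^*cc)$, a padding of the ternary atom by a constant. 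The point is that $x^*,y^*$ are bijective recodings of $\vx,\vy$, and pure independence constrains only the joint distribution of the $\vx$- and $\vy$-values, which is preserved under bijective recoding of each side; crucially, no consistency between the two codings is needed since independence never compares an $\vx$-value with a $\vy$-value. Having thus obtained pure independence of every arity, all remaining atoms of every arity follow from the explicit translations of Proposition~\ref{translations}: inclusion from $(k,2)$-independence by (c), anonymity from $(k{+}1)$-ary inclusion by (d), exclusion from dependence by (a) (or from inclusion and independence by (b)), and the conditional independence atom from $(k{+}m,l{+}m)$-pure independence by (f), each ingredient now being available.

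The main obstacle is isolating exactly which reductions require \emph{consistent} coding and sidestepping those. The purely functional reductions of the first statement and the pure‑independence reduction are harmless, because the dependence atoms rigidly pin $x^*$ down as a bijective function of its tuple despite the multivalued freedom in $\exists$. The genuinely delicate atoms are the relational ones, inclusion and exclusion, where a direct coding would demand that an $\vx$-value and a $\vy$-value receive the same code whenever equal; I avoid this entirely by never coding those atoms directly, instead deriving them from all‑arity pure independence through the ready‑made translations above. What remains is routine: verifying the handful of equivalences and checking that every atom invoked has an arity already shown to be expressible.
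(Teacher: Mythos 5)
Your proposal is correct, but it reaches the theorem by a genuinely different mechanism than the paper. The paper proves one ``typical case'', $\dep(xyz,u)$ from binary dependence atoms, with a single non-inductive formula: it universally quantifies two fresh pairs $(x_1,y_1)$ and $(x_2,y_2)$, lets $\dep(x_1y_1,u_1)$ introduce the graph of a function defined on all of $M^2$, forces that function to be \emph{injective} by the first-order biconditional $(x_1=x_2\wedge y_1=y_2)\leftrightarrow u_1=u_2$, composes codes so that $u_2$ encodes the triple $(x,y,z)$, and applies $\dep(u_2,u)$ on the relevant subteam through a material implication (to be read as a tensor disjunction with a negated first-order antecedent); the independence half is then dismissed with ``respectively''. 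You instead pin an existentially quantified code variable by mutual dependence atoms, $\dep(x_1x_2,z)\wedge\dep(z,x_1)\wedge\dep(z,x_2)$: functionality in one direction is what the backward implication needs, injectivity in the other is what the forward implication needs, and infiniteness enters only to witness an injection $M^2\to M$; your remark that the dependence atoms collapse the lax-semantics witness $F$ to a single-valued injective coding on the team is precisely the point that makes this sound, and the arity induction then closes. What each approach buys: the paper gets one explicit formula exhibiting the pairing function globally, at the cost of unpacking $\to$ and $\leftrightarrow$ in team semantics; your version uses only $\exists$, $\wedge$ and dependence atoms, avoiding those connective subtleties, and --- more substantially --- it actually carries out the independence half the paper leaves implicit: binary dependence from the ternary atom via Proposition~\ref{translations}(a) with $k=2$, constancy as $ccc\perp ccc$, all-arity pure independence by coding each side with its own injection (your observation that pure independence never compares an $\vx$-value with a $\vy$-value, so no cross-side coding consistency is needed, is the key step, and it also makes the reduction insensitive to overlap between $\vx$ and $\vy$), padding with a constant to recover $(1,1)$-independence from the ternary atom, and the remaining atoms through translations (c), (d), (a)/(b) and (f). The only point worth making explicit in a final write-up is that all your equivalences are strong (all models, all teams) and the connectives used are compositional, so the substitutions compose; with that noted, your treatment of the second statement is in fact more complete than the paper's own.
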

\def\la{\langle}
\def\ra{\rangle}

\begin{proof}Suppose $(x,y)\mapsto\la x,y\ra$ is a pairing function (i.e. $\la x,y\ra =\la x',y'\ra$ if and only if $x=x'$ and $y=y'$) on the (infinite) domain.
We prove the following typical case:
\begin{equation}\label{pairing}
\begin{array}{lcl}
 \models\ \dep(xyz,u)&\leftrightarrow&
\forall x_1\forall y_1\exists u_1 (\dep(x_1y_1,u_1)\wedge\\
&&\forall x_2\forall y_2\exists u_2(\dep(x_2y_2,u_2)\wedge \\
&&((x_1=x_2\wedge y_1=y_2)\leftrightarrow u_1=u_2)\wedge\\
&&((x_1=x\wedge y_1=y\wedge x_2=u_1 \wedge y_2=z)\to \dep(u_2,u))))
\end{array}
\end{equation}

Suppose a team $T$ satisfies $\dep(xyz,u)$. Let $Y$ be the extension of $T$ by giving all possible values for 
$x_1,x_2,y_1$ and $y_2$. We further extend $Y$ to $Z$ by giving values to $u_1$ and $u_2$ as follows:
$$s(u_1)=\la s(x_1),s(y_1)\ra, s(u_2)=\la s(x_2),s(y_2)\ra.$$ Clearly, $Z\models\ \dep(x_1y_1,u_1)$ and 
$Z\models\ \dep(x_2y_2,u_2)$. Also, obviously, $Z\models (x_1=x_2\wedge y_1=y_2)\Leftrightarrow u_1=u_2$.
Suppose then $\{s,s'\}\subseteq Z$ satisfies $x_1=x\wedge y_1=y\wedge x_2=u_1 \wedge y_2=z$ and, moreover, $s(u_2)=s'(u_2)$. A direct calculation yields $s(u)=s'(u)$.

Conversely, suppose $T$ satisfies the right hand side of (\ref{pairing}). Thus, if  $T$ is extended by giving all possible values for 
$x_1,x_2,y_1$ and $y_2$, and then further extended to $Z$ by giving suitable values to $u_1$ and $u_2$, then
 $Z$ satisfies the quantifier-free part of the right hand side (\ref{pairing}). To prove the left hand side of (\ref{pairing}), suppose $s,s'\in T$ agree about $xyz$. Let $f$ be a function such that if $s\in Z$, then $s(u_1)=f(s(x_1),s(y_1))$. Then, if $s\in Z$, then $s(u_2)=f(s(x_2),s(y_2))$. Clearly, $f$ is one-one. A calculation yields $s(u_2)=s'(u_2)$. Since $Z$ satisfies $\dep(u_2,u)$, we obtain $s(u)=s'(u)$. \end{proof}

It remains open, whether the unary dependence atom or the binary independence atom have similar universal power.
It remains also open whether the arity hierarchy of the inclusion atom collapses.

\section{Conclusion}

We have defined three dimension like notions in discrete mathematics and applied them to obtain hierarchy and undefinability results in the area of team semantics. Our results demonstrate that in finite models the arity of atoms puts a definitive bound on what can be expressed. In terms of our approach, the arity of the atoms of a sentence  completely determines the dimension of the sentence, and team properties of higher dimension cannot be expressed even if we add all possible Lindstr\"om quantifiers. On the other hand, this is only true if certain nicely behaving logical operations are the only ones that are used. If certain strong (from the perspective of our approach) logical operations, such as the intuitionistic implication, are allowed, the dimension analysis fails. Thus our quantitative analysis can be used to show the rationale 
of choosing some logical operations over some others.

We list below some open questions that remain unanswered by our results: 

\begin{enumerate}[(1)]

    \item Is the $k$-ary dependence atom  definable in terms of  $k$-ary independence, exclusion, inclusion, anonymity, constancy atoms, and some Lindstr\"om quantifiers?
\item Is the $k$-ary anonymity atom  definable in terms of the
  $k$-ary inclusion atom?
\item Is the $k$-ary independence atom definable in terms of the $k$-ary pure independence atom?  \item Is the $(k,l,m)$-ary independence atom  definable in terms of the
  $\max(k,l)+m$-ary dependence,  anonymity,  exclusion
   and  inclusion atoms?

    \item Dependence, exclusion, inclusion, anonymity and independence atoms arise in a natural way from the classes $\cF$, $\cX$, $\cI_\osaj$, $\cY$ and $\cI_\perp$, and for each of these atoms we have proved an arity hierarchy result. Furtheoremore, all the classes are first-order definable.
    Does there exist some other first-order definable families $\cA\subseteq\{R\mid R\subseteq X_1\times\cdots\times X_n\}$ such that the corresponding 
    atoms satisfy similar hierarchy result, and first-order logic extended with the atoms is strictly contained in dependence/exclusion or inclusion logic? \item Our dimension functions are either polynomial or exponential. Is this a general phenomenon for first order definable atoms i.e. is there a Dichotomy Theorem for first order definable atoms? Is it a decidable question to decide whether the dimension function is polynomial?
\end{enumerate}

\bibliographystyle{unsrtnat}
\bibliography{tiki}

\begin{thebibliography}{31}
\providecommand{\natexlab}[1]{#1}
\providecommand{\url}[1]{\texttt{#1}}
\expandafter\ifx\csname urlstyle\endcsname\relax
  \providecommand{\doi}[1]{doi: #1}\else
  \providecommand{\doi}{doi: \begingroup \urlstyle{rm}\Url}\fi

\bibitem[Bollob\'{a}s(1986)]{B}
B\'{e}la Bollob\'{a}s.
\newblock \emph{Combinatorics}.
\newblock Cambridge University Press, Cambridge, 1986.
\newblock ISBN 0-521-33059-9; 0-521-33703-8.
\newblock Set systems, hypergraphs, families of vectors and combinatorial
  probability.

\bibitem[Ciardelli(2009)]{ciardelli09}
Ivano Ciardelli.
\newblock Inquisitive semantics and intermediate logics.
\newblock Master's thesis, University of Amsterdam, 2009.

\bibitem[Hella et~al.(2014)Hella, Luosto, Sano, and Virtema]{HLSV}
Lauri Hella, Kerkko Luosto, Katsuhiko Sano, and Jonni Virtema.
\newblock The expressive power of modal dependence logic.
\newblock In \emph{Advances in modal logic. {V}ol. 10}, pages 294--312. Coll.
  Publ., London, 2014.

\bibitem[Hella and Stumpf(2015)]{HS}
Lauri Hella and Johanna Stumpf.
\newblock The expressive power of modal logic with inclusion atoms.
\newblock In \emph{Proceedings {S}ixth {I}nternational {S}ymposium on {G}ames,
  {A}utomata, {L}ogics and {F}ormal {V}erification}, volume 193 of
  \emph{Electron. Proc. Theor. Comput. Sci. (EPTCS)}, pages 129--143. EPTCS,
  [place of publication not identified], 2015.
\newblock \doi{10.4204/EPTCS.193.10}.
\newblock URL \url{https://doi.org/10.4204/EPTCS.193.10}.

\bibitem[L\"{u}ck and Vilander(2019)]{LVil}
Martin L\"{u}ck and Miikka Vilander.
\newblock On the succinctness of atoms of dependency.
\newblock \emph{Log. Methods Comput. Sci.}, 15\penalty0 (3):\penalty0 Paper No.
  17, 28, 2019.
\newblock \doi{10.23638/LMCS-15(3:17)2019}.
\newblock URL \url{https://doi.org/10.23638/LMCS-15(3:17)2019}.

\bibitem[L{\"{u}}ck(2020)]{Lu}
Martin L{\"{u}}ck.
\newblock \emph{Team logic: axioms, expressiveness, complexity}.
\newblock PhD thesis, University of Hanover, Hannover, Germany, 2020.
\newblock URL \url{https://www.repo.uni-hannover.de/handle/123456789/9430}.

\bibitem[Kleene(1952)]{Kl}
Stephen~Cole Kleene.
\newblock \emph{Introduction to metamathematics}.
\newblock D. Van Nostrand Co., Inc., New York, N. Y., 1952.

\bibitem[V{\"{a}}{\"{a}}n\"{a}nen(2007)]{Vaa}
Jouko V{\"{a}}{\"{a}}n\"{a}nen.
\newblock \emph{Dependence logic}, volume~70 of \emph{London Mathematical
  Society Student Texts}.
\newblock Cambridge University Press, Cambridge, 2007.
\newblock ISBN 978-0-521-70015-3; 0-521-70015-9.
\newblock \doi{10.1017/CBO9780511611193}.
\newblock URL \url{https://doi.org/10.1017/CBO9780511611193}.
\newblock A new approach to independence friendly logic.

\bibitem[Abramsky and V\"{a}\"{a}n\"{a}nen(2009)]{AV}
Samson Abramsky and Jouko V\"{a}\"{a}n\"{a}nen.
\newblock From {IF} to {BI}: a tale of dependence and separation.
\newblock \emph{Synthese}, 167\penalty0 (2, Knowledge, Rationality \&
  Action):\penalty0 207--230, 2009.
\newblock ISSN 0039-7857.
\newblock \doi{10.1007/s11229-008-9415-6}.
\newblock URL \url{https://doi.org/10.1007/s11229-008-9415-6}.

\bibitem[Lindstr\"{o}m(1966)]{Li}
Per Lindstr\"{o}m.
\newblock First order predicate logic with generalized quantifiers.
\newblock \emph{Theoria}, 32:\penalty0 186--195, 1966.
\newblock ISSN 0040-5825.
\newblock \doi{10.1111/j.1755-2567.1966.tb00600.x}.
\newblock URL \url{https://doi.org/10.1111/j.1755-2567.1966.tb00600.x}.

\bibitem[Galliani(2012)]{Ga}
Pietro Galliani.
\newblock Inclusion and exclusion dependencies in team semantics---on some
  logics of imperfect information.
\newblock \emph{Ann. Pure Appl. Logic}, 163\penalty0 (1):\penalty0 68--84,
  2012.
\newblock ISSN 0168-0072.
\newblock \doi{10.1016/j.apal.2011.08.005}.
\newblock URL \url{https://doi.org/10.1016/j.apal.2011.08.005}.

\bibitem[V{\"{a}}{\"{a}}n{\"{a}}nen(2022)]{anon}
Jouko V{\"{a}}{\"{a}}n{\"{a}}nen.
\newblock An atom's worth of anonymity.
\newblock \emph{Logic Journal of the IGPL}, November, 2022.

\bibitem[R{\"{o}}nnholm(2018)]{raine}
Raine R{\"{o}}nnholm.
\newblock Arity fralments of logics with team semantics
  ({URN}:{ISBN}:978-952-03-0912-1), 2018.
\newblock PhD thesis, University of Tampere.

\bibitem[Wilke(2022)]{Wi}
Richard Wilke.
\newblock \emph{Reasoning about dependence and independence: teams and
  multiteams}.
\newblock PhD thesis, {RWTH} Aachen University, Germany, 2022.
\newblock URL \url{https://publications.rwth-aachen.de/record/842872}.

\bibitem[Durand and Kontinen(2012)]{DK}
Arnaud Durand and Juha Kontinen.
\newblock Hierarchies in dependence logic.
\newblock \emph{ACM Trans. Comput. Log.}, 13\penalty0 (4):\penalty0 Art. 31,
  21, 2012.
\newblock ISSN 1529-3785.
\newblock \doi{10.1145/2362355.2362359}.
\newblock URL \url{https://doi.org/10.1145/2362355.2362359}.

\bibitem[Galliani et~al.(2013)Galliani, Hannula, and Kontinen]{GHK}
Pietro Galliani, Miika Hannula, and Juha Kontinen.
\newblock Hierarchies in independence logic.
\newblock In Simona Ronchi~Della Rocca, editor, \emph{Computer Science Logic
  2013 {(CSL} 2013), {CSL} 2013, September 2-5, 2013, Torino, Italy}, volume~23
  of \emph{LIPIcs}, pages 263--280. Schloss Dagstuhl - Leibniz-Zentrum
  f{\"{u}}r Informatik, 2013.
\newblock \doi{10.4230/LIPIcs.CSL.2013.263}.
\newblock URL \url{https://doi.org/10.4230/LIPIcs.CSL.2013.263}.

\bibitem[Hannula(2018)]{Ha}
Miika Hannula.
\newblock Hierarchies in inclusion logic with lax semantics.
\newblock \emph{{ACM} Trans. Comput. Log.}, 19\penalty0 (3):\penalty0
  16:1--16:23, 2018.
\newblock \doi{10.1145/3204521}.
\newblock URL \url{https://doi.org/10.1145/3204521}.

\bibitem[Grohe(1996)]{Gr}
Martin Grohe.
\newblock Arity hierarchies.
\newblock \emph{Ann. Pure Appl. Logic}, 82\penalty0 (2):\penalty0 103--163,
  1996.
\newblock ISSN 0168-0072.
\newblock \doi{10.1016/0168-0072(95)00072-0}.
\newblock URL \url{https://doi.org/10.1016/0168-0072(95)00072-0}.

\bibitem[Galliani(2013)]{MR3038038}
Pietro Galliani.
\newblock Epistemic operators in dependence logic.
\newblock \emph{Studia Logica}, 101\penalty0 (2):\penalty0 367--397, 2013.
\newblock ISSN 0039-3215.
\newblock \doi{10.1007/s11225-013-9478-3}.
\newblock URL \url{https://doi.org/10.1007/s11225-013-9478-3}.

\bibitem[Bollob\'{a}s(2001)]{B1}
B\'{e}la Bollob\'{a}s.
\newblock \emph{Random graphs}, volume~73 of \emph{Cambridge Studies in
  Advanced Mathematics}.
\newblock Cambridge University Press, Cambridge, second edition, 2001.
\newblock ISBN 0-521-80920-7; 0-521-79722-5.
\newblock \doi{10.1017/CBO9780511814068}.
\newblock URL \url{https://doi.org/10.1017/CBO9780511814068}.

\bibitem[Mann et~al.(2011)Mann, Sandu, and Sevenster]{MSS}
Allen~L. Mann, Gabriel Sandu, and Merlijn Sevenster.
\newblock \emph{Independence-friendly logic}, volume 386 of \emph{London
  Mathematical Society Lecture Note Series}.
\newblock Cambridge University Press, Cambridge, 2011.
\newblock ISBN 978-0-521-14934-1.
\newblock \doi{10.1017/CBO9780511981418}.
\newblock URL \url{https://doi.org/10.1017/CBO9780511981418}.
\newblock A game-theoretic approach.

\bibitem[Vapnik(1995)]{Vap}
Vladimir~N. Vapnik.
\newblock \emph{The nature of statistical learning theory}.
\newblock Springer-Verlag, New York, 1995.
\newblock ISBN 0-387-94559-8.
\newblock \doi{10.1007/978-1-4757-2440-0}.
\newblock URL \url{https://doi.org/10.1007/978-1-4757-2440-0}.

\bibitem[O'Donnell(2014)]{Od}
Ryan O'Donnell.
\newblock \emph{Analysis of {B}oolean functions}.
\newblock Cambridge University Press, New York, 2014.
\newblock ISBN 978-1-107-03832-5.
\newblock \doi{10.1017/CBO9781139814782}.
\newblock URL \url{https://doi.org/10.1017/CBO9781139814782}.

\bibitem[Quine(1955)]{Qu}
W.~V. Quine.
\newblock A way to simplify truth functions.
\newblock \emph{Amer. Math. Monthly}, 62:\penalty0 627--631, 1955.
\newblock ISSN 0002-9890.
\newblock \doi{10.2307/2307285}.
\newblock URL \url{https://doi.org/10.2307/2307285}.

\bibitem[Glagolev(1964)]{Gl}
V.~V. Glagolev.
\newblock An estimate of the complexity of the contracted normal form for
  almost all functions of the logic of algebra.
\newblock \emph{Dokl. Akad. Nauk SSSR}, 158:\penalty0 770--773, 1964.
\newblock ISSN 0002-3264.

\bibitem[Kor\v{s}unov(1969)]{Ko}
A.~D. Kor\v{s}unov.
\newblock An upper estimate of the complexity of the shortest disjunctive
  normal forms of almost all {B}oolean functions.
\newblock \emph{Kibernetika (Kiev)}, \penalty0 (6):\penalty0 1--8, 1969.
\newblock ISSN 0023-1274.

\bibitem[Makarov(1964)]{Ma}
S.~V. Makarov.
\newblock An upper bound for the mean length of a disjunctive normal form.
\newblock \emph{Diskret. Analiz}, \penalty0 (3):\penalty0 78--80, 1964.

\bibitem[Weber(1982)]{We}
Karl Weber.
\newblock The length of random {B}oolean functions.
\newblock \emph{Elektron. Informationsverarb. Kybernet.}, 18\penalty0
  (12):\penalty0 659--668, 1982.
\newblock ISSN 0013-5712.

\bibitem[Kuznetsov(1983)]{Ku}
S.~E. Kuznetsov.
\newblock A lower bound for the length of the shortest d.n.f. of almost all
  {B}oolean functions.
\newblock In \emph{Probabilistic methods and cybernetics, {N}o. 19}, pages
  44--47. Kazan. Gos. Univ., Kazan\cprime, 1983.

\bibitem[Aslanyan(1983)]{As}
L.~A. Aslanyan.
\newblock Length of the shortest disjunctive normal form of weakly defined
  {B}oolean functions.
\newblock In \emph{Applied mathematics, {N}o. 2}, pages 32--40, 141--142.
  Erevan. Univ., Erevan, 1983.

\bibitem[Romanov(1983)]{Rm}
A.~M. Romanov.
\newblock Estimate of the length of the shortest disjunctive normal form for
  the negation of the characteristic function of a {H}amming code.
\newblock \emph{Metody Diskret. Analiz.}, \penalty0 (39):\penalty0 88--97,
  1983.
\newblock ISSN 0136-1228.

\end{thebibliography}

\end{document}